\renewcommand{\cftsecleader}{\cftdotfill{\cftdotsep}}
\numberwithin{figure}{section}
\definecolor{blue2}{cmyk}{.94,.11,0,0}
\definecolor{myblue}{rgb}{.8, .8, 1}
\newlength\mytemplen
\newsavebox\mytempbox
\DeclareFontFamily{U}{mathx}{}
\DeclareFontShape{U}{mathx}{m}{n}{<-> mathx10}{}
\DeclareSymbolFont{mathx}{U}{mathx}{m}{n}
\DeclareMathAccent{\widecheck}{0}{mathx}{"71}
\DeclareMathAlphabet\mathbfcal{OMS}{cmsy}{b}{n}
\renewenvironment{thebibliography}[1]
     {\section*{\refname}
      \@mkboth{\MakeUppercase\refname}{\MakeUppercase\refname}
      \begin{enumerate}[label={[\arabic{enumi}]},itemindent=*,leftmargin=2.5em]
      \@openbib@code
      \sloppy
      \clubpenalty4000
      \@clubpenalty \clubpenalty
      \widowpenalty4000
      \sfcode`\.\@m}
     {\def\@noitemerr
       {\@latex@warning{Empty `thebibliography' environment}}
      \end{enumerate}}
\newcommand{\newparallel}{\mathrel{\mathpalette\new@parallel\relax}}
\newcommand{\new@parallel}[2]{
  \begingroup
  \sbox\z@{$#1T$}
  \resizebox{!}{\ht\z@}{\raisebox{\depth}{$\m@th#1/\mkern-4.5mu/$}}
  \endgroup
}
\newcommand{\nnewparallel}{\mathrel{\mathpalette\nnew@parallel\relax}}
\newcommand{\nnew@parallel}[2]{
  \begingroup
  \sbox\z@{$#1T$}
  \resizebox{!}{\ht\z@}{\raisebox{\depth}{$\m@th#1/\mkern-4.5mu/\!\!\!\!\backslash $}}
  \endgroup
}
\newcommand\cb{
    \@ifnextchar[
       {\@cb}
       {\@cb[5pt]}}
\def\@cb[#1]{
    \@ifnextchar[
       {\@@cb[#1]}
       {\@@cb[#1][5pt]}}
\def\@@cb[#1][#2]#3{
    \sbox\mytempbox{#3}
    \mytemplen\ht\mytempbox
    \advance\mytemplen #1\relax
    \ht\mytempbox\mytemplen
    \mytemplen\dp\mytempbox
    \advance\mytemplen #2\relax
    \dp\mytempbox\mytemplen
    \colorbox{myblue}{\hspace{1em}\usebox{\mytempbox}\hspace{1em}}}
\patchcmd{\thebibliography}{\section*}{\section}{}{}
\renewcommand{\Re}{{\rm Re}}
\newcommand{\BES}{{\rm BES}}
\renewcommand{\Im}{{\rm Im}}
\newcommand{\less}{\lesssim}
\newcommand{\more}{\gtrsim}
\newcommand{\e}{{\mathrm e}}
\newcommand{\1}{\mathds 1}
\newcommand{\C}{{\mathscr C}}
\newcommand{\F}{\mathscr F}
\newcommand{\B}{\mathscr B}
\newcommand{\vep}{{\varepsilon}}
\newcommand{\da}{{\downarrow}}
\newcommand{\ua}{{\uparrow}}
\newcommand{\la}{{\langle}}
\newcommand{\ra}{{\rangle}}
\newcommand{\rest}{{\upharpoonright}}
\newcommand{\ind}{{\perp\!\!\!\perp}}
\newcommand{\supp}{{\rm supp}}
\newcommand{\bs}{\boldsymbol}
\newcommand{\ms}{\mathscr}
\renewcommand{\P}{{\mathbb P}}
\newcommand{\E}{{\mathbb E}}
\newcommand{\mc}{\mathcal}
\renewcommand{\d}{{\mathrm d}}
\newcommand{\R}{{\Bbb R}}
\newcommand{\s}{{\mathfrak s}}
\renewcommand{\i}{{\mathtt i}}
\newcommand{\defeq}{{\stackrel{\rm def}{=}}}
\renewenvironment{proof}[1][\proofname]{\noindent {\bfseries #1.}\;}{\hfill\ensuremath{\blacksquare}\\}
\newcommand{\cc}{{^\circ}}
\newcommand{\bi}{\mathbf i}
\newcommand{\bj}{\mathbf j}
\newcommand{\bk}{\mathbf k}
\newcommand{\hK}{{\widehat{K}}}
\newcommand{\two}{{\sqrt{2}}}
\newcommand{\loc}{{\rm loc}}
\newcommand{\I}{{\rm I}}
\newcommand{\II}{{\rm II}}
\newcommand{\CN}{{\mathbb C}^{\it N}}
\newcommand{\deltaz}{\delta_{\ref{ineq:BESb-2}}}
\newtheoremstyle{slantthm}{10pt}{10pt}{\slshape}{}{\bfseries}{}{.5em}{\thmname{#1}\thmnumber{ #2}\thmnote{ (#3)}.}
\newtheoremstyle{slantrmk}{10pt}{10pt}{\rmfamily}{}{\bfseries}{}{.5em}{\thmname{#1}\thmnumber{ #2}\thmnote{ (#3)}.}
\begin{document}
\theoremstyle{slantthm}
\newtheorem{thm}{Theorem}[section]
\newtheorem{prop}[thm]{Proposition}
\newtheorem{lem}[thm]{Lemma}
\newtheorem{cor}[thm]{Corollary}
\newtheorem{defi}[thm]{Definition}
\newtheorem{claim}[thm]{Claim}
\newtheorem{disc}[thm]{Discussion}
\newtheorem{conj}[thm]{Conjecture}

\theoremstyle{slantrmk}
\newtheorem{ass}[thm]{Assumption}
\newtheorem{rmk}[thm]{Remark}
\newtheorem{eg}[thm]{Example}
\newtheorem{question}[thm]{Question}
\numberwithin{equation}{section}
\newtheorem{quest}[thm]{Quest}
\newtheorem{problem}[thm]{Problem}
\newtheorem{discussion}[thm]{Discussion}
\newtheorem{notation}[thm]{Notation}
\newtheorem{observation}[thm]{Observation}

\definecolor{db}{RGB}{13,60,150}
\definecolor{dg}{RGB}{150,40,40}

\newcommand{\thetitle}{
Stochastic motions of the two-dimensional many-body delta-Bose gas, I:
One-$\bs \delta$ motions}

\title{\bf \thetitle\footnote{Support from an NSERC Discovery grant is gratefully acknowledged.}}

\author{Yu-Ting Chen\,\footnote{Department of Mathematics and Statistics, University of Victoria, British Columbia, Canada.}\,\,\footnote{Email: \url{chenyuting@uvic.ca}}}

\date{\today}

\maketitle
\abstract{This paper is the first in a series devoted to constructing stochastic motions for the two-dimensional $N$-body delta-Bose gas for all integers $N\geq 3$ and establishing the associated Feynman--Kac-type formulas; see \cite{C:SDBG2,C:SDBG3,C:SDBG4} for the remaining of the series. The main results of this paper establish the foundation by studying the stochastic one-$\delta$ motions, which relate to the two-dimensional many-body delta-Bose gas by turning off all but one delta function, and we prove the central distributional properties and the SDEs. The proofs extend the method in \cite{C:BES} for the stochastic relative motions and develop and use analytical formulas of the probability distributions of the stochastic one-$\delta$ motions.
}\medskip  

\noindent \emph{Keywords:} Delta-Bose gas; Schr\"odinger operators; Bessel processes.
\smallskip

\noindent \emph{Mathematics Subject Classification (2020):} 60J55, 60J65, 60H30, 81S40.

\setlength{\cftbeforesecskip}{0pt}
\setlength\cftaftertoctitleskip{0pt}
\renewcommand{\cftsecleader}{\cftdotfill{\cftdotsep}}
\setcounter{tocdepth}{2}
\tableofcontents

\section{Introduction}\label{sec:intro}
This paper is the first in a series devoted to constructing stochastic motions for the two-dimensional $N$-body delta-Bose gas for all integers $N\geq 3$ and establishing the associated Feynman--Kac-type formulas.  The model of delta-Bose gas was originally introduced for a quantum system of non-relativistic particles subject to pairwise ``contact interactions.'' It is now considered here for the setting of particles distinguishable only by the strengths of pairwise interactions. See \cite{Suth:Model} for a review of general many-body quantum Hamiltonians and the physical background. 

Specifically, for any $N\geq 2$, the $N$-body delta-Bose gas under consideration has a Hamiltonian described as the following \emph{formal} operator:
\begin{align}\label{def:HN}
\ms H^N\,\defeq -\frac{1}{2}\sum_{i=1}^N\Delta_{z^i}-\sum_{\bi\in \mathcal E_N}\Lambda_\bi \delta(z^{i\prime}-z^i),\quad z^i\in \Bbb C,
\end{align}
where $\Delta_{z^i}$ denotes the two-dimensional Laplacian with respect to $(\Re z^i,\Im z^i)$,
\begin{align}\label{def:EN}
\mc E_N\,\defeq\, \{\bi=(i\prime,i)\in\Bbb N\times \Bbb N;1\leq i<i\prime \leq N\},
\end{align}
and $\Lambda_\bi$ is a coupling constant tuning the strength of the multiplication operator $\delta(z^{i\prime}-z^i)$ for the contact interaction of the $i\prime$-th and $i$-th particles. Note that the operator $\ms H^N$ should indeed be regarded as formal. Not even a nontrivial self-adjoint operator can be associated ``directly'' to the weak formulation $\la f, \ms H^N f\ra_{L^2}$, $f\in \C^2_c(\CN)$ \cite[Section~2]{DFT:Schrodinger}. Also, we choose to work with $\Bbb C$ for the state space of particles, writing $\i$ for $\sqrt{-1}$, simply because the algebra makes it more convenient to apply the stochastic analytic methods for skew-product diffusions.

In the direction of proving possible Feynman--Kac-type formulas, the similarity between $\ms H^N$ and the classical quantum Hamiltonians as sums of Laplacians and smooth potential energy functions suggests a form to start with. But obstructions to the mathematical proof arise immediately. Specifically, the  plausible form represents the semigroup $\{\e^{-t\ms H^N};t\geq 0\}$ of $\ms H^N$ by the following equation: 
\begin{align}\label{eq:FK0}
{
\e^{-t\ms H^N}f(z_0)\;\mbox{=}\;
{\mathbb E}^{(0)}_{z_{0}} \Biggl[\exp \Biggl\{ \sum
_{\bi\in \mc E_N} 
\int _{0}^{t}\Lambda_\bi \delta\bigl(Z^{i
\prime}_{s}-Z^{i}_{s}
\bigr)\,\mathrm{d}s \Biggr\}f(\ms Z_{t}) \Biggr],
}
\end{align}
where $\{\ms Z_t=(Z^1_t,\cdots,Z^N_t)\}=\{Z^j_t\}_{1\leq j\leq N}$ under $\E^{(0)}_{z_0}$, with $Z^j_t\in\Bbb C$, is a $2N$-dimensional standard Brownian motion with initial condition $z_0\in \CN$. Note that the formulation of \eqref{eq:FK0} follows the standard principle. It is chosen to transform the physical meaning of the Hamiltonian $\ms H^N$ directly: The kinetic energy part $ -\frac{1}{2}\sum_{i=1}^N\Delta_{z^i}$ is realized by the $2N$-dimensional standard Brownian motion since its infinitesimal generator is given by the same operator; the potential energy part defined by the remaining of $\ms H^N$ in \eqref{def:HN} enters \eqref{eq:FK0} by its entirety in the exponential functional. On the other hand, the fundamental issue of \eqref{eq:FK0} considers the polarity of points under two-dimensional Brownian motion. Each additive functional $\int_0^t \delta(Z^{i\prime}_s-Z^i_s)\d s$ in \eqref{eq:FK0} can  only be treated zero. Furthermore, \eqref{eq:FK0} cannot be justified by using mollifications of the delta functions and passing the limit via a distributional limit as one removes the mollification. See \eqref{def:DBGvep} for the eligible approximate Hamiltonians and \cite[pp.137--138]{C:DBG} for more details of this difficulty. In particular, the present case is very different from the one-dimensional delta-Bose gas. The equivalent of \eqref{eq:FK0} after changing the spatial dimension from $2$ to $1$ does hold mathematically \cite{BC:1D} since the Brownian local times realize the one-dimensional counterparts of $\int_0^t \delta(Z^{i\prime}_s-Z^i_s)\d s$.

Since the expression \eqref{def:HN} of $\ms H^N$ is too singular to make \eqref{eq:FK0} a mathematically meaningful formula, we regard the legitimate combination of multiplicative functional and diffusion process central to the problem of proving the Feynman--Kac-type formulas for all $N\geq 3$. In seeking possible alternatives, we have revisited our earlier proof in \cite{C:BES} for the two-body case and assessed accordingly the possibility of using Doob's space-time transformations in the form of the ground-state transformations that use solutions to eigenvalue problems of infinitesimal generators for the formulations. See \cite[pp.172-173]{Simon:FIQP} for the basic idea of ground-state transformations. However, these transformations are limited for the present problem since they are now subject to \emph{finite terminal times} that leave many nontrivial characteristics unattended when applied to transform Brownian motions. Moreover, due to the multi-dimensional nature and the explosion of boundary conditions from the delta functions, the most serious issue for us in the many-body setting is deriving the explicit solutions of eigenfunctions and choosing the appropriate ones. Here, we regard explicit solutions as necessary since further issues can arise from handling the detailed properties, such as the sets of singularities. See \cite{Nagasawa} for the closely related issue of nonempty nodal sets when constructing stochastic dynamics of general Schr\"odinger operators. In Section~\ref{sec:intro-rm}, we will discuss in more detail the case of the two-dimensional two-body delta-Bose gas and the difficulty of giving straightforward extensions. 

\subsection{Analytic solutions of the delta-Bose gas}\label{sec:intro-analytic}
In contrast to the lack of ingredients to develop its probabilistic counterparts, various analytic solutions for the two-dimensional $N$-body delta-Bose gas for $N\geq 3$ have been proved in the literature. The first methods are totally functional analytic, beginning with the renormalization techniques of singular quadratic forms by Dell'Antonio, Figari and Teta~\cite{DFT:Schrodinger} which obtain the first construction of $\ms H^N$. Additionally, for the case of homogeneous coupling constants $\Lambda_\bi\equiv\Lambda$, Dimock and Rajeev~\cite{DR:Schrodinger} introduce a different construction by resolvent expansions and approximations in the space of Fourier transforms. See also the extension by Griesemer and Hofacker~\cite{GH:Short} to inhomogeneous coupling constants and particles of different masses using functional analytic methods. Reviews of these methods for quantum Hamiltonians with delta interactions in general can be found in \cite{AGHH:Solvable,GM:SA}.

For $N=2$, the associated delta-Bose gas is also solvable and allows solutions of much simpler forms. 
By the change of variables $z=z^1-z^2$ and $z'=(z^1+z^2)/2$, $\ms H^2$ decomposes into $\Delta_{z'}$ and the {\bf relative motion} operator given by
\begin{align}\label{def:L}
\ms L\;\defeq-\Delta_z-\Lambda_{(2,1)}\delta(z).
\end{align}
Accordingly, the resolvent solutions for $\ms L$ solved by Albeverio, Gesztesy, H\o{}egh-Krohn and Holden~\cite{AGHH:2D} using self-adjoint extensions of $\Delta_z\rest \C_c^\infty(\Bbb C\setminus\{0\})$ enter. These solutions from \cite{AGHH:2D} induce a one-parameter family of semigroups $\{P^\beta_t\}$, $\beta\in (0,\infty)$, as Hilbert--Schmidt integral operators such that the kernels satisfy the following equations:
\begin{align}\label{def:Pbeta}
P^{\beta}_t(z^0,z^1)=P_{2t}(z^0,z^1)+\int_0^t  P_{2s}(z^0)\mathring{P}^\beta_{t-s}(z^1)\d s,\quad z^0,z^1\in \Bbb C,
\end{align}
where 
\begin{gather}
P_{t}(z,z')\,=P_{t}(z-z')\;\defeq\,
\frac{1}{2\pi t}\exp \biggl(- \frac{ |z-z'| ^{2}}{2t} \biggr),\label{def:Pt} \\
\mathring{P}^\beta_t(z)\;\defeq \int_0^t \s^\beta(\tau)P_{2(t-\tau)}(z)\d \tau ,\\
 \s^\beta(\tau)\;\defeq\, 4\pi \int_0^\infty \frac{\beta^u\tau^{u-1}}{\Gamma(u)}\d u.\label{def:sbeta}
\end{gather}
See \cite[Proposition~5.1]{C:DBG} for the proof of \eqref{def:Pbeta} by inverting the Laplace transform of $t\mapsto P^\beta_t(z^0,z^1)$ from the resolvent solutions in \cite{AGHH:2D}; the earlier inversion of the resolvent solutions from \cite{AGHH:2D} appears in \cite[Section~3.2]{ABD:95}. Moreover, it was shown in \cite{AGHH:2D} that the resolvent solutions can be approximated in a physically meaningful manner. The scheme uses the resolvents of the operators regularizing $\ms L$ such that the delta potentials are mollified, and some special coupling constants that vanish in the limit of removing the mollification are imposed for renormalization.  

These approximations from \cite{AGHH:2D} have led to alternative constructions of the two-dimensional many-body delta-Bose gas since Bertini and Cancrini~\cite{BC:2D} use the semigroups of the two-body case to study the two-dimensional stochastic heat equation (SHE). Interests in imposing terminal conditions other than $L^2$-functions for the semigroups thus arise, but the limiting solutions do not extend a priori to each other due to the lack of characterizations. In any case, the applications only require homogeneous coupling constants by setting $\Lambda_\bi\equiv \Lambda$ in \eqref{def:HN} and can start with the approximate semigroups $\{\e^{-t\ms H^N_{\vep}};t\geq 0\}$, where
\begin{linenomath*}\begin{align}\label{def:DBGvep}
\mathscr H_\vep^N\,
\defeq -\frac{1}{2}\sum_{i=1}^N\Delta_{z^i}- \sum_{\bi\in \mc E_N}\left(\frac{2\pi}{\log \vep^{-1}}+\frac{2\pi \lambda}{\log^2 \vep^{-1}}\right) \vep^{-2}\phi(\vep^{-1}(z^{i\prime}-z^i))
\end{align}\end{linenomath*}  
for a suitable probability density $\phi\in \C_c(\Bbb C)$ and a constant $\lambda\in \R$. Accordingly, the $\vep\to 0$ limits of the approximate semigroups with constant terminal conditions for $N=3$ are studied by Caravenna, Sun and Zygouras~\cite{CSZ:Mom}. The convergences and the precise limiting solutions for all $N\geq 3$ are due to Gu, Quastel and Tsai~\cite{GQT} in the setting of $L^2$ terminal conditions (for the semigroups) and to \cite{C:DBG} in the setting of bounded terminal conditions. In particular, the solutions in \cite{GQT,C:DBG} show that in the form of iterated Riemann integrals, the many-body dynamics of the delta-Bose gas can be decomposed into a sequence of dynamics showing only two-body interactions. This property is consistent with known phenomenology in the study of general quantum many-body problems. It has also been realized in the above-mentioned method by Dimock and Rajeev~\cite{DR:Schrodinger} at the level of operators.

The present problem of constructing stochastic motions representing the two-dimensional delta-Bose gas and proving the associated Feynman--Kac-type formulas fundamentally differs from deriving the analytic solutions, however. Even for the simplest case of the relative motion operator $\ms L$ in \eqref{def:L}, for example, it is not straightforward to construct an associated stochastic process by Kolmogorov's consistency theorem; the solutions in \eqref{def:Pbeta} define neither probability nor sub-probability semigroups. In the many-body case, although we will develop along the heuristic of decomposing the dynamics into a sequence of dynamics showing only two-body interactions in the next paper \cite{C:SDBG2}, the existing analytic solutions seem quite limited, at least for choosing the precise forms of the possible stochastic motions. After all, the solutions from \cite{GQT,C:DBG}, which we regard as most closely related to the present problem,
are quite intricate for the forms of diagrammatic expansions. More importantly, these solutions should still be regarded on the analytic side of things now that the derivations expand the Feynman--Kac formulas of $\ms H_\vep^N$ at the \emph{expectation} level. 

On the other hand, constructing the eligible stochastic motions and the associated Feynman--Kac-type formula for the two-dimensional delta-Bose gas may be a good point of departure for other closely related models and more sophisticated scenarios. For example, the model of delta-Bose gas is an example of extremal potential energy that seems to fall outside of the existing methods for E. Nelson's stochastic description of quantum mechanics \cite{Nelson}. Another direction considers general extensions of Doob's transformations in the presence of multiple dimensions and space-time harmonic functions with nontrivial singularities. This direction arises since proving the Feynman--Kac-type formulas for the two-dimensional many-body delta-Bose gas should rely on the existence of a $\CN$-valued diffusion $\{\ms Z_t\}=\{Z^j_t\}_{1\leq j\leq N}$, with $Z_t^j\in \Bbb C$, such that it is obtained by ``conditioning'' $2N$-dimensional, $\CN$-valued Brownian motion to achieve exact contacts, in the form of $Z^i_t=Z^j_t$ for $i\neq j$, \emph{one after another}. (It seems that such processes may also arise from universality problems.) An answer for the closely related question on conditioning a two-dimensional Brownian motion to hit zero \emph{one after another} is obtained in \cite{C:BES}. It arises as a stochastic motion representing the relative motion operator $\ms L$ via a Feynman--Kac-type formula. 

\subsection{Stochastic relative motions as skew-product diffusions}
\label{sec:intro-rm}
The stochastic motion from \cite{C:BES} for the relative motion operator $\ms L$ is chosen to be a continuous extension in the sense of Erickson~\cite{Erickson} for 
skew-product diffusions. The radial part is the special diffusion $\BES(0,\beta\da)$ by Donati-Martin and Yor~\cite{DY:Krein}
originally to answer the question of deriving an $\R_+$-valued diffusion such that the inverse local time at level $0$ is a gamma subordinator. This question is a particular case of the It\^{o}--McKean problem solved completely and theoretically in \cite{Knight:Krein,KW:Krein}. See \cite[Section~2]{C:BES} for more details of the background. 

Specifically, for any $\beta\in (0,\infty)$, the stochastic motion chosen in \cite{C:BES} is a $\Bbb C$-valued diffusion subject to a family of probability measures $\{\P^{\beta\da}_{z^0};z^0\in \Bbb C\}$ with $Z_0=z^0$ under $\P^{\beta\da}_{z^0}$. For nonzero $z^0\in \Bbb C$, the stochastic motion $\{Z_t\}$ under $\P^{\beta\da}_{z_0}$ has a radial part $\{|Z_t|\}\sim \BES(0,\beta\da)$ and is chosen to satisfy the following representation: 
\begin{align}\label{def:SP}
Z_t=|Z_t|\exp \big\{\i\gamma_{\int_0^t \d s/|Z_s|^2}\big\},\quad t<T_0(Z),
\end{align}
where $\i=\sqrt{-1}$, $\{\gamma_t\}$ is an independent one-dimensional Brownian motion independent of $\{|Z_t|\}$, and we set
\begin{align}\label{def:TetaZ}
T_\eta(\mc Z)\,\defeq \inf\{t\geq0; \mc Z_t=\eta\}.
\end{align}
The choice of the radial part and the clock process for the angular part in \eqref{def:SP} are enough to define $\{Z_t\}$ uniquely to the degree of probability distributions by a general theorem of Erickson~\cite[Theorem~1]{Erickson}. Accordingly, the law of $\P^{\beta\da}_0$ can be recovered from $\{Z_{T_0(Z)+t};t\geq 0\}$ under $\P^{\beta\da}_{z^0}$, $z^0\neq 0$, by conditioning on $\sigma(Z_{t\wedge T_0(Z)};t\geq 0)$. On the other hand, $\{\rho_t\}\sim\BES(0,\beta\da)$ can be characterized by the pathwise uniqueness in the SDE of $\{\rho^2_t\}$ \cite[Theorem~2.15]{C:BES}, with the SDE of $\{\rho_t\}$ given by 
\begin{align}\label{eq:BESb}
\rho_t=\rho_0+\int_0^t \left(\frac{1}{2\rho_s}-\sqrt{2\beta}\frac{K_1}{K_0}(\sqrt{2\beta}\rho_s)\right)\d s+B_t\quad \mbox{with}\quad\int_0^t \frac{\d s}{\rho_s}<\infty.
\end{align}
Here, $\{B_t\}$  is a one-dimensional standard Brownian motion, and $K_\nu$ is the Macdonald functions of order $\nu$ allowing the following integral representation~\cite[(5.10.25), p.119]{Lebedev}: 
\begin{align}\label{def:K}
K_\nu(x)=\frac{x^\nu}{2^{\nu+1}}\int_0^\infty \e^{-t-\frac{x^2}{4t}}t^{-\nu-1}\d t,\quad 0<x<\infty.
\end{align}

As an application of this stochastic motion $\{Z_t\}$ under $\P^{\beta\da}$, the following Feynman--Kac-type formula for the semigroup $\{P^\beta_t\}$ defined by \eqref{def:Pbeta}
holds:
\begin{align}\label{main:PR}
P^\beta_tf(z^0)=\E^{\beta\da}_{z^0/\two}\left[\frac{\e^{\beta t}K_0(\sqrt{\beta}\lvert z^0 \rvert)}{K_0(\sqrt{\beta}\lvert \two Z_t\rvert)}f(\two Z_t)\right],\quad \forall\;f\geq 0,\;z^0\in \Bbb C\setminus\{0\},
\end{align}
and an extension to $z^0=0$ also exists \cite[Theorem~2.10]{C:BES}.
Note that \eqref{main:PR} gives a nontrivial stochastic representation of $\{P^\beta_t\}$ since the multiplicative functional 
\begin{align}\label{multi:2}
\frac{\e^{\beta t}K_0(\sqrt{\beta}\lvert z^0 \rvert)}{K_0(\sqrt{\beta}\lvert \two Z_t\rvert)}
\end{align}
enables the semigroup property.
In particular, \eqref{main:PR} differs drastically from the non-rigorous formulation via separating kinetic and potential energies as in \eqref{eq:FK0}. We remark that \eqref{main:PR} easily extends to the Feynman--Kac-type formula of the two-body case by simple modifications \cite[Section~2.3]{C:DBG}. The precise formula is included in the main theorem in \cite{C:SDBG3} of this series.  

Although \eqref{main:PR} allows a simple extension to the Feynman--Kac formula of the two-body delta-Bose gas, several difficulties arise from extending the method to the two-dimensional $N$-body delta-Bose gas for all $N\geq 3$ and choosing the associated stochastic motion; recall our discussion before Section~\ref{sec:intro-analytic}.
 Specific examples include the lack of possible candidates to extend the Donati-Martin--Yor's original method of Esscher transformations for the construction of $\BES(0,\beta\da)$ \cite{DY:Krein} and the difficulty of justifying possible analogues of the known Feynman--Kac formula for the one-dimensional many-body delta-Bose gas \cite[Remark~2.3]{C:BES}. We will discuss the latter in more detail in the next paper \cite{C:SDBG2} of this series. 

A particular set of issues we will resolve in the next two papers \cite{C:SDBG2, C:SDBG3} considers seeking the higher-dimensional analogues of the multiplicative functional in \eqref{multi:2}. In this direction, note that \eqref{main:PR} shows an inversion of a Doob's (space-time) transformation of two-dimensional Brownian motion since $x\mapsto K_0(\sqrt{2\beta} x)$, while blowing up as $x\searrow 0$,
 solves the following eigenvalue problem for the infinitesimal generator of the two-dimensional Bessel process: 
\begin{align}\label{DE:K}
\left(\frac{1}{2}\frac{\d^2}{\d x^2}+\frac{1}{2x}\frac{\d}{\d x}\right)\psi(x)=\beta \psi(x),\quad x>0.
\end{align}
[See the left-hand side of \eqref{DY:com00} for this Doob's transformation of two-dimensional standard Brownian motion.]
However, the method of Doob's transformations is limited in this case of the relative motion operator at two different levels:
\begin{itemize}
\item The transformation only yields a \emph{sub-probability} measure due to a finite terminal time. This property can been made precise in \cite[(2.4)]{DY:Krein} by the following identity
when $z^0\neq 0 $ since the terminal time $T_0(Z)$ satisfies
 $\P^{\beta\da}_{z^0}(T_0(Z)<\infty)=1$: 
\begin{align}\label{DY:com00}
\E^{(0)}_{z^0}\left[F(|Z_s|;s\leq t)\frac{\e^{-\beta t}K_0(\sqrt{2\beta}|Z_t|)}{K_0(\sqrt{2\beta}|z^0|)}\right]=\E^{\beta\da}_{z^0}[F(|Z_s|;s\leq t);t<T_0(Z)].
\end{align}
Here, $\E[Y;A]\,\defeq\,\E[Y\1_A]$, and $\{Z_t\}$ under $\P^{(0)}_{z^0}$ is a two-dimensional standard Brownian motion with $Z_0=z^0$. 
See also Pitman and Yor~\cite[Section~4]{PY:BESINF} for viewing $\BES(0,\beta\da)$ before the first hit of zero
as the two-dimensional Bessel process ``conditioned to hit $0$'' via \eqref{DY:com00}. In particular, since the functional form of the clock process for the angular part in \eqref{def:SP} is the same one used in the skew-product decomposition of two-dimensional Brownian motion \cite[(2.11) Theorem, p.193]{RY}, 
we regard $\{Z_t\}$ under $\P^{\beta\da}$ as a two-dimensional Brownian motion ``conditioned'' to hit zero.
\item The limitation of the method of Doob's transformations extends to the process-level description since it only specifies the diffusion $\{Z_t\}$ only up to $T_0(Z)$ by \eqref{DY:com00}, and so, $T_0(Z)$ is regarded as a terminal time. Moreover,
the following formula of the infinitesimal generator of $\{ \two Z_t\}$ under $\P^{\beta\da}$ away from zero shows a very singular drift posing difficulties for alternative constructions \cite[Section~2.5]{C:BES}:
\begin{linenomath*}\begin{align}\label{def:gen}
\frac{\partial^2 }{\partial x^2}+\frac{\partial^2}{\partial y^2} -2\sqrt{\beta}\frac{K_1}{K_0}(\sqrt{\beta}\lvert z\rvert)\left(\frac{x}{\lvert z\rvert}\frac{\partial }{\partial x}+\frac{y}{\lvert z\rvert}\frac{\partial }{\partial y}\right),\quad z=x+\i y\neq 0.
\end{align}\end{linenomath*}
\end{itemize}
Accordingly, seeking to solve the more complicated many-body case by using Doob's transformations is faced with not only the issue of choosing analogous transformations to begin with, as briefly mentioned before Section~\ref{sec:intro-analytic}. There should also be the issue of extending beyond the point when the absolute continuous relation to Brownian motion begins to break down. Then follows the issue of identifying and constructing new objects beyond that point. These objects include stochastic motions with very singular coefficients for the possible Feynman--Kac-type formulas.

\subsection{Main results of this paper}\label{sec:intro-main}
Our goal in this paper is to establish the foundation for constructing the stochastic motions that represent the two-dimensional $N$-body delta-Bose gas for all integers $N\geq 3$ via Feynman--Kac-type formulas. Here, we begin by extending the methods from \cite{C:BES} for the stochastic relative motions $\{Z_t\}$ under $\P^{\beta\da}$ in the context of adding free motions given by independent two-dimensional Brownian motions. The formal operator counterparts are given by \eqref{def:HN} with $\Lambda_\bj=0$ for all $\bj\in \mathcal E_N\setminus\{\bi\}$, for any $\bi\in \mc E_N$.

Specifically, this paper focuses on the study of the $\CN$-valued diffusions $\{\ms Z_t\}=\{Z^j_t\}_{1\leq j\leq N}$ under $\P_{z_0}^{\beta_\bi\da,\bi}$, called the {\bf stochastic one-$\bs \delta$ motions}. These processes are defined as follows. 
For all $N\geq 2$,  $z_0=(z_0^1,\cdots,z_0^N)\in \CN$, $\beta_\bi\in (0,\infty)$, $\bi=(i\prime,i)\in \mc E_N$, we set
\begin{align}\label{def:SDE2}
Z^j_t\,\defeq
\begin{cases}
\displaystyle \frac{(z^{\bi\prime}_0+W^{\bi\prime}_t)+(\1_{j=i\prime}-\1_{j=i})Z^\bi_t}{\two},&j\in \bi,\\
\vspace{-.4cm}\\
z^j_0+W^j_t,&j\in \{1,\cdots,N\}\setminus \bi.
\end{cases}
\end{align}
Here, $\bi$ is also regarded the set $\{i\prime,i\}$, $\mathcal E_N$ defined in \eqref{def:EN}, 
\begin{align}\label{def:unitary}
z_0^{\bi}\;\defeq\; \frac{z_0^{i\prime}-z_0^{i}}{\two},\quad 
z_0^{\bi\prime}\;\defeq\; \frac{z_0^{i\prime}+z_0^{i}}{\two},
\end{align}
$\{Z^\bi_t\}$ is a version of 
$\{Z_t\}$ under $\P^{\beta_\bi\da}_{z_0^\bi}$, and  $\{W^{\bi\prime}_t\}\cup \{W^k_t\}_{k\in \{1,\cdots,N\}\setminus\bi}$ consist of 
$N-1$ many independent two-dimensional standard Brownian motions with zero initial conditions and independent of $\{Z^\bi_t\}$. In contrast, the next paper \cite{C:SDBG2} constructs the {\bf stochastic many-$\bs \delta$ motions}, which include the stochastic motions to be proven to represent the many-body delta-Bose gas in \cite{C:SDBG3} of this series.

Theorem~\ref{thm:summary} stated below is a minimal summary of the main theorems of this paper: Theorems~\ref{thm:formulas}, \ref{thm:ht} and \ref{thm:MPSDE} are the detailed versions of Theorem~\ref{thm:summary} (1$\cc$), (2$\cc$) and (3$\cc$), respectively. 

\begin{thm}\label{thm:summary}
Let $\{Z_t\}$ under $\P^{\beta\da}$ and $\{\ms Z_t\}$ under $\P^{\beta_\bi\da,\bi}$ be defined as above. \smallskip 

\noindent {\rm (1$\cc$)} The probability distributions of  $\{Z_t\}$ and $\{\ms Z_t\}$ are explicitly solvable to the following degree:
\begin{itemize}
\item The one-dimensional marginals of the bivariate process $\{(Z_t,L_t)\}$ are explicitly solvable, where $\{L_t\}$ is the local time at level zero of $\{Z_t\}$. Moreover, $\{Z_t\}$ is a Feller process with an explicit invariant distribution. 
\item The distribution of $\{\ms Z_t;t<T_0(Z^\bi)\}$, a process with terminal time, is explicitly expressible in terms of the $2N$-dimensional Wiener measure, where $T_\eta(\mathcal Z)$ is defined by \eqref{def:TetaZ}. Also, the distribution of $\ms Z_{t_0}$ restricted to the event $\{T_0(Z^\bi)\leq t_0\}$ for any fixed $t_0\geq 0$ is explicitly solvable by the local time at level $0$ of $\{Z^\bi_t\}$ and the $2N$-dimensional Wiener measure. 
\end{itemize}
\smallskip  

 \noindent {\rm (2$\cc$)} The bivarite process $\{(Z_t,W'_t)\}$ is Harris recurrent, where $\{W_t'\}$ is an independent two-dimensional standard Brownian motion. \smallskip 
 
\noindent {\rm (3$\cc$)} Under $\P_{z_0}^{\beta_\bi\da,\bi}$ for any $z_0\in \CN$, the process $\{\ms Z_t\}$ obeys the following Langevin-type SDE:
\begin{align}\label{def:ZSDE2intro}
Z^j_t=
 z_0^{j}-\frac{(\1_{j=i\prime}-\1_{j=i})}{\two}\int_0^t \frac{\hK_1(\sqrt{2\beta_\bi}|Z^\bi_s|)}{ K_0(\sqrt{2\beta_\bi}|Z^\bi_s|)}\biggl(\frac{1}{ \overline{Z}^\bi_s}\biggr)\d s+W^{j}_t,\quad 1\leq j\leq N.
\end{align}
Here, $\widehat{K}_\nu(x)\,\defeq\, x^\nu K_\nu(x)$, where $K_\nu(\cdot)$ is the Macdonald function of index $\nu$, and with the driving Brownian motion $\{W^\bi_t\}$ of the SDE of $\{Z^\bi_t\}$ [cf. \eqref{Zi:SDEintro}],
\begin{align*}
W^{i\prime}_t&\,\defeq \,\frac{W^{\bi\prime}_t+W^{\bi}_t}{\two},\quad W^{i}_t\,\defeq\, \frac{W^{\bi\prime}_t-W^{\bi}_t}{\two}
\end{align*}
so that $\{W^j_t\}_{1\leq j\leq N}$ defines a $2N$-dimensional standard Brownian motion.
\end{thm}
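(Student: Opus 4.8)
The plan is to deduce the SDE of the full $\CN$-valued process $\{\ms Z_t\}$ directly from its definition \eqref{def:SDE2} together with the SDE \eqref{eq:BESb} of the radial part $\{|Z^\bi_t|\}$ and the skew-product representation \eqref{def:SP} of the stochastic relative motion $\{Z^\bi_t\}$ under $\P^{\beta_\bi\da}_{z_0^\bi}$. For $j\notin\bi$ the claimed equation is trivial: $Z^j_t=z_0^j+W^j_t$ by definition, and the drift term in \eqref{def:ZSDE2intro} vanishes since $\1_{j=i\prime}-\1_{j=i}=0$. So the work is entirely in the two coordinates $j\in\{i\prime,i\}$, and by \eqref{def:SDE2} it suffices to produce the SDE of the single $\Bbb C$-valued process $\{Z^\bi_t\}$ and then apply the linear change of variables in \eqref{def:unitary}, which is orthogonal up to the $\two$ scaling and hence sends the driving Brownian motion $\{W^\bi_t\}$ to the pair $\{W^{i\prime}_t\},\{W^i_t\}$ as stated.

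First I would establish the complex SDE for $\{Z^\bi_t\}$ on $[0,T_0(Z^\bi))$: writing $Z^\bi_t=\rho_t\e^{\i\theta_t}$ with $\rho_t=|Z^\bi_t|$ and $\theta_t=\gamma_{\int_0^t\d s/\rho_s^2}$ as in \eqref{def:SP}, I apply It\^o's formula to $f(\rho,\theta)=\rho\e^{\i\theta}$. The second-order terms in $\rho$ and in $\theta$ combine with the mixed structure to recover the two-dimensional Laplacian piece, producing a driving complex Brownian motion $\{W^\bi_t\}$ whose real and imaginary parts are the radial noise $\{B_t\}$ of \eqref{eq:BESb} and the (time-changed) angular noise, respectively; this is the standard computation behind skew-product diffusions, e.g.\ \cite[(2.11) Theorem, p.193]{RY}. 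The drift contributed by the second-order $\theta$-term is $-\tfrac{1}{2}\rho^{-2}\cdot\rho\e^{\i\theta}=-\tfrac{1}{2}Z^\bi_t/|Z^\bi_t|^2$, which exactly cancels the $\tfrac{1}{2\rho_s}$ term coming from the drift of \eqref{eq:BESb} when that is converted to the complex picture. What survives is precisely the singular drift $-\sqrt{2\beta_\bi}\,\frac{K_1}{K_0}(\sqrt{2\beta_\bi}\rho_s)\cdot\frac{Z^\bi_s}{|Z^\bi_s|}$, and rewriting $\frac{1}{|Z^\bi_s|}\cdot\frac{Z^\bi_s}{|Z^\bi_s|}=\frac{1}{\overline{Z}^\bi_s}$ and $x\,\frac{K_1}{K_0}(x)=\frac{\widehat K_1}{K_0}(x)/x\cdot x = \frac{\widehat K_1(x)}{x^{?}}$—more cleanly, $\sqrt{2\beta_\bi}\frac{K_1}{K_0}(\sqrt{2\beta_\bi}\rho)\cdot\frac1\rho=2\beta_\bi\frac{\widehat K_1}{\widehat K_0}(\sqrt{2\beta_\bi}\rho)$, and one checks this matches the coefficient $\frac{\widehat K_1(\sqrt{2\beta_\bi}|Z^\bi_s|)}{K_0(\sqrt{2\beta_\bi}|Z^\bi_s|)}\cdot\frac{1}{\overline{Z}^\bi_s}$ after accounting for the $\two$ rescaling between $Z^\bi_t$ and the $\two Z_t$ normalization used in \eqref{def:gen}. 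This bookkeeping with the $\widehat K_\nu$ notation and the factors of $\two$ is the place to be careful, but it is routine.

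Next I would extend the SDE past the terminal time $T_0(Z^\bi)$ to all $t\ge 0$. By part (1$\cc$) (Theorem~\ref{thm:formulas}) the law of $\P^{\beta_\bi\da}_0$ is recovered by conditioning $\{Z^\bi_{T_0(Z^\bi)+t}\}$ on $\sigma(Z^\bi_{t\wedge T_0(Z^\bi)})$, and $\{Z^\bi_t\}$ under $\P^{\beta_\bi\da}$ is a Feller diffusion spending zero Lebesgue time at $0$; hence the integrand $\widehat K_1(\sqrt{2\beta_\bi}|Z^\bi_s|)/(K_0(\sqrt{2\beta_\bi}|Z^\bi_s|)\overline Z^\bi_s)$ is defined $\d s$-a.e., and the needed integrability $\int_0^t|Z^\bi_s|^{-1}\d s<\infty$ is exactly the constraint carried in \eqref{eq:BESb}; this lets one splice the pre- and post-$T_0$ pieces into a single equation valid on $[0,\infty)$ by the strong Markov property at $T_0(Z^\bi)$. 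Finally, I assemble: substitute the complex SDE for $\{Z^\bi_t\}$ into \eqref{def:SDE2}, use $Z^{i\prime}_t=\tfrac{1}{\two}\big((z_0^{\bi\prime}+W^{\bi\prime}_t)+Z^\bi_t\big)$ and $Z^{i}_t=\tfrac{1}{\two}\big((z_0^{\bi\prime}+W^{\bi\prime}_t)-Z^\bi_t\big)$, collect the drift with the sign $\1_{j=i\prime}-\1_{j=i}$, and observe that $\tfrac{1}{\two}(W^{\bi\prime}_t\pm W^\bi_t)$ are by definition $W^{i\prime}_t$ and $W^i_t$; standard linear algebra (the map $(W^{\bi\prime},W^\bi)\mapsto(W^{i\prime},W^i)$ is orthogonal) gives that $\{W^j_t\}_{1\le j\le N}$ is a $2N$-dimensional standard Brownian motion.

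The main obstacle is the passage through the terminal time: one must ensure that the singular drift coefficient is genuinely locally integrable along the path of $\{\ms Z_t\}$ after $T_0(Z^\bi)$ — equivalently that $\{Z^\bi_t\}$ under $\P^{\beta_\bi\da}$ does not linger near $0$ in a way that would break $\int_0^t|Z^\bi_s|^{-1}\d s<\infty$ — and that the It\^o decomposition obtained on excursion intervals away from $0$ glues consistently across the (locally finite but infinite-in-total) excursion structure encoded by the local time $\{L_t\}$. This is where the explicit distributional control from Theorem~\ref{thm:formulas} and the Feller property are essential; granting those, the rest is the deterministic It\^o calculus and change-of-variables bookkeeping sketched above.
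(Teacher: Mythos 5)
Your reduction to the single relative motion $\{Z^\bi_t\}$, the It\^o/skew-product computation giving \eqref{Zi:SDEintro} on $[0,T_0(Z^\bi))$, and the orthogonal change of variables producing $\{W^j_t\}_{1\leq j\leq N}$ are all fine, and they match what the paper treats as the routine part [cf.\ Remark~\ref{rmk:SDET0} (2$\cc$), which notes that \eqref{def:SP} and \eqref{eq:BESb} already yield the SDE before $T_0(Z)$]. The genuine gap is exactly at the step you compress into ``splice the pre- and post-$T_0$ pieces by the strong Markov property at $T_0(Z^\bi)$.'' For $\BES(0,\beta\da)$ the inverse local time at $0$ is a gamma subordinator, whose L\'evy measure is infinite; hence $0$ is regular for itself and the zero set of $\{Z^\bi_t\}$ is a perfect set with no isolated points. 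Immediately after $T_0(Z^\bi)$ there are infinitely many excursions in every time interval, so there is no ``next'' excursion to restart from: the excursion structure is not locally finite, a single strong-Markov splice at $T_0$ only reproduces the same problem, and an excursion-by-excursion gluing of It\^o decompositions has no well-ordered index set to induct over. The skew-product representation \eqref{def:SP} simply does not define the angular part, nor the driving Brownian motion, beyond $T_0(Z^\bi)$, so the SDE on $[0,\infty)$ cannot be obtained this way without a new argument.

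Your fallback, ``the explicit distributional control from Theorem~\ref{thm:formulas} and the Feller property make the rest deterministic It\^o calculus,'' is precisely what the paper shows is not available in a direct form: Remark~\ref{rmk:SDET0} (3$\cc$) points out that differentiating the explicit marginal formulas in $t$ produces an exploding boundary term $\s^\beta(t)P_0f_\beta(0)$, so the Kolmogorov forward equation cannot be read off the marginals. The paper instead proves \eqref{Zt:geneq} by splitting the increment over $[s,t]$ according to whether the path hits $0$ (Propositions~\ref{prop:step1} and~\ref{prop:step2}), and the contribution across the zeros is controlled only through quantitative inputs you do not supply: the small-ball bound $\P^{\beta\da}_{z^0}(\sqrt{2\beta}|Z_t|\leq\vep)\lesssim\vep^2|\log^3\vep|$ (Lemma~\ref{lem:PiZbdd}), H\"older regularity of $|Z_t|^2$, the sharp negative moments with logarithmic corrections \eqref{MP:mom00}--\eqref{MP:mom0} (which also give the absolute convergence of the drift integral for all $t$, not just the radial bound $\int_0^t\d s/\rho_s<\infty$ from \eqref{eq:BESb}), and the continuity of $t\mapsto\E^{\beta\da}_{z^0}[\ms Af(Z_t)]$. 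Only then does the martingale problem plus L\'evy's characterization [Proposition~\ref{prop:MP} (4$\cc$)] construct a single driving Brownian motion on all of $[0,\infty)$ and yield \eqref{def:ZSDE2intro}. As written, your proposal asserts the conclusion of this analysis rather than proving it, so the extension beyond $T_0(Z^\bi)$ — the main content of part (3$\cc$) — remains unproved.
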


Theorems~\ref{thm:formulas}, \ref{thm:ht} and \ref{thm:MPSDE} giving the detailed statement of Theorem~\ref{thm:summary} are closely related to each other, but Theorem~\ref{thm:MPSDE} has several distinguished characteristics. More specifically,  Theorem~\ref{thm:formulas} extends some analytic formulas from \cite{C:BES} for probability distributions of the stochastic relative motion $\{Z_t\}$ under $\P^{\beta\da}$. The proof of Theorem~\ref{thm:ht} applies some of the analytic formulas from Theorem~\ref{thm:formulas} although some singularities prevent the direct applications. In contrast, Theorem~\ref{thm:MPSDE} concerns pathwise behavior of $\{\ms Z_t\}$ and has to address two issues in proving the SDE obeyed by $\{Z_t\}$ under $\P^{\beta\da}$: for a two-dimensional standard Bronwian motion $\{W_t\}$ with $W_0=0$, 
\begin{align}\label{Zi:SDEintro}
Z_t=Z_0-\int_0^t \frac{\hK_1(\sqrt{2\beta}|Z_s|)}{K_0(\sqrt{2\beta}|Z_s|)}\left(\frac{1}{\overline{Z}_s}\right)\d s+W_t,
\end{align}
which is an equivalent of the SDE of $\{Z^\bi_t\}$ under $\P^{\beta_\bi\da,\bi}$ implied by \eqref{def:ZSDE2intro}, and so,
also an equivalent of \eqref{def:ZSDE2intro} due to \eqref{def:SDE2}. 
Specifically, these two issues are the following:
\begin{itemize}
\item the absolute convergence of the Riemann-integral term in \eqref{Zi:SDEintro}, and
\item the difficulty of using \eqref{def:gen} for deriving the SDE of $\{Z_t\}$ under $\P^{\beta\da}$, which arises since  \eqref{def:gen} does not consider the pathwise behaviour of the diffusion in and near its zero set.
\end{itemize}
Here, we resolve the first issue by proving sharp negative moments with logarithmic corrections [Proposition~\ref{prop:BESmom} (1$\cc$)]. Note that the order of these negative moments is much stronger than needed in this paper, but the sharp order will be useful in \cite{C:SDBG2} of this series. To circumvent the second issue, we establish \eqref{Zi:SDEintro} by reinforcing the application of the pathwise skew-product representation \eqref{def:SP} in the original proof of \eqref{def:gen} \cite[Proposition~2.8]{C:BES} and now deriving the stronger Kolmogorov forward equation for $\{Z_t\}$ under $\P^{\beta\da}$. 
Still, this Kolmogorov forward equation does not seem to be directly derivable from the analytic formulas of the one-dimensional marginals of $\{Z_t\}$ (Theorem~\ref{thm:formulas}); see Remark~\ref{rmk:SDET0} (3$\cc$). 

To close this introduction, let us point out that the overall formulation of the main theorems of this paper aims to prepare the next two papers \cite{C:SDBG2, C:SDBG3} of this series. In \cite{C:SDBG2}, they enable a pathwise construction of the stochastic many-$\delta$ motions such that the processes allow the interpretation of ``conditioning'' $2N$-dimensional, $\CN$-valued Brownian motion to achieve exact contacts, one after another, of the $\Bbb C$-valued components. 
This way, we obtain a construction of the pathwise description in path integrals of moments of directed polymers in random media \cite{KZ:87}. (To justify this application, we consider the two-dimensional SHE discussed above and the two-dimensional analogue of the continuum directed random polymer~\cite{AKQ:14}.)
Moreover, the stochastic many-$\delta$ motions realize the equivalence of many-body interactions and sequences of two-body interactions via the {\bf stochastic relative motions} under $\P^{\beta_\bi\da,\bi}$ given by
\begin{align}
Z^\bj_t&\;\defeq\;\frac{Z^{j\prime}_t-Z^j_t}{\two}, \quad \forall\;\bj=(j\prime,j)\in \mc E_N,\label{def:Zbj}
\end{align}
which restates $Z^\bi_t=(Z^{i\prime}_t-Z^i_t)/\two$ by \eqref{def:ZSDE2intro} when $\bj=\bi$. Accordingly,
the construction of the stochastic many-$\delta$ motions involves Theorem~\ref{thm:ht} roughly because $\{(Z_t,W'_t)\}$ defines $\{Z^{\bj}_t\}$ when $\bj\in \mc E_N\setminus\{\bi\}$ and $\bj\cap \bi\neq \varnothing$ under $\P^{\beta_\bi\da,\bi}$, and Theorem~\ref{thm:MPSDE} is applied via the SDEs of $\{Z^\bj_t\}$ for all $\bj\in \mc E_N$. Also, 
in \cite{C:SDBG3}, Theorem~\ref{thm:formulas} continues to provide key tools
to proving the Feynman--Kac-type formulas of the two-dimensional $N$-body delta-Bose gas for all $N\geq 3$.\smallskip 

\noindent {\bf Remainder of this paper.} Section~\ref{sec:MARG} gives the proof of Theorem~\ref{thm:summary} (1$\cc$) on the Feller property of the stochastic one-$\delta$ motions and derives some identities for the expectations. Section~\ref{sec:recurrence} gives the proof of the recurrence properties in Theorem~\ref{thm:summary} (2$\cc$). Section~\ref{sec:2-bSDE} gives the proof of Theorem~\ref{thm:summary} (3$\cc$) on the SDEs of the stochastic one-$\delta$ motions. Finally, Section~\ref{sec:SP} studies the transformations to general skew-product diffusions by specifying the radial and angular parts. \smallskip

\noindent {\bf Frequently used notation.} $C(T)\in(0,\infty)$ is a constant depending only on $T$ and may change from inequality to inequality unless indexed by labels of equations. Other constants are defined analogously. We write $A\less B$ or $B\more A$ if $A\leq CB$ for a universal constant $C\in (0,\infty)$. $A\asymp B$ means both $A\less B$ and $B\more A$. For a process $Y$, the expectations $\E^Y_y$ and $\E^Y_\nu$ and the probabilities $\P^Y_y$ and $\P^Y_\nu$ mean that the initial conditions of $Y$ are the point $x$ and the probability distribution $\nu$, respectively. \emph{However, unless otherwise mentioned, all the processes use constant initial conditions}. To handle many-body dynamics, we often write
\begin{align}
\label{def:column} 
\begin{bmatrix} a_{1}
\\
\vdots
\\
a_{n} \end{bmatrix}_{\times } \stackrel{\mathrm{def}} {=}
a_{1} \times \cdots \times a_{n},
\end{align}
which we call {\bf multiplication columns}. Products of measures will be denoted similarly by using
$[\cdot ]_{\otimes}$. Lastly, $\log $ is defined with base $\e$, and $\log^ba\,\defeq\, (\log a)^b$. \smallskip 

\noindent {\bf Frequently used asymptotic representations.} The following can be found in \cite[p.136]{Lebedev}:
\begin{align}
&K_0(x)\sim \log x^{-1},&& x\searrow 0,\label{K00}\\
&K_0(x)\sim \sqrt{\pi/(2x)}\e^{-x},&&x\nearrow\infty,\label{K0infty}\\
&K_1(x)\sim  x^{-1},&& x\searrow 0,\label{K10}\\
&K_1(x)\sim \sqrt{\pi/(2x)}\e^{-x},&& x\nearrow\infty.\label{K1infty}
\end{align}

\section{Analytic formulas of probability distributions}\label{sec:MARG}
Our goal in this section is to prove Theorem~\ref{thm:formulas}, which studies the probability distributions of
the two classes of processes given by $\{Z_t\}$ under $\P^{\beta\da}$
 and the stochastic one-$\delta$ motions $\{\ms Z_t\}=\{ Z^i_t\}_{1\leq i\leq N}$ under $\P^{\beta_\bi\da,\bi}$. Recall that these processes, along with $\{Z^\bi_t\}$ under $\P^{\beta_\bi\da,\bi}$, have been specified in Sections~\ref{sec:intro-rm} and~\ref{sec:intro-main}. Additionally, Theorem~\ref{thm:formulas} handles the probability distributions of the Markovian local times $\{L_t\}$ and $\{L_t^\bi\}$ at level $0$  of $\{Z_t\}$ under $\P^{\beta\da}$ and $\{Z^\bi_t\}$ under $\P^{\beta_\bi\da,\bi}$, respectively. Here, $\{L_t\}$, also a Markovian local time of $\{|Z_t|\}\sim \BES(0,\beta\da)$ at level $0$, satisfies the following normalization:
\begin{align}\label{def:DYLT}
\E^{\beta\da}_0\left[\int_0^\infty\e^{-q\tau} \d  L_\tau\right]=\frac{1}{\log (1+q/\beta)},\quad \forall\;q\in (0,\infty);
\end{align}
the same normalization is imposed on $\{L^\bi_t\}$. 
Note that \eqref{def:DYLT} can be readily obtained from the original construction of $\BES(0,\beta\da)$ due to Donati-Martin and Yor \cite{DY:Krein} by integrating both sides of \cite[(2.10), p.884]{DY:Krein} against $\d \ell$ over $0<\ell<\infty$ and applying a change of variables for Stieltjes integrals \cite[(4.9) Proposition, p.8]{RY}.

\begin{thm}\label{thm:formulas}
Fix $\bi=(i\prime,i)\in \mc E_N$ and $\beta,\beta_\bi\in (0,\infty)$. Write $z^0,z^1$ for $ \Bbb C$-valued variables and $z_0=(z_0^1,\cdots,z_0^N)$ for $\CN$-valued variables with $z^i_0\in \Bbb C$ for all $1\leq i\leq N$.\smallskip 

\noindent {\rm (1$\cc$)} The process $\{Z_t\}$ under $\P^{\beta\da}$ is a Feller process with explicit one-dimensional marginals: 
\begin{align}\label{eq:Feller}
\begin{aligned}
\E^{\beta\da}_{z^0}[f(Z_t)]
&=\int_{\Bbb C}
p^{\beta\da}_t(z^0,z^1)\frac{2\beta}{\pi} K_0(\sqrt{2\beta}|z^1|)^2f(z^1)\d z^1\\
&=\int_{\Bbb C}p^{\beta\da}_t(z^0,z^1)f(z^1)\mu_0^{\beta\da}(\d z^1),\quad \; \forall\;z^0\in \Bbb C,\; f\in \B_+(\Bbb C),\;0<t<\infty.
\end{aligned}
\end{align}
Here, $\mu_0^{\beta\da}$ is a probability measure defined on $(\Bbb C,\B(\Bbb C))$ by
\begin{align}\label{def:mbeta}
\mu_0^{\beta\da}(\d z^1)\,\defeq\, \frac{2\beta}{\pi}K_0(\sqrt{2\beta}|z^1|)^2\d z^1,
\end{align}
and $(t,z^0,z^1)\mapsto p^{\beta\da }_{t}(z^0,z^1)>0$, $(t,z^0,z^1)\in (0,\infty)\times\Bbb C\times \Bbb C$, is a continuous function defined by the following equation:
 \begin{align}
&
p^{\beta\da}_t(z^0,z^1)\frac{2\beta}{\pi} \notag\\
&=
\begin{cases}\label{def:pbetat}
\displaystyle \frac{\e^{-\beta t}P_{t}(z^0,z^1)}{K_0(\sqrt{2\beta}|z^0|)K_0(\sqrt{2\beta}|z^1|)}\\
\vspace{-.4cm}\\
\displaystyle \hspace{.5cm}+\frac{\e^{-\beta t}}{2} \int_0^t \frac{P_{s}( z^0)}{K_0(\sqrt{2\beta}|z^0|)}\int_0^{t-s}\s^\beta(\tau)  \frac{P_{t-s-\tau}(z^1)}{K_0(\sqrt{2\beta }|z^1|)}
\d \tau\d s, &z^0\neq 0,\; z^1\neq 0,\\
\vspace{-.4cm}\\
\displaystyle \frac{\e^{-\beta t}}{2\pi}\int_0^t \s^\beta(\tau) \frac{P_{t-\tau}(z^1)}{K_0(\sqrt{2\beta }|z^1|)}\d \tau,&z^0=0,\; z^1\neq 0,
\\
\vspace{-.4cm}\\
\displaystyle \frac{\e^{-\beta t}}{2\pi}\int_0^t \frac{P_{\tau}(z^0)}{K_0(\sqrt{2\beta }|z^0|)}\s^\beta(t-\tau) \d \tau,&z^0\neq 0,\;z^1= 0,
\\
\vspace{-.4cm}\\
\displaystyle
\frac{\e^{-\beta t}}{2\pi^2}\s^\beta(t),&z^0=0,\;z^1=0,
\end{cases}
\end{align}
where $P_t(z,z')$ is defined in \eqref{def:Pt} as the transition density of two-dimensional standard Brownian motion,
$\s^\beta(\tau)\!\stackrel{\mathrm{def}} {=}\! 4\pi\int_0^\infty \beta^u\tau^{u-1}\d u/\Gamma(u)$ as in \eqref{def:sbeta},
and $K_0(\cdot)$ is the Macdonald function of order $0$.
Since $p^{\beta\da}_t(z^0,z^1)$ is symmetric in $(z^0,z^1)$, $\mu_0^{\beta\da}$ is invariant for $\{Z_t\}$ under $\P^{\beta\da}$.
\smallskip 

\noindent {\rm (2$\cc$)} For all $f\in \B_+(\Bbb C)$ and $g\in \B_+(\Bbb \R_+)$, with $f_{\beta}(z^1)\,\defeq\, f(z^1)K_0(\sqrt{2\beta}|z^1|)$,
\begin{align}
\E^{\beta\da}_{z^0}[f(Z_t)g(L_t)]
&=
\begin{cases}
\displaystyle \frac{\e^{-\beta t}P_{t}f_{\beta}(z^0)}{K_0(\sqrt{2\beta}|z^0|)}g(0)+\e^{-\beta t}\int_0^t\frac{P_{2s}(\two z^0)}{K_0(\sqrt{2\beta}|z^0|)} \\
\vspace{-.4cm}\\
\displaystyle \hspace{.5cm}\times\int_0^{t-s} \left(4\pi\int_0^\infty \frac{g(u)\beta^u\tau^{u-1}}{\Gamma(u)}\d u \right) P_{t-s-\tau}f_{\beta}(0)\d \tau\d s, &z^0\neq 0,\\
\vspace{-.4cm}\\
\displaystyle \frac{\e^{-\beta t}}{2\pi}\int_0^t \left(4\pi\int_0^\infty \frac{g(u)\beta^u\tau^{u-1}}{\Gamma(u)} \d u\right) P_{t-\tau}f_{\beta}(0)\d \tau,&z^0=0.
\label{DY:law1}
\end{cases}
\end{align}

\noindent {\rm (3$\cc$)} For all $h\in \B_+(\R_+)$,
\begin{align}\label{DY:law2}
&\E^{\beta\da}_{z^0}\left[\int_0^t h(\tau)\d L_\tau \right]=
\begin{cases}
\displaystyle \int_0^t
\frac{P_{2s}(\two z^0)}{2K_0(\sqrt{2\beta}|z^0|)}
\int_{s}^{t}  \e^{-\beta \tau}\s^{\beta}(\tau-s)h(\tau)\d\tau \d s,&z^0\neq 0,\\
\vspace{-.4cm}\\
\displaystyle \int_0^t \frac{\e^{-\beta \tau}\s^\beta(\tau)}{4\pi}h(\tau)\d \tau,&z^0=0.
\end{cases}
\end{align}

\noindent {\rm (4$\cc$)} Fix $0<t<\infty$ and $F\in \B_+(\CN)$. With $z_0^\bi\,\defeq\,(z_0^{i\prime}-z_0^i)/\two$, 
\begin{align}
\begin{split}
\label{DY:com}
\1_{\{t<T_0(Z^\bi)\}}\d \P_{z_0}^{\beta_\bi\da,\bi}&=\frac{\e^{-\beta_\bi t}K_0(\sqrt{2\beta_\bi}|Z^\bi_t|)}{K_0(\sqrt{2\beta_\bi}|Z^\bi_0|)}\d \P_{z_0}^{(0)}\;\;\mbox{ on }\F_t^0,\;\forall\; z_0:z_0^\bi\neq 0,
\end{split}\\
\E^{\beta_\bi\da,\bi}_{z_0}\left[\frac{\e^{\beta_\bi t}F(\ms Z_t)}{2K_0(\sqrt{2\beta_\bi}|Z^\bi_t|)};T_0(Z^\bi)\leq t\right]&=\E^{\beta_\bi\da,\bi}_{z_0}\left[\int_0^t\e^{\beta_\bi \tau}\E_{\ms Z_{\tau}}^{(0)}[F(\ms Z_{t-\tau})]
 \d L^\bi_\tau \right],\; \forall\; z_0\in \CN,\label{exp:LT}
\end{align}
where $\F_t^0\,\defeq\,\sigma(\ms Z_s;s\leq t)$, $\{\ms Z_t\}$ under $\P_{z_0}^{(0)}$ is a $2N$-dimensional standard Brownian motion starting from $z_0$, $T_\eta(\mathcal Z)$ is defined by \eqref{def:TetaZ}, and $\E[Y;A]\,\defeq\,\E[Y\1_A]$. 
\end{thm}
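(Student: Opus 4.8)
The plan is to reduce everything to known facts about $\{Z_t\}$ under $\P^{\beta\da}$ and two-dimensional Brownian motion, exploiting the product structure \eqref{def:SDE2}. First I would treat \eqref{DY:com}. Since $\{Z^\bi_t\}$ under $\P_{z_0}^{\beta_\bi\da,\bi}$ is by definition a version of $\{Z_t\}$ under $\P^{\beta_\bi\da}_{z_0^\bi}$, and the remaining components $\{W^{\bi\prime}_t\}\cup\{W^k_t\}_{k\notin\bi}$ are independent Brownian motions, I would start from the two-body absolute-continuity relation \eqref{DY:com00} from \cite{DY:Krein}, restated as
\begin{align*}
\1_{\{t<T_0(Z^\bi)\}}\d\P^{\beta_\bi\da}_{z_0^\bi}=\frac{\e^{-\beta_\bi t}K_0(\sqrt{2\beta_\bi}|Z^\bi_t|)}{K_0(\sqrt{2\beta_\bi}|z_0^\bi|)}\d\P^{(0)}_{z_0^\bi}\quad\text{on }\sigma(Z^\bi_s;s\le t),
\end{align*}
noting that the angular part of $\{Z^\bi_t\}$ is, under both measures, the \emph{same} time-changed independent Brownian motion from the skew-product representation \eqref{def:SP}, so the relation upgrades from the radial filtration to $\sigma(Z^\bi_s;s\le t)$. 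Then I tensor this with the (unchanged) law of the independent Brownian motions $\{W^{\bi\prime}_t\}$, $\{W^k_t\}_{k\notin\bi}$; because the linear map in \eqref{def:SDE2}–\eqref{def:unitary} taking $(Z^\bi,W^{\bi\prime},(W^k))$ to $(\ms Z)$ is a fixed invertible (orthogonal, up to the $\sqrt2$'s) change of coordinates, the pushforward of the $2N$-dimensional Wiener measure is exactly $\P_{z_0}^{(0)}$, the density $\e^{-\beta_\bi t}K_0(\sqrt{2\beta_\bi}|Z^\bi_t|)/K_0(\sqrt{2\beta_\bi}|Z^\bi_0|)$ is $\F_t^0$-measurable since $Z^\bi_t=(Z^{i\prime}_t-Z^i_t)/\two$ is a function of $\ms Z_t$, and $\{t<T_0(Z^\bi)\}\in\F_t^0$. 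This yields \eqref{DY:com}.

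Next I would derive \eqref{exp:LT}. The natural tool is the last-exit / excursion decomposition at the terminal time $T_0(Z^\bi)$: on $\{T_0(Z^\bi)\le t\}$, write $\tau=\sup\{s\le t: Z^\bi_s=0\}$ — but since after hitting $0$ the process $\{Z_t\}$ under $\P^{\beta\da}$ is recovered by the Markov/regeneration structure described after \eqref{def:SP}, and the post-$T_0$ evolution of $\ms Z$ restarts as a fresh stochastic one-$\delta$ motion from $\ms Z_{T_0(Z^\bi)}$ with $Z^\bi=0$, I would instead integrate against the local time $L^\bi$. The cleanest route is: apply \eqref{exp:LT}'s analogue in the pure two-body case — which is essentially \eqref{DY:law2}/the normalization \eqref{def:DYLT} combined with the strong Markov property at $L^\bi$ — and then propagate the independent Brownian components through. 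Concretely, condition on $\F^0_{T_0(Z^\bi)}$; on $\{T_0(Z^\bi)\le t\}$ the conditional law of $\{\ms Z_{T_0(Z^\bi)+r}\}_{r\ge0}$ given the past is that of a stochastic one-$\delta$ motion from $\ms Z_{T_0(Z^\bi)}$, whose $Z^\bi$-coordinate vanishes; using \eqref{DY:com} (now from the zero initial condition, i.e. the recovered law $\P^{\beta_\bi\da,\bi}_{\cdot}$ with $z_0^\bi=0$) together with the explicit occupation formula expressed via $L^\bi$, the factor $1/\bigl(2K_0(\sqrt{2\beta_\bi}|Z^\bi_t|)\bigr)$ and $\e^{\beta_\bi t}$ combine so that the weighted expectation of $F(\ms Z_t)$ on $\{T_0\le t\}$ equals $\E_{z_0}^{\beta_\bi\da,\bi}\bigl[\int_0^t \e^{\beta_\bi\tau}\E^{(0)}_{\ms Z_\tau}[F(\ms Z_{t-\tau})]\,\d L^\bi_\tau\bigr]$, the inner expectation encoding that after the last visit to $\{Z^\bi=0\}$ all $N$ $\Bbb C$-components evolve as free Brownian motions (since turning the last $\delta$ off after the regeneration leaves nothing). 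I would make this rigorous by first proving it for $F$ of product/indicator form and a dense class of test functions, then using monotone class.

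The main obstacle I expect is the second identity \eqref{exp:LT}, specifically handling the excursion/regeneration at the terminal time $T_0(Z^\bi)$ rigorously: one must justify that the post-$T_0$ process is genuinely a fresh stochastic one-$\delta$ motion (this uses the recovery of $\P^{\beta\da}_0$ by conditioning described after \eqref{def:SP}, plus the fact that $T_0$ is a terminal time, not a stopping time at which the strong Markov property applies naively), and that the occupation density along $\{Z^\bi=0\}$ is exactly the Donati–Martin–Yor local time $L^\bi$ with normalization \eqref{def:DYLT} — the factor of $2$ and the $\e^{\beta_\bi\tau}$ weight must be tracked carefully through \eqref{def:DYLT}, \eqref{DY:law2}, and the $\sqrt2$ rescalings in \eqref{def:SDE2}. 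A secondary technical point is the upgrade of \eqref{DY:com00} from the radial $\sigma$-algebra to $\F^0_t$; this should follow from the skew-product representation \eqref{def:SP} since the conditional law of the angular clock is identical under $\P^{(0)}$ and $\P^{\beta\da}$ given the radial path, but it needs to be stated. Everything else — the change of variables \eqref{def:unitary}, measurability of the densities, and the monotone-class extensions — is routine.
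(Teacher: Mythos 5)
There is a genuine gap: your proposal addresses only part (4$\cc$) of the theorem. Parts (1$\cc$)--(3$\cc$) --- the Feller property of $\{Z_t\}$ under $\P^{\beta\da}$, the joint continuity and strict positivity of $(t,z^0,z^1)\mapsto p^{\beta\da}_t(z^0,z^1)$ (which in the paper requires a careful analysis of the convolution singularities coming from the weak integrability of $\s^\beta$ near $0$ and the degeneracy of $P_\tau(z)/K_0(\sqrt{2\beta}|z|)$ at $(\tau,z)=(0,0)$), the fact that $\mu_0^{\beta\da}$ is a probability measure and invariant, the joint law \eqref{DY:law1} for general $g$, and the local-time identity \eqref{DY:law2} --- are never argued. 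Worse, your proof of \eqref{exp:LT} explicitly takes \eqref{DY:law2} and the normalization \eqref{def:DYLT} as inputs ("apply \eqref{DY:law2}/the normalization \eqref{def:DYLT}"), but \eqref{DY:law2} is part (3$\cc$) of the very statement to be proved, so as written the argument is circular unless you supply an independent proof of it (the paper derives it via Laplace transforms and the gamma-subordinator identity \eqref{Lap:gamma}, then extends to $z^0\neq 0$ by the strong Markov property at $T_0(Z)$ together with \eqref{def:T0Z}).

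Within (4$\cc$) itself, your treatment of \eqref{DY:com} coincides with the paper's (upgrade \eqref{DY:com00} to $\sigma(Z_s;s\le t)$ via the skew-product representation \eqref{def:SP}, then tensor with the independent Brownian components and use the linear map \eqref{def:SDE2}), and that part is fine. But for \eqref{exp:LT} the crux is exactly the step you assert rather than derive: that "the factor $1/(2K_0(\sqrt{2\beta_\bi}|Z^\bi_t|))$ and $\e^{\beta_\bi t}$ combine" to produce the right-hand side. The paper proves this by reducing to the auxiliary identity \eqref{exp:LT0} for $\{Z_t\}$ paired with an independent $M$-dimensional Brownian motion, computing $\E^{\beta\da}_{0,\tilde z^0}[\int_0^t H(\mc W_r,r)\,\d L_r]$ explicitly through the inverse local time and \eqref{DY:law2} (display \eqref{aux:LH}), and then matching kernels via \eqref{DY:law1} and Chapman--Kolmogorov (display \eqref{LT:exp12}); the case $z^0\neq 0$ then follows by the strong Markov property of $(Z,\mc W)$ at $T_0(Z)$. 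None of this bookkeeping is routine, and it is where the constants $2$, $4\pi$ and the $\e^{\beta_\bi\tau}$ weight are actually verified. Finally, a small but real misconception: $T_0(Z^\bi)$ is an honest stopping time of the filtration and $\{Z_t\}$ under $\P^{\beta\da}$ is a (Feller, hence strong) Markov process, so the strong Markov property does apply at $T_0$; it is a "terminal time" only relative to the absolute-continuity relation \eqref{DY:com00}, and no excursion-theoretic or last-exit substitute is needed --- the paper conditions at $T_0(Z)$ directly.
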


Let us explain Theorem~\ref{thm:formulas} in more detail. First, Theorem~\ref{thm:formulas} (1$\cc$) will be applied as a key tool for the proof of Theorem~\ref{thm:ht}. The choice of $(p^{\beta\da }_t,\mu^{\beta\da}_0)$ in \eqref{eq:Feller} uses the known probability density function of $Z_t$ \cite[Theorem~2.10]{C:BES}, and the Feller property of $\{Z_t\}$ under $\P^{\beta\da}$ holds to the extent that the process is also regular and reversible. Here, a conservative Feller process $\{\mathcal Z_t\}$ taking values in a locally compact, second countable, Hausdorff space $E$ is {\bf regular} if there exist a measure $\mathbf m$ on $(E,\B(E))$ with a finite total mass in some neighborhood of every point in $E$ and  a continuous function $(t,\xi,\eta)\mapsto p_t(\xi,\eta)>0$ on $(0,\infty)\times E^2$  
such that
\[	
\E_\xi^\mathcal Z[f(\mc Z_t)]=\int_E p_t(\xi,\eta)f(\eta)\mathbf m(\d \eta),\quad \forall\; \xi\in E,\;f\in\B_+(E),\;0<t<\infty
\]
 \cite[p.9 and p.399]{K:FMP}. To prove that $\{Z_t\}$ under $\P^{\beta\da}$ is regular in this sense, we use some particular analytical arguments to show the continuity of $(t,z^0,z^1)\mapsto p^{\beta\da }_{t}(z^0,z^1)$. These analytical arguments handle the weak integrability of $\tau\mapsto \s^\beta(\tau)$ near $\tau=0$ and the singularity of $(\tau,z)\mapsto P_\tau(z)/K_0(\sqrt{2\beta}|z|)$ at $(\tau,z)=(0,0)$. 
In more detail, $\int_{0+}\s^\beta(\tau)^p\d \tau<\infty$ for $p=1$ but not any $p>1$ [cf. \eqref{eq:asympsbeta}], and the limit superior and the limit inferior of $P_\tau(z)/K_0(\sqrt{2\beta}|z|)$ as $(\tau,z)\to (0,0)$ are $\infty$ and $0$, respectively. These ill-behaved properties are further complicated by the convolution integrals in the formula \eqref{def:pbetat} of $p^{\beta\da}_t(z^0,z^1)$. In Section~\ref{sec:2-bSDE}, we will further develop these analytical arguments for the proof of (1$\cc$). In particular, a general real-analysis lemma for proving the continuity of convolution integrals of functions of weak integrability will be proven in Lemma~\ref{lem:LC1}, although its setting is not quite the same as the one considered here. Regarding Theorem~\ref{thm:formulas} (2$\cc$), the analytical formula in \eqref{DY:law1} generalizes \eqref{eq:Feller} and is proven for independent interest since the case of non-constant $g$ is not applied in this series of papers. Finally, Theorem~\ref{thm:formulas} (3$\cc$) supports the proof of Theorem~\ref{thm:formulas} (4$\cc$), and the analytical formulas in Theorem~\ref{thm:formulas} (4$\cc$) will be applied in \cite{C:SDBG2,C:SDBG3}. 

Before initiating the proof of Theorem~\ref{thm:formulas}, let us restate in the following lemma a formula from \cite[the display below (2.9), p.884]{DY:Krein}. Note that this restatement uses a minor correction; see \cite[(5.10.25) on p.119]{Lebedev} for details. 

\begin{lem}\label{lem:BESQb1}
For all $z^0\neq 0$, 
\begin{align}
\P^{\beta\da}_{z^0}(T_0(Z)\in \d s)
&=\frac{\exp(-\beta s-\frac{|z^0|^2}{2s})}{2K_0(\sqrt{2\beta}|z^0|)}\frac{\d s}{s}
=\frac{ P_{2s}(\two z^0)\e^{-\beta s}2\pi}{K_0(\sqrt{2\beta}|z^0|)}\d s,\quad 0<s<\infty.\label{def:T0Z}
\end{align}
\end{lem}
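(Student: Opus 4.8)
The plan is to reduce the computation of the hitting‐time density to a heat‐kernel calculation via the absolute‐continuity relation \eqref{DY:com00} and then differentiate. The statement restates \cite[the display below (2.9), p.884]{DY:Krein}, so one could simply invoke that reference together with the integral representation \eqref{def:K} (this is the ``minor correction'' mentioned before the lemma); but a self‑contained derivation goes as follows. First I would take $F\equiv 1$ in \eqref{DY:com00}; since $T_0(Z)$ depends only on the radial part, this gives, for $z^0\neq 0$,
\[
\P^{\beta\da}_{z^0}(t<T_0(Z))=\frac{\e^{-\beta t}h(t)}{K_0(\sqrt{2\beta}|z^0|)},\qquad h(t)\,\defeq\int_{\Bbb C}P_t(z^0,z)K_0(\sqrt{2\beta}|z|)\d z,
\]
where $P_t$ is as in \eqref{def:Pt}, the transition density of $\{Z_t\}$ under $\P^{(0)}_{z^0}$. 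Because $z\mapsto K_0(\sqrt{2\beta}|z|)$ is locally integrable (only a logarithmic singularity at $0$) and decays exponentially, $h$ is finite and smooth on $(0,\infty)$ with $h(0^+)=K_0(\sqrt{2\beta}|z^0|)$, and $\e^{-\beta t}h(t)\to 0$ as $t\to\infty$ in view of the known fact $\P^{\beta\da}_{z^0}(T_0(Z)<\infty)=1$.

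Next I would differentiate $h$ using the heat equation $\partial_tP_t(z^0,\cdot)=\tfrac12\Delta P_t(z^0,\cdot)$ and integrating by parts. The key analytic input is the distributional identity
\[
\tfrac12\Delta\,K_0(\sqrt{2\beta}|z|)=\beta\,K_0(\sqrt{2\beta}|z|)-\pi\,\delta_0(z)\quad\text{on }\Bbb C,
\]
which follows from the radial eigenvalue equation \eqref{DE:K} away from the origin together with the expansion $K_0(\sqrt{2\beta}|z|)=-\log|z|+O(1)$ near $0$ and $\Delta\log|\cdot|=2\pi\delta_0$. This yields $h'(t)=\beta h(t)-\pi P_t(z^0)$, whence $\frac{\d}{\d t}\bigl(\e^{-\beta t}h(t)\bigr)=-\pi\,\e^{-\beta t}P_t(z^0)$ after the $\beta$‑terms cancel, and therefore
\[
\P^{\beta\da}_{z^0}(T_0(Z)\in\d s)=-\d\!\left[\frac{\e^{-\beta s}h(s)}{K_0(\sqrt{2\beta}|z^0|)}\right]=\frac{\pi\,\e^{-\beta s}P_s(z^0)}{K_0(\sqrt{2\beta}|z^0|)}\d s.
\]

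Finally, substituting $P_s(z^0)=\frac{1}{2\pi s}\e^{-|z^0|^2/(2s)}$ gives the first expression in \eqref{def:T0Z}, and the elementary identity $\pi P_s(z^0)=2\pi P_{2s}(\two z^0)$ (immediate from \eqref{def:Pt}) gives the second; as a consistency check, $\int_0^\infty\frac{\e^{-\beta s-|z^0|^2/(2s)}}{2s}\d s=K_0(\sqrt{2\beta}|z^0|)$ by \eqref{def:K} with $\nu=0$, so the right‑hand side is a genuine probability distribution. The one delicate point I expect is justifying the integration by parts through the logarithmic singularity of $K_0$ at the origin, i.e.\ rigorously producing the $-\pi\delta_0$ term; the cleanest route is Green's identity on $\{|z|>\vep\}$ followed by $\vep\downarrow 0$, tracking the boundary term on $\{|z|=\vep\}$ against the logarithmic part of $K_0$.
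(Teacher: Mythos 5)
Your derivation is correct, but it is genuinely different from what the paper does: the paper offers no proof at all for Lemma~\ref{lem:BESQb1}, presenting it as a restatement of the formula below (2.9) in \cite{DY:Krein} with a minor constant correction checked against the integral representation \eqref{def:K} (i.e.\ exactly the ``citation route'' you mention in passing). Your alternative is a self-contained computation: take $F\equiv 1$ in the change-of-measure identity \eqref{DY:com00} to express the survival function as $\e^{-\beta t}P_tK_0(\sqrt{2\beta}|\cdot|)(z^0)/K_0(\sqrt{2\beta}|z^0|)$, then differentiate in $t$ using the heat equation and the distributional identity $\tfrac12\Delta K_0(\sqrt{2\beta}|z|)=\beta K_0(\sqrt{2\beta}|z|)-\pi\delta_0(z)$, which indeed follows from \eqref{DE:K} away from the origin together with $K_0(\sqrt{2\beta}|z|)=-\log|z|+O(1)$ and $\Delta\log|\cdot|=2\pi\delta_0$; the $\beta$-terms cancel and the boundary term produces exactly $\pi\e^{-\beta s}P_s(z^0)=2\pi\e^{-\beta s}P_{2s}(\two z^0)$, matching \eqref{def:T0Z}, and your consistency check via \eqref{def:K} confirms the normalization (this is precisely the ``minor correction'' the paper alludes to). The constants all check out, and the delicate point you flag—the Green's identity on $\{|z|>\vep\}$ with the boundary term against $\partial_r K_0(\sqrt{2\beta}r)\sim -1/r$—is the right way to make the $-\pi\delta_0$ rigorous; the companion items ($h(0^+)=K_0(\sqrt{2\beta}|z^0|)$ since $z^0\neq0$, and $S(\infty)=0$ either from $\P^{\beta\da}_{z^0}(T_0(Z)<\infty)=1$ or from the total-mass computation) are easy and not circular, since \eqref{DY:com00} is quoted in the paper as an input from \cite{DY:Krein} independently of this lemma. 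What each approach buys: the paper's citation is shorter and faithful to the lemma's role as a restated known formula, while your argument verifies the formula (including the corrected constant) from first principles using only \eqref{DY:com00}, the heat kernel \eqref{def:Pt}, and classical facts about $K_0$, at the cost of the $\vep$-limit bookkeeping near the logarithmic singularity.
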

\mbox{}

\begin{proof}[Proof of Theorem~\ref{thm:formulas}] \noindent {\bf (1$\cc$)} First, \eqref{eq:Feller} along with the formulas in \eqref{def:mbeta} and \eqref{def:pbetat}
is just a restatement of \eqref{DY:law1} for $g\equiv 1$ since the formulas for $z^0\neq 0,z^1\neq 0$ and $z^0=0,z^1\neq 0$ follow readily from \eqref{DY:law1} for $g\equiv 1$. Recall that \eqref{DY:law1} with $g\equiv 1$ has been obtained in \cite[Theorem~2.10]{C:BES}. 

 To see that $\mu_0^{\beta\da}$ is a probability measure, we consider the following computation:
\begin{align*}
\frac{2\beta}{\pi}\int_{\Bbb C}K_0(\sqrt{2\beta}|z^1|)^2 \d z^1&=
4\beta \int_0^\infty K_0(\sqrt{2\beta}r)^2r\d r
=r^2[K_0(r)^2-K_1(r)^2]\big|_{r=0}^\infty =1,
\end{align*}
where the last equality follows by using the asymptotic representations \eqref{K00}, \eqref{K0infty}, \eqref{K10} and \eqref{K1infty} of $K_0(x)$ and $K_1(x)$ as $x\to 0$ and as $x\to\infty$. Note that the third equality in the foregoing display follows since 
\begin{align*}
&\quad\;\frac{\d}{\d r}r^2[K_0(r)^2-K_1(r)^2]\\
&=2rK_0(r)^2-2r^2K_0(r)K_1(r)-2rK_1(r)^2-2r^2K_1(r)\left(\frac{-K_0(r)-K_2(r)}{2}\right)
\\
&=2rK_0(r)^2-r^2K_0(r)K_1(r)-2rK_1(r)^2+rK_1(r)[rK_0(r)+2K_1(r)]
=2rK_0(r)^2,
\end{align*}
where the first equality follows since $K'_0(r)=-K_1(r)$ and $K_1'(r)=[-K_0(r)-K_2(r)]/2$, and the second equality follows since $K_0(r)-K_2(r)=-(2/r)K_1(r)$. See \cite[(5.7.9) on p.110]{Lebedev} for these formulas of $K_0',K_1',K_0-K_2$.

We divide the remaining proof of (1$\cc$) into two steps. Step~1 shows that $\{Z_t\}$ is a Feller process, and Step~2 shows the required continuity of $(t,z^0,z^1)\mapsto p_t^{\beta\da}(z^0,z^1)$. \smallskip 

\noindent {\bf Step 1.}
Since $\{Z_t\}$ is already a Markov process, the required Feller property only needs verifications of the following conditions: 
\begin{itemize}
\item [(a)] For all $z^0\in \Bbb C$ and $f\in \C_0(\Bbb C)$, $\lim_{t\searrow 0}\E^{\beta\da}_{z^0}[f(Z_t)]=f(z^0)$.
\item [(b)] For all $t>0$ and $f\in \C_0(\Bbb C)$, $z^0\mapsto \E^{\beta\da }_{z^0}[f(Z_t)]\in \C_0(\Bbb C)$.
\end{itemize}
See \cite[(2.4) Proposition, p.89]{RY} for these conditions. Condition (a) is immediately satisfied since $\{Z_t\}$ has continuous paths. For condition (b), we use \eqref{DY:law1} with $g\equiv 1$, and the verification is done in Steps~1-1 and~1-2 below by showing $\lim_{|z^0|\to\infty}\E^{\beta\da }_{z^0}[f(Z_t)]=0$ and $z^0\mapsto \E^{\beta\da }_{z^0}[f(Z_t)]\in \C$, respectively. 
\smallskip 

\noindent{\bf Step 1-1.}
We first show that $\lim_{|z^0|\to\infty}\E^{\beta\da }_{z^0}[f(Z_t)]=0$ for any fixed $f\in \C_0(\Bbb C)$. For any $\vep>0$, choose $M>0$ such that $|f(z)|\leq \vep$ for all $|z|\geq M$. Then by \eqref{DY:law1} with $g\equiv 1$,
\begin{align}\label{Feller:3-0}
|\E^{\beta\da}_{z^0}[f(Z_t)]|\leq \vep+\|f\|_\infty\P^{\beta\da}_{z^0}(|Z_t|\leq M),
\end{align}
so it remains to prove $\lim_{|z^0|\to\infty}\P^{\beta\da}_{z^0}(|Z_t|\leq M)=0$. 

To this end, note that the first term on the right-hand side of \eqref{DY:law1} for $z^0\neq 0$, $g\equiv 1$, and $f(z)$ replaced by $\1_{\{|z|\leq M\}}$ satisfies 
\begin{align}
 \frac{\e^{-\beta t}P_{t}(\1_{\{|\cdot|\leq M\}})_{\beta}(z^0)}{K_0(\sqrt{2\beta}|z^0|)}&=\frac{\e^{-\beta t}\E[\1_{\{|Z|\leq M\}}K_0(\sqrt{2\beta}|Z|)]}{K_0(\sqrt{2\beta}|z^0|)}.\label{Feller:x0infty-1}
\end{align}
Here, $Z$ is a complex-valued standard normal vector, namely, a two-dimensional normal random vector with the mean $z^0$ and the covariance matrix $\sqrt{t}[\delta_{i,j}]_{1\leq i,j\leq 2}$. By the asymptotic representation \eqref{K00} of $K_0(x)$ as $x\to0$, we see that, if $|z^0|$ is large such that $M\leq |z^0|/2 $,
\begin{align*}
\E[\1_{\{|Z|\leq M\}}K_0(\sqrt{2\beta}|Z|)]&\less C(\beta,M) \int_{|z|\leq M}\frac{1}{2\pi t}\exp\left(-\frac{|z-z^0|^2}{2t}\right)(\log^+|z|^{-1}+1)\d z\\
&\leq C(\beta,M,t)\e^{-C(t)|z^0|^2}\int_{|z|\leq M}(\log^+|z|^{-1}+1)\d z.
\end{align*}
Compare the exponential on the right-hand side with the exponential of the asymptotic representation of $K_0(\sqrt{2\beta}|z^0|)^{-1}$ as $|z^0|\to\infty$ by using \eqref{K0infty}. Then we see the right-hand side of \eqref{Feller:x0infty-1} tends to zero as $|z^0|\to\infty$. 

Next, we verify the zero limit of the second term on the right-hand side of \eqref{DY:law1} for $z^0\neq 0$, $g\equiv 1$, and $f(z)$ replaced by $\tilde{f}(z)\,\defeq\,\1_{\{|z|\leq M\}}$ as $|z^0|\to\infty$. Note that for all $|z^0|>1$, that second term can be written as
\begin{align*}
&\quad\;\frac{\e^{-\beta t}}{K_0(\sqrt{2\beta}|z^0|)}\int_0^t P_{2s}(\two z^0)\int_0^{t-s} \left(4\pi\int_0^\infty \frac{\beta^u\tau^{u-1}}{\Gamma(u)}\d u \right) P_{t-s-\tau}\tilde{f}_{\beta}(0)\d \tau\d s\\
&\leq \frac{\e^{-\beta t}\exp\{-\frac{(|z^0|^2-1)}{2t}\}}{K_0(\sqrt{2\beta}|z^0|)}\int_0^t P_{2s}(\two )\int_0^{t-s} \left(4\pi\int_0^\infty \frac{\beta^u\tau^{u-1}}{\Gamma(u)}\d u \right) P_{t-s-\tau}\tilde{f}_{\beta}(0)\d \tau\d s\\
&\xrightarrow[|z^0|\to\infty]{}0
\end{align*}
by a comparison of exponentials similar to that for \eqref{Feller:x0infty-1}. This limit holds also because  the last iterated integral is finite; consider \eqref{DY:law1} for $z^0=1$, $g\equiv 1$, and $f$ replaced by $\tilde{f}$. 
 
By the zero limits obtained in the last two paragraphs, $\lim_{|z^0|\to\infty}\P^{\beta\da}_{z^0}(|Z_t|\leq M)=0$. Therefore,  by \eqref{Feller:3-0},
$\lim_{|z^0|\to\infty}\E^{\beta\da }_{z^0}[f(Z_t)]=0$, as required. \smallskip 

\noindent {\bf Step 1-2.} To verify the continuity of $z^0\mapsto \E^{\beta\da}_{z^0}[f(Z_t)]$ for $t>0$, let $z^0_n,z^0_\infty\in \Bbb C$ such that $z^0_n\to z^0_\infty$ as $n\to\infty$. Then
by the validity of  \eqref{DY:law1} for $g\equiv 1$  \cite[Theorem~2.10]{C:BES}, it is enough to verify the following limits, where w-$\lim$ denotes a weak limit of finite measures:
\begin{align}
\lim_{n\to\infty}P_tf_\beta(z^0_n)&=P_tf_\beta(z^0_\infty),\label{DYlaw:lim1}\\
\mbox{w-}\lim_{n\to\infty}\frac{\e^{-\beta s}P_{2s}(\two z^0_n)}{K_0(\sqrt{2\beta}|z^0_n|)}\d s&= \frac{\e^{-\beta s}P_{2s}(\two z^0_\infty)}{K_0(\sqrt{2\beta}|z^0_\infty|)}\d s,&& 0\leq s\leq t,\;\mbox{if }z^0_\infty\neq 0 ,\label{DYlaw:lim2}\\
\mbox{w-}\lim_{n\to\infty}\frac{\e^{-\beta s}P_{2s}(\two z^0_n)}{K_0(\sqrt{2\beta}|z^0_n|)}\d s&=\frac{1}{2\pi}\delta_0(\d s),&& 0\leq s\leq t,\;\mbox{if }z^0_\infty= 0 .\label{DYlaw:lim3}
\end{align}
More precisely, the two weak limits are enough to obtain limiting integrals of those defined by the second term in \eqref{DY:law1} for $z^0=z^0_n\neq 0$ and $g\equiv 1$ since the right-hand side of the following equation, which restates \eqref{DY:law1} for $z^0=0$ and $g\equiv 1$, is continuous on $\R_+$:
\begin{align}\label{continuity_t=0}
\int_0^{t} \s^\beta(\tau)P_{t-\tau}f_{\beta}(0)\d \tau=\int_0^{t} \left(4\pi\int_0^\infty \frac{\beta^u\tau^{u-1}}{\Gamma(u)}\d u \right) P_{t-\tau}f_{\beta}(0)\d \tau=\frac{2\pi}{\e^{-\beta t}}\E^{\beta\da}_0[f(Z_t)].
\end{align}

Let us prove \eqref{DYlaw:lim1}--\eqref{DYlaw:lim3}. 
To obtain \eqref{DYlaw:lim1}, it suffices to use the dominated convergence theorem and the asymptotic representations \eqref{K00} and \eqref{K0infty} of $K_0(x)$ as $x\to 0$ and as $x\to\infty$ since $z^1\mapsto \log|z^1|^{-1}\in L^1_\loc(\d z^1)$ and $z\mapsto \sup_{z'\in K}P_t(z-z')\leq C(K,t)\e^{-C'(K,t)|z|^2}$ for any compact set $K$ and $t>0$. Also, \eqref{DYlaw:lim2} and \eqref{DYlaw:lim3} follow upon noting that
\begin{align}
\lim_{n\to\infty}\int_0^{t'}\frac{\e^{-\beta s}P_{2s}(\two z^0_n)}{K_0(\sqrt{2\beta}|z^0_n|)}\d s&=\int_0^{t'}\frac{\e^{-\beta s}P_{2s}(\two z^0_\infty)}{K_0(\sqrt{2\beta}|z^0_\infty|)}\d s,&& \forall\;0\leq t'\leq t,\;\mbox{if }z^0_\infty\neq 0 ,\label{lim:weakK1}\\
\lim_{n\to\infty}\int_0^{t'}\frac{\e^{-\beta s}P_{2s}(\two z^0_n)}{K_0(\sqrt{2\beta}|z^0_n|)}\d s&=\frac{1}{2\pi},&& \forall\;0<t'\leq t,\;\mbox{if }z^0_\infty= 0, \label{lim:weakK2}
\end{align}
and then using the standard result on weak convergences of finite measures to finite measures 
\cite[Theorem~2.8.4, p.124]{Ash} that \eqref{lim:weakK1} and \eqref{lim:weakK2} are sufficient for \eqref{DYlaw:lim2} and \eqref{DYlaw:lim3}, respectively. In more detail, to obtain \eqref{lim:weakK2}, just note that
\begin{align}\label{DYlaw:lim-delta}
\begin{split}
\int_0^t \e^{-\beta s}P_{2s}(\two z^0)\d s&=\frac{1}{4\pi}\int_0^{t/|z^0|^2}\frac{1}{s}\e^{-\beta |z^0|^2 s}\exp\left(-\frac{1}{2s}\right)\d s\\
&\sim \frac{\log |z^0|^{-1}}{2\pi},\quad z^0\to 0,
\end{split}
\end{align}
and use the asymptotic representation \eqref{K00} of $K_0(x)$ as $x\to 0$. 

By Steps~1-1 and~1-2, we have verified condition (b) stated at the beginning of Step~1. Hence, $\{Z_t\}$ under $\P^{\beta\da}$ is a Feller process.\smallskip 

\noindent {\bf Step 2.} In this step, we prove that, for $p^{\beta\da}_t(z^0,z^1)$ defined by \eqref{def:pbetat}, $(t,z^0,z^1)\mapsto p_t^{\beta\da}(z^0,z^1)$ on $(0,\infty)\times \Bbb C^2$ is continuous. It is enough to show all of the following limits for any $z^0_n,z^0_\infty,z^1_n,z^1_\infty\in \Bbb C$ and $t_n,t_\infty\in (0,\infty)$ such that $z^0_n\to z^0_\infty$, $z^1_n\to z^1_\infty$, and $t_n\to t_\infty$ as $n\to\infty$: 
\begin{align}
&\lim_{n\to\infty}\int_0^{t_n}\s^\beta(\tau)\frac{P_{t_n-\tau}(z^1_n)}{K_0(\sqrt{2\beta}|z^1_n|)}\d \tau=
\begin{cases}
\displaystyle \int_0^{t_\infty}\s^\beta(\tau)\frac{P_{t_\infty-\tau}(z_\infty^1)}{K_0(\sqrt{2\beta}|z^1_\infty|)}\d \tau,&\mbox{ if } z^1_\infty\neq 0,\\
\vspace{-.4cm}\\
\displaystyle \frac{1}{\pi}\s^\beta(t_\infty),& \mbox{ if }z^1_\infty=0
\end{cases}\label{DYlim:11}
\end{align}
and
\begin{align}
&\lim_{n\to\infty} \int_0^{t_n} \frac{P_{s}( z_n^0)}{K_0(\sqrt{2\beta}|z_n^0|)}\int_0^{t_n-s}\s^\beta(\tau)  \frac{P_{t_n-s-\tau}(z_n^1)}{K_0(\sqrt{2\beta }|z_n^1|)}
\d \tau\d s\notag\\
&=\begin{cases}
\displaystyle \int_0^{t_\infty} \frac{P_{s}( z_\infty^0)}{K_0(\sqrt{2\beta}|z_\infty^0|)}\int_0^{t_\infty-s}\s^\beta(\tau)  \frac{P_{t_\infty-s-\tau}(z_\infty^1)}{K_0(\sqrt{2\beta }|z_\infty^1|)}
\d \tau\d s,&\mbox{if }z^0_\infty\neq 0,\;z^1_\infty\neq 0,\\
\vspace{-.4cm}\\
\displaystyle\frac{1}{\pi} \int_0^{t_\infty}\s^\beta(\tau)  \frac{P_{t_\infty-\tau}(z_\infty^1)}{K_0(\sqrt{2\beta }|z_\infty^1|)}
\d \tau,&\mbox{if }z^0_\infty= 0,\;z^1_\infty\neq 0,\\
\vspace{-.4cm}\\
\displaystyle\frac{1}{\pi} \int_0^{t_\infty} \frac{P_{s}( z_\infty^0)}{K_0(\sqrt{2\beta}|z_\infty^0|)}\s^\beta(t_\infty-s)\d \tau,&\mbox{if }z^0_\infty\neq 0,\;z^1_\infty= 0,\\
\vspace{-.4cm}\\
\displaystyle \frac{1}{\pi^2}\s^\beta(t_\infty),&\mbox{if }z^0_\infty=0,\;z^1_\infty=0.
\end{cases}\label{DYlim:12}
\end{align}
We verify these limits in Steps~2-1 and~2-2 below.\smallskip 

\noindent {\bf Step 2-1.}
To see \eqref{DYlim:11}, first, find $\delta>0$ such that $t_n-\delta>\delta$ for all $n$, which is possible since $t_\infty>0$. Then we \emph{separate the singularities} by writing
\begin{align}\label{DY:limsp1}
\int_0^{t_n}\s^\beta(\tau)\frac{P_{t_n-\tau}(z^1_n)}{K_0(\sqrt{2\beta}|z^1_n|)}\d \tau
=\left(\int_0^{t_n-\delta}
+\int_{t_n-\delta}^{t_n}\right)\s^\beta(\tau)\frac{P_{t_n-\tau}(z^1_n)}{K_0(\sqrt{2\beta}|z^1_n|)}\d \tau.
\end{align}
This decomposition considers the following properties separately whenever $\tau$ is bounded away from $0$: (a) $\tau\mapsto \sup_nP_{\tau}(z^1_n)$ is bounded, and (b) $\tau\mapsto \s^\beta(\tau)$ is continuous. 

In the case that $z^1_\infty\neq 0$, since $\tau\mapsto \sup_nP_\tau(z^1_n)$, $\tau\in \R_+$, is bounded for all large $n$, we can get the corresponding limit in \eqref{DYlim:11} from the foregoing equality by dominated convergence. In the case of $z^1_\infty=0$, the limit of the next to the last integral in \eqref{DY:limsp1} is zero, whereas the limit of the last integral there can be obtained by arguing as in \eqref{DYlaw:lim-delta} and using the asymptotic representation \eqref{K00} of $K_0(x)$ as $x\to 0$. This proves the corresponding limit in \eqref{DYlim:11}. \smallskip

\noindent {\bf Step 2-2.} To see \eqref{DYlim:12}, we use the same choice of $\delta$ from Step~2-1. Again, we separate the singularities by writing $\int_0^{t_n}=\int_0^{t_n-\delta}+\int_{t_n-\delta}^{t_n}$. Then by changing variables and changing the order of integration for the iterated integral corresponding to $\int_{t_n-\delta}^{t_n}$, we get
\begin{align}
 &\quad\;\int_0^{t_n} \frac{P_{s}( z_n^0)}{K_0(\sqrt{2\beta}|z_n^0|)}\int_0^{t_n-s}\s^\beta(\tau)  \frac{P_{t_n-s-\tau}(z_n^1)}{K_0(\sqrt{2\beta }|z_n^1|)}
\d \tau\d s\notag\\
\begin{split}\label{DYlim:122}
&= \int_0^{t_n-\delta} \frac{P_{s}( z_n^0)}{K_0(\sqrt{2\beta}|z_n^0|)}\int_0^{t_n-s}\s^\beta(\tau)  \frac{P_{t_n-s-\tau}(z_n^1)}{K_0(\sqrt{2\beta }|z_n^1|)}
\d \tau\d s\\
&\quad +\int_0^{\delta}\s^\beta(\tau) \left(\int_{\tau}^{\delta} \frac{P_{t_n-s'}( z_n^0)}{K_0(\sqrt{2\beta}|z_n^0|)} \frac{P_{s'-\tau}(z_n^1)}{K_0(\sqrt{2\beta }|z_n^1|)}
\d s'\right)\d \tau.
\end{split}
\end{align}

To take the $n\to\infty$ limits of the two integrals on the right-hand side of \eqref{DYlim:122}, we first make some observations for the integrands. To handle the first integral on the right-hand side of \eqref{DYlim:122}, note that for all $s\neq t_\infty-\delta$, \eqref{DYlim:11} gives
\begin{align}
&\quad\;\lim_{n\to\infty}\1_{[0,t_n-\delta]}(s)\int_0^{t_n-s}\s^\beta(\tau)\frac{P_{t_n-s-\tau}(z^1_n)}{K_0(\sqrt{2\beta}|z^1_n|)}\d \tau\notag\\
&=
\begin{cases}
\displaystyle \1_{[0,t_\infty-\delta]}(s)\int_0^{t_\infty-s}\s^\beta(\tau)\frac{P_{t_\infty-s-\tau}(z_\infty^1)}{K_0(\sqrt{2\beta}|z^1_\infty|)}\d \tau,&\mbox{ if } z^1_\infty\neq 0,\\
\vspace{-.4cm}\\
\displaystyle \1_{[0,t_\infty-\delta]}(s)\frac{1}{\pi}\s^\beta(t_\infty-s),& \mbox{ if }z^1_\infty=0.
\end{cases}\label{DYlim:13}
\end{align}
Moreover, an inspection of the proof of \eqref{DYlim:11} in Step~2-1 shows the following properties:
\begin{itemize}
\item The convergence in \eqref{DYlim:13} holds boundedly since $\1_{[0,t_n-\delta]}(s)$ implies $t_n-s\geq \delta$.
\item The sequence of functions of $s$ on the left-hand side of \eqref{DYlim:13} is equicontinuous from the right at $s=0$. By a decomposition as in \eqref{DY:limsp1}, the required equicontinuity is implied by the equicontinuity from the right at $s=0$ of the following two families of functions: 
\begin{align}\label{twofunctions}
s\mapsto \int_0^{t_n-s-\delta}\s^\beta(\tau)\frac{P_{t_n-s-\tau}(z^1_n)}{K_0(\sqrt{2\beta}|z^1_n|)}\d \tau,\quad s\mapsto \int_{t_n-s-\delta}^{t_n-s}\s^\beta(\tau)\frac{P_{t_n-s-\tau}(z^1_n)}{K_0(\sqrt{2\beta}|z^1_n|)}\d \tau.
\end{align}
Here, the required equicontinuity of the first family from \eqref{twofunctions}
can be deduced from the following bound explained below: for all $t_0,t_1\geq \delta_0$ and $|z-z'|\leq \vep_0$ with $\vep_0/\delta_0^{1/2}\leq 1$,
\begin{align}\label{heat:unif}
|P_{t_0}(z)-P_{t_1}(z')|\leq C(\vep_0,\delta_0)|z-z'|P_{2t_0}(z)+C(\delta_0)|t_1-t_0|.
\end{align}
Also, the required equicontinuity of the second family from \eqref{twofunctions} can be obtained by writing the functions as
\[
\int_{t_n-s-\delta}^{t_n-s}\s^\beta(\tau)\frac{P_{t_n-s-\tau}(z^1_n)}{K_0(\sqrt{2\beta}|z^1_n|)}\d \tau=\int_0^{\delta/|z^1_n|^2} \s^\beta(t_n-s-|z^1_n|^2\tau)\frac{\frac{1}{2\pi \tau}\exp\left(-\frac{1}{2\tau}\right)}{K_0(\sqrt{2\beta}|z^1_n|)}\d \tau
\]
and then modifying the asymptotic argument in \eqref{DYlaw:lim-delta}. 

Now, note that \eqref{heat:unif} holds by combining the following two inequalities from \cite[Lemma~4.16 (2$\cc$) and (3$\cc$)]{C:DBG}: for all $\vep,\delta_0\in (0,\infty)$ such that $\vep/\delta_0^{1/2}\leq 1$ 
and all $M\in (0,\infty)$,
\begin{align}
&\sup_{|z''|\leq M}|P_t(\vep z''+z)-P_t(z)|\leq C(M)(\vep/\delta_0^{1/2})P_{2t}(z),\; \forall\;z\in \Bbb C,\;t\geq \delta_0,\label{gauss:vep}\\
&\sup_{z\in \Bbb C}|P_{t_1}(z)-P_{t_0}(z)|\leq C(\delta)|t_1-t_0|,\quad\forall\;t_1,t_0\geq \delta_0.\label{kernel:tmod}
\end{align}
\end{itemize}

Next, to handle the last integral in \eqref{DYlim:122}, note that, for all $0<\tau<\delta$,
\begin{align*}
&\lim_{n\to\infty}\int_{\tau}^{\delta} \frac{P_{t_n-s'}( z_n^0)}{K_0(\sqrt{2\beta}|z_n^0|)} \frac{P_{s'-\tau}(z_n^1)\d s'}{K_0(\sqrt{2\beta }|z_n^1|)}\\
&=
\begin{cases}
\displaystyle \int_{\tau}^{\delta} \frac{P_{t_\infty-s'}( z_\infty^0)}{K_0(\sqrt{2\beta}|z_\infty^0|)} \frac{P_{s'-\tau}(z_\infty^1)\d s'}{K_0(\sqrt{2\beta }|z_\infty^1|)}
,&\mbox{if } z^0_\infty\neq 0,z^1_\infty\neq 0,\\
\vspace{-.4cm}\\
\displaystyle  \frac{1}{\pi}\frac{P_{t_\infty-\tau}( z_n^0)}{K_0(\sqrt{2\beta}|z_\infty^0|)},&\mbox{if } z^0_\infty\neq 0,z^1_\infty= 0,\\
\vspace{-.4cm}\\
0,&\mbox{if } z^0_\infty= 0,
\end{cases}
\end{align*}
where the limit in the first case follows from dominated convergence, the limit in the second case can be obtained as in \eqref{DYlaw:lim-delta}, and the limit in the last case can be obtained by dominated convergence. Moreover, we have the following property:
\begin{itemize}
\item The sequences of integrals on the left-hand side for $\tau$ ranging over $0<\tau<\delta$  
are uniformly bounded, which can be seen by using \eqref{def:T0Z}. 
\end{itemize}

Finally, we pass the $n\to\infty$ limit of the right-hand side of \eqref{DYlim:122}. By dominated convergence and, for the case of $z^0_\infty=0$, an asymptotic calculation as in \eqref{DYlaw:lim-delta}, the limits in all cases can be obtained from the last two equalities along with the above three properties followed by bullet points. We have proved \eqref{DYlim:12}. The proof of (1$\cc$) is complete.\smallskip 

\noindent {\bf (2$\cc$)} To prove \eqref{DY:law1} for general $g\geq 0$, it suffices to consider the case of $z^0=0$. This reduction is due to the following calculation: for $z^0\neq 0$,
\begin{align}
\E_{z^0}^{\beta\da}[f(Z_t)g(L_t)]
&=\E^{\beta\da}_{z^0}[f(Z_t)g(0);t<T_0(Z)]+\E^{\beta\da}_{z^0}[f(Z_t)g(L_t);t\geq T_0(Z)]\notag\\
&=\E^{\beta\da}_{z^0}[f(Z_t)g(0);t<T_0(Z)]\notag\\
&\quad\;+\E^{\beta\da}_{z^0}\bigl[\E^{\beta\da}_0[f(Z_{t-s})g(L_{t-s})]\big|_{s=T_0(Z)};t\geq T_0(Z)\bigr]\notag\\
\begin{split}
&=\frac{\e^{-\beta t}P_{t}f_\beta(z^0)}{K_0(\sqrt{2\beta}|z^0|)}g(0)\\
&\quad +\frac{\e^{-\beta t}}{K_0(\sqrt{2\beta}|z^0|)}\int_0^t P_{2s}(\two z^0)2\pi \e^{\beta (t-s)}\E^{\beta\da}_0[f(Z_{t-s})g(L_{t-s})]\d s.\label{DY:law1-0}
\end{split}
\end{align}
Here, the second equality uses the Markov property of $\{Z_t\}$. Also, 
the first term in \eqref{DY:law1-0} follows by using the definition  $f_{\beta}(z^1)\,\defeq\, f(z^1)K_0(\sqrt{2\beta}|z^1|)$ and the identity
\begin{align}\label{com:Z}
\1_ {\{t<T_0(Z)\}}
\d \P^{\beta\da}_{z^0}\big|_{\sigma(Z_s;s\leq t)}=\frac{\e^{-\beta t}K_0(\sqrt{2\beta}|Z_t|)}{K_0(\sqrt{2\beta}|z^0|)}\d \P^{(0)}_{z^0}\big|_{\sigma(Z_s;s\leq t)},
\end{align}
which holds by combining facts (a) and (b) as follows: (a) the version of \eqref{com:Z} from \cite[(2.4) on p.883]{DY:Krein} where $\sigma(Z_s;s\leq t)$ is replaced by $\sigma(|Z_s|;s\leq t)$, and (b) \eqref{def:SP} where $\{\gamma_t\}\ind \{|Z_t|\}$; the second term in \eqref{DY:law1-0} follows by using \eqref{def:T0Z}.

By \eqref{DY:law1-0}, the formula in \eqref{DY:law1} for $z^0\neq 0$ follows as soon as we justify the formula in \eqref{DY:law1} for $z^0=0$. The justification is done in the three steps below. \smallskip 

\noindent {\bf Step 1.} We first show \eqref{DY:law1} for $z^0=0$ in the case where
\begin{align}
f(z^1)\equiv f(|z^1|)\in \C_{b}(\Bbb C)\mbox{ with $0\notin \supp(f)$},\; g(\ell)\equiv \left(\frac{\beta+\gamma}{\beta}\right)^\ell,\;\gamma\in (-\beta,\infty).\label{DY:ass}
\end{align}
To this end, it is enough to prove 
\begin{align}\label{DY:law1-1}
\E^{\beta\da}_0[\e^{(\log \frac{\beta+\gamma}{\beta})L_t}f(|Z_t|)]=\e^{\gamma t}\E^{(\beta+\gamma)\da}_0\left[\frac{K_0(\sqrt{2\beta}|Z_t|)}{K_0(\sqrt{2(\beta+\gamma)}|Z_t|)}f(|Z_t|)\right]
\end{align}
since then, the right-hand side leads to \eqref{DY:law1} for $z^0=0$ by using \eqref{DY:law1} for $g\equiv 1$ and $\beta$ replaced by $\beta+\gamma$, already obtained in \cite[Theorem~2.10]{C:BES}, and writing $(\beta+\gamma)^u$ as $(\frac{\beta+\gamma}{\beta})^u \beta^u=g(u)\beta^u$. Note that
 \eqref{DY:law1-1} follows immediately upon applying a change of measures formula from \cite[(2.7) of Theorem~2.2 on p.884]{DY:Krein}. Let us include a different proof by using Laplace transforms below for the reader's convenience. 
 
We begin by showing that 
\begin{align}\label{Lap:finite}
\forall\;\lambda>0,\quad 
\E^{\beta\da}_0\left[\int_0^\infty \e^{-qt}\e^{\lambda L_t}\d t\right]<\infty\mbox{ for all $q>0$ such that }\frac{\lambda}{\log (1+q/\beta)}<1.
\end{align}
To see \eqref{Lap:finite}, by the Taylor expansion of the exponential function, it suffices to note that
\begin{align}\label{eq:Ltn}
\E^{\beta\da }_0\left[\int_0^\infty \e^{-q t}L_t^n\d t\right]=\frac{n!}{q\log^n(1+q/\beta)},\quad \forall\;q\in (0,\infty).
\end{align}
This identity holds since by the change of variables formula for Stieltjes integrals \cite[(4.6) Proposition on p.6]{RY} and an application of the Markov property of $\{|Z_t|\}$ (cf. the proof of \cite[Proposition~5.6]{C:BES}),
\begin{align}\label{Lap:id-aux0}
\E^{\beta\da }_0[L_t^n]=\E^{\beta\da}_0\left[\int_0^t n(L_{t}-L_s)^{n-1}\d L_s\right]=\E^{\beta\da}_0\left[\int_0^t n\E^{\beta\da}_0[L^{n-1}_{t-s}]\d L_s\right]
\end{align}
for all integers $n\geq 1$ so that 
\begin{align}\label{Lap:id-aux}
\E^{\beta\da }_0\left[\int_0^\infty \e^{-q t}L_t^n\d t\right]=n\E\left[\int_0^\infty \e^{-qt}\d L_t\right]\E^{\beta\da }_0\left[\int_0^\infty \e^{-q t}L_t^{n-1}\d t\right]
\end{align}
and by iteration and \eqref{def:DYLT}, we obtain \eqref{eq:Ltn}. [The reader may consult the derivation of \eqref{Lap:int-5} below for more details of how the last equality of \eqref{Lap:id-aux0} leads to \eqref{Lap:id-aux}.]

We show \eqref{DY:law1-1} now. Write $\lambda=\log \frac{\beta+\gamma}{\beta}$. By the expansion
$\e^{\lambda L_t}=1+\lambda\int_0^t \e^{\lambda(L_t-L_s)}\d L_s$,
\begin{align}
&\quad\;\E^{\beta\da}_0\left[\int_0^\infty \e^{-qt}\e^{\lambda L_t}f(|Z_t|)\d t\right]\notag\\
&=\E^{\beta\da}_0\left[\int_0^\infty \e^{-qt}f(|Z_t|)\d t\right]+\lambda\E^{\beta\da}_0\left[\int_0^\infty \e^{-qt}\int_0^t \e^{\lambda (L_t-L_s)}\d L_s f(|Z_t|)\d t\right]\notag\\
&= \E^{\beta\da}_0\left[\int_0^\infty \e^{-qt}f(|Z_t|)\d t\right]\notag\\
&\quad\;+\lambda\E^{\beta\da}_0\left[\int_0^\infty \e^{-qs}\int_s^\infty\e^{-q(t-s)}  \e^{\lambda (L_t-L_s)} f(|Z_t|)\d t\d L_s\right]\notag\\
&= \E^{\beta\da}_0\left[\int_0^\infty \e^{-qt}f(|Z_t|)\d t\right]+\lambda\E^{\beta\da}_0\left[\int_0^\infty \e^{-qs}\E^{\beta\da}_0\left[\int_0^\infty\e^{-qt}  \e^{\lambda t} f(|Z_t|)\d t\right]\d L_s\right]\label{Lap:int-3}\\
&=\E^{\beta\da}_0\left[\int_0^\infty \e^{-qt}f(|Z_t|)\d t\right]+\frac{\lambda}{\log (1+q/\beta)}\E^{\beta\da}_0\left[\int_0^\infty \e^{-qt}\e^{\lambda L_t}f(|Z_t|)\d t\right],\label{Lap:int-5}
\end{align}
where \eqref{Lap:int-3} uses the Markov property of $\{|Z_t|\}$~\cite[Theorem~2.1]{DY:Krein}, and \eqref{Lap:int-5} uses \eqref{def:DYLT}. For any $q$ satisfying \eqref{Lap:finite}, solving the last equality as a recursive equation gives 
\begin{align}
\E^{\beta\da}_0\left[\int_0^\infty \e^{-qt}\e^{\lambda L_t}f(|Z_t|)\d t\right]&=\frac{1}{1-\frac{\lambda}{\log (1+q/\beta)}}\E^{\beta\da}_0\left[\int_0^\infty \e^{-qt}f(|Z_t|)\d t\right]\notag\\
&=\frac{\frac{2}{\log \frac{\beta+q}{\beta}}}{1-\frac{\lambda}{\log (1+q/\beta)}}\int_0^\infty \e^{-(q+\beta)t}P_{t}f_\beta(0)\d t\label{DY:law1-2}\\
&=\frac{2}{\log \frac{\beta+q}{\beta+\gamma}}\int_0^\infty \e^{-(q+\beta)t}P_{t}f_\beta(0)\d t.\label{DY:law1-3}
\end{align}
Here, \eqref{DY:law1-2} uses the formula 
\begin{align}\label{DY:law1-4}
\int_0^\infty \e^{-q t}\E^{\beta\da}_0[f(|Z_t|)]\d t=\frac{2}{\log \frac{\beta+q}{\beta}}\int_0^\infty \e^{-(q+\beta)t}P_tf_\beta(0)\d t,\quad q>0,
\end{align}
which holds by \eqref{DY:law1} for $z^0=0$ and $g\equiv 1$ and the identity $\int_0^\infty \e^{-q\tau}\s^\beta(\tau)\d \tau=4\pi /\log (q/\beta)$ [cf. the derivation of \eqref{sbeta:Lap} below]. Also, \eqref{DY:law1-3} follows since $\lambda=\log \frac{\beta+\gamma}{\beta}$. Note that the right-hand side of \eqref{DY:law1-4} with $(q,\beta,f(z^1))$ replaced by 
\[
\left(q-\gamma,\beta+\gamma,\frac{K_0(\sqrt{2\beta}|z^1|)}{K_0(\sqrt{2(\beta+\gamma)}|z^1|)}f(z^1)\right)
\]
and the right-hand side of \eqref{DY:law1-3} are equal. Hence, we deduce \eqref{DY:law1-1} after Laplace inversions; the conditions for the Laplace inversions are verified below. By the reason mentioned right below \eqref{DY:law1-1}, \eqref{DY:law1} for $z^0=0$ under the assumption of \eqref{DY:ass} holds.

To justify the Laplace inversions used above, it is enough to show that the functions of $t\geq 0$ on the left and right-hand sides of \eqref{DY:law1-1}, called $h_L(t)$ and $h_R(t)$, respectively, are continuous and of exponential order. [Recall that $h(t)$ is said to be of exponential order if $|h(t)|\leq C_0(h)\e^{C_1(h)t}$ for all $t\geq 0$.]  For $h_L(t)$, the required continuity and growth follow upon noting that by \eqref{Lap:finite}, $t\mapsto \E^{\beta\da}_0[\e^{\lambda L_t}]$ is of exponential order for all $\lambda>0$.  Also, to get the required continuity and growth of $h_R(t)$, by \eqref{K00} and \eqref{K0infty}, it suffices to show that 
\begin{align}\label{exp:order}
t\mapsto \E^{(\beta+\gamma)\da}_0[K_0(\sqrt{2(\beta+\gamma)|Z_t|})^{-1}\e^{\lambda |Z_t|};|Z_t|>a]\mbox{ is of exponential order},\quad \forall\;\lambda,a>0, 
\end{align}
where the condition $|Z_t|>a$ is due to the assumption that $0\notin \supp(f)$ under \eqref{DY:ass}.
To obtain \eqref{exp:order}, just use \eqref{DY:law1} with $z^0=0$, $f(z^1)\equiv K_0(\sqrt{2(\beta+\gamma)|z^1|})^{-1} \e^{\lambda|z^1|}\1_{\{|z^1|\geq a\}}$ and $g\equiv 1$, a case already covered in \cite[Theorem~2.10]{C:BES}, and then, note that $t\mapsto \int_0^t \s^{\beta+\gamma}(\tau)\d \tau$ and $t\mapsto \E^{(0)}_0[\e^{\mu|Z_t|}]$ for any $\mu>0$ are of exponential order. See the proof of \cite[Lemma~3.2]{C:BES} for this property of  $t\mapsto \int_0^t \s^{\beta+\gamma}(\tau)\d \tau$. 
We have verified the required continuity and growth of the functions of $t$ on both sides of \eqref{DY:law1-1}.\smallskip 

\noindent {\bf Step 2.} By Step~1, \eqref{DY:law1} for $z^0=0$ holds for all bounded, nonnegative $f(z^1)=f(|z^1|)$ and $g\in \B_+(\R_+)$. Note that this extension to $g\in \B_+(\R_+)$ holds since by the Stone--Weierstrass theorem, the linear span of $s\mapsto \e^{-\lambda s}$, $s\geq 0$, for $\lambda\geq 0$ is dense in the space $(C([0,\infty]),\|\cdot\|_\infty)$ of continuous functions on $[0,\infty]$ under the supremum norm. \smallskip 

\noindent {\bf Step 3.} To complete the proof of \eqref{DY:law1} for $z^0=0$,  it remains to include the case where $f$ is not necessarily radial, that is, where $f(z^1)\equiv f(|z^1|)$ not necessarily holds. It is enough to show that for all $f\in\B_b(\Bbb C)$ and $g \in \C^1(\R)$ such that $g$ and $g'$ have at most polynomial growth,
\begin{align}\label{DY:law1-5}
\E^{\beta\da}_0[f(Z_t)g(L_t)]=\E^{\beta\da}_0[\,\overline{f}(|Z_t|)g(L_t)], 
\end{align}
where $\overline{f}$ is the radialization of $f$:
\begin{align}\label{def:radialization}
\overline{f}(z)\,\defeq\, \frac{1}{2\pi}\int_{-\pi}^\pi f(z\e^{\i \theta})\d \theta,\quad z\in \Bbb C.
\end{align}
The growth assumption on $(f,g)$ ensures that both sides of \eqref{DY:law1-1} are finite. 

Without loss of generality, we can also assume that $g(0)=0$. The required identity \eqref{DY:law1-5} holds easily in this case since 
$g(L_t)=\int_0^t g'(L_s)\d L_s$ implies
\begin{align*}
\E^{\beta\da}_0[f(Z_t)g(L_t)]&=\E_0^{\beta\da}\left[\int_0^t g'(L_s)f(Z_t)\d L_s\right]\\
&=\E_0^{\beta\da}\left[\int_0^t g'(L_s)\E^{\beta\da}_0[f(Z_{t-s})]\d L_s\right]\\
&=\E_0^{\beta\da}\left[\int_0^t g'(L_s)\E^{\beta\da}_0[\,\overline{f}(|Z_{t-s}|)]\d L_s\right],
\end{align*} 
where the second equality uses the Markov property of $\{Z_t\}$ at time $s$ (cf. the proof of \cite[Proposition~6.5]{C:BES}), and the third equality is implied by Erickson's characterization of the resolvent of $\{Z_t\}$ starting at the origin \cite[(2.3) on p.75]{Erickson}. Similarly, $\E^{\beta\da}_0[\,\overline{f}(|Z_t|)g(L_t)]$ equals the right-hand side of the last equality. We conclude that \eqref{DY:law1-5} holds for all $g \in \C^1(\R)$ such that $g$ and $g'$ have at most polynomial growth. The proof of (2$\cc$) is complete. \smallskip 

\noindent {\bf (3$\cc$)} First,
we give the proof of \eqref{DY:law2} for $z^0=0$. Recall the density of the linear span of the functions $s\mapsto \e^{-\lambda s}$ for $\lambda\geq 0$ in $(C([0,\infty]),\|\cdot\|_\infty)$, as mentioned in Step~2 of the proof of (2$\cc$). Therefore, it suffices to prove \eqref{DY:law2} for $z^0=0$ with $h(s)\equiv \e^{-\lambda s}$, or show that in this case,
\begin{align}
\int_0^\infty\e^{-qt}\E^{\beta\da}_0\left[\int_0^t h(\tau)\d L_\tau\right]\d t&=\frac{1}{q\log [1+(q+\lambda)/\beta]}\label{DY:law2-0-1}\\
&=\int_0^\infty \e^{-qt}\int_0^t \e^{-\beta s}\left(\int_0^\infty\frac{\beta^u s^{u-1}}{\Gamma(u)}\d u \right)h(s)\d s\d t,\label{DY:law2-0-2}
\end{align}
since then the Laplace inversions can be justified by using the fact that $t\mapsto \E^{\beta\da}_0[\e^{\lambda' L_t}]$, $\lambda'>0$, and $t\mapsto \int_0^t \s^{\beta}(\tau)\d \tau$ are of exponential order. [Recall the justification at the proof of (2$\cc$) Step~1.]
To verify \eqref{DY:law2-0-1}, simply write
 \begin{align}
\int_0^\infty\e^{-qt}\E^{\beta\da}_0\left[\int_0^t h(\tau)\d L_\tau\right]\d t&=\frac{1}{q}\E^{\beta\da}_0\left[\int_0^\infty \e^{-(q+\lambda)\tau}\d L_\tau\right]
\end{align}
and use \eqref{def:DYLT}. As for \eqref{DY:law2-0-2}, we extend the application of the gamma subordinators in \cite[Proposition~5.1]{C:DBG} and consider the following. Recall that given $a,b\in (0,\infty )$, the one-dimensional marginals of
a $\operatorname{Gamma} (a,b)$-subordinator $X^{(a,b)}$ (with $X^{(a,b)}_0=0$) \cite[p.73]{Bertoin} satisfy 
\begin{align}
\label{def:Gamma} {\mathbb P}(X^{(a,b)}_{u}\in \mathrm{d}
s )&= f^{(a,b)}_{u}(s )\mathrm{d}s,\quad u,s>0,
\quad \text{where } f^{(a,b)}_{u}(s )\! \stackrel{
\mathrm{def}} {=}\frac{b^{au}s ^{au-1}}{\Gamma (au)}{\mathrm{e}}^{-b
s};\\
\label{Lap:gamma} {\mathbb E}[{\mathrm{e}}^{-q X^{(a,b)}_{u}}]&=
\int _{0}^{\infty }{\mathrm{e}}^{-qs} f^{(a,b)}_{u}(
s )\mathrm{d} s ={\mathrm{e}}^{-u a\log (1+q/b)}. 
\end{align}
With $\int_s^\infty \e^{-qt}\d t=q^{-1}\e^{-qs}$ and $\beta^us^{u-1}/\Gamma(u)=f_u^{(1,\beta)}(s)\e^{\beta s}$, the choice of $h(s)\equiv \e^{-\lambda s}$ yields
\begin{align}
&\quad\;\int_0^\infty \e^{-qt}\int_0^t \e^{-\beta s}\left(\int_0^\infty\frac{\beta^u s^{u-1}}{\Gamma(u)}\d u \right)h(s)\d s\d t\notag\\
&=\frac{1}{q}\int_0^\infty \e^{-(q+\beta+\lambda) s}\left(\int_0^\infty f_u^{(1,\beta)}(s)\e^{\beta s}\d u\right)\d s\notag\\
&=\frac{1}{q}\int_0^\infty \e^{-u\log [1+(q+\lambda)/\beta]}\d u
=\frac{1}{q\log [1+(q+\lambda)/\beta]},\label{sbeta:Lap}
\end{align}
where the second equality uses \eqref{Lap:gamma} with
$(a,b,q)=(1,\beta ,q+\lambda )$. This proves \eqref{DY:law2-0-2}. The proof of \eqref{DY:law2} for $z^0=0$ is complete.

To prove  \eqref{DY:law2} for all $z^0\neq 0$, we use the following computation, where the four equalities after the first one can be justified in the same order by the strong Markov property of $\{Z_t\}$ at time $T_0(Z)$, \eqref{def:T0Z},  \eqref{DY:law2} for $z^0=0$, and a change of variables replacing $\tau$ with $\tau-s$:
\begin{align*}
\E^{\beta\da}_{z^0}\left[\int_0^t h(\tau)\d L_\tau\right]
&=\E^{\beta\da}_{z^0}\biggl[\int_{0}^{t-T_0(Z)}h(T_0(Z)+\tau) [\d( L_{T_0(Z)+\tau}-L_{T_0(Z)})];T_0(Z)\leq t\biggr]\\
&=\E^{\beta\da}_{z^0}\left[\E^{\beta\da}_{0}\biggl[\int_0^{t-s} h(s+\tau)\d L_\tau\right]_{T_0(Z)=s};T_0(Z)\leq t\biggr]\\
&=\int_0^t \frac{ P_{2s}(\two z^0)\e^{-\beta s}2\pi}{K_0(\sqrt{2\beta}|z^0|)}
\E_0^{\beta\da}\left[\int_0^{t-s}h(s+\tau)\d L_{\tau}\right]\d s\\
&=\int_0^t\frac{ P_{2s}(\two z^0)\e^{-\beta s}2\pi}{K_0(\sqrt{2\beta}|z^0|)}\int_0^{t-s} \frac{\e^{-\beta \tau}\s^\beta(\tau)}{4\pi}h(s+\tau)\d \tau\d s\\
&=\int_0^t \frac{P_{2s}(\two z^0)}{2K_0(\sqrt{2\beta}|z^0|)}
\int_{s}^{t}  \e^{-\beta \tau}\s^\beta(\tau-s)h(\tau)\d \tau\d s.
\end{align*}
We have obtained \eqref{DY:law2} for $z^0\neq 0$, as required. 
The proof of  \eqref{DY:law2} is complete. \smallskip

\noindent {\bf (4$\cc$)} Identity \eqref{DY:com} follows immediately from \eqref{DY:com00} and \eqref{def:SP} since by \eqref{def:SDE2}, $\{\ms Z_t\}$ is a linear transformation of $\{Z^\bi_t\}$ and the independent Brownian motions $\{W^{\bi\prime}_t\}\cup \{W^k_t\}_{k\in \{1,\cdots,N\}\setminus\bi}$.

The proof of \eqref{exp:LT} uses the same property that $\{\ms Z_t\}$ is a linear transformation of $\{Z^\bi_t\}$ and $\{W^{\bi\prime}_t\}\cup \{W^k_t\}_{k\in \{1,\cdots,N\}\setminus\bi}$.
Therefore, it suffices to show the following identity using an $M$-dimensional Brownian motion $\{\mc W_t\}$, $M\geq 1$, with transition densities $\{\tilde{p}_t(\tilde{z}^1,\tilde{z}^2)\}$ and independent of $\{Z_t\}$ under $\P^{\beta\da}$:  For all $f\in \B_+(\Bbb C)$, $\tilde{f}\in \B_+(\R^{M})$, $z^0\in \Bbb C$ and $\tilde{z}^0\in \R^M$,
\begin{align}\label{exp:LT0}
\E^{\beta\da}_{z^0,\tilde{z}^0}\left[\frac{\e^{\beta t}f(Z_t)\tilde{f}(\mc W_t)}{2K_0(\sqrt{2\beta}|Z_t|)};t\geq T_0(Z)\right]=\E^{\beta\da }_{z^0,\tilde{z}^0}\left[\int_0^t\e^{\beta r}\E^{(0)}_{0,\mc W_{ r}}[f(Z_{t- r})\tilde{f}(\mc W_{t- r})] \d L_ r\right].
\end{align}
Here, $\{Z_t\}$ and $\{\mc W_t\}$ have initial conditions $z^0$ and $\tilde{z}^0$ under $\P^{\beta\da}_{z^0,\tilde{z}^0}$, and $\P^{(0)}_{z^0,\tilde{z}^0}$ is similarly defined by extending $\P^{(0)}$ under which $\{Z_t\}$ is a two-dimensional standard Brownian motion. Given the validity of \eqref{exp:LT0}, \eqref{exp:LT} follows from the monotone class theorem.
 
To prove \eqref{exp:LT0}, we handle the two cases $z^0=0$ and $z^0\neq 0$ separately. For the case of $z^0=0$, first, recall the notation $[\cdot]_\times$ and $[\cdot]_{\otimes}$ defined in and below \eqref{def:column}. Also, observe that by using the inverse local time $\{\tau_\ell\}$ of $\{L_ r\}$, for any nonnegative $H$, we have
\begin{align}
\E^{\beta\da }_{0,\tilde{z}^0}\left[\int_0^tH(\mc W_ r, r)\d L_ r\right]&=\E^{\beta\da }_{0,\tilde{z}^0}\left[\int_0^\infty\1_{\{\tau_\ell\leq t\}}H(\mc W_{\tau_\ell},\tau_\ell)\d \ell\right]\notag\\
&=\int_{\R^M}\E^{\beta\da }_{0,\tilde{z}^0}\left[\int_0^\infty\1_{\{\tau_\ell\leq t\}}\tilde{p}_{\tau_\ell}(\tilde{z}^0,\tilde{z}^1)H(\tilde{z}^1,\tau_\ell)\d \ell\right]\d\tilde{z}^1\notag\\
&=\int_{\R^M}\E^{\beta\da }_{0,\tilde{z}^0}\left[\int_0^t\tilde{p}_{ r}(\tilde{z}^0,\tilde{z}^1)H(\tilde{z}^1, r)\d L_ r\right]\d\tilde{z}^1\notag\\
&=\int_0^t\int_{\R^M} \begin{bmatrix}
\frac{\e^{-\beta r}\s^\beta( r)}{4\pi}\\
\vspace{-.4cm}\\
\tilde{p}_ r(\tilde{z}^0,\tilde{z}^1)
\end{bmatrix}_\times 
H(\tilde{z}^1, r)\d \tilde{z}^1\d  r,\label{aux:LH}
\end{align}
where the first and third equalities use the change of variables formula of general Stieltjes integrals \cite[(4.9) Proposition on p.8]{RY}, the second equality uses the independence between $\{Z_t\}$ and $\{\mc W_t\}$,
and the last equality uses \eqref{DY:law2} with $z^0=0$. Note that the notation $[\cdot]_\times$ in \eqref{aux:LH} is defined in \eqref{def:column}.

The proof of \eqref{exp:LT0} in the case of $z^0=0$ now proceeds as follows. By \eqref{DY:law1} for $g\equiv 1$ and $z^0=0$, we can write
\begin{align}
&\quad\;\E^{\beta\da}_{0,\tilde{z}^0}\left[\frac{\e^{\beta t}f(Z_t)\tilde{f}(\mc W_t)}{2K_0(\sqrt{2\beta}|Z_t|)}\right]\notag\\
&=
\int_0^{t}\int_{\Bbb C\times \R^M}
\begin{bmatrix}
 \frac{\e^{-\beta t}\s^\beta( r)}{2\pi}P_{t- r}(z)\frac{\e^{\beta t}f(z)}{2K_0(\sqrt{2\beta}|z|)}K_0(\sqrt{2\beta}|z|)\\
\vspace{-.4cm}\\
\tilde{p}_t(\tilde{z}^0,\tilde{z}^2)\tilde{f}(\tilde{z}^2)
\end{bmatrix}_\times 
\begin{bmatrix}
\d z\\
\d \tilde{z}^2
\end{bmatrix}_{\otimes}
\d  r\notag\\
&=\int_0^{t}\int_{\R^M}\int_{\Bbb C\times\R^M}
\begin{bmatrix}
 \frac{\e^{-\beta t}\s^\beta( r)}{2\pi}P_{t- r}(z)\frac{\e^{\beta t}f(z)}{2K_0(\sqrt{2\beta}|z|)}K_0(\sqrt{2\beta}|z|)\\
\vspace{-.4cm}\\
\tilde{p}_ r(\tilde{z}^0,\tilde{z}^1)\tilde{p}_{t- r}(\tilde{z}^1,\tilde{z}^2)\tilde{f}(\tilde{z}^2)
\end{bmatrix}_\times 
\begin{bmatrix}
\d z\\
\d \tilde{z}^2
\end{bmatrix}_{\otimes}\d \tilde{z}_1
\d  r\notag\\
&=\int_0^t
\int_{\R^M}\begin{bmatrix}
\frac{\e^{-\beta r}\s^\beta( r)}{4\pi}\\
\vspace{-.4cm}\\
\tilde{p}_{ r}(\tilde{z}^0,\tilde{z}^1)
\end{bmatrix}_{\times}
\e^{\beta r}\E^{(0)}_{0,\tilde{z}^1}[f(Z_{t- r})\tilde{f}(\mathcal W_{t- r})]
\d \tilde{z}^1
\d  r\notag\\
&=\E^{\beta\da }_{0,\tilde{z}^0}\left[\int_0^t\e^{\beta r}\E^{(0)}_{0,\mc W_ r}[f(Z_{t- r})\tilde{f}(\mc W_{t- r})] \d L_ r\right],\label{LT:exp12}
\end{align}
where the second equality uses the Chapman--Kolmogorov equation and the last equality uses \eqref{aux:LH}.  By \eqref{LT:exp12} and the fact that $T_0(Z)=0$ under $Z_0=0$, we have proved \eqref{exp:LT0} for the case of $z^0=0$.

The proof of \eqref{exp:LT0} in the case of $z^0\neq 0$ uses the strong Markov property of $\{(Z_t,\mc W_t)\}$ at time $T_0(Z)$, which holds by the independence between $\{Z_t\}$ and $\{\mc W_t\}$. Specifically, we obtain the following equalities where the first and third ones use the strong Markov property of $\{(Z_t,\mc W_t)\}$ at time $T_0(Z)$ and the second equality uses  \eqref{LT:exp12} with $t$ replaced by $t-s$:
\begin{align*}
&\quad \;\E^{\beta\da}_{z^0,\tilde{z}^0}\Biggl[\frac{\e^{\beta t}f(Z_t)\tilde{f}(\mc W_t)}{2K_0(\sqrt{2\beta}|Z_t|)};t\geq T_0(Z)\Biggr]\\
&=
\E^{\beta\da}_{z^0,\tilde{z}^0}\Biggl[\e^{\beta T_0(Z)}\E^{\beta\da}_{0,\mc W_{T_0(Z)}}\Biggl[\frac{\e^{\beta (t-s)}f(Z_{t-s})\tilde{f}(\mc W_{t-s})}{2K_0(\sqrt{2\beta}|Z_{t-s}|)}\Biggr]\Bigg|_{T_0(Z)=s};t\geq T_0(Z)\Biggr]\\
&=
\E^{\beta\da}_{z^0,\tilde{z}^0}\Biggl[\e^{\beta T_0(Z)}
\E^{\beta\da }_{0,\mc W_{T_0(Z)}}\Biggl[\int_0^{t-s}\!\!\!\e^{\beta r}\E^{(0)}_{0,\mc W_ r}[f(Z_{t-s- r})\tilde{f}(\mc W_{t-s- r})]\d L_ r\Biggr]
\Bigg|_{T_0(Z)=s}\!\!\!;t\geq T_0(Z)\Biggr]\\
&=\E^{\beta\da }_{z^0,\tilde{z}^0}\Biggl[\int_0^t\e^{\beta r}\E^{(0)}_{0,\mc W_ r}[f(Z_{t- r})\tilde{f}(\mc W_{t- r})] \d L_ r\Biggr].
\end{align*}
The last equality proves \eqref{exp:LT0} in the case of $z^0\neq 0$. The proof is complete.
\end{proof}

\section{Recurrence}\label{sec:recurrence}
Our goal in this section is to prove Theorem~\ref{thm:ht} on the recurrence of $\{(Z_t,W'_t)\}$ under $\P^{\beta\da}$, where $\{W'_t\}$ is a two-dimensional standard Brownian motion independent of $\{Z_t\}$. 

\begin{thm}\label{thm:ht}
Fix $\sigma\in [0,\infty),\varsigma\in (0,\infty)$, and $\beta,\beta_0\in (0,\infty)$. Write $\P^{\beta\da}_{(z^0,z^1)}$ for $\P^{\beta\da}_{z^0}$ under which $W'_0=z^1$, for any $z^1\in \Bbb C$. Then for all $(z^0,z^1)\in \Bbb C^2$, we have the following properties.
\begin{itemize}
\item [\rm (1$\cc$)] The $\Bbb C^2$-valued regular Feller process $\{(Z_t,W'_t)\}$ is Harris recurrent and reversible with respect to the following invariant measure, where $\mu_0^{\beta\da}$ is defined by \eqref{def:mbeta}:
\begin{align}\label{def:tmbeta}
\mathbf m^{\beta\da}(\d \tilde{z}^0,\d \tilde{z}^1)\;\defeq\; \mu_0^{\beta\da}(\d \tilde{z}^0)\d \tilde{z}^1=\frac{2\beta}{\pi}K(\sqrt{2\beta}|\tilde{z}^0|)^2\d \tilde{z}^0\d \tilde{z}^1,\quad \tilde{z}^0,\tilde{z}^1\in \Bbb C.
\end{align}

\item [\rm (2$\cc$)] 
For the Markovian local time $\{L_t\}$ of $\{Z_t\}$ under $\P^{\beta\da}$,
\begin{align}\label{ineq:LTinfty}
\lim_{t\to\infty}\int_0^\infty K_0(\sqrt{2\beta_0}|\varsigma W'_t|)\d L_t=\infty \quad \P^{\beta\da}_{(z^0,z^1)}\mbox{-a.s. }
\end{align}

\item [\rm (3$\cc$)] For all $1<q_0<1+1/\two$,
\begin{gather}
\limsup_{q\searrow 0}\int_0^\infty  q\e^{-qt}\E^{\beta\da}_{(z^0,z^1)}\left[\left(\frac{K_0(\sqrt{2\beta_0}|\sigma Z_t+\varsigma W'_t|)}{K_0(\sqrt{2\beta}|Z_t|)}\right)^{q_0}\right]\d t<\infty.\label{ht:1}
\end{gather}
\end{itemize}
\end{thm}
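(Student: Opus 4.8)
I would prove the three parts in the order stated; (1$\cc$) and (2$\cc$) both rest on one observation. By the defining property of $\BES(0,\beta\da)$ from \cite{DY:Krein}, quantified by \eqref{def:DYLT}, the right-continuous inverse $\{\tau_\ell\}$ of $\{L_t\}$ is a subordinator without killing whose Laplace exponent $\Phi$ satisfies
\[
\frac{1}{\Phi(q)}=\int_0^\infty\e^{-\ell\Phi(q)}\d\ell=\E\Big[\int_0^\infty\e^{-q\tau_\ell}\d\ell\Big]=\E^{\beta\da}_0\Big[\int_0^\infty\e^{-q\tau}\d L_\tau\Big]=\frac{1}{\log(1+q/\beta)},
\]
so $\Phi(q)=\log(1+q/\beta)$; since $\Phi(0)=0$, $\tau_\ell<\infty$ for every $\ell$ and $L_\infty=\infty$ $\P^{\beta\da}$-a.s. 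As $\{W'_t\}$ is independent of $\{Z_t\}$, hence of $\{\tau_\ell\}$, the subordinated process $\{W'_{\tau_\ell}\}$ is a rotationally invariant $\Bbb C$-valued L\'evy process with characteristic exponent $\xi\mapsto\log(1+|\xi|^2/(2\beta))$, which is $\asymp|\xi|^2$ near $0$; hence $\int_{|\xi|\leq1}\d\xi/\log(1+|\xi|^2/(2\beta))=\infty$, so by the Chung--Fuchs criterion in two dimensions $\{W'_{\tau_\ell}\}$ is recurrent.

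\noindent\textbf{Parts (1$\cc$) and (2$\cc$).} The pair $\{(Z_t,W'_t)\}$ is a regular Feller process with continuous, strictly positive transition density $p^{\beta\da}_t(z^0,\tilde z^0)P_t(z^1,\tilde z^1)$ relative to the locally finite measure $\mathbf m^{\beta\da}=\mu_0^{\beta\da}\otimes(\text{Lebesgue})$, using Theorem~\ref{thm:formulas}~(1$\cc$) for the first factor; reversibility with respect to $\mathbf m^{\beta\da}$ follows from the symmetry of $p^{\beta\da}_t$ and of $P_t$. Strict positivity makes the process $\mathbf m^{\beta\da}$-irreducible, so by the transience/recurrence dichotomy it suffices to produce one set of positive $\mathbf m^{\beta\da}$-measure visited at arbitrarily large times a.s.\ from every start. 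Taking $B=\{|\tilde z^0|\vee|\tilde z^1|<\delta\}$ and using $L_\infty=\infty$ together with recurrence of $\{W'_{\tau_\ell}\}$, one gets $\ell_k\uparrow\infty$ with $Z_{\tau_{\ell_k}}=0$ and $W'_{\tau_{\ell_k}}\in\{|\tilde z^1|<\delta/2\}$; the strong Markov property at each $\tau_{\ell_k}$ and path continuity give $\P(\int_{\tau_{\ell_k}}^{\tau_{\ell_k}+1}\1_B(Z_s,W'_s)\d s\geq c\mid\mathcal F_{\tau_{\ell_k}})\geq p_0>0$ with $c,p_0$ independent of $k$, and conditional Borel--Cantelli yields $\int_0^\infty\1_B(Z_s,W'_s)\d s=\infty$ a.s.; hence $\{(Z_t,W'_t)\}$ is Harris recurrent. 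For (2$\cc$), the change-of-variables formula along $\{\tau_\ell\}$ gives $\int_0^t K_0(\sqrt{2\beta_0}|\varsigma W'_s|)\d L_s=\int_0^{L_t}K_0(\sqrt{2\beta_0}|\varsigma W'_{\tau_\ell}|)\d\ell$ with $L_t\uparrow\infty$; since $K_0>0$ with $K_0(\sqrt{2\beta_0}\varsigma|x|)\geq c_R>0$ on $\{|x|\leq R\}$ and the recurrent L\'evy process $\{W'_{\tau_\ell}\}$ spends infinite ``time'' $\ell$ in every ball, the right side diverges, which is \eqref{ineq:LTinfty}.

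\noindent\textbf{Part (3$\cc$).} Write $F(\zeta,\eta)=\big(K_0(\sqrt{2\beta_0}|\sigma\zeta+\varsigma\eta|)/K_0(\sqrt{2\beta}|\zeta|)\big)^{q_0}$. By independence and Theorem~\ref{thm:formulas}~(1$\cc$),
\[
\E^{\beta\da}_{(z^0,z^1)}[F(Z_t,W'_t)]=\frac{2\beta}{\pi}\int_{\Bbb C^2}p^{\beta\da}_t(z^0,\zeta)P_t(z^1,\eta)\,K_0(\sqrt{2\beta_0}|\sigma\zeta+\varsigma\eta|)^{q_0}K_0(\sqrt{2\beta}|\zeta|)^{2-q_0}\,\d\zeta\,\d\eta,
\]
the point being that the dangerous factor $K_0(\sqrt{2\beta}|\zeta|)^{-q_0}$ is absorbed by $\mu_0^{\beta\da}(\d\zeta)$, and for $q_0\in(1,1+1/\two)$ the exponent $2-q_0$ is positive, so $\zeta\mapsto K_0(\sqrt{2\beta}|\zeta|)^{2-q_0}$ and $\eta\mapsto K_0(\sqrt{2\beta_0}|\cdot|)^{q_0}$ are locally integrable with exponential decay at infinity. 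I would split $\int_0^\infty q\e^{-qt}(\,\cdot\,)\d t=\int_0^1+\int_1^\infty$. For $t\leq1$ one shows $\int_0^1\E^{\beta\da}_{(z^0,z^1)}[F(Z_t,W'_t)]\d t<\infty$ by inserting \eqref{def:pbetat}; all the resulting integrals are Gaussians tested against $K_0(\sqrt{2\beta}|\zeta|)^{1-q_0}$ (bounded near $\zeta=0$ since $q_0>1$) and against $K_0(\sqrt{2\beta_0}|\cdot|)^{q_0}$, with at most iterated-logarithmic blow-up in $t$, so $\int_0^1 q\e^{-qt}\E[\cdot]\d t=O(q)\to0$. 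For $t\geq1$ one inserts \eqref{def:pbetat}, integrates in $\eta$ against $P_t(z^1,\eta)\leq(2\pi t)^{-1}$, and bounds the large-$\zeta$ contribution via the elementary exponential moment $\E^{(0)}_{z^0}[\e^{a|Z_t|}]\lesssim C(z^0)\e^{a^2t}$ (from $a|\zeta|\leq|\zeta|^2/(4t)+a^2t$) applied with $a=(q_0-1)\sqrt{2\beta}$ to absorb $K_0(\sqrt{2\beta}|\zeta|)^{1-q_0}$; combined with the $\e^{-\beta t}$ prefactor of \eqref{def:pbetat} (and, for the convolution term, the growth $\s^\beta(\tau)\lesssim\e^{\beta\tau}$), this yields $\E^{\beta\da}_{(z^0,z^1)}[F(Z_t,W'_t)]\leq C(z^0)/t$ for $t\geq1$ precisely when $2(q_0-1)^2<1$, i.e.\ $q_0<1+1/\two$. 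Then $\int_1^\infty q\e^{-qt}C(z^0)t^{-1}\d t\leq C(z^0)\int_1^\infty q\e^{-qt}\d t\leq C(z^0)$ uniformly in $q$, and adding the two pieces gives \eqref{ht:1}.

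\noindent\textbf{Main obstacle.} The decisive step is the uniform $O(1/t)$ bound on $\E^{\beta\da}_{(z^0,z^1)}[F(Z_t,W'_t)]$ for $t\geq1$: the explicit density \eqref{def:pbetat} is a Gaussian term plus a double convolution against the only weakly integrable kernel $\s^\beta$, and it must be paired with the factor $K_0(\sqrt{2\beta}|\zeta|)^{1-q_0}$ that grows exponentially in $\zeta$; keeping the resulting exponential growth rates strictly below the $\beta$ supplied by the $\e^{-\beta t}$ prefactor is exactly the constraint $2(q_0-1)^2<1$. A secondary, non-computational point is the passage in (1$\cc$) from recurrence of the subordinated motion $\{W'_{\tau_\ell}\}$ to Harris recurrence of $\{(Z_t,W'_t)\}$, handled by the standard dichotomy for $\mathbf m^{\beta\da}$-irreducible processes.
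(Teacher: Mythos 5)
Your proposal is correct in substance, but for (1$\cc$)--(2$\cc$) it takes a genuinely different route from the paper. The paper verifies the hypothesis of Kallenberg's Harris-recurrence criterion directly: using Theorem~\ref{thm:formulas} (2$\cc$) it computes $\int_0^\infty\P^{\beta\da}_{0,0}((Z_t,W'_t)\in\Gamma)\,\d t$ and shows divergence from $\int^\infty \e^{-\beta\tau}\s^\beta(\tau)\tau^{-1}\d\tau=\infty$; it then obtains (2$\cc$) abstractly from Harris recurrence, viewing $K_0(\sqrt{2\beta_0}\varsigma|W'|)\,\d L$ as an additive functional with nonzero Revuz measure and quoting the explosion theorem in Revuz--Yor. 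You instead exploit the subordinator structure: \eqref{def:DYLT} identifies the inverse local time $\{\tau_\ell\}$ as a Gamma subordinator, so $\{W'_{\tau_\ell}\}$ is a symmetric planar L\'evy process with exponent $\log(1+|\xi|^2/(2\beta))$, recurrent by Chung--Fuchs; this yields (2$\cc$) directly by the time change (independently of (1$\cc$)) and yields (1$\cc$) via a.s. infinite occupation of a fixed ball plus the same Kallenberg dichotomy. Your route is more structural and arguably illuminating, at the price of importing L\'evy recurrence theory and some routine care you should make explicit: the strong Markov property at $T_0(Z)<\infty$ to reduce to $Z_0=0$ and $L_\infty=\infty$; infinite occupation of balls around \emph{arbitrary} points (which uses that the subordinated Gaussian has full support, not merely neighborhood recurrence of the starting point); and passing to a separated subsequence of the times $\tau_{\ell_k}$ before the conditional Borel--Cantelli so that the occupation contributions over $[\tau_{\ell_k},\tau_{\ell_k}+1]$ are not double-counted. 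For (3$\cc$) your argument is essentially the paper's with different bookkeeping: both hinge on the same competition, $K_0(\sqrt{2\beta}|\zeta|)^{1-q_0}\lesssim \e^{(1+\eta)(q_0-1)\sqrt{2\beta}|\zeta|}$ against Gaussian exponential moments and the $\e^{-\beta t}$ prefactor, which is exactly where $2(q_0-1)^2<1$, i.e.\ $q_0<1+1/\two$, enters; the paper packages this as the uniform resolvent bound of Lemma~\ref{lem:K0K0expbdd} together with the limit $\int_0^\infty q\e^{-(q+\beta)t}\s^\beta(t)\d t\to 4\pi\beta$, whereas you prove the pointwise bound $\E^{\beta\da}_{(z^0,z^1)}[F(Z_t,W_t')]\leq C(z^0)/t$ for $t\geq 1$ (the $1/t$ coming from the $W'$-marginal since $K_0(\sqrt{2\beta_0}|\cdot|)^{q_0}\in L^1$) and then integrate; your version in fact shows the $\limsup$ in \eqref{ht:1} vanishes, which implies the stated finiteness.
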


See Section~\ref{sec:ht1} for the proofs of Theorem~\ref{thm:ht} (1$\cc$) and (2$\cc$) and Section~\ref{sec:ht2} for the proof of Theorem~\ref{thm:ht}~(3$\cc$). 

Theorem~\ref{thm:ht} shows the recurrence of $\{(Z_t,W'_t)\}$ in different forms. 
First, for
Theorem~\ref{thm:ht} (1$\cc$), the regular Feller property and the invariance of $\mathbf m^{\beta\da}$ are immediate by Theorem~\ref{thm:formulas} (1$\cc$) and the fact that two-dimensional Brownian motion is a regular Feller process invariant and reversible with respect to the Lebesgue measure on $\Bbb C$. (Recall the discussion below Theorem~\ref{thm:formulas} for the definition of a Feller process being \emph{regular}.) Then the {\bf Harris recurrence} of  $\{(Z_t,W'_t)\}$ refers to the following property: 
\begin{align}\label{def:Harris}
\int_0^\infty \1_\Gamma(Z_t,W'_t)\d t=\infty\quad \P^{\beta\da}_{(z^0,z^1)}\mbox{-a.s.},\quad \forall\; (z^0,z^1)\in \Bbb C^2,\; \Gamma\in \B(\Bbb C^2)\mbox{ with }\mathbf m^{\beta\da}(\Gamma)>0,
\end{align}
which specializes a general definition from \cite[p.400]{K:FMP}. Also, Theorem~\ref{thm:ht} (2$\cc$) is a corollary of Theorem~\ref{thm:ht} (1$\cc$), and Theorem~\ref{thm:ht}~(3$\cc$) serves as a technical property applied together with Theorem~\ref{thm:ht} (2$\cc$) in \cite{C:SDBG2}. We regard Theorem~\ref{thm:ht} (3$\cc$) as an integrated form of recurrence as we combine the weak convergence of $\1_{t>0}q\e^{-qt}\d t$ to $\delta_\infty(\d t)$ as $q\searrow 0$ and the asymptotic representations \eqref{K00} and \eqref{K0infty} of $K_0(x)$  as $x\searrow 0$ and $x\nearrow \infty$.

\subsection{Harris recurrence}\label{sec:ht1}
In this subsection, we give the proofs of Theorem~\ref{thm:ht} (1$\cc$) and (2$\cc$).
 \smallskip 

\begin{proof}[Proof of Theorem~\ref{thm:ht} (1$\cc$)]
By a theorem ensuring the Harris recurrence of a general regular Feller process~\cite[Theorem~20.17, pp.405--406]{K:FMP}, 
\begin{align}
\int_0^\infty \P^{\beta\da}_{0,0}((Z_t,W'_t)\in \Gamma)\d t=\infty\label{Harris}
\end{align}
for any compact set $\Gamma$ with a nonzero Lebesgue measure is sufficient to get the required Harris recurrence. To prove this property, first, recall Theorem~\ref{thm:formulas} (2$\cc$), the definition \eqref{def:sbeta} of $\s^\beta$, 
 and the notation of multiplication columns $[\cdot]_\times$ defined in \eqref{def:column} and its analogue $[\cdot]_{\otimes}$ for measures. Then by  the assumed independence of $\{Z_t\}$ and $ \{W_t'\}$ under $\P^{\beta\da}$,
\begin{align}
&\quad \int_0^\infty  \P^{\beta\da}_{0,0}((Z_t,W'_t)\in \Gamma)\d t\notag\\
&=\int_0^\infty \int_0^{t} \int_{\Bbb C^2}\frac{\e^{-\beta t}}{2\pi }
\begin{bmatrix}
 \s^\beta(\tau)P_{t-\tau}(z_0)K_0(\sqrt{2\beta}|z^0|)\\
P_{t}(z^1)
\end{bmatrix}_\times 
\1_\Gamma(z^0,z^1)\begin{bmatrix}
\d z^0\\
\d z^1
\end{bmatrix}_{\otimes}\d \tau\d t\notag\\
&=\int_{\Bbb C}\int_0^\infty \int_\tau^\infty 
\int_{\Bbb C^2}\frac{\e^{-\beta \tau}\e^{-\beta(t-\tau)}}{2\pi }
\begin{bmatrix}
\s^\beta(\tau)P_{t-\tau}(z^0)K_0(\sqrt{2\beta}|z^0|)\\
P_{\tau}(w)P_{t-\tau}(w,z^1)
\end{bmatrix}_\times\notag\\
&\quad\;\times \1_\Gamma(z^0,z^1)\begin{bmatrix}
\d z^0\\
\d z^1
\end{bmatrix}_{\otimes}\d t\d  \tau\d w\notag\\
\begin{split}
&=\int_{\Bbb C}
\int_0^\infty \e^{-\beta \tau}
\begin{bmatrix}
 \s^\beta(\tau)\\
P_{\tau}(w)
\end{bmatrix}_\times\int_0^\infty   \e^{-\beta t'}
\int_{\Bbb C^2}
\begin{bmatrix}
P_{t'}(z^0)K_0(\sqrt{2\beta}|z^0|)\\
P_{t'}(w,z^1)
\end{bmatrix}_\times\\
&\quad \;\times \1_\Gamma(z^0,z^1) \begin{bmatrix}
\d z^0\\
\d z^1
\end{bmatrix}_{\otimes}\d t'\d\tau \d w. \label{Harris:1}
\end{split}
\end{align}
Here, we write $P_t(z^1)$ as $\int P_\tau (w)P_{t-\tau}(w,z^1)\d w$
by the Chapman--Kolmogorov equation and change the order of integration from $\d \tau\d t$ to $\d t\d \tau$ in the second equality, and the last equality uses the change of variables $t'=t-\tau$. Note that the notations $[\cdot]_\times$ and $[\cdot]_\otimes$ are defined in and below \eqref{def:column}. 

Let us make some observations for the right-hand side of \eqref{Harris:1}. First, note that the function 
\[
w\mapsto \int_0^\infty \e^{-\beta t'}
\int_{\Bbb C^2}
\begin{bmatrix}
P_{t'}(z^0)K_0(\sqrt{2\beta}|z^0|)\\
P_{t'}(w,z^1)
\end{bmatrix}_\times\1_\Gamma(z^0,z^1)\begin{bmatrix}
\d z^0\\
\d z^1
\end{bmatrix}_{\otimes}\d t',\quad w\in \Bbb C,
\]
is bounded continuous, and since the Lebesgue measure of $\Gamma$ is positive by assumption, the function takes values in $(0,\infty)$.  Therefore, with $B(0,R)$ denoting the open ball in $\Bbb C$ centered at $0$ with radius $R$, \eqref{Harris:1} implies that for any $R\in (0,\infty)$, 
\begin{align} 
 \int_0^\infty \P^{\beta\da}_{0,0}((Z_t,W'_t)\in \Gamma)\d t
&\geq  C(R,\Gamma,\beta)
\int_0^\infty \e^{-\beta \tau}
\begin{bmatrix}
 \s^\beta(\tau)\\
P_{\tau}(0,B(0,R))
\end{bmatrix}_\times \d \tau\notag\\
&\geq C(R,\Gamma,\beta)
\int_{1/\beta}^\infty \e^{-\beta \tau}
\begin{bmatrix}
 \s^\beta(\tau)\\
1/\tau
\end{bmatrix}_\times \d \tau,\label{Harris:2}
\end{align}
since 
$2\pi \tau P_\tau(0,B(0,R))=\int_{|z^1|\leq R}\e^{-|z^1|^2/(2\tau)}\d z^1\to \int_{|z^1|\leq R}\d z^1$ as $\tau\to\infty$. To use \eqref{Harris:2}, we recall the definition \eqref{def:sbeta} of $\s^\beta$ and write
\begin{align}
\int_{1/\beta}^\infty \e^{-\beta \tau}
\begin{bmatrix}
 \s^\beta(\tau)\\
1/\tau
\end{bmatrix}_\times \d \tau&\more\int_{1/\beta}^\infty \e^{-\beta \tau}\int_0^\infty  \frac{\beta^u \tau^{u-2}}{\Gamma(u)}\d u\d \tau\notag\\
&=\beta\int_0^\infty \int_1^\infty \frac{ \e^{-t}t^{u-2}}{\Gamma(u)}\d t\d u=\infty,\label{Harris:3}
\end{align}
where the second equality uses the change of variables $t=\beta\tau$, and the last equality holds since $\int_0^1 \e^{-t}t^{u-2}\d t\leq 1$ for all $u\geq 2$ and $\int_0^\infty \e^{-t}t^{u-2}\d t=\Gamma(u-1)\to\infty$ as $u\to\infty$ so that the following asymptotic representation as $u\to\infty$ holds:
\[
\int_1^\infty\frac{ \e^{-t}t^{u-2}}{\Gamma(u)}\d t=\frac{\int_1^\infty \e^{-t}t^{u-2}\d t}{u\Gamma(u-1)}\sim \frac{\int_0^\infty \e^{-t}t^{u-2}\d t}{u\Gamma(u-1)}
=\frac{1}{u}.
\]
By \eqref{Harris:2} and \eqref{Harris:3}, we obtain \eqref{Harris}. The proof is complete. 
\end{proof}

\begin{proof}[Proof of Theorem~\ref{thm:ht} (2$\cc$)]
To prove \eqref{ineq:LTinfty}, recall that 
we have explained below Theorem~\ref{thm:ht} why $\mathbf m^{\beta\da}$ is invariant for $\{(Z_t,W'_t)\}$ under $\P^{\beta\da}$.   
Since
\begin{align}\label{def:Revuz}
 g\mapsto \sup_{T> 0}\frac{1}{T}\int_{\Bbb C^2}\E_{(\tilde{z}^0,\tilde{z}^1)}^{\beta\da}\left[\int_0^T g(Z_s,W'_s)
K_0(\sqrt{2\beta_0}|\varsigma W'_s|)\d L_s\right]\mathbf m^{\beta\da}(\d \tilde{z}^0,\d \tilde{z}^1)
\end{align}
is a nonzero Revuz measure, a general theorem guaranteeing the a.s. explosion of nonnegative additive functionals \cite[Proposition~3.11, p.426]{RY} gives \eqref{ineq:LTinfty}; see also \cite[p.409]{RY}. Specifically, to apply that general theorem in \cite{RY}, note that the above Harris recurrence implies the Harris recurrence defined in \cite[p.425]{RY} by \cite[Theorems~20.10, 20.11, and~20.12, pp.397--400]{K:FMP}. 
\end{proof}

\subsection{The integrated form of recurrence}\label{sec:ht2}
This subsection gives the proof of Theorem~\ref{thm:ht} (3$\cc$). To write out the expectation in \eqref{ht:1},
we use again Theorem~\ref{thm:formulas} (2$\cc$)  for $g\equiv 1$, the definition \eqref{def:sbeta} of $\s^\beta$ and the assumed independence $\{Z_t\}\ind \{W_t'\}$ under $\P^{\beta\da}$. Also, we use the Chapman--Kolmogorov equation to rewrite the transition density $P_t(z^1,\tilde{z}^1)$ for $W'_t$. Hence, for $z^0\neq 0$ and all $z^1\in \Bbb C$,
\begin{align*}
&\quad\;\E^{\beta\da}_{(z^0,z^1)}\left[\left(\frac{K_0(\sqrt{2\beta_0}|\sigma Z_t+\varsigma W'_t|)}{K_0(\sqrt{2\beta}|Z_t|)}\right)^{q_0}\right]\\
&=
\displaystyle 
\frac{\e^{-\beta t}}{K_0(\sqrt{2\beta}|z^0|)}
\int_{\Bbb C^2}
\begin{bmatrix}
P_t(z^0,\tilde{z}^0) K_0(\sqrt{2\beta}|\tilde{z}^0|)\\
P_t(z^1,\tilde{z}^1)
\end{bmatrix}_\times
\left(\frac{K_0(\sqrt{2\beta_0}|\sigma \tilde{z}^0+\varsigma \tilde{z}^1|)}{K_0(\sqrt{2\beta}|\tilde{z}^0|)}\right)^{q_0}\begin{bmatrix}
\d \tilde{z}^0\\
\d \tilde{z}^1
\end{bmatrix}_\otimes\\
&\quad +\int_0^{t}\int_{\Bbb C} 
\frac{2\pi \e^{-\beta s}}{K_0(\sqrt{2\beta}|z^0|)}
\begin{bmatrix}
P_{2s}(\two z^0)\\
P_{s}(z^1,w)
\end{bmatrix}_\times\\
&\quad\;\times \E^{\beta\da}_{(0,w)}\left[\left(\frac{K_0(\sqrt{2\beta_0s}|\sigma Z_{t-s}+\varsigma W'_{t-s}|)}{K_0(\sqrt{2\beta}|Z_{t-s}|)}\right)^{q_0}\right]\d w \d s,
\end{align*}
and
\begin{align*}
&\quad\;\E^{\beta\da}_{(0,z^1)}\left[\left(\frac{K_0(\sqrt{2\beta_0}|\sigma Z_t+\varsigma W'_t|)}{K_0(\sqrt{2\beta}|Z_t|)}\right)^{q_0}\right]\\
&= \frac{\e^{-\beta t}}{2\pi}\int_0^t \int_{\Bbb C^3}
\begin{bmatrix}
\s^\beta(\tau)\\
P_{\tau}(z^1,w)
\end{bmatrix}_\times 
\begin{bmatrix}
P_{t-\tau}(\tilde{z}^0)K_0(\sqrt{2\beta}|\tilde{z}^0|)\\
P_{t-\tau}(w,\tilde{z}^1)
\end{bmatrix}_\times 
\left(\frac{K_0(\sqrt{2\beta_0}|\sigma \tilde{z}^0+\varsigma \tilde{z}^1|)}{K_0(\sqrt{2\beta}|\tilde{z}^0|)}\right)^{q_0}\\
&\quad \;\d w\otimes \begin{bmatrix}
\d \tilde{z}^0\\
\d \tilde{z}^1
\end{bmatrix}_\otimes \d \tau.
\end{align*}
Next, to compute the Laplace transforms of the right-hand sides, we write
\begin{align}
U^{q_0}_q(w^0,w^1)&\,\defeq \int_0^\infty \e^{-qt}\int_{\Bbb C^2}
\begin{bmatrix}
P_t(w^0,\tilde{z}^0) K_0(\sqrt{2\beta}|\tilde{z}^0|)\\
P_t(w^1,\tilde{z}^1)
\end{bmatrix}_\times\notag\\
&\quad \;\times\left(\frac{K_0(\sqrt{2\beta_0}|\sigma \tilde{z}^0+\varsigma \tilde{z}^1|)}{K_0(\sqrt{2\beta}|\tilde{z}^0|)}\right)^{q_0}\begin{bmatrix}
\d \tilde{z}^0\\
\d \tilde{z}^1
\end{bmatrix}_\otimes \d t\notag\\
&=\int_0^\infty \e^{-qt}\E^{(W^0,W^1)}_{(w^0,w^1)}\left[\left(\frac{K_0(\sqrt{2\beta_0}|\sigma W^0_t+\varsigma W_t^1|}{K_0(\sqrt{2\beta}|W_t^0|)}\right)^{q_0}K_0(\sqrt{2\beta}
|W^0_t|)
\right] \d t,\label{def:Uqq}
\end{align}
where  $W^0$ and $W^1$ are independent two-dimensional standard Brownian motions.
Since the Laplace transform of a convolution gives a product of Laplace transforms and $P_{2s}(\two z^0)=2^{-1}P_s(z^0)$, we get
\begin{align}
\begin{aligned}\label{K0K0bdd}
&\quad\;\int_0^\infty q\e^{-qt}\E^{\beta\da}_{(z^0,z^1)}\left[\left(\frac{K_0(\sqrt{2\beta_0}|\sigma Z_t+\varsigma W'_t|)}{K_0(\sqrt{2\beta}|Z_t|)}\right)^{q_0}\right]\d t\\
&=
\begin{cases} 
\displaystyle \frac{qU^{q_0}_{q+\beta}(z^0,z^1)}{K_0(\sqrt{2\beta}|z^0|)}+\frac{2\pi }{2K_0(\sqrt{2\beta}|z^0|)}
\int_{\Bbb C}\int_0^\infty \e^{-(q+\beta)t}
\begin{bmatrix}
P_t(z^0)\\
P_t(z^1,w)
\end{bmatrix}_{\times}\d t\\
\vspace{-.2cm}\\
\quad \displaystyle  \times \int_0^\infty q\e^{-qt}\E^{\beta\da}_{(0,w)}\left[\left(\frac{K_0(\sqrt{2\beta_0}|\sigma Z_t+\varsigma W'_t|)}{K_0(\sqrt{2\beta}|Z_t|)}\right)^{q_0}\right]\d t   \d w,&z^0\neq 0,\\
\vspace{-.2cm}\\
\displaystyle 
\int_{\Bbb C}\int_0^\infty \frac{q}{2\pi }\e^{-(q+\beta) t}\begin{bmatrix}
\s^\beta(t)\\
P_t(z^1,w)
\end{bmatrix}_\times
  \d t U^{q_0}_{q+\beta}(0,w) \d w, &z^0=0.
\end{cases}
\end{aligned}
\end{align}

Our goal is to bound the limit superior of the right-hand side of \eqref{K0K0bdd} as $q\searrow 0$. To this end, we first prove the following lemma. It gives a choice of $q_0>0$ for bounding $U^{q_0}_{\beta}=\lim_{q\searrow 0}\ua U^{q_0}_{q+\beta}$. 

\begin{lem}\label{lem:K0K0expbdd}
For all $w^0\in \Bbb C$ and $1<q_0<1+1/\two$, $U^{q_0}_\beta$ defined by \eqref{def:Uqq} satisfies  
\begin{align}\label{ineq:K0K0expbdd}
\sup_{w^1\in \Bbb C}U^{q_0}_\beta(w^0,w^1)<\infty.
\end{align}
\end{lem}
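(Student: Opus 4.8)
The plan is to reduce the estimate to a one–variable integral in the time variable in which the point $w^1$ has disappeared, and then balance a logarithmic singularity at $t=0$ against an exponential growth at $t=\infty$ that is controlled precisely by the hypothesis $q_0<1+1/\two$. Starting from the probabilistic form \eqref{def:Uqq}, I would write
\[
U^{q_0}_\beta(w^0,w^1)=\int_0^\infty \e^{-\beta t}\,\E\bigl[K_0(\sqrt{2\beta_0}|\sigma W^0_t+\varsigma W^1_t|)^{q_0}K_0(\sqrt{2\beta}|W^0_t|)^{1-q_0}\bigr]\,\d t
\]
and condition on $W^0_t$. Since $\varsigma>0$, the change of variables $u=\sigma W^0_t+\varsigma W^1_t$ turns the conditional expectation over $W^1_t$ into the convolution $\bigl(P_{\varsigma^2 t}\ast K_0(\sqrt{2\beta_0}|\cdot|)^{q_0}\bigr)(\sigma W^0_t+\varsigma w^1)$; as the convolution of two nonnegative radially decreasing functions on $\Bbb C$ is again radially decreasing, this is bounded by its value at the origin, $g_t\defeq\int_{\Bbb C}P_{\varsigma^2 t}(u)K_0(\sqrt{2\beta_0}|u|)^{q_0}\,\d u$, which depends on neither $w^1$ nor $W^0_t$. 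Hence $U^{q_0}_\beta(w^0,w^1)\leq\int_0^\infty \e^{-\beta t}g_t h_t\,\d t$ with $h_t\defeq\E[K_0(\sqrt{2\beta}|W^0_t|)^{1-q_0}]$, and it remains to estimate the two scalar functions $g_t$ and $h_t$.

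For $g_t$, I would use the asymptotics \eqref{K00} and \eqref{K0infty}: by \eqref{K00} the function $u\mapsto K_0(\sqrt{2\beta_0}|u|)^{q_0}$ has only a logarithmic singularity at $u=0$, hence is locally integrable on $\Bbb C$ for every $q_0>0$, and by \eqref{K0infty} it decays exponentially at infinity, so it lies in $L^1(\Bbb C)$ and $g_t\leq\|P_{\varsigma^2 t}\|_\infty\|K_0(\sqrt{2\beta_0}|\cdot|)^{q_0}\|_{L^1}\less 1/t$ for $t\geq1$. For $0<t\leq1$, rescaling $u=\varsigma\sqrt t\,v$ and using \eqref{K00} in the form $K_0(x)\less 1+\log^+ x^{-1}$ together with $\log^+(ab)\leq\log^+ a+\log^+ b$ and $\int_{\Bbb C}\e^{-|v|^2/2}(\log^+|v|^{-1})^{q_0}\d v<\infty$ gives $g_t\less(1+\log^+ t^{-1})^{q_0}$. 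Thus $g_t\less(1+\log^+ t^{-1})^{q_0}\wedge(C/t)$ for all $t>0$, with a constant depending only on $\beta_0,\varsigma,q_0$.

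For $h_t$, since $q_0>1$ this is a negative power of $K_0$. The asymptotics \eqref{K00}, \eqref{K0infty} (with continuity of $K_0$ on compact subsets of $(0,\infty)$ for the intermediate range) yield a lower bound $K_0(x)\more(1+x)^{-1/2}\e^{-x}$ for all $x>0$, so after absorbing the slowly varying factor into the exponential at the cost of an arbitrarily small enlargement of the rate, $K_0(\sqrt{2\beta}|W^0_t|)^{1-q_0}\less_\vep\e^{\tilde c|W^0_t|}$ with $\tilde c\defeq(q_0-1+\vep)\sqrt{2\beta}$. Writing $|W^0_t|\leq|w^0|+|\Re(W^0_t-w^0)|+|\Im(W^0_t-w^0)|$ and applying the one–dimensional Gaussian bound $\E[\e^{a|N|}]\leq2\e^{a^2/2}$ to each of the two independent $N(0,t)$ coordinates of $W^0_t-w^0$ (with $a=\tilde c\sqrt t$) gives $h_t\leq 4\e^{\tilde c|w^0|}\e^{\tilde c^2 t}$, where $\tilde c^2=2(q_0-1+\vep)^2\beta$. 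The appearance of $\tilde c^2 t$ rather than $\tilde c^2 t/2$ — each of the two coordinates of the planar Brownian motion contributing a factor $\e^{\tilde c^2 t/2}$ — is exactly what fixes the threshold in the next step.

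Finally I would split $\int_0^\infty\e^{-\beta t}g_t h_t\,\d t=\int_0^1+\int_1^\infty$. On $(0,1]$ the integrand is $\less_{w^0}(1+\log^+ t^{-1})^{q_0}$, which is integrable. On $[1,\infty)$ it is $\less_{w^0,\vep}t^{-1}\e^{-\beta t(1-2(q_0-1+\vep)^2)}$, and since the hypothesis $1<q_0<1+1/\two$ means $2(q_0-1)^2<1$, the exponent is strictly negative for $\vep$ small enough, so this part is finite too. As neither $g_t$, $h_t$, nor the resulting constant depends on $w^1$, the bound is uniform in $w^1\in\Bbb C$, which is \eqref{ineq:K0K0expbdd}. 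The hypothesis on $q_0$ is used only for the convergence of the tail integral; I expect the main (though still elementary) work to be the two scalar estimates — the $g_t$ bound (a logarithmic singularity integrated against a spreading heat kernel) and the $h_t$ bound (a Gaussian moment–generating estimate) — both of which follow from \eqref{K00}–\eqref{K1infty}.
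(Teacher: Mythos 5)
Your proof is correct, but it follows a genuinely different route from the paper's. The paper decouples the two $K_0$-factors by H\"older's inequality with conjugates $(p_1,q_1)$: the numerator factor is bounded via $K_0(x)^{q_0p_1}\less x^{-p_1\gamma}$ and uniform negative moments of the two-dimensional Bessel process (giving $t^{-\gamma/2}$), the denominator factor via $K_0(x)^{-1}\leq C(\eta)\e^{(1+\eta)x}$ and the Gaussian exponential-moment bound (giving $\e^{(1+\eta)^2(q_0-1)^2q_1\cdot 2\beta t}$), and one then has to juggle the four parameters $(p_1,q_1,\gamma,\eta)$ so that $2(1+\eta)^2(q_0-1)^2q_1<1$ and $p_1\gamma<2$. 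You instead condition on $W^0_t$ and observe that the $W^1$-average is the Gaussian smoothing $(P_{\varsigma^2t}\ast K_0(\sqrt{2\beta_0}|\cdot|)^{q_0})(\sigma W^0_t+\varsigma w^1)$, which — since the convolution of nonnegative radially decreasing functions is radially decreasing (layer-cake over centered balls) — is bounded by its value at the origin, a quantity $g_t$ independent of both $w^1$ and $W^0_t$; this gives an \emph{exact} factorization $U^{q_0}_\beta\leq\int_0^\infty\e^{-\beta t}g_th_t\,\d t$ with no H\"older loss. What each approach buys: yours avoids the auxiliary exponent $q_1$, so the exponential growth rate is $2(q_0-1+\vep)^2\beta$ and the threshold $q_0<1+1/\two$ appears directly; moreover your small-$t$ control of $g_t$ is only logarithmic, $(1+\log^+t^{-1})^{q_0}$, rather than the power $t^{-\gamma/2}$ the paper accepts. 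The price is the rearrangement-type monotonicity fact for the convolution and an essential use of $\varsigma>0$ (the paper uses $\varsigma>0$ too, through $\sigma^2+\varsigma^2>0$ in its negative-moment step), whereas the paper's argument runs entirely on inequalities already catalogued elsewhere in the text (\eqref{K00}--\eqref{K1infty}, Bessel negative moments, \eqref{absBMmom:1}). Both arguments use the hypothesis on $q_0$ only to make the tail integral converge, and both bounds are manifestly uniform in $w^1$, as required for \eqref{ineq:K0K0expbdd}.
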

\begin{proof}
We consider the following for a pair of H\"older conjugates $(p_1,q_1)$ such that $1<q_1<\infty$:
\begin{align}
\begin{split}
&\quad\;\E^{(W^0,W^1)}_{(w^0,w^1)}\left[\left(\frac{K_0(\sqrt{2\beta_0}|\sigma W^0_t+\varsigma W_t^1|)}{K_0(\sqrt{2\beta}|W_t^0|)}\right)^{q_0}K_0(\sqrt{2\beta}
|W^0_t|)
\right]\\
&\leq \E^{(W^0,W^1)}_{(w^0,w^1)}[K_0(\sqrt{2\beta_0}|\sigma W^0_t+\varsigma W_t^1|)^{q_0p_1}
]^{1/p_1}\\
&\quad\;\times\E^{W^0}_{w^0}[K_0(\sqrt{2\beta}|W^0_t|)^{(1-q_0)q_1}]^{1/q_1}.\label{K0K0-1}
\end{split}
\end{align}

To bound the first multiplicative factor on the right-hand side of \eqref{K0K0-1}, we use the asymptotic representations \eqref{K00} and \eqref{K0infty} of $K_0$ as $x\to 0$ and $x\to\infty$ to the effect of 
\[
K_0(x)^{q_0p_1}\leq C(q_0,q_1,\gamma) x^{-p_1\gamma}, \quad \forall\;x,\gamma\in (0,\infty).
\]
Also, the following bounds for a Bessel process $\{\rho_t\}$ of index $0$ hold: for any $\gamma'\in [0,2)$, 
\[
\sup_{x_0\in \R_+}\E_{x_0}^\rho[\rho_t^{-\gamma'}]\leq \E_{0}^\rho[\rho_t^{-\gamma'}]= \E^{\rho}_0[\rho_1^{-\gamma'}]t^{-\gamma'/2}<\infty,\quad \forall\;t\in (0,\infty);
\]
see \cite[Lemma~6.2]{C:BES} for details. Then with a two-dimensional standard Brownian motion $\{W'_t\}$, applying the last two displays in the same order gives, for all $\gamma\in (0,\infty)$ such that $p_1\gamma\in (0,2)$,
\begin{align}
&\quad\;\E^{(W^0,W^1)}_{(w^0,w^1)}[K_0(\sqrt{2\beta_0}|\sigma W^0_t+\varsigma W_t^1|)^{q_0p_1}]^{1/p_1}\notag\\
&\leq C(\beta_0,q_0,q_1,\gamma,\sigma,\varsigma) \E^{W'}_{0}\left[|W'_t|^{-p_1\gamma}\right]^{1/p_1}\notag\\
&= C(\beta_0,q_0,q_1,\gamma,\sigma,\varsigma)t^{-\gamma/2},\quad \forall\;t\in (0,\infty).\label{expbdd:3}
\end{align}

To bound the second multiplicative factor on the right-hand side of \eqref{K0K0-1}, note that since $Z\sim \mathcal N(0,1)$ implies
$\E[\e^{a|Z|}]\leq 2\E[\e^{aZ}]=2\e^{a^2/2}$ for all $a\in \R$,
\begin{align}\label{absBMmom:1}
\E^{W^1}_0[\e^{\lambda|W^0_1|}]\leq \E^{W^1}_0[\e^{\lambda|\Re W^0_1|+\lambda |\Im W^0_1|}]\leq 4\e^{\lambda^2},\quad \forall\;\lambda\in \R.
\end{align}
Also, recall that $1<q_0<1+1/\two$, and note that
the asymptotic representations \eqref{K00} and \eqref{K0infty} of $K_0$ as $x\to 0$ and $x\to\infty$ imply that for any $\eta>0$, $K_0(x)^{-1}\leq C(\eta) \e^{(1+\eta)x}$ for all $x\geq 0$. Applying this bound on $K_0^{-1}(x)$ and \eqref{absBMmom:1} in the same order gives
\begin{align}
\E^{W^0}_{w^0}[K_0(\sqrt{2\beta}|W^0_t|)^{(1-q_0)q_1}]^{1/q_1}
&\leq C(q_0,q_1,\eta)
\E^{W^0}_{w^0}\bigl[\e^{(1+\eta)(q_0-1)q_1\sqrt{2\beta}|W^0_t|}\bigr]^{1/q_1}\notag\\
&\leq C(\beta,q_0,q_1,\eta,w_0)
\e^{(1+\eta)^2(q_0-1)^2q_1\cdot 2\beta  t} ,\quad\forall\;t\in (0,\infty).\label{expbdd:2}
\end{align}

Recall that $(p_1,q_1)$ used above is a pair of H\"older conjugates. 
Applying \eqref{expbdd:3}  and \eqref{expbdd:2} gives the following under the condition of $1<q_1<\infty$, $\gamma\in (0,\infty)$ with $p_1\gamma\in (0,2)$, and $\eta>0$:
\begin{align}
\begin{split}
&\quad\;\sup_{w^1\in \Bbb C} \E^{(W^0,W^1)}_{(w^0,w^1)}\left[\left(\frac{K_0(\sqrt{2\beta_0}|\sigma W^0_t+\varsigma W_t^1|}{K_0(\sqrt{2\beta}|W_t^0|)}\right)^{q_0}K_0(\sqrt{2\beta}
|W^0_t|)
\right]\\
&\leq C(\beta,\beta_0,q_0,q_1,\gamma,\sigma,\varsigma,\eta,w_0)\e^{(1+\eta)^2(q_0-1)^2q_1\cdot 2\beta  t}t^{-\gamma/2},\quad \forall\;t\in (0,\infty).
\label{expbdd:1-1}
\end{split}
\end{align}
On the other hand, we can choose $1<q_1<\infty$ and $\eta>0$ such that $(1+\eta)^2(q_0-1)^2q_1\cdot 2<1$ since $1<q_0<1+1/\two$.
With respect to this $q_1$, we validate \eqref{expbdd:1-1} by choosing 
small enough $\gamma\in (0,2)$ such that $p_1\gamma\in (0,2)$, so \eqref{ineq:K0K0expbdd} follows. The proof is complete.
\end{proof}

\begin{proof}[End of the proof of Theorem~\ref{thm:ht} (3$\cc$)]
For $z^0=0$, we pass the $q\searrow 0$ limit superior of the corresponding term on the right-hand side of \eqref{K0K0bdd} and use
Lemma~\ref{lem:K0K0expbdd} and the following limit:
\[
\int_0^\infty q\e^{-(q+\beta) t}\s^\beta (t)\d t =\frac{4\pi q}{\log[(q+\beta)/\beta]}\to 4\pi\beta,\quad q\searrow 0,
\]
where the first equality can be seen by an inspection of \eqref{sbeta:Lap} (or just use \cite[Proposition~5.1, p.176]{C:DBG}), and
 the limit holds since $\lim_{x\searrow 0}x^{-1}\log (x+1)=1$. 
Moreover,  this argument shows that the limit superiors in \eqref{ht:1} for $z^0=0$ and  $z^1\in \Bbb C$ are uniformly bounded. Hence, \eqref{ht:1} for $z^0=0$ extends to \eqref{ht:1} for $z^0\neq0$ by passing the limit superior of the term for $z^0\neq 0$ on the right-hand side of \eqref{K0K0bdd}  and using Lemma~\ref{lem:K0K0expbdd} again.
We have proved Theorem~\ref{thm:ht} (3$\cc$). \mbox{\quad}
\end{proof}

\section{SDEs with singular drift}\label{sec:2-bSDE}
The following theorem extends Theorem~\ref{thm:summary} (3$\cc$)
for the purpose of applications in \cite{C:SDBG2,C:SDBG3}. The main tool of the proof is Proposition~\ref{prop:MP}, which will be stated afterward. 

\begin{thm}
\label{thm:MPSDE}
For $\beta_\bi\in (0,\infty)$ and $z_0=(z_0^1,\cdots,z_0^N)\in \CN$,
the following holds under  $\P^{\beta_\bi\da,\bi}_{z_0}$:\smallskip 

\noindent {\rm (1$\cc$)} The stochastic one-$\delta$ motion $\{\ms Z_t\}$ defined by \eqref{def:SDE2}
 obeys the following SDEs:
\begin{align}\label{def:ZSDE2}
Z^j_t=
 z_0^{j}-\frac{(\1_{j=i\prime}-\1_{j=i})}{\two}\int_0^t \frac{\hK_1(\sqrt{2\beta_\bi}|Z^\bi_s|)}{ K_0(\sqrt{2\beta_\bi}|Z^\bi_s|)}\biggl(\frac{1}{ \overline{Z}^\bi_s}\biggr)\d s+W^{j}_t,\quad 1\leq j\leq N.
\end{align}
Here, $\widehat{K}_\nu(x)\,\defeq\, x^\nu K_\nu(x)$, where $K_\nu(\cdot)$ is the Macdonald function of index $\nu$, and with the driving Brownian motion $\{W^\bi_t\}$ of the SDE of $\{Z^\bi_t\}$ obtained in Proposition~\ref{prop:MP} (4$\cc$),
\begin{align}
W^{i\prime}_t&\,\defeq \,\frac{W^{\bi\prime}_t+W^{\bi}_t}{\two},\quad W^{i}_t\,\defeq\, \frac{W^{\bi\prime}_t-W^{\bi}_t}{\two}\label{def:Wjj}
\end{align}
so that $\{W^j_t\}_{1\leq j\leq N}$ defines a $2N$-dimensional standard Brownian motion.\smallskip 

\noindent {\rm (2$\cc$)} For any $\bs \bj\in \mc E_N$, the stochastic relative motion $\{Z^\bj_t\}$ from \eqref{def:Zbj} satisfies the following: 
\begin{align}\label{SDE:Zbj}
Z^\bj_t\,&=Z_0^\bj-\frac{\sigma(\bj)\cdot \sigma(\bi)}{2}\int_0^t \frac{\hK_1(\sqrt{2\beta_\bi}|Z^\bi_s|)}{ K_0(\sqrt{2\beta_\bi}|Z^\bi_s|)}
\biggl( \frac{1}{\overline{Z}^\bi_s}\biggr)\d s+W^\bj_t,\\
|Z^\bj_t|^2&=|Z^\bj_0|^2+\int_0^t\biggl[2-\sigma(\bj)\cdot \sigma(\bi)
 \Re\biggl(\frac{Z^\bj_s}{Z^\bi_s}\biggr)\frac{\hK_1(\sqrt{2\beta_\bi}|Z^\bi_s|)}{ K_0(\sqrt{2\beta_\bi}|Z^\bi_s|)}\biggr]\d s+\int_0^t 2|Z^\bj_s|\d B^\bj_s,\label{SDE:|Zbj|^2}\\
|Z^\bj_t|&=|Z^\bj_0|+\int_0^t \biggl[\frac{1}{2|Z^\bj_s|}-\frac{\sigma(\bj)\cdot \sigma(\bi)}{2|Z^\bj_s|} \Re\biggl(\frac{Z^\bj_s}{Z^\bi_s}\biggr)\frac{\hK_1(\sqrt{2\beta_\bi}|Z^\bi_s|)}{ K_0(\sqrt{2\beta_\bi}|Z^\bi_s|)}\biggr]\d s+B^\bj_t.\label{SDE:|Zbj|}
\end{align}
Here, $\int_0^t \d s/|Z^\bj_s|<\infty$, and the following notation is used: 
\begin{itemize}
\item $X^\bj_t\,\defeq\,\Re(Z^\bj_t)$ and $Y^\bj_t\,\defeq\,\Im(Z^\bj_t)$.
\item For any $\bk=(k\prime,k)\in \mc E_N$, $\sigma(\bk)\in \{-1,0,1\}^N$ denotes the column vector such that the $k\prime$-th component is $1$, the $k$-th component is $-1$, and the remaining components are zero. 
\item $\{W^{\bj}_t\}$ is a two-dimensional standard Brownian motion defined by, with real $U^\bj_t$ and $V^\bj_t$,
\[
W_t^\bj=U^\bj_t+\i V^\bj_t\;\defeq\,\frac{W^{j\prime}_t-W^j_t}{\two}=[W_t^1,\cdots, W_t^N]^\top\frac{\sigma(\bj)}{\two}
\]  
\item $\{B^\bj_t\}$ is a one-dimensional standard Brownian motion defined
\begin{align}\label{def:Bj}
B^\bj_t\;\defeq \int_0^t \frac{X^\bj_s\d U^\bj_s+Y^\bj_s\d V^\bj_s}{|Z^\bj_s|}.
\end{align}
\end{itemize}

\noindent {\rm (3$\cc$)} For $\{U_t^\bj\}$, $\{V_t^\bj\}$ and $\{B_t^\bj\}$ from {\rm (2$\cc$)}, the covariations satisfy the following equations: 
\begin{align}
\begin{split}
\label{covar:UV}
 \la U^\bj,U^\bk\ra_t&=\la V^\bj,V^\bk\ra_t=\frac{\sigma(\bj)\cdot \sigma(\bk)}{2}t,\\
  \la U^\bj,V^\bk\ra_t&= 0,
  \end{split}
\end{align}
and  
  \begin{align}
 \begin{split}
\la B^\bj,B^\bk\ra_t&=\frac{\sigma(\bj)\cdot \sigma(\bk)}{2}\int_0^t \frac{X^\bj X^\bk_s+Y^\bj_sY^\bk_s}{|Z^\bj_s||Z^\bk_s|}\d s\\
&=\frac{\sigma(\bj)\cdot \sigma(\bk)}{2}\int_0^t \cos [\arg(Z^\bj_s)-\arg(Z^\bk_s)]\d s\\
&=\frac{\sigma(\bj)\cdot \sigma(\bk)}{2}\int_0^t \Re\biggl(\frac{Z^\bj_s}{Z^\bk_s}\biggr)\frac{|Z^\bk_s|}{|Z^\bj_s|}\d s.
\label{covar:Bbj}
\end{split}
\end{align}
\end{thm}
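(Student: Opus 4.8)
The plan is to treat part (1$\cc$) as the core analytic input and derive (2$\cc$) and (3$\cc$) as essentially algebraic consequences. For (1$\cc$): the SDE \eqref{def:ZSDE2} for $j\notin\bi$ is immediate from \eqref{def:SDE2}, since those components are literally $z_0^j+W^j_t$. For $j\in\bi$, by the linear relation \eqref{def:SDE2} and the definitions \eqref{def:Wjj}, it suffices to prove the single complex SDE \eqref{Zi:SDEintro} for $\{Z^\bi_t\}\sim\{Z_t\}$ under $\P^{\beta\da}$, together with the integrability $\int_0^t\d s/|Z^\bi_s|<\infty$. The latter I would obtain from Proposition~\ref{prop:BESmom}~(1$\cc$) (the sharp negative moments with logarithmic corrections), which controls $\E[|Z_s|^{-1}]$ near $s=0$ well enough for Fubini to give $\E\bigl[\int_0^t\d s/|Z_s|\bigr]<\infty$. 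For the SDE itself I would follow the route flagged in the introduction: rather than appealing to the generator \eqref{def:gen} (which says nothing about the zero set), reinforce the pathwise skew-product argument from \cite[Proposition~2.8]{C:BES} by deriving the Kolmogorov forward equation for $\{Z_t\}$ under $\P^{\beta\da}$ and matching it against the candidate drift $-\hK_1(\sqrt{2\beta}|z|)/\bigl(K_0(\sqrt{2\beta}|z|)\overline z\bigr)$; the radial SDE \eqref{eq:BESb} together with the skew-product representation \eqref{def:SP} identifies the real and imaginary drift components, using $K_0'=-K_1$ and $\hK_1(x)=xK_1(x)$. The delicate point is that the drift is only locally integrable, so the stochastic-calculus manipulations (It\^o's formula applied to $z\mapsto\Re z,\Im z$ composed with the skew product) must be justified via a localization away from $0$ and a limiting argument controlled by the negative-moment bound; this is where most of the work lies.

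For part (2$\cc$), I would apply the linear map $Z^\bj_t=[Z^1_t,\dots,Z^N_t]\,\sigma(\bj)/\two$ directly to \eqref{def:ZSDE2}. Only the components $j\in\bi=\{i',i\}$ contribute a drift, and the coefficient $(\1_{j=i'}-\1_{j=i})$ is exactly the $j$-th entry of $\sigma(\bi)$; hence summing against $\sigma(\bj)/\two$ produces the inner product $\sigma(\bj)\cdot\sigma(\bi)$ and the prefactor $-\tfrac12$, giving \eqref{SDE:Zbj}, with $W^\bj_t=[W^1_t,\dots,W^N_t]\sigma(\bj)/\two$ a two-dimensional Brownian motion (variance $|\sigma(\bj)|^2/2=1$). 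The integrability $\int_0^t\d s/|Z^\bj_s|<\infty$ I would get by noting $|Z^\bj_s|\geq$ a comparison quantity, or more directly: when $\bj=\bi$ it is the statement already proven in (1$\cc$); when $\bj\neq\bi$, $\{Z^\bj_t\}$ differs from a genuine two-dimensional Brownian motion (or half-sum of such) by the bounded-variation drift term, so $|Z^\bj_s|^{-1}$ is locally integrable by the usual two-dimensional Brownian estimate. Equations \eqref{SDE:|Zbj|^2} and \eqref{SDE:|Zbj|} then follow from It\^o's formula applied to $|z|^2=X^2+Y^2$ and to $|z|=\sqrt{X^2+Y^2}$ using \eqref{SDE:Zbj}: the drift of $Z^\bj$ dotted with $Z^\bj_s/|Z^\bj_s|$ yields the $\Re(Z^\bj_s/Z^\bi_s)$ term since $\Re(\overline{Z^\bi_s}^{-1}\,\overline{Z^\bj_s})/|Z^\bj_s|\cdot|Z^\bj_s|=\Re(Z^\bj_s/Z^\bi_s)\cdot|Z^\bi_s|^{-1}\cdot|Z^\bi_s|$ — more carefully, $\Re\bigl((Z^\bj_s/\overline{Z}{}^\bi_s)\cdot(\overline{Z}{}^\bj_s/|Z^\bj_s|)\cdot\text{coeff}\bigr)$ simplifies to $\Re(Z^\bj_s/Z^\bi_s)$ after cancelling $|Z^\bj_s|$ — and the quadratic-variation contribution $2$ (resp.\ $1/(2|Z^\bj_s|)$) comes from $\d\la W^\bj\ra_s$; the martingale part of \eqref{SDE:|Zbj|^2} is $2\int_0^t(X^\bj\,\d U^\bj+Y^\bj\,\d V^\bj)=2\int_0^t|Z^\bj_s|\,\d B^\bj_s$ by the definition \eqref{def:Bj} of $B^\bj$, and L\'evy's characterization shows $\{B^\bj_t\}$ is a standard one-dimensional Brownian motion since $\d\la B^\bj\ra_s=(X^\bj_s{}^2+Y^\bj_s{}^2)/|Z^\bj_s|^2\,\d s=\d s$.

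For part (3$\cc$), the covariations \eqref{covar:UV} follow from the bilinearity of $\la\cdot,\cdot\ra$ and the fact that $\{W^1_t,\dots,W^N_t\}$ is a $2N$-dimensional standard Brownian motion: writing $U^\bj_t=[U^1_t,\dots,U^N_t]\sigma(\bj)/\two$ and $V^\bj_t=[V^1_t,\dots,V^N_t]\sigma(\bj)/\two$ with $U^k,V^k$ the real and imaginary parts of $W^k$, one has $\la U^k,U^\ell\ra_t=\la V^k,V^\ell\ra_t=\delta_{k\ell}t$ and $\la U^k,V^\ell\ra_t=0$, so $\la U^\bj,U^\bk\ra_t=\tfrac12\sigma(\bj)\cdot\sigma(\bk)\,t$ and the mixed bracket vanishes. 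For \eqref{covar:Bbj}, substitute \eqref{def:Bj} and use $\d\la U^\bj,U^\bk\ra_s=\d\la V^\bj,V^\bk\ra_s=\tfrac12\sigma(\bj)\cdot\sigma(\bk)\,\d s$, $\d\la U^\bj,V^\bk\ra_s=0$ to get $\d\la B^\bj,B^\bk\ra_s=\tfrac{\sigma(\bj)\cdot\sigma(\bk)}{2}\cdot\frac{X^\bj_sX^\bk_s+Y^\bj_sY^\bk_s}{|Z^\bj_s||Z^\bk_s|}\,\d s$; the second and third forms are just trigonometric rewritings, $X^\bj X^\bk+Y^\bj Y^\bk=|Z^\bj||Z^\bk|\cos(\arg Z^\bj-\arg Z^\bk)=|Z^\bk|^2\Re(Z^\bj/Z^\bk)$. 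I expect the main obstacle to be entirely inside part (1$\cc$): justifying the passage from the pathwise skew-product/radial description to the complex It\^o SDE across the process's visits to $0$, i.e.\ establishing the relevant Kolmogorov forward equation and controlling the singular drift via the sharp negative moments — everything in (2$\cc$) and (3$\cc$) is then bookkeeping with linear algebra, It\^o's formula and L\'evy's characterization.
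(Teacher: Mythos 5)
Your overall architecture is the same as the paper's: reduce (1$\cc$) to the single complex SDE \eqref{Zi:SDEintro} for $\{Z^\bi_t\}$, obtain \eqref{SDE:Zbj} by contracting \eqref{def:ZSDE2} against $\sigma(\bj)/\two$, and get (3$\cc$) by direct bracket computations; those parts, and all of (3$\cc$), are fine. The first genuine problem is inside (1$\cc$): the theorem statement allows you to import the SDE of $\{Z^\bi_t\}$ from Proposition~\ref{prop:MP} (4$\cc$) (this is exactly what the paper's proof does), but you instead undertake to prove it, and your sketch misplaces the difficulty. Localizing It\^o's formula away from $0$ and controlling the drift with the negative-moment bounds only reproduces the SDE on $\{t<T_0(Z)\}$ --- this is Remark~\ref{rmk:SDET0} (2$\cc$) and is the easy part. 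The substance of Proposition~\ref{prop:MP} is the extension across the a.s.\ nonempty, recurrent zero set of $Z^\bi$: the paper proves the forward equation \eqref{Zt:geneq} by splitting the difference quotient as in \eqref{Zlim:ratio} into the piece straddling a visit to $0$ (Proposition~\ref{prop:step1}, which needs the hitting-probability bound of Lemma~\ref{lem:PiZbdd} and the H\"older estimate for $|Z_t|^2$) and the piece inside an excursion (Proposition~\ref{prop:step2}), and only then passes to the SDE via the martingale problem and L\'evy's characterization. ``This is where most of the work lies'' does not supply these ingredients, so as written the core of (1$\cc$) is unproven unless you explicitly cite Proposition~\ref{prop:MP} (4$\cc$).

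In (2$\cc$) two steps would fail as stated. First, you derive \eqref{SDE:|Zbj|^2}--\eqref{SDE:|Zbj|} by ``It\^o's formula applied to $|z|=\sqrt{X^2+Y^2}$,'' but $|z|$ is not $\C^2$ at $0$ and $Z^\bi$ does hit $0$ (a.s., with positive local time for its radial part), so at least for $\bj=\bi$ this application is illegitimate without an argument that no local-time correction appears; this is precisely what the paper's Lemma~\ref{lem:SQ} provides, by regularizing with $(|z|^2+\vep)^{1/2}$, extracting $\int_0^t\d s/|Z^\bj_s|<\infty$ via Fatou, and passing to the limit with the dominated convergence theorem for stochastic integrals, using as input the absolute convergence $\int_0^t \hK_1(\sqrt{2\beta_\bi}|Z^\bi_s|)\,\d s/\bigl(K_0(\sqrt{2\beta_\bi}|Z^\bi_s|)|Z^\bi_s|\bigr)<\infty$ from Proposition~\ref{prop:MP} (1$\cc$). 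Second, your justification of $\int_0^t \d s/|Z^\bj_s|<\infty$ for $\bj\neq\bi$ --- ``a two-dimensional Brownian motion plus a bounded-variation drift, hence the usual Brownian estimate'' --- is not a valid principle: an adapted finite-variation drift can confine a planar Brownian motion to a shrinking neighbourhood of the origin (radial reflection at a decreasing barrier), destroying local integrability of $1/|\cdot|$. What saves the statement here is the specific structure: either run Lemma~\ref{lem:SQ}, whose proof yields the finiteness as part of its conclusion, or condition on $\sigma(W^{\bi\prime}_s,Z^\bi_s;s\geq 0)$ and use the independent free Brownian coordinate entering $Z^\bj$ to get $\E[|Z^\bj_s|^{-1}]\less s^{-1/2}$. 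Some such argument is needed; without it, both the radial SDEs and the Brownian property of $B^\bj$ in \eqref{def:Bj} (which also requires $\int_0^t\1_{\{Z^\bj_s=0\}}\d s=0$) are left hanging.
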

\begin{proof}
For (1$\cc$), \eqref{def:ZSDE2} follows immediately from Proposition~\ref{prop:MP} (4$\cc$) and \eqref{def:SDE2}. 
 The asserted property of $\{W^j_t\}_{1\leq j\leq N}$ can be justified by
 L\'evy's characterization of Brownian motion \cite[3.16 Theorem, p.157]{KS:BM} since $\{W^\bi_t\}$, $\{W^{\bi\prime}_t\}$ and $ \{W^k_t\}_{k\in \{1,\cdots,N\}\setminus\bi}$ are independent.  

For the properties stated in (2$\cc$), first, note that \eqref{SDE:Zbj} can be obtained by considering the following terms in the first two cases of SDEs from \eqref{def:ZSDE2}:
\[
\mp\frac{1}{\two} \int_0^t \frac{\hK_1(\sqrt{2\beta_\bi}|Z^\bi_s|)}{ K_0(\sqrt{2\beta_\bi}|Z^\bi_s|)}\biggl(\frac{1}{ \overline{Z}^\bi_s}\biggr)\d s=\mp \frac{1}{2}\int_0^t \frac{\hK_1(\sqrt{2\beta_\bi}|Z^\bi_s|)}{ K_0(\sqrt{2\beta_\bi}|Z^\bi_s|)}\biggl(\frac{1}{|Z^{\bi}_s|^2}\biggr)(Z^{i\prime}_s-Z^i_s)\d s.
\]
Accordingly, we deduce \eqref{SDE:Zbj} after some algebra; see \cite[Proposition~2.2]{C:SDBG4} for details. To obtain \eqref{SDE:|Zbj|^2}, \eqref{SDE:|Zbj|} and the property $\int_0^t \d s/|Z^\bj_s|<\infty$, we use \eqref{SDE:Zbj}, \eqref{def:Bj}, and Lemma~\ref{lem:SQ} with the choice of
 \[
 A^{\bj,\bi}_t\equiv -\frac{\sigma(\bj)\cdot \sigma(\bi)}{2}\int_0^t  \frac{\hK_1(\sqrt{2\beta_\bi}|Z^\bi_s|)}{ K_0(\sqrt{2\beta_\bi}|Z^\bi_s|)}\d s,\quad  A^{\bj,\bk}_t\equiv 0,\quad \forall\;\bj\in \mc E_N,\;\bk\in \mc E_N\setminus\{\bi\}, 
 \]
 and $\tau=T$, for any fixed $0<T<\infty$. 
Also, the required Brownian property of $\{W^\bj_t\}$ is straightforward, and  
 L\'evy's characterization of Brownian motion is enough to give the required Brownian property of $\{B^\bj_t\}$ since $\int_0^t \1\{|Z^\bj_s|=0\}\d s=0$ by $\int_0^t \d s/|Z^\bj_s|<\infty$. 

For (3$\cc$), \eqref{covar:UV} and \eqref{covar:Bbj} follow immediately from the definitions of $\{U_t^\bj\}$, $\{V_t^\bj\}$ and $\{B_t^\bj\}$.
\end{proof}

The main objective of the remaining of Section~\ref{sec:2-bSDE} is to prove  the following proposition on $\{Z_t\}$ under $\P^{\beta\da}$. Recall Section~\ref{sec:intro-rm} for the definition of this process. 

\begin{prop}\label{prop:MP}
{\rm (1$\cc$)} For all $0<t<\infty$, it holds that 
\begin{gather}
\sup_{z^0\in \Bbb C}\E^{\beta\da}_{z^0}\biggl[ \frac{\1_{\{|Z_t|\leq 1\}}}{K_0(\sqrt{2\beta}|Z_t|)^4|Z_t|^2}\biggr]<\infty,\label{MP:mom00}\\
\sup_{z^0\in \Bbb C}\E^{\beta\da}_{z^0}\biggl[\exp\biggl\{\lambda \int_0^t \frac{\1_{\{|Z_s|\leq 1\}}\d s}{K_0(\sqrt{2\beta}|Z_s|)^4|Z_s|^2}\biggr\}\biggr]<\infty,\quad \forall\;\lambda\in \R.\label{MP:mom0}
\end{gather}
Moreover,
\begin{gather}
\sup_{z^0\in \Bbb C}\E^{\beta\da}_{z^0}\biggl[\frac{\hK_1(\sqrt{2\beta}|Z_t|)}{K_0(\sqrt{2\beta}|Z_t|)|Z_t|}\biggr]<\infty,\label{MP:mom1}\\
\sup_{z^0\in \Bbb C}\E^{\beta\da}_{z^0}\biggl[\biggl(\int_0^t\frac{\hK_1(\sqrt{2\beta}|Z_s|)\d s}{K_0(\sqrt{2\beta}|Z_s|)|Z_s|}\biggr)^p
\biggr]<\infty,\quad \forall\;1\leq p<\infty.\label{MP:mom}
\end{gather}

\noindent {\rm (2$\cc$)} For any $f\in \C_c^2(\Bbb C)$ and $z^0\in \Bbb C$, the function 
 $t\mapsto \E^{\beta\da}_{z^0}[\ms Af(Z_t)]$ is continuous in $(0,\infty)$,
where, by identifying $\Bbb C$ with $\R^2$ and using the usual inner product $\la \cdot,\cdot\ra$ for $\R^2$,
\begin{align}\label{def:A-1}
\ms Af(z^1)\,\defeq \,\frac{\Delta f}{2}(z^1)-\biggl\langle\frac{\hK_1(\sqrt{2\beta}|z^1|)}{K_0(\sqrt{2\beta}|z^1|)\overline{z}^1},\nabla f(z^1)\biggr\rangle,\quad z^1\in \Bbb C\setminus\{0\},
\end{align}
with $\Delta$ denoting the two-dimensional Laplacian and $\nabla$ denoting the gradient operator. \smallskip 

\noindent {\rm (3$\cc$)} For all $z^0\in \Bbb C$ and $f\in \C_c^2(\Bbb C)$, 
\begin{align}\label{Zt:geneq}
\E^{\beta\da}_{z^0}[f(Z_t)]=f(z^0)+\int_0^t \E^{\beta\da}_{z^0}[\ms Af(Z_s)]\d s.
\end{align}

\noindent {\rm (4$\cc$)} Fix $z^0\in \Bbb C$. Under $\P^{\beta\da}_{z^0}$, it holds that
\begin{align}\label{Zi:SDE}
Z_t=z^0-\int_0^t \frac{\hK_1(\sqrt{2\beta}|Z_s|)}{K_0(\sqrt{2\beta}|Z_s|)}\left(\frac{1}{\overline{Z}_s}\right)\d s+W_t
\end{align}
for a two-dimensional standard Brownian motion $\{W_t\}$ starting from $0$.
\end{prop}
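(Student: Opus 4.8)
The plan is to prove the four parts roughly in order, using the analytic formulas of Theorem~\ref{thm:formulas}~(1$\cc$) for the moment bounds and then the skew-product representation \eqref{def:SP} together with part (3$\cc$) to extract the SDE. For part (1$\cc$), I would write out the expectation $\E^{\beta\da}_{z^0}[\,\cdot\,]$ using the explicit transition formula \eqref{eq:Feller}--\eqref{def:pbetat}. The key point is that the invariant density $\mu_0^{\beta\da}(\d z^1) = \frac{2\beta}{\pi}K_0(\sqrt{2\beta}|z^1|)^2\d z^1$ carries a factor $K_0(\sqrt{2\beta}|z^1|)^2$ which, near $z^1=0$, behaves like $(\log|z^1|^{-1})^2$. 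Testing against $\1_{\{|z^1|\le 1\}}/(K_0(\sqrt{2\beta}|z^1|)^4|z^1|^2)$ therefore leaves an integrand of order $|z^1|^{-2}(\log|z^1|^{-1})^{-2}$ near the origin, which is locally integrable in two dimensions (the borderline $\int_{0+} r^{-1}(\log r^{-1})^{-2}\d r<\infty$). One must check the bound is uniform in $z^0$: this follows because in \eqref{def:pbetat} the factors $1/K_0(\sqrt{2\beta}|z^0|)$ are cancelled or dominated by the $P_t(z^0)$-type Gaussian prefactors, exactly as in the estimates of Step~1-1 in the proof of Theorem~\ref{thm:formulas}~(1$\cc$); I would cite those computations. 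The exponential moment \eqref{MP:mom0} then follows from \eqref{MP:mom00} by the standard Khas'minskii-type argument: split $[0,t]$ into small intervals, use the Markov property at the partition points, and use $\sup_{z^0}\E^{\beta\da}_{z^0}[\int_0^\delta \1_{\{|Z_s|\le1\}}\,\d s/(K_0^4|Z_s|^2)]\to 0$ as $\delta\to0$ (which itself follows from \eqref{MP:mom00} by Fubini and dominated convergence). The bounds \eqref{MP:mom1}--\eqref{MP:mom} are analogous but easier, since $\hK_1(x)/(xK_0(x)) = K_1(x)/(x^{0}K_0(x))\cdot x^{-1}\cdot x \sim x^{-2}\cdot$(bounded) near $0$ by \eqref{K00} and \eqref{K10}, wait---more precisely $\hK_1(x)=xK_1(x)\to1$ as $x\to0$, so $\hK_1(\sqrt{2\beta}|z^1|)/(K_0(\sqrt{2\beta}|z^1|)|z^1|)\sim |z^1|^{-1}(\log|z^1|^{-1})^{-1}$, again integrable against $K_0^2\,\d z^1\asymp (\log|z^1|^{-1})^2\,\d z^1$; the $p$-th moment \eqref{MP:mom} follows by Jensen/Minkowski in integral form plus the Markov property.

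For part (2$\cc$), I would observe that $\ms Af\in\B_b(\Bbb C)$ has, near $z^1=0$, the same type of logarithmic-over-linear singularity controlled in (1$\cc$), so $\ms Af$ is integrable against $\mu_0^{\beta\da}$ and bounded away from $0$; continuity of $t\mapsto \E^{\beta\da}_{z^0}[\ms Af(Z_t)]$ on $(0,\infty)$ then follows from the continuity of $(t,z^0,z^1)\mapsto p^{\beta\da}_t(z^0,z^1)$ established in Step~2 of the proof of Theorem~\ref{thm:formulas}~(1$\cc$), combined with a dominated-convergence argument whose envelope is furnished by the uniform bound of (1$\cc$)---specifically, splitting $\int_{\Bbb C}$ into $\{|z^1|\le\epsilon\}$ and its complement, bounding the first piece uniformly in $t$ on compact time intervals via (1$\cc$), and using continuity of $p^{\beta\da}_t$ on the second.

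For part (3$\cc$), the strategy is to reinforce the proof of \cite[Proposition~2.8]{C:BES}. Starting from the skew-product representation \eqref{def:SP}, apply It\^o's formula to $f(Z_{t\wedge T_0(Z)})$ for $f\in\C_c^2$ away from the origin: the radial part $\{|Z_t|\}\sim\BES(0,\beta\da)$ has the SDE \eqref{eq:BESb}, the angular part is the Brownian clock $\gamma_{\int_0^t \d s/|Z_s|^2}$, and combining these yields precisely $\d f(Z_s)=\ms Af(Z_s)\d s+\d(\text{martingale})$ on $[0,T_0(Z))$. Taking expectations, using \eqref{MP:mom} to see the drift term is absolutely integrable and the local martingale is a true martingale, gives \eqref{Zt:geneq} with $t$ replaced by $t\wedge T_0(Z)$. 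The new ingredient beyond \cite{C:BES} is to push past $T_0(Z)$: here one invokes the Markov property at $T_0(Z)$ together with the law of $\{Z_{T_0(Z)+t}\}$ under $\P^{\beta\da}_{z^0}$ being that of $\{Z_t\}$ under $\P^{\beta\da}_0$ (as recalled in Section~\ref{sec:intro-rm}), and then one needs the $z^0=0$ case of \eqref{Zt:geneq}. For $z^0=0$ the process immediately leaves the origin, its zero set has zero Lebesgue measure (since $\int_0^t\d s/|Z_s|^2<\infty$, or at least $\int_0^t\1_{\{Z_s=0\}}\d s=0$, which one gets from the occupation formula for $\BES(0,\beta\da)$), so the It\^o computation on the open set $\Bbb C\setminus\{0\}$ still produces $\ms Af(Z_s)\d s$ a.e.; the continuity of $t\mapsto\E^{\beta\da}_0[\ms Af(Z_t)]$ from part (2$\cc$) then lets one write the integral form without worrying about the null set $\{t:Z_t=0\}$. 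Assembling the piece before $T_0(Z)$ and the piece after via the Markov property gives \eqref{Zt:geneq} for all $z^0$. \textbf{I expect this passage across $T_0(Z)$, and the justification that the contribution of the (uncountably many) times the process sits at $0$ is negligible, to be the main obstacle}---this is exactly the gap in \eqref{def:gen} flagged in the introduction, namely that the generator formula ignores pathwise behaviour in and near the zero set. The remark after the proof (Remark~\ref{rmk:SDET0}~(3$\cc$)) signals that one cannot shortcut this via the marginal formulas alone, so the Kolmogorov-forward/It\^o route through (2$\cc$) is essential.

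Finally, part (4$\cc$) is a routine consequence of (3$\cc$): \eqref{Zt:geneq} identifies the generator of $\{Z_t\}$ under $\P^{\beta\da}_{z^0}$ on $\C_c^2$, so applying it to coordinate-cutoff functions approximating $z\mapsto\Re z$ and $z\mapsto\Im z$ and using the absolute convergence of the drift from \eqref{MP:mom} shows that $Z_t - z^0 + \int_0^t \hK_1(\sqrt{2\beta}|Z_s|)/(K_0(\sqrt{2\beta}|Z_s|)\overline{Z}_s)\,\d s$ is a continuous local martingale; its quadratic variation is computed (again via (3$\cc$) applied to $f$ localizing $|z|^2$, $(\Re z)^2$, etc., or directly from the skew-product) to be $t\cdot\mathrm{Id}_2$, whence L\'evy's characterization identifies it as a two-dimensional standard Brownian motion $\{W_t\}$, giving \eqref{Zi:SDE}.
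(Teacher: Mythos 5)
Your plan for \eqref{MP:mom00}, for part (2$\cc$) and for part (4$\cc$) is broadly workable (part (4$\cc$) is in fact the paper's own L\'evy-characterization argument, and your route to (2$\cc$) via continuity of $p^{\beta\da}_t$ plus a splitting of the space integral is a reasonable alternative to the paper's convolution lemma), but there are two genuine gaps. First, the deduction of the exponential moment \eqref{MP:mom0} from \eqref{MP:mom00} does not go through as you describe. A Khas'minskii/Gr\"onwall iteration needs $\sup_{z^0}\E^{\beta\da}_{z^0}\bigl[\int_0^\delta \1_{\{|Z_s|\leq1\}}\,\d s/(K_0(\sqrt{2\beta}|Z_s|)^4|Z_s|^2)\bigr]$ to be small for small $\delta$, i.e.\ a quantitative, integrable-in-$s$ bound on $s\mapsto\sup_{z^0}\E^{\beta\da}_{z^0}[\,\cdot\,]$ near $s=0$. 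Finiteness at each fixed $t$ provides no dominating function, so ``Fubini and dominated convergence'' cannot produce this: a blow-up like $1/s$ would be perfectly consistent with \eqref{MP:mom00} and would destroy the argument. The paper devotes Proposition~\ref{prop:BESmom} precisely to this refinement, proving the uniform bound $C(\beta,t)\bigl(\1_{\{s\leq2\deltaz\}}/(s\log^2 s)+1\bigr)$, which is then fed into a Gr\"onwall-type convolution inequality; the same quantitative small-time input is what would make your sketch for \eqref{MP:mom} (``Jensen/Minkowski plus the Markov property'') meaningful. This is fixable by redoing your fixed-$t$ computation with explicit $s$-dependence, but as written the step fails.

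Second, and more seriously, the heart of (3$\cc$)--(4$\cc$) is exactly the point you flag and then do not resolve. Knowing that $\int_0^t\1_{\{Z_s=0\}}\d s=0$ and applying It\^o's formula on $\Bbb C\setminus\{0\}$ excursion by excursion does not show that $f(Z_t)-f(Z_0)-\int_0^t\ms Af(Z_s)\d s$ is a martingale: zero occupation time of the origin does not exclude a singular additive functional carried by $\{t:Z_t=0\}$ (compare $|B_t|$ for one-dimensional Brownian motion, or the radial part $|Z_t|\sim\BES(0,\beta\da)$ itself, whose local time at $0$ is nontrivial --- its inverse is a gamma subordinator). Ruling out such a boundary term is the entire content of extending the SDE past $T_0(Z)$, so invoking continuity of $t\mapsto\E^{\beta\da}_0[\ms Af(Z_t)]$ ``to write the integral form without worrying about the null set'' is circular. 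The paper instead proves the forward equation directly: it computes the right derivative of $t\mapsto\E^{\beta\da}_{z^0}[f(Z_t)]$ through the decomposition \eqref{Zlim:ratio}, shows the contribution of paths hitting $0$ in $(s,t]$ is $o(t-s)$ uniformly (Proposition~\ref{prop:step1}, which requires the H\"older estimate of Lemma~\ref{lem:regularity} and the sharp hitting bound of Lemma~\ref{lem:PiZbdd}, $\P^{\beta\da}(\sqrt{2\beta}|Z_t|\leq\vep)\lesssim(\tfrac1t+1)\vep^2|\log^3\vep|$), handles the non-hitting part via the change of measure \eqref{com:Z} and uniform-integrability estimates (Proposition~\ref{prop:step2}), and only then integrates the derivative using (2$\cc$) and Lemma~\ref{lem:LC2}. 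None of these quantitative ingredients appears in your sketch, so the extension of \eqref{Zt:geneq}, and hence of \eqref{Zi:SDE}, beyond $T_0(Z)$ remains unproven in your proposal.
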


\begin{rmk}\label{rmk:SDET0}
(1$\cc$) \eqref{def:A-1} is equivalent to \eqref{def:gen} since the latter is for $\{\sqrt{2}Z_t\}$.\smallskip 

\noindent (2$\cc$)
 By It\^{o}'s formula, \eqref{def:SP} and \eqref{eq:BESb}  are enough to show that 
$Z_t$, $t<T_0(Z)$, satisfies \eqref{Zi:SDE}. See Proposition~\ref{prop:SP} (1$\cc$) with the setting of Example~\ref{eg:SP} (2$\cc$) and $\alpha_\vartheta=0$. The purpose of \eqref{Zi:SDE} is therefore to extend \eqref{Zi:SDE} to and beyond $T_0(Z)$.  \smallskip 

\noindent (3$\cc$) It is not clear to us that \eqref{Zt:geneq} can be obtained from direct functional differentiation of semigroups as in the usual derivation of Kolmogorov's forward equations. More specifically, differentiating formally the right-hand side of \eqref{DY:law1} for $z^0=0$ and $g\equiv 1$ with respect to $t$ and applying the Leibniz integral rule to the derivative of the integral term give
\[
\frac{\d}{\d t}\E^{\beta\da}_0[f(Z_t)]=-\beta \E_0^{\beta\da}[f(Z_t)]+\frac{\e^{-\beta t}}{2\pi}\biggl(
\s^\beta(t)P_{0}f_\beta(0)+\int_0^t \s^\beta(\tau)\frac{\d}{\d t}P_{t-\tau}f_\beta(0)\d \tau\biggr),
\]
but the boundary term $\s^\beta(t)P_{0}f_\beta(0)$ explodes  
since $\tau\mapsto P_{\tau}f_\beta(0)$ has a logarithmic singularity at $\tau=0$ by the asymptotic representation \eqref{K00} of $K_0(x)$ as $x\to 0$. The same issue applies the case of $z^0\neq 0$. 
\qed 
\end{rmk}

\noindent {\bf Outline of the proofs of Proposition~\ref{prop:MP} (1$\cc$)--(3$\cc$).}  
For (1$\cc$), the proof of \eqref{MP:mom00} is obtained by using the explicit formulas of the probability densities of $\{Z_t\}$ under $\P^{\beta\da}$ [Theorem~\ref{thm:formulas} (1$\cc$)]. The proof of \eqref{MP:mom0} uses a Gr\"onwall inequality-type argument together with a refinement of \eqref{MP:mom00}. See Section~\ref{sec:sharpmom} for details of the proofs of \eqref{MP:mom00} and \eqref{MP:mom0}. 

To see why \eqref{MP:mom0} implies \eqref{MP:mom}, note that by the asymptotic representations \eqref{K00}, \eqref{K0infty}, \eqref{K10} and \eqref{K1infty} of $K_0(x)$ and $K_1(x)$ as $x\searrow 0$ and as $x\nearrow\infty$, $z\mapsto \widehat{K}_1(\sqrt{2\beta}|z|)/[K_0(\sqrt{2\beta}|z|)|z|]$, for $|z|\geq 1$, is bounded. Hence, by the elementary inequality $(a+b)^p\leq C(p)(a^p+b^p)$ for all $a,b\geq 0$ and $1\leq p<\infty$,
 \begin{align*}
 &\quad\;\sup_{z^0\in \Bbb C}\E^{\beta\da}_{z^0}\biggl[\biggl(\int_0^t\frac{\hK_1(\sqrt{2\beta}|Z_s|)\d s}{K_0(\sqrt{2\beta}|Z_s|)|Z_s|}\biggr)^p
\biggr]\notag\\
&\leq C(p,\beta)\left(t+  \sup_{z^0\in \Bbb C}\E^{\beta\da}_{z^0}\biggl[\biggl(\int_0^t\frac{\1_{\{|Z_s|\leq 1\}}\d s}{K_0(\sqrt{2\beta}|Z_s|)|Z_s|}\biggr)^p
\biggr]\right)\\
&\leq C(p,\beta)\left(t+  \sup_{z^0\in \Bbb C}\E^{\beta\da}_{z^0}\biggl[\biggl(\int_0^t \frac{\1_{\{|Z_s|\leq 1\}}\d s}{K_0(\sqrt{2\beta}|Z_s|)^4|Z_s|^2}\biggr)^p\biggr]\right).
\end{align*}
Here, the last inequality holds since $K_0(\sqrt{2\beta}|z|)^3 |z|\leq C(\beta) $ for all $|z|\leq1$ by the asymptotic expansion of $K_0(x)$ as $x\searrow 0$. By the last inequality, \eqref{MP:mom0} implies \eqref{MP:mom}. The proof that \eqref{MP:mom00} implies \eqref{MP:mom1} is simpler, using again the asymptotic representations of $K_0(x)$ and $K_1(x)$ as $x\searrow 0$ and as $x\nearrow\infty$.  

The proof of Proposition~\ref{prop:MP} (2$\cc$) (Section~\ref{sec:contforward}) is also obtained by using 
the explicit formula of the probability density of $Z_t$ under $\P^{\beta\da}$, since we have to handle the singularity of the function $\ms Af(z^1)$. The proof shares some similarities with that of Theorem~\ref{thm:formulas} (1$\cc$) on showing the continuity of convolutions of functions of weak integrability, but now the context allows a more general tool (Lemma~\ref{lem:LC1}) to handle some steps of the proof. 

For the proof of  Proposition~\ref{prop:MP} (3$\cc$) (Section~\ref{sec:forwardeqn}), the main task is to show 
\begin{align}\label{Zgen:lim}
\forall\;z^0\in \Bbb C,\;f\in \C_c^2(\Bbb C),\;0<t<\infty,\quad 
\lim_{\vep\searrow 0}\frac{\E^{\beta\da}_{z^0}[f(Z_{t+\vep})]-\E^{\beta\da}_{z^0}[f(Z_t)]}{\vep}=\E^{\beta\da}_{z^0}[\ms Af(Z_t)].
\end{align}
The sufficiency of \eqref{Zgen:lim} for proving \eqref{Zt:geneq} is supported by two points:
\begin{itemize}
\item By the definition \eqref{def:A-1} of $\ms A$ and \eqref{MP:mom} with $p=1$, we get
\[
s\mapsto \E^{\beta\da}_{z^0}[|\ms Af(Z_s)|]
\in L^1((0,t],\d s), \quad \forall\;0<t<\infty.
\]
Moreover, $\E^{\beta\da}_{z^0}[|\ms Af(Z_t)|]<\infty$ for any $0<t<\infty$ by \eqref{MP:mom1}.
\item Since $t\mapsto \E^{\beta\da}_{z^0}[\ms Af(Z_t)]$, $t>0$, is continuous [Proposition~\ref{prop:MP} (2$\cc$)], \eqref{Zgen:lim} implies that the function $t\mapsto \E^{\beta\da}_{z^0}[f(Z_t)]$, $t>0$, is actually continuously differentiable with derivative $t\mapsto \E^{\beta\da}_{z^0}[\ms Af(Z_t)]$. This is due to the following elementary lemma, whose proof is omitted.

\begin{lem}\label{lem:LC2}
If $f:\R_+\to \R$ is continuous such that its right-derivative $f'_+$ exists everywhere in $\R_+$ and is continuous, then $f(x)=f(0)+\int_0^x f'_+(t)\d t$ for all $x\geq 0$. 
\end{lem}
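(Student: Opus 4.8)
The plan is to reduce the statement to the elementary fact that a continuous function on a closed bounded interval whose right-derivative is everywhere $\leq 0$ is nonincreasing. First I would introduce $g(x)\defeq f(x)-f(0)-\int_0^x f_+'(t)\,\d t$ for $x\geq 0$. Since $f$ is continuous and $f_+'$ is continuous, $g$ is continuous on $\R_+$ with $g(0)=0$; moreover, by the fundamental theorem of calculus applied to the continuous integrand $f_+'$, the map $x\mapsto \int_0^x f_+'(t)\,\d t$ is differentiable with derivative $f_+'(x)$. Hence the right-derivative $g_+'(x)$ exists at every $x\geq 0$ and equals $f_+'(x)-f_+'(x)=0$. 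So it suffices to show that a continuous $g\colon\R_+\to\R$ with $g(0)=0$ and $g_+'\equiv 0$ vanishes identically.

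To this end I would first prove the auxiliary claim: if $h$ is continuous on $[a,b]$ and $h_+'(t)\leq 0$ for all $t\in[a,b)$, then $h(b)\leq h(a)$. Fix $\vep>0$ and set $\phi(t)\defeq h(t)-h(a)-\vep(t-a)$, so that $\phi$ is continuous on $[a,b]$, $\phi(a)=0$, and $\phi_+'(t)\leq -\vep<0$ on $[a,b)$. Let $c\defeq\inf\{t\in[a,b]:\phi(t)>0\}$, with the convention $\inf\varnothing=b$. If this set is empty, then $\phi\leq 0$ on $[a,b]$ and in particular $\phi(b)\leq 0$. Otherwise $c\in[a,b]$; since $\phi_+'(a)<0$ forces $\phi<0$ on a right-neighbourhood of $a$, one has $c>a$, and since $\phi\leq 0$ on $[a,c)$ by definition of the infimum, continuity gives $\phi(c)\leq 0$, while choosing a sequence in the set converging to $c$ gives $\phi(c)\geq 0$, hence $\phi(c)=0$. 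If $c<b$, then $\phi_+'(c)<0$ would force $\phi<\phi(c)=0$ on a right-neighbourhood of $c$, contradicting $c=\inf\{t:\phi(t)>0\}$; hence $c=b$ and $\phi(b)=0$. In every case $\phi(b)\leq 0$, i.e.\ $h(b)-h(a)\leq \vep(b-a)$, and letting $\vep\searrow 0$ yields $h(b)\leq h(a)$.

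Finally I would apply this auxiliary claim, on an arbitrary interval $[0,x]$, to $h=g$ and to $h=-g$ (which also satisfies $(-g)_+'\equiv 0$), obtaining $g(x)\leq g(0)=0$ and $-g(x)\leq -g(0)=0$, hence $g(x)=0$. This gives $f(x)=f(0)+\int_0^x f_+'(t)\,\d t$ for all $x\geq 0$, as claimed. The only point requiring a little care is the infimum argument in the auxiliary claim---making precise that a continuous function which can never locally increase does not in fact increase---but this is entirely routine and uses nothing beyond continuity on a closed bounded interval, which is presumably why the paper omits the proof. I expect no genuine obstacle.
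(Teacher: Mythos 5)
Your proof is correct and complete. Note that the paper deliberately gives no argument for Lemma~\ref{lem:LC2} (it is introduced as an ``elementary lemma, whose proof is omitted''), so there is nothing to compare against; your write-up simply supplies the standard missing argument. The reduction to the claim that a continuous function with everywhere nonpositive right-derivative on $[a,b)$ is nonincreasing, proved via the $\vep$-slope perturbation $\phi(t)=h(t)-h(a)-\vep(t-a)$ and the first-crossing (infimum) argument, is the classical route, and your handling of the endpoint cases ($c=b$ versus $c<b$) is careful and airtight. One small observation: continuity of $f'_+$ is used only in the first step, to guarantee via the fundamental theorem of calculus that $x\mapsto\int_0^x f'_+(t)\,\d t$ has right-derivative $f'_+(x)$; the monotonicity claim itself needs only the existence (not continuity) of the right-derivative, so your argument in fact proves slightly more than the lemma asserts.
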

\end{itemize}
Given \eqref{Zgen:lim} and the two points mentioned above, the required identity \eqref{Zt:geneq} follows:
\[
\E^{\beta\da}_{z^0}[f(Z_t)]=
\lim_{\vep \searrow 0}\left(\E^{\beta\da}_{z^0}[f(Z_\vep)]+\int_\vep^t \E^{\beta\da}_{z^0}[\ms Af(Z_s)]\d s\right)=f(z^0)+\int_0^t \E_{z^0}^{\beta\da}[\ms Af(Z_s)]\d s.
\]

To obtain \eqref{Zgen:lim}, we proceed with the following decomposition: for $0<s<t<\infty$, 
\begin{align}\label{Zlim:ratio}
\begin{split}
\frac{1}{t-s}\E^{\beta\da}_{z^0}[f(Z_{t})-f(Z_s)]&=\frac{1}{t-s}\E^{\beta\da}_{z^0}[f(Z_{t})-f(Z_s);Z_s\neq 0,T_0(Z)\circ \vartheta_s\leq t-s]\\
&\quad +\frac{1}{t-s}\E^{\beta\da}_{z^0}[\E^{\beta\da}_{Z_s}[f(Z_{t-s})-f(Z_0);T_0(Z)> t-s];Z_s\neq 0],
\end{split}
\end{align}
where $\E[Y;A]\,\defeq\,\E[Y\1_A]$, and
$\vartheta_s:C_{\Bbb C}[0,\infty)\to C_{\Bbb C}[0,\infty)$ denotes the shift operator such that $\vartheta_s(\omega)\defeq \{\omega_{s+t};t\geq 0\}$. Note that \eqref{Zlim:ratio} follows by using  the Markov property of $\{Z_t\}$ and the property that $\P^{\beta\da}_{z^0}(Z_s= 0)=0$ for all $0<s<\infty$ due to \eqref{DY:law1}. Given \eqref{Zlim:ratio}, we will show the following convergences:
\begin{itemize}
\item [\rm (1)]  The first term on the right-hand side of \eqref{Zlim:ratio} converges uniformly to zero as $(t-s) \searrow 0$ for $s_0\leq s< t\leq t_0$, for all fixed $0<s_0<t_0<\infty$ (Proposition~\ref{prop:step1}).
\item [\rm (2)] The second term on the right-hand side of \eqref{Zlim:ratio} converges to $\E^{\beta\da}_{z^0}[\ms Af(Z_s)]$ as $t\searrow s$ for all fixed $s>0$ (Proposition~\ref{prop:step2}).
\end{itemize}
These convergences will prove \eqref{Zgen:lim}, and so, complete the proof of Proposition~\ref{prop:MP} (3$\cc$). \qed \smallskip

\begin{proof}[Proof of Proposition~\ref{prop:MP} (4$\cc$)]
By \eqref{Zt:geneq} and the Markov property of $\{Z_t\}$ under $\P^{\beta\da}$, 
\begin{align}\label{MP:f}
M_t^f\,\defeq \,f(Z_t)-f( Z_0) -\int_0^t\ms Af(Z_s)\d s
\end{align}
is a continuous martingale if $f\in \C_c^2(\Bbb C)$, and hence, is a continuous local martingale  by stopping if $f\in \C^2(\Bbb C)$. In particular, for any $\theta\in \Bbb C$,
\[
M^\theta_t\,\defeq\, \la Z_t,\theta\ra-\la Z_0,\theta\ra+\int_0^t 
 \biggl\langle\frac{\hK_1(\sqrt{2\beta}|Z_s|)}{K_0(\sqrt{2\beta}|Z_s|)\overline{Z_s}},\theta\biggr\rangle
\d s
\]
is a continuous local martingale, and by taking $f(z)=\la z,\theta\ra^2$ in \eqref{MP:f} and using a standard calculation with It\^{o}'s formula (cf. \cite[the proof of (i) $\Rightarrow$ (ii) of (2.4)~Proposition, p.297]{RY}), we deduce that $\la M^\theta,M^\theta\ra_t=\la \theta,\theta\rangle t$.  Taking $\theta=1,\i,1+\i$ then shows that
\[
W_t=W^{(1)}_t+\i W^{(2)}_t\,\defeq\, Z_t-Z_0+\int_0^t \frac{\hK_1(\sqrt{2\beta}|Z_s|)\d s}{K_0(\sqrt{2\beta}|Z_s|)\overline{Z_s}},\quad \mbox{for $W^{(1)}_t,W^{(2)}_t\in \R$},
\]
is a continuous local martingale, and $\la W^{(i)},W^{(j)}\ra_t=\delta_{ij}t$, where $\delta_{ij}$ is Kronecker's delta. These properties are enough to prove 
 \eqref{Zi:SDE} since by L\'evy's characterization of Brownian motion \cite[3.16 Theorem, p.157]{KS:BM}, $\{W_t\}$ is a two-dimensional standard Brownian motion. 
\end{proof}

\subsection{Sharp negative $\P^{\beta\da}$-moments with logarithmic corrections}\label{sec:sharpmom}
Our goal in Section~\ref{sec:sharpmom} is to prove \eqref{MP:mom00} and \eqref{MP:mom0} of Proposition~\ref{prop:MP} (1$\cc$). [We have explained in the above outline how \eqref{MP:mom1} and \eqref{MP:mom} follow.] 
 We will first prove \eqref{MP:mom00} along with finer properties in Proposition~\ref{prop:BESmom} and then finish with the proof of \eqref{MP:mom0}.
Recall $\log^b(a)\,\defeq\, (\log a)^b$.

\begin{prop}\label{prop:BESmom}
{\rm (1$\cc$)} With a fixed constant $\deltaz=\deltaz(\beta)$ chosen from Lemma~\ref{lem:deltaz}, it holds that for all $0< t<\infty$,
\begin{align}
\begin{split}
\sup_{z^0\in \Bbb C}\E_{z^0}^{\beta\da}\left[\frac{\1_{\{|\sqrt{2\beta}Z_s|\leq \deltaz\}}}{|Z_s|^2K_0(\sqrt{2\beta}|Z_s|)^4}\right]
\leq C_{\ref{ineq:BESb-2}}(\beta,t)\left(\frac{\1_{\{s\leq 2\deltaz\}}}{s\log^2 s}+1\right)\in L^1([0,t],\d s),\label{ineq:BESb-2}
\end{split}
\end{align}
where $ C_{\ref{ineq:BESb-2}}(\beta,t)$ increases in $t$. In particular, \eqref{MP:mom00} holds. \smallskip 
 
\noindent {\rm (2$\cc$)} The function $\s^\beta$ defined by \eqref{def:sbeta} is continuous in $(0,\infty)$ and satisfies, for any $0<t<1/2$,
\begin{align}\label{eq:asympsbeta}
C(\beta)(\tau\log^2\tau)^{-1}\leq 
\s^\beta(\tau)\leq C_{\ref{eq:asympsbeta}}(\beta,t)(\tau \log^2\tau)^{-1},\quad \forall\;0<\tau\leq t,
\end{align}
where $C_{\ref{eq:asympsbeta}}(\beta,t)$ increases in $t$.
\end{prop}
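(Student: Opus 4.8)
I will prove part (2$\cc$) first, since part (1$\cc$) uses it through the explicit density from Theorem~\ref{thm:formulas}~(1$\cc$). For (2$\cc$), continuity of $\s^\beta$ on $(0,\infty)$ follows from dominated convergence applied to $\s^\beta(\tau)=4\pi\int_0^\infty \beta^u\tau^{u-1}\Gamma(u)^{-1}\,\d u$: near $u=0$ one has $1/\Gamma(u)\sim u$ so the integrand is locally bounded, and as $u\to\infty$ the factor $1/\Gamma(u)$ decays superexponentially, both uniformly for $\tau$ in compact subsets of $(0,\infty)$. For the two-sided asymptotics as $\tau\searrow0$, write $\s^\beta(\tau)=\frac{4\pi}{\tau}\int_0^\infty (\beta\tau)^u\Gamma(u)^{-1}\,\d u$ and, for $\beta\tau<1$, split the integral at $u=1$; the tail is $\le\beta\tau\int_1^\infty \d u/\Gamma(u)=O(\tau)$ and hence negligible, while on $[0,1]$ one uses $1/\Gamma(u)=u/\Gamma(1+u)$ together with the boundedness of $\Gamma$ above and below on $[1,2]$ to get $\int_0^1(\beta\tau)^u\Gamma(u)^{-1}\,\d u\asymp\int_0^1 u\,\e^{-u|\log(\beta\tau)|}\,\d u$, and then the elementary evaluation $\int_0^1 u\,\e^{-au}\,\d u=a^{-2}(1-(1+a)\e^{-a})$ gives $\asymp|\log(\beta\tau)|^{-2}$. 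Since $|\log(\beta\tau)|/|\log\tau|\to1$, this yields $\s^\beta(\tau)\asymp(\tau\log^2\tau)^{-1}$ as $\tau\searrow0$; combined with the continuity (hence local boundedness and strict positivity) of $\s^\beta$ on $[\varepsilon,t]$ for every $\varepsilon>0$, this gives \eqref{eq:asympsbeta} on all of $(0,t]$, the monotonicity of the constant in $t$ being automatic.

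\textbf{Proof of (1$\cc$): reduction and the key spatial estimate.} By \eqref{eq:Feller}, with $f(z)=\1_{\{\sqrt{2\beta}|z|\le\deltaz\}}/(|z|^2K_0(\sqrt{2\beta}|z|)^4)$ and $f_\beta(z):=f(z)K_0(\sqrt{2\beta}|z|)$, the quantity $\E_{z^0}^{\beta\da}[\1_{\{\sqrt{2\beta}|Z_s|\le\deltaz\}}/(|Z_s|^2K_0(\sqrt{2\beta}|Z_s|)^4)]$ equals $\int_{\sqrt{2\beta}|z|\le\deltaz}[p_s^{\beta\da}(z^0,z)\tfrac{2\beta}{\pi}]|z|^{-2}K_0(\sqrt{2\beta}|z|)^{-2}\,\d z$, with $p_s^{\beta\da}$ given by \eqref{def:pbetat}. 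The analytic heart is the contrast between two spatial integrals, both controlled using the monotonicity of $r\mapsto r^2K_0(\sqrt{2\beta}r)^a$ on $(0,\deltaz/\sqrt{2\beta}]$ provided by Lemma~\ref{lem:deltaz} (this is what fixes $\deltaz$): (i) the bare singular integral $\int_{\sqrt{2\beta}|z|\le\deltaz}|z|^{-2}K_0(\sqrt{2\beta}|z|)^{-a}\,\d z\asymp\int_0^{\cdot}r^{-1}(\log1/r)^{-a}\,\d r<\infty$ for every $a>1$ (\emph{no} logarithmic loss); and (ii) its heat-smoothed version $P_\sigma f_\beta(0)$, for which the substitution $r=\sqrt\sigma\rho$ reduces the polar integral to $\sigma^{-1}\int_0^{\cdot}\e^{-\rho^2/2}\rho^{-1}K_0(\sqrt{2\beta\sigma}\rho)^{-3}\,\d\rho$; using $K_0(\sqrt{2\beta\sigma}\rho)\asymp \log(1/\sqrt\sigma)+\log(1/\rho)$ on $\rho\le1$ and the elementary $\int_0^\infty(L+v)^{-3}\,\d v=(2L^2)^{-1}$ with $L=\log(1/\sqrt\sigma)$, one gets $P_\sigma f_\beta(0)\asymp(\sigma\log^2\sigma)^{-1}$ as $\sigma\searrow0$ (and $P_\sigma f_\beta(0)$ bounded for $\sigma$ away from $0$, by continuity in $\sigma$). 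The single lost power of $\log$ in (ii) relative to (i) is precisely the ``logarithmic correction'' of the title.

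\textbf{Proof of (1$\cc$): the two cases.} For $z^0=0$, the relevant case of \eqref{def:pbetat} (equivalently \eqref{DY:law1} with $g\equiv1$) gives $\E_0^{\beta\da}[f(Z_s)]=\frac{\e^{-\beta s}}{2\pi}\int_0^s\s^\beta(\tau)P_{s-\tau}f_\beta(0)\,\d\tau$; both factors are $\lesssim(\cdot\log^2\cdot)^{-1}$ near their singularities $\tau=0$ and $\tau=s$ and bounded away from them, so, $\int_{0+}(\tau\log^2\tau)^{-1}\,\d\tau$ being finite, splitting at $\tau=s/2$ and estimating each half by a supremum of one factor times the integral of the other yields $\E_0^{\beta\da}[f(Z_s)]\lesssim(s\log^3 s)^{-1}\le C(\beta,t)(\1_{\{s\le2\deltaz\}}(s\log^2 s)^{-1}+1)$ for $s\le t$. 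For $z^0\neq0$, use \eqref{DY:law1-0}, i.e.\ $\E_{z^0}^{\beta\da}[f(Z_s)]=\frac{\e^{-\beta s}P_sf_\beta(z^0)}{K_0(\sqrt{2\beta}|z^0|)}+\E_{z^0}^{\beta\da}[\E_0^{\beta\da}[f(Z_{s-T_0(Z)})];\,T_0(Z)\le s]$ (strong Markov property at $T_0(Z)$ and Lemma~\ref{lem:BESQb1}). In the second term, feed in the $z^0=0$ bound and the density $\P_{z^0}^{\beta\da}(T_0(Z)\in\d u)=\frac{\e^{-\beta u}\e^{-|z^0|^2/2u}}{2uK_0(\sqrt{2\beta}|z^0|)}\,\d u$ from \eqref{def:T0Z}, then split the $u$-integral at $u=s/2$: for $u\le s/2$ one has $s-u\ge s/2$, where the $z^0=0$ bound is $\asymp(s\log^2 s)^{-1}$; for $u>s/2$, bound $\e^{-|z^0|^2/2u}/(2uK_0(\sqrt{2\beta}|z^0|))$ via $\sup_{z^0\neq0,\,s\le t}\e^{-|z^0|^2/2s}/K_0(\sqrt{2\beta}|z^0|)\le C(\beta,t)$, a Gaussian-versus-exponential comparison as in Step~1-1 of the proof of Theorem~\ref{thm:formulas}~(1$\cc$). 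The first term is estimated in the same spirit: over $|z^1|\le\sqrt s$ use $P_s\le(2\pi s)^{-1}$ and (i) to get $\lesssim s^{-1}\int_0^{\sqrt s}r^{-1}K_0(\sqrt{2\beta}r)^{-3}\,\d r\asymp(s\log^2s)^{-1}$, over $\sqrt s\le\sqrt{2\beta}|z^1|\le\deltaz$ use the monotonicity from Lemma~\ref{lem:deltaz} and total mass $1$ of $P_s(z^0,\cdot)$ to get $\lesssim(s\log^3 s)^{-1}$, and handle the prefactor $K_0(\sqrt{2\beta}|z^0|)^{-1}$ by its boundedness for $|z^0|\le\deltaz/\sqrt{2\beta}$ and by the same Gaussian-versus-exponential estimate for larger $|z^0|$ (where $P_sf_\beta(z^0)$ is exponentially small in $|z^0|^2/s$). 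Finally \eqref{MP:mom00} follows from \eqref{ineq:BESb-2} at $s=t$ plus the trivial bound on $\{\deltaz/\sqrt{2\beta}<|Z_t|\le1\}$, where the integrand is bounded.

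\textbf{Main obstacle.} The delicate point throughout (1$\cc$) is to retain, at every place where a factor $s^{-1}$ threatens to appear, the compensating power of $\log$: since the bare singular integral (i) carries \emph{no} logarithmic gain, one must not bound $|z^1|^{-2}\le s^{-1}$ on the whole support disc but only on $|z^1|\lesssim\sqrt s$, controlling the remaining annulus via the monotonicity of $r^2K_0(\sqrt{2\beta}r)^a$ from Lemma~\ref{lem:deltaz}; this, together with the uniform-in-$z^0$ Gaussian-versus-exponential estimate, is what upgrades the bound from the non-integrable $s^{-1}$ to the integrable $(s\log^2 s)^{-1}$. Part (2$\cc$) is comparatively routine.
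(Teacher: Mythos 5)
Your part (2$\cc$) is correct and close in spirit to the paper's own computation (the paper evaluates $\int_0^1 u\tau^u\,\d u$ explicitly rather than comparing $1/\Gamma(u)$ with $u$, but the content is the same), and much of your part (1$\cc$) follows the paper's route: the formula \eqref{DY:law1}, polar-coordinate estimates based on the antiderivatives of $1/(x\log^2x)$ and $1/(x|\log^3 x|)$, a convolution bound split at $s/2$, the strong Markov property at $T_0(Z)$ with the hitting density \eqref{def:T0Z}, and a Gaussian-versus-exponential bound for large $|z^0|$. However, there is a genuine gap in your treatment of the second term for $z^0\neq 0$ on the range $u=T_0(Z)\in(s/2,s]$. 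There you bound the hitting density $\e^{-\beta u}\e^{-|z^0|^2/(2u)}/\bigl(2uK_0(\sqrt{2\beta}|z^0|)\bigr)$ by $C(\beta,t)/s$, discarding the factor $1/K_0(\sqrt{2\beta}|z^0|)$ altogether, and then integrate the $z^0=0$ bound over $s-u\in(0,s/2)$, which by \eqref{int:xlog2x} produces a factor $\asymp 1/|\log s|$. The resulting contribution is of order $C(\beta,t)/(s|\log s|)$, which is \emph{not} dominated by $C\bigl(\tfrac{\1_{\{s\leq 2\deltaz\}}}{s\log^2 s}+1\bigr)$ as $s\searrow 0$, and moreover $s\mapsto 1/(s|\log s|)$ is not integrable at $0$. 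So, as written, your argument does not establish \eqref{ineq:BESb-2} — in particular not its $L^1([0,t],\d s)$ clause, which is precisely what the Gr\"onwall argument for \eqref{MP:mom0} needs — and only yields the single-time statement \eqref{MP:mom00}.

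The missing logarithm must come from the prefactor $1/K_0(\sqrt{2\beta}|z^0|)$, which cannot be replaced by a constant uniformly in $z^0$: when $T_0(Z)$ is close to $s$, the starting points whose contribution is not already negligible have $|z^0|$ small, and then $1/K_0(\sqrt{2\beta}|z^0|)\asymp 1/|\log|z^0||$ is itself of order $1/|\log s|$ or smaller. This is exactly what the paper's Step~2-2 exploits through a case analysis in $|z^0|^2$ versus $s/2$: if $|z^0|^2<s/2$, the bound $K_0(\sqrt{2\beta}|z^0|)\geq K_0(\sqrt{\beta s})\gtrsim|\log s|$ supplies the second logarithm; if $|z^0|^2\geq s/2$, one uses $\sup_{\tau>0}P_{2\tau}(\two z^0)\lesssim|z^0|^{-2}$ together with $K_0(\sqrt{2\beta}|z^0|)\asymp|\log |z^0||$ and the monotonicity of $x\log^2 x$ from Lemma~\ref{lem:deltaz} to dominate by $C/\bigl((s/2)\log^2(s/2)\bigr)$. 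Alternatively, your split can be repaired by the sharper uniform estimate $\sup_{z^0\neq 0}\e^{-|z^0|^2/(2u)}/K_0(\sqrt{2\beta}|z^0|)\leq C(\beta)/|\log u|$ for small $u$, which restores the needed factor; but some such retention of the $z^0$-dependence is indispensable. Two smaller points: Lemma~\ref{lem:deltaz} provides monotonicity of $x|\log^2x|$, $x|\log^3x|$ and $xK_0(\sqrt{2\beta}x)^2$, not of $r^2K_0(\sqrt{2\beta}r)^a$ as you cite (the latter does hold near $0$, but you must prove it or shrink $\deltaz$); and your claimed two-sided asymptotics for $P_\sigma f_\beta(0)$ only treat $\rho\leq 1$ — the annulus $1\leq\rho\leq\deltaz/\sqrt{2\beta\sigma}$ needs an additional monotonicity or Gaussian-decay argument (it contributes $O\bigl((\sigma|\log\sigma|^3)^{-1}\bigr)$), though supplying this is routine and only the upper bound is used.
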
 

\begin{lem}\label{lem:deltaz}
There exists a constant $\deltaz=\deltaz(\beta)\in (0,\e^{-3})$ such that all of the following three functions are increasing on $(0,\deltaz]$: $x\mapsto x|\log ^jx|,\;j=2,3$, and $x\mapsto x K_0(\sqrt{2\beta}x)^2$.
\end{lem}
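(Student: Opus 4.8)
The plan is to treat the three functions separately, exhibiting for each an explicit threshold $\delta>0$ below which it is strictly increasing, and then to take $\deltaz=\deltaz(\beta)$ to be any number smaller than all three thresholds and smaller than $\e^{-3}$.

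For the two logarithmic functions the work is a one‑line differentiation: for $0<x<1$ and $j\in\{2,3\}$ one has $x|\log^jx|=x(\log x^{-1})^j$, so
\[
\frac{\d}{\d x}\,x(\log x^{-1})^j=(\log x^{-1})^{j-1}\bigl(\log x^{-1}-j\bigr),
\]
which is strictly positive whenever $\log x^{-1}>j$. Since $\e^{-3}\le\e^{-2}$, every $x\in(0,\e^{-3})$ satisfies $\log x^{-1}>3\ge j$, so both $x\mapsto x|\log^2x|$ and $x\mapsto x|\log^3x|$ are already increasing on all of $(0,\e^{-3})$.

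The only step with any content is the Macdonald term $g(x)\defeq xK_0(\sqrt{2\beta}x)^2$. Using $K_0'=-K_1$ I would compute
\[
g'(x)=K_0(\sqrt{2\beta}x)\Bigl(K_0(\sqrt{2\beta}x)-2\sqrt{2\beta}x\,K_1(\sqrt{2\beta}x)\Bigr),
\]
and then check that the bracketed factor is positive near the origin. This is immediate from the asymptotics already recorded in the paper: by \eqref{K00} and \eqref{K10}, as $x\searrow0$ one has $K_0(\sqrt{2\beta}x)\to\infty$ while $\sqrt{2\beta}x\,K_1(\sqrt{2\beta}x)=\hK_1(\sqrt{2\beta}x)\to1$, so the bracket tends to $+\infty$; since $K_0>0$ throughout, there is $\delta_3(\beta)>0$ with $g'>0$ on $(0,\delta_3]$. (If one prefers to avoid asymptotics, the recurrences for $K_0'$, $K_1'$ and $K_0-K_2$ recalled in the proof of Theorem~\ref{thm:formulas} give $\frac{\d}{\d y}\bigl(yK_1(y)\bigr)=-yK_0(y)<0$, hence $\hK_1(y)=yK_1(y)<1$ for all $y>0$, whence $g'(x)>K_0(\sqrt{2\beta}x)\bigl(K_0(\sqrt{2\beta}x)-2\bigr)$, which is positive as soon as $K_0(\sqrt{2\beta}x)\ge2$, i.e.\ for $x$ below an explicit $\beta$‑dependent threshold.) I do not expect a genuine obstacle here; it is bookkeeping with known estimates, and the $\beta$‑dependence enters only through the rescaling $y=\sqrt{2\beta}x$.

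To finish I would set $\deltaz=\deltaz(\beta)$ equal to any element of $\bigl(0,\min\{\e^{-3},\delta_3(\beta)\}\bigr)$. Then $(0,\deltaz]$ is contained in each of the three intervals found above, so all three functions are strictly increasing on $(0,\deltaz]$, and $\deltaz\in(0,\e^{-3})$, as required.
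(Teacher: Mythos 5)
Your proof is correct and follows essentially the same route as the paper: elementary differentiation for the two logarithmic factors, and for $x\mapsto xK_0(\sqrt{2\beta}x)^2$ the derivative identity $\frac{\d}{\d x}\,xK_0(x)^2=K_0(x)[K_0(x)-2xK_1(x)]$ combined with the small-$x$ asymptotics \eqref{K00} and \eqref{K10}, then taking $\deltaz$ below the minimum of the thresholds. Your parenthetical alternative via $\frac{\d}{\d y}\bigl(yK_1(y)\bigr)=-yK_0(y)$ is a nice optional refinement but does not change the substance.
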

\begin{proof}
It suffices to note the following monotonicity properties: $x\mapsto x|\log^2 x|$ is increasing over $0<x\leq \e^{-2}$, $x\mapsto x|\log^3 x|$ is increasing over $0<x\leq \e^{-3}$, and $x\mapsto xK_0(x)^2$ is increasing over all small enough $x>0$. The last monotonicity can be seen 
by using the asymptotic representations \eqref{K00} and \eqref{K10} of $K_0(x)$ and $K_1(x)$ as $x\to 0$ since
\begin{align}\label{der:xK0}
\frac{\d}{\d x} xK_0(x)^2=K_0(x)^2+2xK_0(x)K_0'(x)=K_0(x)[K_0(x)-2xK_1(x)].
\end{align}
See \cite[(5.7.9) on p.110]{Lebedev} for the derivative $K_0'(x)=-K_1(x)$ used in \eqref{der:xK0}. 
\end{proof}

In the following proof, we will use the standard fact that
$\{|Z_t|^2\}$ under $\P^{(0)}$ is a version of $\BES Q$ of index $0$: $\d |Z_t|^2=2\d t+2|Z_t|\d \tilde{B}_t$ for some one-dimensional standard Brownian motion $\{\tilde{B}_t\}$ \cite[p.439]{RY}, since $\{Z_t\}$ is a two-dimensional Brownian motion under $\P^{(0)}$ by definition. We denote the $\BES Q$ of index $\nu$ by $\BES Q(\nu)$.  \smallskip

\begin{proof}[Proof of Proposition~\ref{prop:BESmom} (1$\cc$)]
First, the $L^1$-property in \eqref{ineq:BESb-2} holds just because 
\begin{align}\label{int:xlog2x}
\int \frac{\d x}{x\log^2x}=-\frac{1}{\log x}+C,\quad 0<x<1.
\end{align}
To obtain the inequality in \eqref{ineq:BESb-2}, we consider $z^0=0$ and $z^0\neq 0$ separately in Steps~1 and~2 below. These steps will repeatedly use the following shorthand notation so that the expectation considered in \eqref{ineq:BESb-2} equals $\E^{\beta\da}_{z^0}[m(Z_s)]$:
\begin{align}\label{def:tfunc}
m(z^1)\,\defeq\, \frac{\1_{\{\sqrt{2\beta}|z^1|\leq \deltaz\}}}{|z^1|^2K_0(\sqrt{2\beta}|z^1|)^4},\quad z^1\in \Bbb C.
\end{align}
Also, note that $m(z^1)\equiv m_0(|z^1|)$, where
\begin{align}\label{def:func}
m_0(x)\,\defeq \,\frac{\1_{\{\sqrt{2\beta}x\leq \deltaz\}}}{x^2K_0(\sqrt{2\beta}x)^4}\quad\mbox{ is decreasing in $(0,\infty)$}.
\end{align}
This decreasing monotonicity holds since, for $0<x\leq y$, $\1_{\{\sqrt{2\beta}y\leq \deltaz\}}\leq \1_{\{\sqrt{2\beta}x\leq \deltaz\}}$ and when $\sqrt{2\beta}y\leq \deltaz$, $xK_0(\sqrt{2\beta} x)^2\leq yK_0(\sqrt{2\beta} y)^2$ by the choice of $\deltaz$ (Lemma~\ref{lem:deltaz}). \smallskip

\noindent {\bf Step~1 ($\bs z^{\bs 0}\bs = \bs 0$).} We take three further steps below
to bound the right-hand side of the following inequality, which is obtained from \eqref{DY:law1} with $z^0=0$, $t=s$, $f\equiv m$ and $g\equiv 1$:
\begin{align}
\E_{0}^{\beta\da}\left[\frac{\1_{\{|\sqrt{2\beta}Z_s|\leq \deltaz\}}}{|Z_s|^2K_0(\sqrt{2\beta}|Z_s|)^4}\right]
&\leq \int_0^s  \s^\beta(\tau)\int_{|z^1|\leq \frac{\deltaz}{\sqrt{2\beta}}} \frac{P_{s-\tau}(z^1)\d z^1}{|z^1|^2 K_0(\sqrt{2\beta}|z^1|)^3} \d \tau,\label{ineq:BESb0}
\end{align}
where $m(\cdot)$ is defined by \eqref{def:tfunc}, and $\s^\beta$ is defined in \eqref{def:sbeta}. The following argument
views the right-hand side of \eqref{ineq:BESb0} as a convolution of the two functions
\begin{align}\label{ineq:bconv}
\tau\mapsto \s^\beta(\tau) \quad \&\quad \tau\mapsto \int_{|z^1|\leq \frac{\deltaz}{\sqrt{2\beta}}} \frac{P_{\tau}(z^1)\d z^1}{|z^1|^2 K_0(\sqrt{2\beta}|z^1|)^3}.
\end{align}

\noindent {\bf Step~1-1.} To bound the first function in \eqref{ineq:bconv}, we rewrite the integral in \eqref{def:sbeta} that defines $\s^\beta(\tau)$, using the formula $\Gamma(u+1)=u\Gamma(u)$ for $u>0$, and then consider two inequalities:
\begin{align}
\forall\;0<\tau\leq t,\quad 
\s^\beta(\tau)
&=\frac{4\pi}{ \tau}\int_0^1 \frac{u(\beta\tau)^u}{\Gamma(u+1)}\d u +4\pi\int_1^\infty\frac{\beta^u \tau^{u-1}}{\Gamma(u)}\d u\label{ineq:BESb1}\\
&\leq \frac{4\pi \max\{\beta, 1\}}{\tau}\int_0^1u\tau^u\d u +4\pi \int_1^\infty\frac{\beta^u (t\vee 1)^{u-1}}{\Gamma(u)}\d u\label{ineq:BESb1-1}\\
&\leq C_{\ref{ineq:BESb2}}(\beta,t)\left(\frac{\1_{\{  \tau\leq \deltaz\}}}{\tau \log ^2\tau}+1\right),\label{ineq:BESb2}
\end{align}
where the last inequality follows because
\begin{align}\label{int:xax}
\int_0^1 ua^u\d u&=\frac{-a+a\log a+1}{\log^2a},\quad 0<a\neq 1,\\
C_{\ref{ineq:BESb2}}(\beta,t)&\,\defeq\,
4\pi \max\left\{\beta, 1,\int_1^\infty\frac{\beta^u (t\vee 1)^{u-1}}{\Gamma(u)}\d u\right\} .
\end{align}

\noindent {\bf Step~1-2.} The bound we wish to prove in Step~1-2 is for the second function in \eqref{ineq:bconv}:
\begin{align}\label{ineq:BESb5-conclusion}
 \int_{|z^1|\leq \frac{\deltaz}{\sqrt{2\beta}}} \frac{P_{\tau}(z^1)\d z^1}{|z^1|^2 K_0(\sqrt{2\beta}|z^1|)^3}\leq C(\beta)\left(\frac{\1_{\{ \tau\leq \deltaz\}}}{\tau\log^2 \tau }+1\right),\quad 0<\tau\leq t.
\end{align}

We first use the polar coordinates to get
\begin{align}
\forall\;0<\tau<\infty,\quad \int_{|z^1|\leq \frac{\deltaz}{\sqrt{2\beta}}} \frac{P_{\tau}(z^1)\d z^1}{|z^1|^2 K_0(\sqrt{2\beta}|z^1|)^3}
&=
\int_{0}^{\frac{\deltaz}{\sqrt{2\beta}}} \frac{\e^{-\frac{r^2}{2\tau}}\d r}{\tau rK_0(\sqrt{2\beta}r)^3}\notag\\
&\leq C(\beta) \int_{0}^{\frac{\deltaz}{\sqrt{2\beta}}} \frac{\e^{-\frac{r^2}{2\tau}}\d r}{\tau r|\log^3 (\sqrt{2\beta} r)|},
\label{ineq:BESb4}
\end{align}
where the last inequality follows from the asymptotic representation \eqref{K00} of $K_0(x)$ as $x\to 0$.
Next, we consider $\sqrt{\tau}\leq \deltaz$ and $\sqrt{\tau}> \deltaz$ separately. For the case of $\sqrt{\tau}\leq \deltaz$, the integral on the right-hand side of \eqref{ineq:BESb4} satisfies  
\begin{align}
\int_{0}^{\frac{\deltaz}{\sqrt{2\beta}}} \frac{\e^{-\frac{r^2}{2\tau}}\d r}{\tau r|\log^3 (\sqrt{2\beta} r)|}
&\leq \int_{0}^{\frac{\sqrt{\tau}}{\sqrt{2\beta}}} \frac{\d r}{\tau r|\log^3 (\sqrt{2\beta} r)|}+\int_{\frac{\sqrt{\tau}}{\sqrt{2\beta}}}^{\frac{\deltaz}{\sqrt{2\beta}}}\frac{\e^{-\frac{r^2}{2\tau}}\d r}{\tau r|\log^3 (\sqrt{2\beta} r)|}\notag\\
&\less\frac{1}{\tau\log^2\sqrt{\tau} }+\frac{C(\beta)}{\sqrt{\tau}|\log^3 (\sqrt{ \tau})|}\int_{\frac{\sqrt{\tau}}{\sqrt{2\beta}}}^{\frac{\deltaz}{\sqrt{2\beta}}}\frac{1}{\tau}\e^{-\frac{r^2}{2\tau}}\d r.\label{ineq:BESb5-0}
\end{align}
Here, the first term in \eqref{ineq:BESb5-0} follows by using 
the identity 
\begin{align}\label{int:xlog3ax}
\int \frac{\d x}{-x\log^3(ax)}=\frac{1}{2\log^2(ax)}+C,\quad x>0,\;0<ax<1,
\end{align}
and the second term in \eqref{ineq:BESb5-0} uses the increasing monotonicity of $x\mapsto x|\log^3x|$ over $0<x\leq \deltaz$ (Lemma~\ref{lem:deltaz}). By \eqref{ineq:BESb4}, \eqref{ineq:BESb5-0} and the change of variables that replaces $r/\sqrt{\tau}$ by $r$, 
\begin{align}
\int_{|z^1|\leq \frac{\deltaz}{\sqrt{2\beta}}} \frac{P_{\tau}(z^1)\d z^1}{|z^1|^2 K_0(\sqrt{2\beta}|z^1|)^3}
&\less\frac{C(\beta)}{\tau\log^2\sqrt{\tau} }+\frac{C(\beta)}{\tau|\log^3 (\sqrt{ \tau})|}\int_{\frac{1}{\sqrt{2\beta}}}^{\frac{\deltaz}{\sqrt{2\beta \tau}}} \e^{-\frac{r^2}{2}}\d r,\notag
\end{align}
which is enough to prove \eqref{ineq:BESb5-conclusion} except that we only consider here $\sqrt{\tau}\leq \deltaz$. For the complementary case of $\sqrt{\tau}> \deltaz$,
we obtain from \eqref{ineq:BESb4} and \eqref{int:xlog3ax} that
\[
\int_{|z^1|\leq \frac{\deltaz}{\sqrt{2\beta}}} \frac{P_{\tau}(z^1)\d z^1}{|z^1|^2 K_0(\sqrt{2\beta}|z^1|)^3}\leq \frac{C(\beta)}{\tau\log^2\deltaz}\leq C(\beta)\left(\frac{\1_{\{ \tau\leq \deltaz\}}}{\tau\log^2 \tau }+1\right).
\] 
The proof of  the whole statement of\eqref{ineq:BESb5-conclusion} is complete. \smallskip

\noindent {\bf Step~1-3.}
We now prove that for $C_{\ref{ineq:BESb-g2---1}}(\beta,t)$ increasing in $t$, 
\begin{align}\label{ineq:BESb-g2---1}
\E_{0}^{\beta\da}\left[\frac{\1_{\{|\sqrt{2\beta}Z_s|\leq \deltaz\}}}{|Z_s|^2K_0(\sqrt{2\beta}|Z_s|)^4}\right]
\leq C_{\ref{ineq:BESb-g2---1}}(\beta,t)\left(\frac{\1_{\{s\leq 2\deltaz\}}}{s\log^2 s}+1\right),\quad \forall\;0<s\leq t.
\end{align}
First, it follows from \eqref{ineq:BESb0}, \eqref{ineq:BESb2}, and \eqref{ineq:BESb5-conclusion} that for all $0<s\leq t$,
\begin{align}
\E_{z^0}^{\beta\da}\left[\frac{\1_{\{|\sqrt{2\beta}Z_s|\leq \deltaz\}}}{|Z_s|^2K_0(\sqrt{2\beta}|Z_s|)^4}\right]
&\leq C_{\ref{ineq:BESb2}}(\beta,t)C(\beta)\int_0^s \left(\frac{\1_{\{ \tau\leq \deltaz\}}}{\tau\log^2 \tau}+1\right) \notag\\&\quad \times\left(\frac{\1_{\{ s-\tau\leq \deltaz\}}}{(s-\tau)\log^2 (s-\tau)}+1\right)\d \tau  \notag\\
\begin{split}
&= C_{\ref{ineq:BESb2}}(\beta,t)C(\beta)\left(2\int_0^s \frac{\1_{\{ \tau\leq \deltaz\}}}{\tau\log^2 \tau}\d \tau+s\right)\\
&\quad +C_{\ref{ineq:BESb2}}(\beta,t)C(\beta)\int_0^s\frac{\1_{\{ \tau\leq \deltaz\}}}{\tau\log^2  \tau}\frac{\1_{\{  s-\tau\leq \deltaz\}}}{( s-\tau)\log^2   (s-\tau)}\d \tau\notag
\end{split}\\
\begin{split}
&\leq C_{\ref{ineq:BESb2}}(\beta,t)C(\beta)\max\left\{\int_0^t \frac{\1_{\{ \tau\leq \deltaz\}}}{\tau\log^2 \tau}\d \tau+t,1\right\}\\
&\quad \times\left(1+\frac{\1_{\{ s/2\leq \deltaz\}}}{(s/2)\log^2 (s/2)}\right),\label{ineq:BESb-g2}
\end{split}
\end{align}
where $ \int_0^t\1_{\{ \tau\leq \deltaz\}}\d \tau/(\tau\log^2 \tau)<\infty$ by \eqref{int:xlog2x}, and
\eqref{ineq:BESb-g2} uses the choice of $\deltaz$ (Lemma~\ref{lem:deltaz}) to validate the following general bound: for any decreasing functions $f,g\geq 0$,
\begin{align}
\forall\;0<s\leq t,\quad 
\int_0^s  f(\tau)g(s-\tau)\d \tau&=\int_0^{s/2} f(\tau)g(s-\tau)\d \tau+\int_{s/2}^s f(\tau)g(s-\tau)\d \tau\notag\\
&\leq g(s/2)\int_0^{s/2}f(\tau)\d \tau +f(s/2)\int_{s/2}^s g(s-\tau)\d \tau \notag\\
&\leq g(s/2)\int_0^{t/2} f(\tau)\d \tau+f(s/2)\int_{0}^{t/2} g(\tau)\d \tau .\label{conv:dec}
\end{align}
Note that \eqref{ineq:BESb-g2} is enough to prove \eqref{ineq:BESb-g2---1}. \smallskip

\noindent {\bf Step~2 ($\bs z^{\bs 0}\bs \neq \bs 0$).} Our goal now is to extend 
\eqref{ineq:BESb-g2---1} to initial conditions $z^0\neq 0$ such that the bounds obtained are uniform in $z^0\neq 0$. This way, we will prove the inequality in \eqref{ineq:BESb-2}. 

To obtain the required extension of \eqref{ineq:BESb-g2---1}, 
recall that $m(\cdot)$ is defined by \eqref{def:tfunc}, and let $m_\beta(\cdot)$ be the transformation of $m(\cdot)$ as defined below \eqref{DY:law1}. Then we consider the following bound 
implied by \eqref{DY:law1} and \eqref{ineq:BESb-g2---1}: for all $z^0\neq 0$ and $0<s\leq t$,
\begin{align}
&\quad \E_{z^0}^{\beta\da}\left[\frac{\1_{\{|\sqrt{2\beta}Z_s|\leq \deltaz\}}}{|Z_s|^2K_0(\sqrt{2\beta}|Z_s|)^4}\right]\notag\\
&\leq \frac{\e^{-\beta s}P_{s}m_{\beta}(z^0)}{K_0(\sqrt{2\beta}|z^0|)} \notag\\
&\quad\;+C_{\ref{ineq:BESb-g2---1}}(\beta,t)\int_0^s \frac{P_{2\tau}(\two z^0)\e^{-\beta \tau}}{{K_0(\sqrt{2\beta}|z^0|)}}
\left(\frac{\1_{\{(s-\tau)\leq 2\deltaz\}}}{(s-\tau)\log^2(s-\tau)}+1\right)\d \tau\notag\\
\begin{split}
&\leq \frac{\e^{-\beta s}P_{s}m_{\beta}(z^0)}{K_0(\sqrt{2\beta}|z^0|)} \\
&\quad\;+C_{\ref{case2}}(\beta,t) \int_0^s \frac{P_{2\tau}(\two z^0)\e^{-\beta \tau}}{{K_0(\sqrt{2\beta}|z^0|)}}
\left(\frac{\1_{\{(s-\tau)\leq \deltaz\}}}{(s-\tau)\log^2(s-\tau)}+1\right)\d \tau.\label{case2}
\end{split}
\end{align}
[We change ``$C_{\ref{ineq:BESb-g2---1}}(\beta,t)$'' and
``$(s-\tau)\leq 2\deltaz$'' to ``$C_{\ref{case2}}(\beta,t)$'' and ``$(s-\tau)\leq \deltaz$,'' respectively, to get the last inequality.] Below, Step~2-1 bounds the first term on the right-hand side of \eqref{case2}, Steps~2-2 bounds the last integral, and Step~2-3 gives the conclusion of Step~2.\smallskip 

\noindent {\bf Step~2-1.} We first note the following:
\begin{align}
\frac{\e^{-\beta s}P_{s} m_{\beta}(z^0)}{K_0(\sqrt{2\beta}|z^0|)}&\leq \frac{\e^{-\beta s}P_{s} m_{\beta}(0)}{K_0(\sqrt{2\beta}|z^0|)}\notag\\
&\leq \frac{C(\beta)\e^{-\beta s}}{K_0(\sqrt{2\beta}|z^0|)}
\E^{(0)}_0\left[\frac{\1_{\{\sqrt{2\beta}|Z_s|\leq \deltaz\}}}{|Z_s|^2\bigl|\log^3( \sqrt{2\beta}|Z_s|)\bigr|}\right]\notag\\
&= \frac{C(\beta)\e^{-\beta s}}{K_0(\sqrt{2\beta}|z^0|)}\int_0^\infty\frac{\1_{\{\sqrt{2\beta }r\leq \deltaz\}}\exp(-\frac{r^2}{2s})\d r}{sr |\log^3 (\sqrt{2\beta }r)|} .
\label{Case2-1}
\end{align}
Here, the first inequality holds by the comparison theorem of SDEs specialized to the case of $\BES Q(0)$ \cite[2.18 Proposition, p.293]{KS:BM} since
$K_0$ is decreasing, and so,
 for $m_0(\cdot)$ defined by \eqref{def:func}, $x\mapsto m_0(x) K_0(\sqrt{2\beta }x)$ is also decreasing; the second inequality uses the asymptotic representation \eqref{K00} of $K_0(x)$ as $x\to 0$ and the notation that $\{Z_t\}$ under $\P^{(0)}$ is a two-dimensional standard Brownian motion; the equality uses the polar coordinates. 

Next, we use \eqref{Case2-1} to show the following bound: for all $0<s\leq t$,
 \begin{align}\label{Case2-4}
\sup_{z^0:0<|z^0|\leq 4\deltaz/\sqrt{2\beta}}\frac{\e^{-\beta s}P_{s} m_{\beta}(z^0)}{K_0(\sqrt{2\beta}|z^0|)}\leq C(\beta) \left( \frac{\1_{\{s\leq 2\deltaz \}}}{(s/2)\log^2(s/2)}+1\right).
\end{align}
 [The reason for using $|z^0|\leq 4\deltaz/\sqrt{2\beta}$ in \eqref{Case2-4} will become clear in the next paragraph.] Under the assumption of $\deltaz\geq (s/2)^{1/4}>0$, \eqref{Case2-4} can be seen by noting that the integral in \eqref{Case2-1} after a change of variables satisfies 
\begin{align}
&\quad\;\int_0^\infty \frac{\1_{\{\sqrt{2\beta }r\leq \deltaz\}}\exp(-\frac{r^2}{2s})\d r}{sr |\log^3 (\sqrt{2\beta }r)|} 
=\biggl(\int_0^{\frac{1}{\sqrt{2\beta}2^{1/4}s^{1/4}}} +\int^{\frac{\deltaz}{\sqrt{2\beta s}}}_{\frac{1}{\sqrt{2\beta}2^{1/4}s^{1/4}}} \biggr)\frac{\exp(-\frac{r^2}{2})\d r}{sr |\log^3 (\sqrt{2\beta s}r)|}\notag\\
&\leq \frac{1}{s\log^2[(s/2)^{1/4}]}+\exp\biggl(-\frac{1}{4}\biggl(\frac{1}{\sqrt{2\beta (s/2)^{1/2}}}\biggr)^2\biggr)
\int^{\frac{\deltaz}{\sqrt{2\beta s}}}_{\frac{1}{\sqrt{2\beta}2^{1/4}s^{1/4}}} \frac{\exp(-\frac{r^2}{4})\d r}{sr |\log^3 (\sqrt{2\beta s}r)|},\notag
\end{align}
where the first term on the right-hand side uses \eqref{int:xlog3ax}, and
the second term can be bounded by $C(\beta)$. Also, for $s>0$ such that $\deltaz< (s/2)^{1/4}$, the right-hand side of \eqref{Case2-1} is bounded by $C(\beta)$, so \eqref{Case2-4}  holds again. We have proved \eqref{Case2-4} for all $0<s\leq t$. 

Next, we show that for all $0<s\leq t$,
\begin{align}\label{Case2-5}
\sup_{z^0:|z^0|>4\deltaz/\sqrt{2\beta}}\frac{\e^{-\beta s}P_{s}m_{\beta}(z^0)}{K_0(\sqrt{2\beta}|z^0|)}
&\leq C_{\ref{Case2-5}}(\beta,t)\left( \frac{\1_{\{s\leq 2\deltaz \}}}{(s/2)\log^2(s/2)}+1\right),
\end{align}
where $C_{\ref{Case2-5}}(\beta,t)$ increases in $t$.
The point of the proof now is to ``subdue'' $1/K_0(\sqrt{2\beta}|z^0|)$ for large $|z^0|$, which is not needed for \eqref{Case2-4}.
Specifically,
to see \eqref{Case2-5}, note that $m_0(x)=m_0(x)\1_{\{\sqrt{2\beta}x\leq \deltaz\}}$ by \eqref{def:func}. Also, for $|z^1|\leq \deltaz/\sqrt{2\beta}$ and
$|z^0|>4 \deltaz/\sqrt{2\beta}$, $|z^0-z^1|>|z^0|-|z^1|\geq 3|z^0|/4$. Hence, for $|z^0|>4 \deltaz/\sqrt{2\beta}$ and $0<s\leq t$,
\begin{align}
\frac{\e^{-\beta s}P_{s}m_{\beta}(z^0)}{K_0(\sqrt{2\beta}|z^0|)}&\less \frac{\e^{-\beta s}\exp(-\frac{(3|z^0|/4)^2}{4s})P_{2s}m_{\beta}(z^0)
}{K_0(\sqrt{2\beta}|z^0|)}\notag\\
&\leq  \frac{\e^{-\beta s}\exp(-\frac{(3|z^0|/4)^2}{4t})P_{2s}m_{\beta}(z^0)
}{K_0(\sqrt{2\beta}|z^0|)}.\label{exp:extract}
\end{align}
Recall the asymptotic representation \eqref{K0infty} of $K_0(x)$ as $x\to\infty$.
Since the proof of \eqref{Case2-4} effectively only bounds $P_sm_\beta(z^0)$, it extends to $P_{2s}m_\beta(z^0)$ and 
 we obtain from \eqref{exp:extract} that
\begin{align*}
\sup_{z^0:|z^0|>4\deltaz/\sqrt{2\beta}}\frac{\e^{-\beta s}P_{s}m_{\beta}(z^0)}{K_0(\sqrt{2\beta}|z^0|)}&\leq C_{\ref{Case2-5}}(\beta,t)\left( \frac{\1_{\{2s\leq 2\deltaz \}}}{s\log^2s}+1\right),\quad\forall\;0<s\leq t,
\end{align*}
which is enough to get the required inequality in \eqref{Case2-5} for all $0<s\leq t$. 

In summary, since the last inequality and \eqref{Case2-4} are valid for all $0<s\leq t$, we get
\begin{align}\label{Case2-6}
\sup_{z^0:z^0\neq 0}\frac{\e^{-\beta s}P_{s} m_{\beta}(z^0)}{K_0(\sqrt{2\beta}|z^0|)}\leq C_{\ref{Case2-6}}(\beta,t) \left( \frac{\1_{\{s\leq 2\deltaz \}}}{(s/2)\log^2(s/2)}+1\right), \quad\forall\;0<s\leq t.
\end{align}
In more detail, $C_{\ref{Case2-6}}(\beta,t)$ is increasing in $t$ since $C_{\ref{Case2-5}}(\beta,t)$ is.\smallskip 

\noindent {\bf Step~2-2.} Now we show that the integral term on the right-hand side of \eqref{case2} satisfies 
\begin{align}
&\quad\;\sup_{z^0:z^0\neq 0}
\int_0^s \frac{P_{2\tau}(\two z^0)\e^{-\beta \tau}}{{K_0(\sqrt{2\beta}|z^0|)}}
\left(\frac{\1_{\{(s-\tau)\leq \deltaz\}}}{(s-\tau)\log^2(s-\tau)}+1\right)\d \tau\notag\\
& \leq C_{\ref{Case2-2-3}}(\beta,t) \left(\frac{\1_{\{(s/2)\leq \deltaz\}}}{(s/2)\log^2(s/2)}+1\right),\quad\forall\;0<s\leq t,\label{Case2-2-3}
\end{align} 
where $C_{\ref{Case2-2-3}}(\beta,t)$ is increasing in $t$. Below we consider the following four cases separately: (i) $s/2\leq \deltaz$ and $s/2\leq |z^0|^2<\min\{1/(2\sqrt{2\beta}),1/4\}$; (ii) $s/2\leq \deltaz$, $s/2\leq |z^0|^2$ and $ |z^0|^2\geq \min\{1/(2\sqrt{2\beta}),1/4\}$; (iii) $\deltaz\geq s/2>|z^0|^2$; (iv) $s/2>\deltaz$. 

For the case of $s/2\leq \deltaz$ and $s/2\leq |z^0|^2<\min\{1/(2\sqrt{2\beta}),1/4\}$,  we first note that
\begin{align}\label{max:tauetau}
{\rm argmax}\{\tau^{-1}\e^{-a/\tau};\tau>0\}=a,\quad \forall\;a>0,
\end{align}
since $(\d/\d x)x\e^{-ax}=\e^{-ax}(1-ax)$. Hence, $\tau\mapsto P_{2\tau}(\two z^0)$ over $0<\tau<\infty$ can be bounded by $P_{2\cdot |z^0|^2/2}(\two z^0)$. By this bound, Lemma~\ref{lem:BESQb1} and the formula in \eqref{int:xlog2x}, we get
\begin{align}
&\quad\;\int_0^s \frac{P_{2\tau}(\two z^0)\e^{-\beta \tau}}{{K_0(\sqrt{2\beta}|z^0|)}}\left(\frac{\1_{\{(s-\tau)\leq \deltaz\}}}{(s-\tau)\log^2(s-\tau)}+1\right)\d \tau\notag\\
&\less \frac{1}{K_0(\sqrt{2\beta}|z^0|)}\times \frac{1}{4\pi |z^0|^2/2}\exp\left(-\frac{|z^0|^2}{2\cdot |z^0|^2/2}\right)\frac{1}{|\log s|}+1\notag\\
&\less \frac{1}{ |z^0|^2K_0(\sqrt{2\beta}|z^0|)}\times\frac{1}{|\log |z^0|^2|}+1\notag\\
&\leq  \frac{C(\beta)}{|z^0|^2\log^2(|z^0|^2)}+1\leq \frac{C(\beta)}{(s/2) \log^2(s/2)}+1,\label{Case2-2-1}
\end{align}
where the second inequality uses the bound $\sup_{s\leq2\deltaz}|\frac{\log (s/2)}{\log s}|\leq C(\beta)$ and the decreasing monotonicity of $x\mapsto |\log x|$ over $0<x\leq 1$, 
the third inequality uses the asymptotic representation \eqref{K00} of $K_0(x)$ as $x\to 0$,
and the last inequality can be seen by considering separately $|z^0|^2\leq \deltaz$ and $|z^0|^2>\deltaz$ and using the choice of $\deltaz$ (Lemma~\ref{lem:deltaz}).

For the case of $s/2\leq \deltaz$, $s/2\leq |z^0|^2$ and $|z^0|^2\geq \min\{1/(2\sqrt{2\beta}),1/4\}$, we consider
\begin{align}
&\quad\;\int_0^s \frac{P_{2\tau}(\two z^0)\e^{-\beta \tau}}{{K_0(\sqrt{2\beta}|z^0|)}}\left(\frac{\1_{\{(s-\tau)\leq \deltaz\}}}{(s-\tau)\log^2(s-\tau)}+1\right)\d \tau\notag\\
&\leq \frac{\e^{-\frac{|z^0|^2}{4t}}}{K_0(\sqrt{2\beta}|z^0|)}\int_0^s \frac{\e^{-\frac{|z^0|^2}{4\tau}}\e^{-\beta \tau}}{(4\pi \tau){}}\left(\frac{\1_{\{(s-\tau)\leq \deltaz\}}}{(s-\tau)\log^2(s-\tau)}+1\right)\d \tau\leq C_{\ref{Case2-2-1-1}}(\beta,t)\label{Case2-2-1-1}
\end{align}
for all $0< s\leq t$, where $C_{\ref{Case2-2-1-1}}(\beta,t)$ is increasing in $t$.

For the case of $\deltaz\geq s/2>|z^0|^2$, we use a slight modification of the proof of \eqref{conv:dec}, the decreasing monotonicity of $\tau\mapsto P_{2\tau}(\two z^0)$ over $\tau>|z^0|^2/2$ by \eqref{max:tauetau}, and the decreasing monotonicity of $K_0$ to get the first inequality below:
\begin{align}
&\quad\; \int_0^s \frac{P_{2\tau}(\two z^0)\e^{-\beta \tau}}{{K_0(\sqrt{2\beta}|z^0|)}}
\left(\frac{\1_{\{(s-\tau)\leq \deltaz\}}}{(s-\tau)\log^2(s-\tau)}+1\right)\d \tau\notag\\
&\leq \left(\frac{\1_{\{(s/2)\leq \deltaz\}}}{(s/2)\log^2(s/2)}+1\right)
\int_0^{s/2} \frac{P_{2\tau}(\two z^0)\e^{-\beta \tau}}{{K_0(\sqrt{2\beta}|z^0|)}}\d \tau\notag\\
&\quad\;+\frac{P_{s}(\two z^0)}{{K_0(\sqrt{2\beta(s/2)})}}
\int_{s/2}^s\left(\frac{\1_{\{(s-\tau)\leq \deltaz\}}}{(s-\tau)\log^2(s-\tau)}+1\right)\d \tau\notag\\
&\leq C_{\ref{Case2-2-2}}(\beta,t)  \left(\frac{\1_{\{(s/2)\leq \deltaz\}}}{(s/2)\log^2(s/2)}+1\right),\quad \forall\;0<s\leq t,\label{Case2-2-2}
\end{align}
where $C_{\ref{Case2-2-2}}(\beta,t)$ increases in $t$. 
Note that \eqref{Case2-2-2} holds since each term on its left-hand side can be bounded by its right-hand side
except with a different constant that increases in $t$. In more detail, 
we apply Lemma~\ref{lem:BESQb1} to bound the first integral on the left-hand side of \eqref{Case2-2-2} and use  
\eqref{int:xlog2x}, $\deltaz\geq s/2$, and the asymptotic representation \eqref{K00} of $K_0(x)$ as $x\to 0$ to get
\[
\frac{P_{s}(\two z^0)}{{K_0(\sqrt{2\beta(s/2)})}}
\int_{s/2}^s\frac{\1_{\{(s-\tau)\leq \deltaz\}}}{(s-\tau)\log^2(s-\tau)}\d \tau\leq
\frac{C(\beta)}{s\log[ \sqrt{2\beta(s/2)} \wedge \deltaz]\log(s/2)},
\]
and the increasing monotonicity of $1/K_0(\cdot)$ gives
\[
\frac{P_{s}(\two z^0)}{{K_0(\sqrt{2\beta(s/2)})}}
\int_{s/2}^s\d \tau\less \frac{1}{{K_0(\sqrt{2\beta(t/2)})}}.
\]

Finally, for the case of $s/2>\deltaz$, by writing $\int_0^s=\int_0^{\deltaz}+\int_{\deltaz}^s$, we get
\begin{align}
&\quad\;\int_0^s \frac{P_{2\tau}(\two z^0)\e^{-\beta \tau}}{{K_0(\sqrt{2\beta}|z^0|)}}
\left(\frac{\1_{\{(s-\tau)\leq \deltaz\}}}{(s-\tau)\log^2(s-\tau)}+1\right)\d \tau\notag\\
&\leq  C(\beta)\int_0^{\deltaz} \frac{P_{2\tau}(\two z^0)\e^{-\beta \tau}2\pi }{K_0(\sqrt{2\beta}|z^0|)}\d \tau\notag\\
&\quad\;+\frac{C(\beta)\e^{-\frac{|z^0|^2}{2t}}}{\deltaz K_0(\sqrt{2\beta}|z^0|)}
\int_{\deltaz}^s\left(\frac{\1_{\{(s-\tau)\leq \deltaz\}}}{(s-\tau)\log^2(s-\tau)}+1\right)\d \tau\notag\\
&\leq C_{\ref{Case2-2-3-1}}(\beta,t),\quad \forall\;0<s\leq t,\label{Case2-2-3-1}
\end{align}
where the last inequality uses Lemma~\ref{lem:BESQb1} and \eqref{int:xlog2x}. 
We have obtained \eqref{Case2-2-3} by establishing \eqref{Case2-2-1}, \eqref{Case2-2-1-1}, \eqref{Case2-2-2} and \eqref{Case2-2-3-1} for the four cases mentioned below \eqref{Case2-2-3}.
\smallskip 

\noindent {\bf Step 2-3.} Applying \eqref{Case2-6} and \eqref{Case2-2-3} to \eqref{case2}  proves 
\begin{align}\label{ineq:BESb-g2---2}
\sup_{z^0:z^0\neq 0}\E_{z^0}^{\beta\da}\left[\frac{\1_{\{|\sqrt{2\beta}Z_s|\leq \deltaz\}}}{|Z_s|^2K_0(\sqrt{2\beta}|Z_s|)^4}\right]
\leq C_{\ref{ineq:BESb-g2---2}}(\beta,t)\left(\frac{\1_{\{s\leq 2\deltaz\}}}{s\log^2 s}+1\right),\quad \forall\;0<s\leq t.
\end{align}
By \eqref{ineq:BESb-g2---1} and \eqref{ineq:BESb-g2---2}, we have proved the inequality in
\eqref{ineq:BESb-2} for all $0<s\leq t$. The proof of Proposition~\ref{prop:BESmom} (1$\cc$) is complete.
\end{proof}

\begin{proof}[Proof of Proposition~\ref{prop:BESmom} (2$\cc$)]
For $0<\tau\leq 1/2$, \eqref{ineq:BESb1} shows that 
\begin{align}
\s^\beta(\tau)&\geq \frac{4\pi}{\tau}\int_0^1\frac{u(\beta\tau)^u}{\Gamma(u+1)} \d u \more \frac{\min\{\beta,1\}}{\tau}\int_0^1u\tau^u\d u\notag\\
&=\frac{\min\{\beta,1\}}{\tau}\biggl(\frac{-\tau+\tau\log \tau+1}{\log^2\tau}\biggr)\geq \frac{C(\beta)}{\tau\log^2\tau},
\end{align}
where the equality uses \eqref{int:xax}, and the last inequality uses the fact that $\tau\mapsto -\tau+\tau\log \tau+1$ is decreasing in $(0,1)$.
By the last inequality and \eqref{ineq:BESb2}, we obtain the required two-sided bound in \eqref{eq:asympsbeta} for $0<t<1/2$. Also, the continuity of $\s^\beta$ in $(0,\infty)$ follows immediately from the definition \eqref{def:sbeta} of $\s^\beta$ by using the dominated convergence theorem. 
\end{proof}

The following proof completes the proof of Proposition~\ref{prop:MP} (1$\cc$). 
\smallskip  

\begin{proof}[Proof of (\ref{MP:mom0}) of Proposition~\ref{prop:MP}]
It is enough to 
prove 
\begin{align*}
\sup_{z^0\in \Bbb C}\E^{\beta\da}_{z^0}\left[\exp\left\{\lambda \int_0^t \frac{\1_{\{\sqrt{2\beta} |Z_r|\leq \deltaz\}}}{|Z_r|^2K_0(\sqrt{2\beta}|Z_r|)^4}\d r\right\}\right]<\infty,\quad \forall\;t,\lambda\in (0,\infty).
\end{align*}

We first consider the following approximations with $\vep\in (0,1)$:
\begin{align*}
f_\vep(s)\,\defeq \,\sup_{z^0\in \Bbb C}\E^{\beta\da}_{z^0}\left[\exp \left(\lambda \int_0^s g_\vep(|Z_r|)\d r\right)\right],\quad\mbox{where }
g_\vep(y)\,\defeq \,\lambda \frac{\1_{\{\sqrt{2\beta}(y\vee \vep)\leq \deltaz\}}}{(y\vee \vep)^2K_0(\sqrt{2\beta}(y\vee \vep))^4},
\end{align*}
such that $f_\vep$ satisfies the following inequality:
\begin{align}\label{conv:ineq}
f_\vep(s)\leq 1+\int_0^s \lambda C_{\ref{ineq:BESb-2}}(\beta,t)\left(\frac{\1_{\{r\leq 2\deltaz\}}}{r\log^2r}+1\right)
f_\vep(s-r)\d r,\quad\forall\;0\leq s\leq t.
\end{align}
To see \eqref{conv:ineq}, note that since $m_0(\cdot)$ in \eqref{def:func} is decreasing, 
\begin{align}
g_\vep(y)\leq \lambda \frac{\1_{\{\sqrt{2\beta} y\leq \deltaz\}}}{y^2K_0(\sqrt{2\beta}y)^4},\quad \forall\; y\geq 0.
\end{align} 
Then by the expansion $\e^{\int_0^s h(r)\d r}=1+\int_0^s h(r)\e^{\int_r^sh(v)\d v}\d r$ and the Markov property of $\{Z_t\}$, 
\begin{align}
&\quad \E^{\beta\da}_{z^0}\left[\exp \left(\lambda\int_0^s g_\vep(|Z_r|)\d r\right)\right]\notag\\
&\leq 1+\E^{\beta\da}_{z^0}\left[\int_0^s \lambda \frac{\1_{\{\sqrt{2\beta} |Z_r|\leq \deltaz\}}}{|Z_r|^2K_0(\sqrt{2\beta}|Z_r|)^4}\E^{\beta\da}_{Z_r}\left[\exp\left(\lambda\int_0^{s-r}g_\vep(|Z_v|)\d v\right)\right]\d r\right],\label{Fvep:int}
\end{align}
which leads to \eqref{conv:ineq} upon applying \eqref{ineq:BESb-2}.

Now, \eqref{conv:ineq} shows a convolution-type inequality such that
\[
r\mapsto \lambda C_{\ref{ineq:BESb-2}}(\beta,t)\left(\frac{\1_{\{r\leq 2\deltaz\}}}{r\log^2r}+1\right)
\]
is independent of $\vep$ and is in $ L^1([0,t],\d r)$ by \eqref{ineq:BESb-2}. Also,  $\sup_{0\leq s\leq t}f_\vep(s)<\infty$ since $g_\vep\in \C_b(\R_+)$. Hence, an extension of Gr\"onwall's lemma (e.g. \cite[Lemma~15 on pp.22--23]{Dalang}) gives
\begin{align}\label{fvepbdd}
\sup_{0\leq s\leq t}f_\vep(s)\leq C(\beta,\lambda,t),\quad \forall\;\vep\in (0,1).
\end{align}
Moreover, by Fatou's lemma, passing $\vep\searrow 0$ for the left-hand side of \eqref{fvepbdd} leads to the required bound \eqref{MP:mom0} for $\lambda>0$. In more detail, we have used the fact that $\{|Z_t|\}\sim \BES(0,\beta\da)$ is instantaneously reflecting at $0$ \cite[Theorem~2.1, p.883]{DY:Krein}, so $\int_0^\infty\1_{\{Z_r=0\}}\d r=0$. 
\end{proof}

\subsection{Continuity of the forward derivative}\label{sec:contforward}
To obtain Proposition~\ref{prop:MP} (2$\cc$), it suffices to prove the following continuity property. 

\begin{prop}\label{prop:genC}
For any $f\in \C_b(\Bbb C)$ and $z^0\in \Bbb C$, the following function is continuous:
\begin{align}\label{eq:genC}
t\mapsto \E^{\beta\da}_{z^0}\biggl[\frac{\hK_1(\sqrt{2\beta}|Z_t|)}{K_0(\sqrt{2\beta}|Z_t|)\overline{Z_t}}f(Z_t)\biggr],\quad t>0.
\end{align}
\end{prop}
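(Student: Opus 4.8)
The plan is to prove Proposition~\ref{prop:genC} by reducing the continuity of the map in \eqref{eq:genC} to the continuity of convolution integrals of functions with weak integrability, exactly in the spirit of the proof of Theorem~\ref{thm:formulas} (1$\cc$), but now using the general real-analysis tool Lemma~\ref{lem:LC1} referred to in the text. First I would write out the expectation using the explicit one-dimensional marginal density of $Z_t$ under $\P^{\beta\da}$ from Theorem~\ref{thm:formulas} (1$\cc$), i.e. via $p^{\beta\da}_t(z^0,z^1)\frac{2\beta}{\pi}$ and the formula \eqref{def:pbetat}. This turns \eqref{eq:genC} into an integral against $K_0(\sqrt{2\beta}|z^1|)^2$ of the integrand $\widehat K_1(\sqrt{2\beta}|z^1|)K_0(\sqrt{2\beta}|z^1|)^{-1}\overline{z^1}^{-1}f(z^1)$; the two copies of $K_0$ partly cancel, leaving a factor $\widehat K_1(\sqrt{2\beta}|z^1|)K_0(\sqrt{2\beta}|z^1|)\overline{z^1}^{-1}f(z^1)$, which by the asymptotics \eqref{K00}--\eqref{K1infty} is $\less |z^1|^{-1}(\log^+|z^1|^{-1}+1)$ near $0$ and integrable against the Gaussian tails. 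Crucially $|z^1|^{-1}$ is locally integrable in $\Bbb C=\R^2$, which is what makes the dominated-convergence and convolution-continuity arguments go through.

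Next I would split the resulting expression according to the cases $z^0\neq 0$ and $z^0=0$ coming from \eqref{def:pbetat}, and in each case decompose it into: (a) a ``free heat'' term $\propto \e^{-\beta t}P_t\big[\text{integrand}\big](z^0)/K_0(\sqrt{2\beta}|z^0|)$ (present only when $z^0\neq 0$), whose continuity in $t$ is immediate by dominated convergence using the local integrability of $|z^1|^{-1}$ and the smoothness of $(t,z^0,z^1)\mapsto P_t(z^0,z^1)$ away from $t=0$; and (b) a double convolution term of the form $\int_0^t\int_0^{t-s}(\cdots)\s^\beta(\tau)\,\d\tau\,\d s$ in which the singular pieces are the weak singularity of $\tau\mapsto\s^\beta(\tau)$ at $\tau=0$ (for which $\int_{0+}\s^\beta<\infty$, by Proposition~\ref{prop:BESmom} (2$\cc$)) and the singularity of $(\tau,z)\mapsto P_\tau(z)/K_0(\sqrt{2\beta}|z|)$ at $(0,0)$. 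For (b) I would apply Lemma~\ref{lem:LC1} (the general lemma for continuity of convolutions of weakly integrable functions) to the inner $\d\tau$-convolution and then to the outer $\d s$-convolution, treating the factor $\int_{\Bbb C}P_{t-s-\tau}(z^1)\widehat K_1(\sqrt{2\beta}|z^1|)K_0(\sqrt{2\beta}|z^1|)\overline{z^1}^{-1}f(z^1)\,\d z^1$ as the ``nice'' factor — it is continuous and bounded in its time argument on compacts bounded away from $0$, and as the time argument $\searrow 0$ it is controlled by the local integrability of $|z^1|^{-1}(\log^+|z^1|^{-1}+1)$, exactly as in \eqref{DYlaw:lim1} and the estimates around \eqref{heat:unif}, \eqref{gauss:vep}, \eqref{kernel:tmod}. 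A standard ``separate the singularities'' decomposition $\int_0^t = \int_0^{t-\delta}+\int_{t-\delta}^t$ (as in \eqref{DY:limsp1} and \eqref{DYlim:122}) handles the overlap of the two singular factors.

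The main obstacle I expect is the combination of the \emph{three} ill-behaved ingredients at once: the $\log$-type singularity of $\tau\mapsto P_\tau f_\beta$-type heat integrals at $\tau\to 0$, the $\tau^{-1}\log^{-2}\tau$ blow-up of $\s^\beta(\tau)$ near $0$ (so $\s^\beta\in L^1_{\mathrm{loc}}$ but $\s^\beta\notin L^p_{\mathrm{loc}}$ for $p>1$), and the fact that the integrand now carries an \emph{extra} factor $|z^1|^{-1}$ from $\overline{z^1}^{-1}$ — worse than the purely-$\log$ integrand appearing in Theorem~\ref{thm:formulas} (1$\cc$). One must verify that $|z^1|^{-1}(\log^+|z^1|^{-1}+1)$ is still in $L^1_{\mathrm{loc}}(\Bbb C)$ (it is, since in polar coordinates the radial integral is $\int_0 r\cdot r^{-1}(\log r^{-1}+1)\,\d r<\infty$), and that the equicontinuity-in-$s$ estimates for the families of inner integrals — the analogues of \eqref{twofunctions} — survive the presence of this factor. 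I would also need the boundedness $\sup_n\int_{\Bbb C}P_{\tau}(z^1_n)|z^1|^{-1}(\log^+|z^1|^{-1}+1)\,\d z^1<\infty$ for $\tau$ bounded away from $0$ and a controlled rate as $\tau\searrow 0$; both follow from the Gaussian tail of $P_\tau$ and the local integrability above, but keeping track of the dependence on $\delta$ in the ``separate the singularities'' step is the delicate bookkeeping. Once these estimates are in place, Lemma~\ref{lem:LC1} applied twice (inner then outer convolution), together with dominated convergence for the free-heat term and the asymptotic argument \eqref{DYlaw:lim-delta} for the $z^0=0$ boundary cases, yields the continuity of \eqref{eq:genC} in $t>0$, completing the proof of Proposition~\ref{prop:genC} and hence of Proposition~\ref{prop:MP} (2$\cc$).
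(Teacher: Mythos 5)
Your proposal is correct and takes essentially the same route as the paper: reduce via the explicit marginal formulas (your use of Theorem~\ref{thm:formulas} (1$\cc$) is equivalent to the paper's use of \eqref{DY:law1} with $g\equiv 1$) to a free-heat term plus convolutions of $\s^\beta$ with the spatial factor $\tau\mapsto\int_{\Bbb C}P_\tau(z^1)\frac{\hK_1(\sqrt{2\beta}|z^1|)}{\overline{z}^1}f(z^1)\,\d z^1$, show this factor is continuous in $\tau>0$ and integrably bounded near $\tau=0$ (the paper gets the clean rate $C(\beta,f)\tau^{-1/2}$ by Brownian scaling in \eqref{gen:bdd}), and apply Lemma~\ref{lem:LC1} twice together with the continuity and weak integrability of $\s^\beta$ from Proposition~\ref{prop:BESmom} (2$\cc$). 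The only differences are cosmetic: after combining with the density the $K_0(\sqrt{2\beta}|z^1|)$ factors cancel completely, leaving just the locally integrable singularity $|z^1|^{-1}$, and Lemma~\ref{lem:LC1} renders your proposed redo of the separate-the-singularities/equicontinuity bookkeeping from the proof of Theorem~\ref{thm:formulas} (1$\cc$) unnecessary.
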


The proof of Proposition~\ref{prop:genC} considers the expectations in \eqref{eq:genC} via the analytical formulas in \eqref{eq:Feller}. To handle the convolution integrals of functions of weak integrability in these formulas, we now use the following lemma, which seems difficult to find in the literature. 

\begin{lem}\label{lem:LC1}
Fix $0<T<\infty$. Let $f,g:(0,T)\to \R_+$ be such that $f$ is bounded on compacts in $(0,T)$, $g$ is continuous in $(0,T)$, and $f,g\in L^1((0,T))$. Then $f\star g(t)=\int_0^t f(s)g(t-s)\d s$ is in $L^1((0,T))$ and is continuous in $(0,T)$.  
\end{lem}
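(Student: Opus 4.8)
The plan is to prove the two assertions --- membership in $L^1((0,T))$ and continuity in $(0,T)$ --- by a truncation argument that exploits the asymmetry between the hypotheses on $f$ and $g$: near the left endpoint of the convolution integral the variable $s$ is small so $f(s)$ may blow up but $g(t-s)$ is evaluated away from $0$ where $g$ is continuous and locally bounded, and symmetrically near the right endpoint $t-s$ is small but $s$ is bounded away from $0$. First I would dispatch the $L^1$ claim: for $0<a<b<T$ Tonelli's theorem gives $\int_a^b f\star g(t)\,\d t \le \int_0^b f(s)\,\d s\int_0^b g(r)\,\d r<\infty$, so $f\star g\in L^1_{\loc}((0,T))$, which is all that is claimed (here I read ``$f\star g\in L^1((0,T))$'' in the sense appropriate to an open interval; if genuine $L^1((0,T))$ integrability is wanted one notes $\int_0^T f\star g\le \|f\|_{L^1}\|g\|_{L^1}<\infty$ by the same Tonelli computation on $(0,T)$).

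For continuity, fix $t_0\in(0,T)$ and pick $\delta>0$ with $2\delta<t_0$ and $t_0+2\delta<T$; I would restrict attention to $t$ in the compact interval $[t_0-\delta,t_0+\delta]$. Split
\begin{align*}
f\star g(t)=\int_0^{\delta}f(s)g(t-s)\,\d s+\int_{\delta}^{t-\delta}f(s)g(t-s)\,\d s+\int_{t-\delta}^{t}f(s)g(t-s)\,\d s,
\end{align*}
interpreting the middle term as $0$ when $t-\delta\le\delta$ (which, with $t\ge t_0-\delta>2\delta-\delta=\delta$... actually $t-\delta\ge t_0-2\delta>0$, so choosing $\delta$ a bit smaller makes $t-\delta>\delta$ throughout; I would simply shrink $\delta$ so that $t_0-2\delta>\delta$, i.e. $3\delta<t_0$). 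In the first integral $t-s$ ranges over a compact subinterval of $(0,T)$ on which $g$ is uniformly continuous and bounded; dominated convergence (dominating function $\|g\|_{\infty,[\text{relevant compact}]}\,f(s)\1_{(0,\delta)}(s)$, which is integrable since $f\in L^1$ near $0$) gives continuity of $t\mapsto\int_0^\delta f(s)g(t-s)\,\d s$. For the third integral substitute $r=t-s$ to rewrite it as $\int_0^\delta f(t-r)g(r)\,\d r$; now $g\in L^1(0,\delta)$ provides the domination, $f(t-r)$ is the value of $f$ at points in a compact subinterval of $(0,T)$ where $f$ is bounded, and continuity in $t$ follows from dominated convergence together with $L^1$-continuity of translation applied to $f$ (extend $f$ by $0$ off a compact neighbourhood of the relevant range and use that $r\mapsto f(t-r)$ converges in measure, dominated by $\|f\|_{\infty}$ on that compact). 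The middle integral $\int_\delta^{t-\delta}f(s)g(t-s)\,\d s$ has both arguments bounded away from $0$: $f$ is bounded on $[\delta,T-\delta]$, $g$ is continuous hence bounded there, and the integrand is jointly continuous off the boundary, so continuity in $t$ is elementary (dominated convergence with constant dominating function on the compact, plus handling the moving endpoint $t-\delta$ via boundedness of the integrand).

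I expect the main obstacle to be the third (right-endpoint) term, because there the roles are swapped --- one must push continuity through an integral where the \emph{singular} factor $g$ sits at a fixed location but $f$, about which one only knows local boundedness and global $L^1$, is the one being translated. The clean way around this is to invoke $L^1$-continuity of translations: writing the term as $\int_{\R}\tilde f(t-r)g(r)\1_{(0,\delta)}(r)\,\d r$ with $\tilde f$ a compactly supported $L^1$ extension of $f$, one has $\|\tilde f(t_n-\cdot)-\tilde f(t-\cdot)\|_{L^1}\to 0$, and since $g\1_{(0,\delta)}\in L^\infty_{\loc}$... no --- $g$ need not be bounded near $0$. So instead I would keep $g$ as the $L^1$ weight and $f$ as the $L^\infty$-on-compacts factor, i.e. bound $|f(t_n-r)-f(t-r)|\cdot g(r)\le 2\|f\|_{\infty,K}\,g(r)\in L^1$ and argue that $f(t_n-r)\to f(t-r)$ for a.e.\ $r$ along a subsequence (this needs $f$ to be, after modification on a null set, such that translates converge pointwise a.e. --- true for any measurable $f$ along a suitable subsequence, by $L^1$-continuity of translation and the fact that $L^1$-convergence implies a.e.-convergent subsequences). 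Then dominated convergence finishes it, and the subsequence principle upgrades subsequential limits to the full limit since the candidate limit is the same for every subsequence. This is the only place requiring care; everything else is routine dominated-convergence bookkeeping.
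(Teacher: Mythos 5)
Your proof is correct, but it follows a genuinely different route from the paper's. The paper proves continuity by a direct two-sided estimate: it writes the increment $f\star g(t\pm\delta)-f\star g(t)$, changes variables so that the shift always lands on $g$, and controls the three resulting pieces by (i) local boundedness of $f$ times $\int_0^\delta g\to 0$, (ii) local boundedness of $f$ times the smallness of $\int_\tau^{\tau+\eta_0}g$ uniformly in $\tau$, and (iii) $\int_0^t f$ times the uniform continuity of $g$ on compacts away from $0$; no translation-continuity of $L^1$ functions is needed, and the "endpoint" piece is simply made small rather than shown continuous. You instead fix $\delta$ once, split $f\star g(t)$ into three integrals, and prove each is continuous in $t$; this forces you to establish continuity of $t\mapsto\int_0^\delta f(t-r)g(r)\,\d r$ with $g$ singular at a fixed location, which you resolve with $L^1$-continuity of translation of (an extension of) $f$, an a.e.-convergent subsequence, the domination $|f(t_n-r)-f(t-r)|\,g(r)\le 2\|f\|_{\infty,K}\,g(r)\in L^1$, and the subsequence principle -- a valid chain, and your self-correction about not being able to pair the $L^1$-translation estimate against an unbounded $g$ is exactly the right caution. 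What each approach buys: the paper's argument is elementary and quantitative (explicit $\eta_0,\delta_0$ bookkeeping, separate right/left continuity) and stays entirely within the stated hypotheses; yours is more modular but imports the translation lemma and the subsequence/DCT mechanism. If you want to streamline your hard step, note you can avoid subsequences by truncating $g$: with $M=\|f\|_{\infty,K}$,
\begin{align*}
\biggl|\int_0^\delta [f(t_n-r)-f(t-r)]g(r)\,\d r\biggr|\le \Lambda\,\|\tilde f(t_n-\cdot)-\tilde f(t-\cdot)\|_{L^1}+2M\int_0^\delta g\,\1_{\{g>\Lambda\}}\,\d r,
\end{align*}
and let $n\to\infty$ then $\Lambda\to\infty$. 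Your $L^1((0,T))$ argument via Tonelli matches what the paper dismisses as standard.
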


\begin{proof}
By writing $\int_0^t f(s)g(t-s)\d s=\int_0^{t/2}f(s)g(t-s)\d s+\int_{t/2}^t f(s)g(t-s)$, we obtain immediately from the assumptions imposed on $f,g$ that $\int_0^t f(s)g(t-s)\d s$ defines an absolutely convergent integral for all $0<t<T$. The proof that $t\mapsto \int_0^t f(s)g(t-s)\d s\in L^1((0,T))$ is standard. 
Hence, it remains to prove the continuity of $f\star g$ in $(0,T)$. In the following,
we fix $t\in (0,T)$ and write $S(t_1,t_2)\defeq \sup_{t_1\leq r\leq t_2}f(r)$. Note that $S(t_1,t_2)<\infty$ for $0<t_1\leq t_2<T$.

We first show the right-continuity of $f\star g$ at $t$. Let $0<\delta<t$ with $t+\delta<T$, and write
\begin{align}
f\star g(t+\delta)-f\star g(t)&=\int_0^{t+\delta} f(s)g(t+\delta-s)\d s-\int_0^{t} f(s)g(t-s)\d s\notag\\
&=\int_t^{t+\delta} f(s)g(t+\delta-s)\d s+\int_0^t f(s)[g(t+\delta-s)-g(t-s)]\d s.\label{RC:1}
\end{align}
The last two integrals can be estimated as follows: 
\begin{align}
\int_t^{t+\delta}f(s)g(t+\delta-s)\d s\leq S(t,t+\delta)\int_0^\delta g(s)\d s,\label{RC:2}
\end{align}
and for any $0<\eta<\min\{t/4,(T-t)/4\}$, 
\begin{align}
&\quad\;\left|\int_0^t f(s)[g(t+\delta-s)-g(t-s)]\d s\right|\notag\\
&\leq \left|\int_0^\eta f(t-s)[g(\delta+s)-g(s)]\d s\right|+\left|
\int_\eta^t f(t-s)[g(\delta+s)-g(s)]\d s\right|\notag\\
\begin{split}
&\leq S(t-\eta,t)\left(\int_{\delta}^{\delta+\eta}g(s)\d s+\int_0^{\eta}g(s)\d s\right)+ \int_0^{t}f(s)\d s\sup_{\eta\leq s\leq t}|g(\delta+s)-g(s)|.\label{RC:3}
\end{split}
\end{align}
By \eqref{RC:1}--\eqref{RC:3}, we get, for all $0<\delta<t$ with $t+\delta<T$ and $0<\eta<\min\{t/4,(T-t)/4\}$,
\begin{align}
\begin{split}\label{RC:4}
|f\star g(t+\delta)-f\star g(t)|&\leq S(t,t+\delta)\int_0^{\delta} g(s)\d s\\
&\quad+S(t-\eta,t)
\left(\int_{\delta}^{\delta+\eta}g(s)\d s+\int_0^{\eta}g(s)\d s\right)\\
&\quad+\int_0^t f(s)\d s\sup_{\eta\leq s\leq t}|g(\delta+s)-g(s)|.
\end{split}
\end{align}

Now, given $\vep>0$, the continuity of $\tau\mapsto \int_0^\tau g(s)\d s$ implies the existence of $0<\eta_0<\min\{t/4,(T-t)/4\}$ such that 
\begin{align}\label{RC:5}
\int_\tau^{\tau+\eta_0}g(s)\d s<\frac{\vep}{4[S(t/4,t+3(T-t)/4)+1]},\quad \forall\;0\leq \tau\leq t.
\end{align}
Also, since $g$ is continuous in $(0,T)$, we can find $0<\delta_0<\eta_0$ with $t+\delta_0<T$ such that 
\begin{align}\label{RC:6}
\sup_{\eta_0\leq s\leq t}|g(\delta+s)-g(s)|<\frac{\vep}{4[\int_0^t f(s)\d s+1]},\quad \forall\;0<\delta<\delta_0.
\end{align}
By applying \eqref{RC:5} and \eqref{RC:6} to \eqref{RC:4} with $\eta=\eta_0$, we get 
\begin{align*}
|f\star g(t+\delta)-f\star g(t)|\leq \frac{\vep}{4}+\frac{\vep}{4}\cdot 2+\frac{\vep}{4}=\vep,\quad\forall\;0<\delta<\delta_0.
\end{align*}
The foregoing inequality proves the required right-continuity of $f\star g$ in $(0,T)$.

The left-continuity of $f\star g$ in $(0,T)$ can be obtained similarly. For $0<\delta<t/2$, write
\begin{align}
f\star g(t)-f\star g(t-\delta)&=\int_0^{t} f(s)g(t-s)\d s-\int_0^{t-\delta} f(s)g(t-\delta-s)\d s\notag\\
&=\int_{t-\delta}^{t} f(s)g(t-s)\d s+\int_0^{t-\delta} f(s)[g(t-s)-g(t-\delta-s)]\d s.\label{RC:7}
\end{align}
The last two integrals can be estimated as follows: 
\begin{align}
\int_{t-\delta}^{t} f(s)g(t-s)\d s\leq S(t-\delta,t)\int_0^\delta g(s)\d s,\label{RC:8}
\end{align}
and for any $0<\eta<\min\{t/4,(T-t)/4\}$, 
\begin{align}
&\quad\;\left|\int_0^{t-\delta} f(s)[g(t-s)-g(t-\delta-s)]\d s\right|\notag\\
&\leq \left|\int_0^\eta f(t-\delta-s)[g(\delta+s)-g(s)]\d s\right|+\left|
\int_\eta^{t-\delta} f(t-\delta-s)[g(\delta+s)-g(s)]\d s\right|\notag\\
\begin{split}
&\leq S(t-\delta-\eta,t-\delta)\left(\int_{\delta}^{\delta+\eta}g(s)\d s+\int_0^{\eta}g(s)\d s\right)\\
&\quad+ \int_0^{t}f(s)\d s\sup_{\eta\leq s\leq t}|g(\delta+s)-g(s)|.\label{RC:9}
\end{split}
\end{align}
Then for the same choice of $\eta_0$ and $\delta_0$ from \eqref{RC:5} and \eqref{RC:6}, we obtain from \eqref{RC:7}, \eqref{RC:8} and \eqref{RC:9} that, for all $0<\delta<\delta_0$, 
\begin{align*}
|f\star g(t)-f\star g(t-\delta)|&\leq S(t-\delta,t)\int_0^\delta g(s)\d s+S(t-\delta-\eta_0,t-\delta)\\
&\quad \times\left(\int_{\delta}^{\delta+\eta_0}g(s)\d s+\int_0^{\eta_0}g(s)\d s\right)\\
&\quad+ \int_0^{t}f(s)\d s\sup_{\eta_0\leq s\leq t}|g(\delta+s)-g(s)|\\
&\leq \vep,
\end{align*}
which is the required left-continuity of $f\star g$ at $t$. The proof is complete.
\end{proof}

\begin{proof}[Proof of Proposition~\ref{prop:genC}]
By \eqref{def:sbeta} and \eqref{DY:law1}, the expectations in \eqref{eq:genC} satisfy 
\begin{align*}
&\quad\;\E^{\beta\da}_{z^0}\biggl[\frac{\hK_1(\sqrt{2\beta}|Z_t|)}{K_0(\sqrt{2\beta}|Z_t|)\overline{Z_t}}f(Z_t)\biggr]\\
&=
\begin{cases}
\displaystyle \frac{\e^{-\beta t}}{K_0(\sqrt{2\beta}|z^0|)}
\int_{\Bbb C}P_t(z^0,z^1)\frac{\hK_1(\sqrt{2\beta}|z^1|)}{\overline{z}^1}f(z^1)\d z^1
+\\
\vspace{-.2cm}\\
\displaystyle \hspace{0cm}\frac{\e^{-\beta t}}{K_0(\sqrt{2\beta}|z^0|)}\int_0^t P_{2s}(\two z^0)\int_0^{t-s}\s^\beta(\tau)\\
\vspace{-.2cm}\\
 \displaystyle \hspace{.4cm}\times \int_{\Bbb C} P_{t-s-\tau}(z^1)\frac{\hK_1(\sqrt{2\beta}|z^1|)}{\overline{z}^1}f(z^1)\d z^1\d \tau\d s, &z^0\neq 0,\\
\vspace{-.2cm}\\
\displaystyle \frac{\e^{-\beta t}}{2\pi}\int_0^t  \s^\beta(\tau) \int_{\Bbb C}P_{t-\tau}(z^1)\frac{\hK_1(\sqrt{2\beta}|z^1|)}{\overline{z}^1}f(z^1)\d z^1\d \tau,&z^0=0.
\end{cases}
\end{align*}
Note that $s\mapsto P_{2s}(\two z^0)$, $s>0$, is bounded continuous whenenver $z^0\neq 0$. Hence, by Proposition~\ref{prop:BESmom} (2$\cc$) and Lemma~\ref{lem:LC1}, the required continuity of the function in \eqref{eq:genC} holds as soon as we prove the following two properties:
\begin{gather}
 t\mapsto \int_{\Bbb C}P_t(z^0,z^1)\frac{\hK_1(\sqrt{2\beta}|z^1|)}{\overline{z}^1}f(z^1)\d z^1 ,\; t>0,\;\mbox{ is continuous},\;\forall\;z^0\in \Bbb C,\;\label{gen:cont}\\
 \int_{\Bbb C}P_t(z^1)\frac{\hK_1(\sqrt{2\beta}|z^1|)}{\overline{z}^1}f(z^1)\d z^1 \leq \frac{C(\beta,f)}{t^{1/2}},\quad t>0.\label{gen:bdd}
\end{gather}

We show \eqref{gen:cont} and \eqref{gen:bdd} now. 
To get \eqref{gen:cont}, it suffices to note that by using the polar coordinates and the asymptotic representations \eqref{K10} and \eqref{K1infty} of $K_1(x)$ as $x\to 0$ and as $x\to\infty$, 
 $z^1\mapsto [\widehat{K}_1(\sqrt{2\beta}|z^1|)/\overline{z}^1]f(z^1)\in L^1(\Bbb C)$, and so, the required continuity follows by dominated convergence. Also, \eqref{gen:bdd} holds by using the following three properties: $K_1(x)\less x^{-1}$ for all $x>0$ [recall \eqref{K10}--\eqref{K1infty}], the Brownian scaling, and $\E^{(0)}_{0}[|Z_1|^{-1}]<\infty$. The proof of Proposition~\ref{prop:genC} is complete.
\end{proof}

\subsection{Kolmogorov's forward equation under $\P^{\beta\da}$}\label{sec:forwardeqn}
We now turn to the proof of Proposition~\ref{prop:MP} (3$\cc$), by which we will complete the proof of Proposition~\ref{prop:MP}. Recall that by \eqref{Zgen:lim}, it is enough to prove the convergences of the two terms on the right-hand side of \eqref{Zlim:ratio} in the particular modes of convergence described below \eqref{Zlim:ratio}. The convergences will be obtained in Sections~\ref{sec:1conv} and \ref{sec:2conv} as Propositions~\ref{prop:step1} and~\ref{prop:step2}. 

\subsubsection{Differentiation across the zeros}\label{sec:1conv}

\begin{prop}\label{prop:step1}
For all $z^0\in \Bbb C$, $f\in \C^2_c(\Bbb C)$ and $0<s_0<t_0<\infty$, it holds that
\begin{align}\label{goal:step1}
\lim_{\vep\searrow 0}
\sup_{s_0\leq s<t=s+\vep\leq t_0}
\left|\frac{1}{\vep}\E^{\beta\da}_{z^0}[f(Z_{t})-f(Z_s);Z_s\neq 0,\;T_0(Z)\circ \vartheta_s\leq \vep]\right|=0.
\end{align}
\end{prop}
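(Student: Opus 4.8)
\textbf{Proposal for the proof of Proposition~\ref{prop:step1}.}

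The plan is to estimate the expectation by conditioning on the position $Z_s=z^1\neq 0$ and using the strong Markov property of $\{Z_t\}$ at time $s$ followed by the strong Markov property at the first hitting time $T_0(Z)$. After the first application, we must bound, uniformly for $z^1$ ranging over a compact set (the support-related region matters since $f\in \C_c^2$), the quantity
\[
\frac{1}{\vep}\E^{\beta\da}_{z^1}[f(Z_\vep)-f(z^1);T_0(Z)\leq \vep].
\]
On the event $\{T_0(Z)\leq \vep\}$ we write $f(Z_\vep)-f(z^1)=[f(Z_\vep)-f(0)]+[f(0)-f(z^1)]$ and split the expectation accordingly. The term $\E^{\beta\da}_{z^1}[f(0)-f(z^1);T_0(Z)\leq \vep]=(f(0)-f(z^1))\P^{\beta\da}_{z^1}(T_0(Z)\leq \vep)$ is controlled by Lemma~\ref{lem:BESQb1}: since $f\in \C^1_c$, $|f(0)-f(z^1)|\less |z^1|$, and from \eqref{def:T0Z},
\[
\P^{\beta\da}_{z^1}(T_0(Z)\leq \vep)=\int_0^\vep \frac{\exp(-\beta u-|z^1|^2/(2u))}{2K_0(\sqrt{2\beta}|z^1|)}\frac{\d u}{u},
\]
which, multiplied by $|z^1|$, is $O(\vep)$ for $|z^1|$ bounded away from $0$ and decays rapidly in $|z^1|^2/\vep$; so after dividing by $\vep$ this contribution stays bounded and in fact tends to $0$ after integrating against the law of $Z_s$ (which has a density by \eqref{DY:law1}). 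The term with $f(Z_\vep)-f(0)$ is handled by the strong Markov property at $T_0(Z)$: conditionally on $T_0(Z)=u\leq \vep$, $\E^{\beta\da}_0[f(Z_{\vep-u})-f(0)]$, and one needs a bound of the form $|\E^{\beta\da}_0[f(Z_r)-f(0)]|\less r^{1/2}\vee(\text{something integrable against }\s^\beta)$; this follows from \eqref{DY:law1} for $z^0=0$, the asymptotics \eqref{eq:asympsbeta} of $\s^\beta$, and $f\in \C^2_c$ (Taylor-expanding $f(z)-f(0)$ and using $\int_{\Bbb C}|z|^2 P_r(z)\d z=2r$, $\int_{\Bbb C}|z|P_r(z)\d z\less r^{1/2}$, together with $\int_0^r \s^\beta(\tau)\d \tau\less r/\log^2 r$ for small $r$, cf. the proof of \cite[Lemma~3.2]{C:BES}).

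The key technical point — and the main obstacle — is that the inner convolution $\int_0^\vep \P^{\beta\da}_{z^1}(T_0(Z)\in \d u)\,\E^{\beta\da}_0[f(Z_{\vep-u})-f(0)]$ must be shown to be $o(\vep)$ \emph{uniformly} in $z^1$ over the relevant compact region, because only then does dividing by $\vep$ and integrating against the (bounded-on-compacts) density of $Z_s$ give the limit $0$ uniformly for $s_0\le s<t\le t_0$. For $|z^1|$ bounded below this is essentially $\P^{\beta\da}_{z^1}(T_0(Z)\leq\vep)=O(\vep)$ times $\sup_{r\le\vep}|\E_0^{\beta\da}[f(Z_r)-f(0)]|=o(1)$, so the product is $o(\vep)$; the care is in making the $O(\vep)$ constant uniform, which \eqref{def:T0Z} supplies since $|z^1|\ge\delta>0$ forces $\exp(-|z^1|^2/(2u))/u\le C(\delta)$ on $0<u\le\vep\le t_0$. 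One should be careful that the density $p^{\beta\da}_s(z^0,z^1)$ from Theorem~\ref{thm:formulas}~(1$\cc$) is bounded uniformly for $s\in[s_0,t_0]$ and $z^1$ in a compact set — this is exactly the continuity/positivity established there — so the $z^1$-integration of these $o(\vep)$ bounds is legitimate. Away from $z^1$ bounded below, i.e.\ $|z^1|$ small, the bound $|f(0)-f(z^1)|\less|z^1|$ kills the blow-up of $\P^{\beta\da}_{z^1}(T_0(Z)\le\vep)\sim \log|z^1|^{-1}/\log(\beta/q)$-type terms, and the Gaussian factor $\exp(-|z^1|^2/(2u))$ inside the $u$-integral, together with $\int_{\Bbb C}(\cdots)\d z^1$ over $|z^1|\le$ (small), gives an integrable contribution that is again $o(\vep)$ after the $1/\vep$ normalization.

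In summary, the steps in order are: (1) apply the Markov property at $s$ to reduce to bounding $\vep^{-1}\E^{\beta\da}_{z^1}[f(Z_\vep)-f(z^1);T_0(Z)\le\vep]$ integrated against $p^{\beta\da}_s(z^0,z^1)\mu_0^{\beta\da}(\d z^1)$; (2) split $f(Z_\vep)-f(z^1)=[f(Z_\vep)-f(0)]+[f(0)-f(z^1)]$; (3) bound the second piece by Lemma~\ref{lem:BESQb1} and $|f(0)-f(z^1)|\less|z^1|$; (4) bound the first piece by the strong Markov property at $T_0(Z)$, Lemma~\ref{lem:BESQb1}, the small-time estimate $|\E^{\beta\da}_0[f(Z_r)-f(0)]|\less r^{1/2}+\int_0^r\s^\beta$, and Proposition~\ref{prop:BESmom}~(2$\cc$) for the $\s^\beta$-asymptotics; (5) combine using the uniform boundedness of $(s,z^1)\mapsto p^{\beta\da}_s(z^0,z^1)$ on $[s_0,t_0]\times K$ from Theorem~\ref{thm:formulas}~(1$\cc$) and dominated convergence to pass the $\vep\searrow0$ limit through the $z^1$-integral, yielding \eqref{goal:step1}.
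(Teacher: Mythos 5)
Your route is genuinely different from the paper's. The paper's proof splits the increment at the hitting time $s+T_0(Z)\circ\vartheta_s$, uses the Lipschitz bound on $f$ and the fact that $Z$ vanishes at that time to rewrite both increments as $\bigl||Z_{t_2}|^2-|Z_{t_1}|^2\bigr|^{1/2}$, controls these by the pathwise H\"older continuity of $\{|Z_t|^2\}$ (Lemma~\ref{lem:regularity}, via the SDE, BDG and Kolmogorov's criterion), and after H\"older's inequality reduces everything to showing that a negative power of $\vep$ times $\E^{\beta\da}_{z^0}[\P^{\beta\da}_{Z_s}(T_0(Z)\le\vep)]$ vanishes uniformly; that last step splits on $\{|Z_s|\le\vep^\gamma\}$ and uses the small-ball estimate of Lemma~\ref{lem:PiZbdd}, the Laplace transform of $T_0$, \eqref{supK}, and the exponent bookkeeping of Lemma~\ref{lem:choicenu}. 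You instead stay at the level of the explicit laws (Markov at $s$, Lemma~\ref{lem:BESQb1}, strong Markov at $T_0$, small-time behaviour of the semigroup at $0$). This can be made to work, since both arguments ultimately rest on the same two inputs — smallness of $\E^{\beta\da}_{z^0}[\P^{\beta\da}_{Z_s}(T_0(Z)\le\vep)]$ and a control of the displacement over the window — but as written your proposal has two concrete problems.

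First, the inequality $\int_0^r\s^\beta(\tau)\d\tau\less r/\log^2r$ is false: from \eqref{def:sbeta}, or from $\int_0^\infty\e^{-q\tau}\s^\beta(\tau)\d\tau=4\pi/\log(q/\beta)$, one has $\int_0^r\s^\beta(\tau)\d\tau\asymp 4\pi/|\log r|$; multiplying the pointwise bound \eqref{eq:asympsbeta} by the length of the interval is not legitimate because the integrand is nearly non-integrable at $0$. The small-time bound you need, $\sup_{r\le\vep}\bigl|\E^{\beta\da}_0[f(Z_r)]-f(0)\bigr|\less\sqrt{\vep}$ up to logarithms, is still true, but it comes from pairing the $1/|\log r|$ mass of $\s^\beta$ on $[0,r]$ with the $\sqrt{u}\,|\log u|$ size of $\int_{\Bbb C}P_u(z)(|z|\wedge 1)K_0(\sqrt{2\beta}|z|)\d z$ in \eqref{DY:law1}; and the quality of this bound matters, because the averaged hitting probability is only of order $\vep|\log\vep|$, so a bound of order $1/|\log\vep|$ would not close the argument.

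Second, and this is the genuine gap, the quantity you single out as the key point, $\int_0^\vep\P^{\beta\da}_{z^1}(T_0(Z)\in\d u)\,\E^{\beta\da}_0[f(Z_{\vep-u})-f(0)]$, is not $o(\vep)$ uniformly in $z^1$ on any region touching the origin: by Lemma~\ref{lem:BESQb1}, for $|z^1|\asymp\sqrt{\vep}$ one has $\P^{\beta\da}_{z^1}(T_0(Z)\le\vep)\asymp 1/|\log\vep|$, and it is even larger for smaller $|z^1|$, so it is nowhere near $O(\vep)$ there. The device $|f(0)-f(z^1)|\less|z^1|$ only helps your second piece, not this one. What saves the argument is averaging over the law of $Z_s$, and for that you must quantify how little mass this law puts near $0$: its Lebesgue density is $p^{\beta\da}_s(z^0,z^1)\tfrac{2\beta}{\pi}K_0(\sqrt{2\beta}|z^1|)^2$, hence not bounded on compacts as you assert, but with an integrable $\log^2$ singularity, uniformly over $s\in[s_0,t_0]$ by the continuity in Theorem~\ref{thm:formulas}~(1$\cc$); combined with Lemma~\ref{lem:BESQb1} this gives $\E^{\beta\da}_{z^0}[\P^{\beta\da}_{Z_s}(T_0(Z)\le\vep)]\less C(\beta,z^0,s_0,t_0)\,\vep|\log\vep|$, and only then does $\vep^{-1}\cdot\sqrt{\vep}\cdot\vep|\log\vep|\to0$ close the first piece (a similar near-origin computation is needed for the second piece). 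This near-zero mass estimate — which the paper isolates and proves as Lemma~\ref{lem:PiZbdd} — is exactly the step your write-up leaves unproved; once it is supplied, your alternative argument goes through.
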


The proof of this proposition needs Lemma~\ref{lem:PiZbdd}--\ref{lem:choicenu} stated below. 
For the first lemma, we use several ingredients from Section~\ref{sec:sharpmom}. In particular, recall that $\deltaz=\deltaz(\beta)$ is an auxiliary constant from Lemma~\ref{lem:deltaz}. 

\begin{lem}\label{lem:PiZbdd}
For all $t>0$ and $\vep\in (0,\deltaz)$, it holds that
\begin{align}\label{bdd:PiZbdd2}
\sup_{z^0\in \Bbb C}\P^{\beta\da}_{z^0}(\sqrt{2\beta}|Z_t|\leq \vep)
\leq C_{\ref{bdd:PiZbdd2}}(\beta,t)\left(\frac{1}{t}+1\right)\vep^2|\log^3\vep|,
\end{align}
where $C_{\ref{bdd:PiZbdd2}}(\beta,t)$ is increasing in $t$. 
\end{lem}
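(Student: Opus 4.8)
The goal is a uniform (in $z^0$) upper bound on $\P^{\beta\da}_{z^0}(\sqrt{2\beta}|Z_t|\leq \vep)$ of order $\vep^2|\log^3\vep|$. The natural strategy is to write the probability as an expectation of an indicator of a small ball and apply the explicit one-dimensional marginal formula \eqref{DY:law1} (equivalently \eqref{eq:Feller}) with $g\equiv 1$ and $f=\1_{\{\sqrt{2\beta}|\cdot|\leq \vep\}}$. Splitting into the cases $z^0=0$ and $z^0\neq 0$ exactly as in the proof of Proposition~\ref{prop:BESmom}~(1$\cc$), I would bound the relevant integrals against the convolution structure already analyzed there. The key heuristic: near the origin $\mu_0^{\beta\da}(\d z^1)=\frac{2\beta}{\pi}K_0(\sqrt{2\beta}|z^1|)^2\d z^1$ behaves like $|\log|z^1||^2\,\d z^1$, so $\mu_0^{\beta\da}(\{|z^1|\leq \vep/\sqrt{2\beta}\})\asymp \vep^2|\log\vep|^2$, and the transition density $p^{\beta\da}_t(z^0,z^1)$ contributes at most an extra $|\log\vep|$ (coming from the $K_0(\sqrt{2\beta}|z^1|)^{-1}$ factors in \eqref{def:pbetat}), giving the claimed $\vep^2|\log^3\vep|$.

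\textbf{Case $z^0=0$.} Here \eqref{DY:law1} gives
\[
\P^{\beta\da}_0(\sqrt{2\beta}|Z_t|\leq \vep)=\frac{\e^{-\beta t}}{2\pi}\int_0^t \s^\beta(\tau)\int_{|z^1|\leq \frac{\vep}{\sqrt{2\beta}}}P_{t-\tau}(z^1)K_0(\sqrt{2\beta}|z^1|)\,\d z^1\,\d \tau.
\]
Using \eqref{K00} to bound $K_0(\sqrt{2\beta}|z^1|)\less |\log(\sqrt{2\beta}|z^1|)|+1\less |\log \vep|$ on the ball (for $\vep$ small) and then $\int_{|z^1|\leq \vep/\sqrt{2\beta}}P_{t-\tau}(z^1)\d z^1\leq \min\{1,C(\beta)\vep^2/(t-\tau)\}$, I can split $\int_0^t=\int_0^{t/2}+\int_{t/2}^t$ and use $\int_{1/\beta}$-type bounds on $\int_0^t\s^\beta(\tau)\d\tau<\infty$ (see the proof of \cite[Lemma~3.2]{C:BES}, cited in the excerpt) together with the weak singularity $\s^\beta(\tau)\asymp (\tau\log^2\tau)^{-1}$ from \eqref{eq:asympsbeta}. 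This yields a bound $C_{\ref{bdd:PiZbdd2}}(\beta,t)(1/t+1)\vep^2|\log\vep|^2\cdot|\log\vep|$, i.e.\ of the asserted order; tracking the $1/t$ comes from the worst case where $t-\tau$ is small and $P_{t-\tau}$ concentrates. One must be slightly careful that $|\log^3\vep|$ absorbs the $|\log\vep|^2$ from the measure times the $|\log\vep|$ from $K_0$, and that the $\vep<\deltaz$ hypothesis keeps all logarithms of the same sign and the monotonicity of $x\mapsto xK_0(\sqrt{2\beta}x)^2$ (Lemma~\ref{lem:deltaz}) available.

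\textbf{Case $z^0\neq 0$.} Apply \eqref{DY:law1} for $z^0\neq 0$: the first term is $\frac{\e^{-\beta t}P_t f_\beta(z^0)}{K_0(\sqrt{2\beta}|z^0|)}$ with $f=\1_{\{\sqrt{2\beta}|\cdot|\leq\vep\}}$, and the second is a convolution of $P_{2s}(\two z^0)\e^{-\beta s}/K_0(\sqrt{2\beta}|z^0|)$ against the $z^0=0$ expression just bounded. For the first term, by the comparison/monotonicity trick used at \eqref{Case2-1} (replacing $z^0$ by $0$ in the numerator is legitimate since $y\mapsto \1_{\{\sqrt{2\beta}y\leq\vep\}}K_0(\sqrt{2\beta}y)$ is decreasing — here I must instead argue directly since $\1$ times $K_0$ is decreasing only up to the jump, but the cleaner route is to bound $P_t f_\beta(z^0)\leq \sup_z P_t f_\beta(z)$ and estimate $\sup_z P_t(\text{ball})\leq \min\{C(\beta)\vep^2/t,\,C(\beta)\vep^2|\log\vep|^2\cdot\text{const}\}$), and then handle $1/K_0(\sqrt{2\beta}|z^0|)$ by the now-standard dichotomy $|z^0|\leq 4\deltaz/\sqrt{2\beta}$ versus $|z^0|>4\deltaz/\sqrt{2\beta}$: in the small-$|z^0|$ regime $1/K_0(\sqrt{2\beta}|z^0|)$ is bounded, while in the large-$|z^0|$ regime one extracts a Gaussian factor $\exp(-c|z^0|^2)$ from $P_t$ to dominate $1/K_0(\sqrt{2\beta}|z^0|)\less \e^{C|z^0|}$ via \eqref{K0infty}, exactly as at \eqref{exp:extract}. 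For the second term, since the $z^0=0$ bound is already of the form $C(\beta,t)(1/r+1)\vep^2|\log^3\vep|$ with $r$ the running time and $(1/r+1)\in L^1_{\loc}$, and $\int_0^t P_{2s}(\two z^0)\e^{-\beta s}\,\d s/K_0(\sqrt{2\beta}|z^0|)\leq C(\beta,t)$ uniformly in $z^0\neq 0$ by Lemma~\ref{lem:BESQb1} (integrating \eqref{def:T0Z} over $s\in(0,t)$ gives $\P^{\beta\da}_{z^0}(T_0(Z)\leq t)\leq 1$), the convolution inequality \eqref{conv:dec} gives a bound of the same order with $\vep^2|\log^3\vep|$ preserved and $1/t$ replaced by a constant; combining with the first term and taking the supremum over $z^0$ completes the case.

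\textbf{Main obstacle.} The technical heart is bookkeeping the logarithms so that the product of the $|\log\vep|^2$ from $\mu_0^{\beta\da}$, the $|\log\vep|$ from a single $K_0(\sqrt{2\beta}|z^1|)$ factor, the $(\tau\log^2\tau)^{-1}$ weight from $\s^\beta$, and the time singularities all collapse to precisely $\vep^2|\log^3\vep|\cdot C(\beta,t)(1/t+1)$ — in particular confirming the exponent $3$ on the logarithm is not exceeded and that the $1/t$ blow-up is correctly localized, all uniformly in $z^0\in\Bbb C$. The monotonicity lemmas (Lemma~\ref{lem:deltaz}) and the convolution estimate \eqref{conv:dec}, both already established, are what make this routine rather than delicate, so I expect no genuinely new difficulty beyond careful estimation.
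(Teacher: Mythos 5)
Your overall architecture matches the paper's: apply \eqref{DY:law1} with $f=\1_{\{\sqrt{2\beta}|\cdot|\leq\vep\}}$, treat $z^0=0$ first, then feed that bound into the convolution term for $z^0\neq 0$, and beat $1/K_0(\sqrt{2\beta}|z^0|)$ with an extracted Gaussian factor for large $|z^0|$. But the quantitative bookkeeping — which you yourself identify as the heart of the matter — contains genuine errors. First, in the $z^0=0$ case your pointwise bound $K_0(\sqrt{2\beta}|z^1|)\less|\log\vep|$ on the ball $\{|z^1|\leq\vep/\sqrt{2\beta}\}$ is backwards: there $|\log(\sqrt{2\beta}|z^1|)|\geq|\log\vep|$ and blows up as $z^1\to 0$, so the logarithmic singularity has to be integrated (polar coordinates, as in \eqref{PiZ:bdd2} and \eqref{PiZ:bdd2-0}), and the correct output for $z^0=0$ is $C(\beta,t)(1/t+1)\vep^2\log^2\vep$ — exponent $2$, not $3$. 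This is not merely a question of sharpness: the exponent-$2$ bound is precisely the input needed in the $z^0\neq0$ step, because the third logarithm is generated there and nowhere else.

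Second, your treatment of the convolution term for $z^0\neq0$ does not go through as written. The claim that $(1/r+1)\in L^1_{\loc}$ is false, so you cannot simply pair the bound $C(\beta,t)(1/r+1)\vep^2|\log^3\vep|$ with the unit total mass of the kernel; nor does \eqref{conv:dec} apply, since $s\mapsto P_{2s}(\two z^0)\e^{-\beta s}/K_0(\sqrt{2\beta}|z^0|)$ is not decreasing (it vanishes at $s=0$ and peaks at $s=|z^0|^2/2$) and the $(1/r+1)$-type bound is not integrable at $r=0$. The workable route — the paper's — splits $\int_0^t=\int_0^{t/2}+\int_{t/2}^{(t-\vep^2)\vee(t/2)}+\int_{(t-\vep^2)\vee(t/2)}^t$, replaces the probability near $r=t-s=0$ by the crude bound $C(\beta,r)(|\log r|+1)$ of \eqref{PiZ:bdd3-final1-prel-1000}, and on the middle window integrates $1/(t-s)$ against $\vep^2\log^2\vep$, producing $\vep^2\log^2\vep\cdot\log(1/\vep^{2})\asymp\vep^2|\log^3\vep|$; this is exactly where the exponent $3$ arises. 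If one instead feeds in your inflated $z^0=0$ bound with $|\log^3\vep|$, the same capping computation returns $\vep^2|\log^4\vep|$ and overshoots the statement; and your assertion that the $1/t$ can be "replaced by a constant" in this term is unsupported — uniformity in $z^0$ of the $\int_0^{t/2}$ piece is obtained from Lemma~\ref{lem:BESQb1} together with \eqref{KPbdd}, and the $(1/t+1)$ factor is what these yield. A minor remark in the other direction: your concern that $y\mapsto\1_{\{\sqrt{2\beta}y\leq\vep\}}K_0(\sqrt{2\beta}y)$ is "decreasing only up to the jump" is unfounded — it is a product of nonnegative decreasing functions, hence decreasing — so the comparison-theorem step you tried to route around is available and is exactly what the paper uses at \eqref{PiZ:bddEE}.
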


\begin{rmk}
 \eqref{bdd:PiZbdd2} cannot be improved to one where the bound is $C_{\ref{bdd:PiZbdd2}}(\beta,t)\vep^2|\log^3\vep|$. This necessity can be seen by taking $z^0=0$.\qed
 \end{rmk}

\begin{proof}[Proof of Lemma~\ref{lem:PiZbdd}]
We consider $z^0=0$ and $z^0\neq 0$ in Steps~1 and~2, respectively.\smallskip 

\noindent {\bf Step 1.} For the case of $z^0=0$, take $g\equiv 1$ and $f(z^1)\equiv \1_{\{\sqrt{2\beta}|z^1|\leq \vep\}}$ in \eqref{DY:law1}. Then for all $t>0$ and $\vep\in (0,\deltaz)$,
\begin{align}
&\quad\;\P_{0}^{\beta\da}(\sqrt{2\beta}|Z_t|\leq \vep)\notag\\
&\leq \int_0^t \s^\beta(\tau)\int_{\sqrt{2\beta}|z^1|\leq \vep}P_{t-\tau}(z^1) K_0(\sqrt{2\beta}|z^1|)\d z^1  \d \tau\notag\\
&\leq C_{\ref{ineq:BESb0-2}}(\beta,t)\int_0^t \left(\frac{ \1_{\{  \tau\leq \deltaz\}}
}{\tau \log ^2\tau}+1
\right)\E^{(0)}_0\bigl[\bigl|\log (\sqrt{2\beta}|Z_{t-\tau}|)\bigr|;\sqrt{2\beta}|Z_{t-\tau}|\leq \vep\bigr]\d \tau,
\label{ineq:BESb0-2}
\end{align}
where \eqref{ineq:BESb0-2} uses Proposition~\ref{prop:BESmom} (2$\cc$) and the asymptotic representation \eqref{K00} of $K_0(x)$ as $x\to 0$, $\{Z_t\}$ under $\P^{(0)}$ is a two-dimensional standard Brownian motion, and $C_{\ref{ineq:BESb0-2}}(\beta,t)$ is increasing in $t$. Note that we can use \eqref{conv:dec} to bound the right-hand side of \eqref{ineq:BESb0-2}. Specifically, since $x\mapsto |\log x|$ is decreasing on $0<x\leq 1$, and $\tau\mapsto \1_{\{\sqrt{2\beta\tau}|z^1|\leq \vep\}}$ is decreasing for any fixed $z^1\in \Bbb C$, the following function is decreasing: 
\[
\tau\mapsto \E^{(0)}_0\bigl[\bigl|\log (\sqrt{2\beta\tau}|Z_{1}|)\bigr|;\sqrt{2\beta\tau }|Z_{1}|\leq \vep\bigr]=\E^{(0)}_0\bigl[\bigl|\log (\sqrt{2\beta}|Z_{\tau}|)\bigr|;\sqrt{2\beta}|Z_{\tau}|\leq \vep\bigr].
\]
Also, $\tau\mapsto  \1_{\{  \tau\leq \deltaz\}}/(\tau \log ^2\tau)$ is decreasing by the choice of $\deltaz$ (Lemma~\ref{lem:deltaz}).
Hence, by \eqref{conv:dec} with $s=t$ and then \eqref{int:xlog2x}, \eqref{ineq:BESb0-2} implies, for $C_{\ref{PiZ:bdd1}}(\beta,t)$ increasing in $t$,
\begin{align}
\begin{aligned}\label{PiZ:bdd1}
&\quad\;\P^{\beta\da}_{0}(\sqrt{2\beta}|Z_t|\leq \vep)\\
&\leq C_{\ref{PiZ:bdd1}}(\beta,t)\biggl(\frac{\1_{\{ \frac{t}{2}\leq \deltaz\}}}{(\frac{t}{2})\log^2 (\frac{t}{2})}+1\biggr)\int_0^{\frac{t}{2}} \E^{(0)}_0\bigl[\bigl|\log (\sqrt{2\beta}|Z_{\tau}|)\bigr|;\sqrt{2\beta}|Z_{\tau}|\leq \vep\bigr]\d \tau\\
&\quad +C_{\ref{PiZ:bdd1}}(\beta,t)\E^{(0)}_0\bigl[\bigl|\log (\sqrt{2\beta}|Z_{t/2}|)\bigr|;\sqrt{2\beta}|Z_{t/2}|\leq \vep\bigr],\quad \forall\;t>0,\;\vep\in (0,\deltaz).
\end{aligned}
\end{align}

Let us bound the two terms on the right-hand side of \eqref{PiZ:bdd1}. For the first term, write
\begin{align}
\begin{split}
&\quad\;\E^{(0)}_0\bigl[\bigl|\log (\sqrt{2\beta}|Z_\tau|)\bigr|;\sqrt{2\beta}|Z_\tau|\leq \vep\bigr]\\
&=\int_0^{\frac{\vep}{\sqrt{2\beta}}} \frac{|\log (\sqrt{2\beta}r)|}{\tau}r\exp\left(-\frac{r^2}{2\tau}\right)\d r,\quad \tau>0,\label{logB:polar}
\end{split}
\end{align}
by the polar coordinates.  Hence, for all $t>0$ and $\vep\in (0,\deltaz)$, we have
\begin{align}
&\quad\int_0^{\frac{t}{2}} \E^{(0)}_0\bigl[\bigl|\log (\sqrt{2\beta}|Z_{\tau}|)\bigr|;\sqrt{2\beta}|Z_{\tau}|\leq \vep\bigr]\d \tau\notag\\
&\leq 
\int_0^t  \int_0^{\frac{\vep}{\sqrt{2\beta}}}\frac{|\log (\sqrt{2\beta}r)|}{\tau}r\exp\left(-\frac{r^2}{2\tau}\right) \d r\d \tau\notag\\
&=\int_0^{\frac{\vep}{\sqrt{2\beta}}} |\log (\sqrt{2\beta}r)|\cdot r\int_0^{\frac{t}{r^2}}\frac{1}{\tau}\exp\left(-\frac{1}{2\tau}\right)\d \tau\d r\notag\\
&\less \int_0^{\frac{\vep}{\sqrt{2\beta}}} |\log (\sqrt{2\beta}r)|\cdot r\left[\log^+\left(\frac{\max\{t,1\}}{r^2}\right)+1\right] \d r\notag\\
&\leq C(\beta)(\log^+t+1)\int_0^{\frac{\vep}{\sqrt{2\beta}}}\log^2 (\sqrt{2\beta}r)r\d r\label{PiZ:bdd3-0}\\
&\leq C(\beta)(\log^+t+1)\vep^2\log^2\vep.\label{PiZ:bdd2}
\end{align}
Here, the equality in the second line changes the order of integration and then changes variables by replacing $\tau/r^2$ with $\tau$;
\eqref{PiZ:bdd3-0} holds since $1\leq C(\beta) |\log (\sqrt{2\beta}r)|$ for $0<r\leq \vep/\sqrt{2\beta}$ due to the assumption $\vep\in (0,\deltaz)$; \eqref{PiZ:bdd2} uses
$\int x\log^2 x\d x=4^{-1}x^2(2\log^2 x-2\log x+1)+C$, $x>0$.

For the second term on the right-hand side of \eqref{PiZ:bdd1}, note that
 for all $\tau>0$ and $\vep\in (0,\deltaz)$,
\begin{align}
&\quad\;\E^{(0)}_0\bigl[\bigl|\log (\sqrt{2\beta}|Z_\tau|)\bigr|;\sqrt{2\beta}|Z_\tau|\leq \vep\bigr]\notag\\
&=\int_0^{\frac{\vep}{\sqrt{2\beta\tau}}}|\log (\sqrt{2\beta \tau}r)| r\exp\left(-\frac{r^2}{2}\right)\d r \label{PiZ:ORG}\\
&\leq \int_0^{\frac{\vep}{\sqrt{2\beta\tau}}}|\log (\sqrt{2\beta \tau}r)| r\d r 
=\frac{1}{2\beta \tau}\int_0^\vep(-\log r)r\d r\notag\\
&=\frac{1}{2\beta\tau}\cdot \frac{1}{4}\vep^2(1-2\log \vep),
\label{PiZ:bdd2-0}
\end{align}
where the last equality uses the identity
$\int x\log x\d x=\frac{1}{4}x^2(2\log x-1)+C$, $x>0$.
Then by \eqref{PiZ:bdd2-0} with $\tau=t/2$, the following holds for all $t>0$ and $\vep\in (0,\deltaz)$:
\begin{align}
\E^{(0)}_0\bigl[\bigl|\log (\sqrt{2\beta}|Z_{t/2}|)\bigr|;\sqrt{2\beta}|Z_{t/2}|\leq \vep\bigr]
&\leq \frac{C(\beta)}{t}(\vep^2-\vep^2\log \vep)\leq \frac{C(\beta)}{t}\vep^2|\log \vep|.\label{PiZ:bdd3}
\end{align}

In summary, applying \eqref{PiZ:bdd2}  and \eqref{PiZ:bdd3} to \eqref{PiZ:bdd1} proves the following inequality:
\begin{align}\label{PiZ:bdd3-final}
\P^{\beta\da}_0(\sqrt{2\beta}|Z_t|\leq \vep)\leq C_{\ref{PiZ:bdd3-final}}(\beta,t)\left(\frac{1}{t}+1\right)\vep^2\log^2\vep,\quad \forall\;t>0,\;\vep\in (0,\deltaz),
\end{align}
where $C_{\ref{PiZ:bdd3-final}}(\beta,t)$ is increasing in $t$.\smallskip

\noindent {\bf Step 2.}
For the case of $z^0\neq 0$, we choose the following $(g,f)$ for \eqref{DY:law1}: $g\equiv 1$ and $f(z^1)\equiv \tilde{f}(z^1)\,\defeq\, \1_{\{\sqrt{2\beta}|z^1|\leq \vep\}}$. The first term from this use of \eqref{DY:law1} satisfies
\begin{align}
\frac{\e^{-\beta t}P_{t}\tilde{f}_\beta(z^0)}{K_0(\sqrt{2\beta}|z^0|)}&\leq \frac{1}{K_0(\sqrt{2\beta}|z^0|)} \E^{(0)}_{z^0}\bigl[\bigl|\log( \sqrt{2\beta}|Z_{t}|)\bigr|;\sqrt{2\beta}|Z_{t}|\leq \vep\bigr]\notag\\
&\leq \frac{1}{K_0(\sqrt{2\beta}|z^0|)} \E^{(0)}_{0}\bigl[\bigl|\log (|\sqrt{2\beta}|Z_{t}|)\bigr|;\sqrt{2\beta}|Z_{t}|\leq \vep\bigr].\label{PiZ:bddEE}
\end{align}
The last equality holds 
by using the comparison theorem of SDEs \cite[2.18 Proposition, p.293]{KS:BM} specialized to the case of $\BES Q(0)$,
since $|z|\mapsto |\log |z||\1_{\{|z|\leq \vep\}}$ is decreasing.

We show that \eqref{PiZ:bddEE} implies
\begin{align}\label{Zibdd:part1}
\sup_{z^0\neq 0}\frac{\e^{-\beta t}P_{t}\tilde{f}_\beta( z^0)}{K_0(\sqrt{2\beta}|z^0|)}\leq \frac{C_{\ref{Zibdd:part1}}(\beta,t)}{t}\vep^2|\log\vep|,\quad \forall\;t>0,\;\vep\in (0,\deltaz)
\end{align}
for some $C_{\ref{Zibdd:part1}}(\beta,t)$ increasing in $t$.
First, by \eqref{PiZ:bdd3} and the asymptotic representation \eqref{K00} of $K_0(x)$ as $x\to 0$, \eqref{PiZ:bddEE} implies
\begin{align}
\sup_{z^0:|z^0|\leq 4\vep/\sqrt{2\beta}}\frac{\e^{-\beta t}P_{t}\tilde{f}_\beta( z^0)}{K_0(\sqrt{2\beta}|z^0|)}\leq \frac{C(\beta)}{t}\vep^2|\log\vep|,\quad \forall\;t>0,\;\vep\in(0,\deltaz).\label{PiZ:bddEE0}
\end{align} 
Note that for $|z^0|>4 \vep/\sqrt{2\beta}$ and $|z^1|\leq \vep/\sqrt{2\beta}$, we have $| z^0-z^1|\geq | z^0|-|z^1|\geq 3| z^0|/4$. Hence, the definition of $\tilde{f}$ above \eqref{PiZ:bddEE} implies
\begin{align}
\frac{\e^{-\beta t}P_{t}\tilde{f}_\beta( z^0)}{K_0(\sqrt{2\beta}|z^0|)}&\less \frac{\e^{-\beta t}\exp(-\frac{(3|z^0|/4)^2}{4t})P_{2t}\tilde{f}_{\beta}( z^0)
}{K_0(\sqrt{2\beta}|z^0|)}\notag\\
&\leq C(\beta) \frac{\e^{-\beta t}\exp(-\frac{(3|z^0|/4)^2}{4t})}{K_0(\sqrt{2\beta}|z^0|)} \E^{(0)}_{z^0}\bigl[\bigl|\log (\sqrt{2\beta}|Z_{2t}|)\bigr|;\sqrt{2\beta}|Z_{2t}|\leq \vep\bigr]\notag\\
&\leq C(\beta)\frac{\e^{-\beta t}\exp(-\frac{(3|z^0|/4)^2}{4t})}{K_0(\sqrt{2\beta}|z^0|)} \E^{(0)}_{0}\bigl[\bigl|\log (\sqrt{2\beta}|Z_{2t}|)\bigr|;\sqrt{2\beta}|Z_{2t}|\leq \vep\bigr],\label{PiZ:bddEE0-1}
\end{align}
where the second line follows from the asymptotic representation \eqref{K00} of $K_0(x)$ as $x\to 0$, and the last inequality uses the comparison theorem of SDEs for $\BES Q(0)$. Note that by the asymptotic representations \eqref{K00} and \eqref{K0infty} of $K_0(x)$ as $x\to0$ and $x\to\infty$,
\begin{align}\label{KPbdd}
\sup_{z^0\neq 0}\frac{\exp(-\theta\frac{|z^0|^2}{t})}{K_0(\sqrt{2\beta}|z^0|)}<\infty,\quad \forall\;t>0,\;\theta\in(0,\infty).
\end{align}
By \eqref{PiZ:bdd3}, \eqref{PiZ:bddEE0-1} and \eqref{KPbdd}, we get
\begin{align}\label{PiZ:bddEE0000}
\sup_{z^0:|z^0|> 4\vep/\sqrt{2\beta}}\frac{\e^{-\beta t}P_{t}\tilde{f}_\beta( z^0)}{K_0(\sqrt{2\beta}|z^0|)}\leq \frac{C_{\ref{PiZ:bddEE0000}}(\beta,t)}{t}\vep^2|\log\vep|,\quad \forall\;t>0,\;\vep\in(0,\deltaz)
\end{align}
for some $C_{\ref{PiZ:bddEE0000}}(\beta,t)$ increasing in $t$. Combining \eqref{PiZ:bddEE0} and \eqref{PiZ:bddEE0000} proves \eqref{Zibdd:part1}.

To deal with the second term from \eqref{DY:law1} for $g\equiv 1$, $f(z^1)\equiv \1_{\{\sqrt{2\beta}|z^1|\leq \vep\}}$ and $z^0\neq 0$, we note that \eqref{PiZ:ORG} shows
\begin{align}\label{PiZ:bdd3-final1-prel}
\E^{(0)}_0\bigl[\bigl|\log (\sqrt{2\beta}|Z_\tau|)\bigr|;\sqrt{2\beta}|Z_\tau|\leq \vep\bigr]\leq C(\beta)(|\log\tau|+1),\quad \forall\;\tau>0,
\end{align}
so that by replacing \eqref{PiZ:bdd3} with \eqref{PiZ:bdd3-final1-prel} in the proof of \eqref{PiZ:bdd3-final}, we get
\begin{align}\label{PiZ:bdd3-final1-prel-1000}
 \P_{0}^{\beta\da}(\sqrt{2\beta}|Z_{\tau}|\leq \vep)\leq C_{\ref{PiZ:bdd3-final1-prel-1000}}(\beta,\tau)(|\log\tau|+1),\quad \forall\;\tau>0,\;\vep\in (0,\deltaz),
\end{align}
for $C_{\ref{PiZ:bdd3-final1-prel-1000}}(\beta,\tau)$ increasing in $\tau$.
Then that second term from \eqref{DY:law1} just mentioned satisfies the following bounds, where the first inequality below uses  \eqref{PiZ:bdd3-final} and \eqref{PiZ:bdd3-final1-prel-1000}:
\begin{align}
&\quad\;\int_0^t \frac{2\pi\e^{-\beta s}P_{2s}(\two z^0)}{K_0(\sqrt{2\beta}|z^0|)} \P_{0}^{\beta\da}(\sqrt{2\beta}|Z_{t-s}|\leq \vep)\d s\notag\\
&=\biggl(\int_0^{t/2}+\int_{t/2}^{(t-\vep^2)\vee (t/2)}+ \int_{(t-\vep^2)\vee (t/2)}^t\biggr) \frac{2\pi\e^{-\beta s}P_{2s}(\two z^0)}{K_0(\sqrt{2\beta}|z^0|)}\notag\\
&\quad \times\P_{0}^{\beta\da}(\sqrt{2\beta}|Z_{t-s}|\leq \vep)\d s\notag\\
\begin{split}
&\leq C_{\ref{PiZ:bdd3-final1-1000}}(\beta,t)\int_0^{t/2}\frac{2\pi \e^{-\beta s}P_{2s}(\two z^0)}{K_0(\sqrt{2\beta}|z^0|)}\cdot \left(\frac{1}{t-s}+1\right)\vep^2\log^2\vep \d s\\
&\quad \;+
\frac{C_{\ref{PiZ:bdd3-final1-1000}}(\beta,t)\exp(-\frac{|z^0|^2}{2t})}{tK_0(\sqrt{2\beta}|z^0|)}
\int_{t/2}^{(t-\vep^2)\vee (t/2)} \left(\frac{1}{t-s}+1\right)\vep^2\log^2\vep \d s\\
&\quad \;+\frac{C_{\ref{PiZ:bdd3-final1-1000}}(\beta,t)\exp(-\frac{|z^0|^2}{2t})}{tK_0(\sqrt{2\beta}|z^0|)} \int_{(t-\vep^2)\vee (t/2)}^t ( |\log(t-s)|+1)\d s \label{PiZ:bdd3-final1-1000}
\end{split}\\
&\leq C_{\ref{PiZ:bdd3-final1}}(\beta,t)\left(\frac{1}{t}+1\right)\vep^2|\log^3\vep|,\quad \forall\;t>0,\;\vep\in (0,\deltaz),\;z^0\neq 0,\label{PiZ:bdd3-final1}
\end{align}
where $C_{\ref{PiZ:bdd3-final1-1000}}(\beta,t)$ and $C_{\ref{PiZ:bdd3-final1}}(\beta,t)$ are increasing in $t$. In more detail, to get \eqref{PiZ:bdd3-final1}, we have used Lemma~\ref{lem:BESQb1} to bound the first integral on the left-hand side of \eqref{PiZ:bdd3-final1}. Also, to bound the last two terms on
the left-hand side of \eqref{PiZ:bdd3-final1},  we have used \eqref{KPbdd} to bound the coefficients of the two integrals,
and the identity $\int \log x\d x=x(\log x-1)+C$ for $x>0$ has been applied to bound the last integral on the left-hand side of \eqref{PiZ:bdd3-final1}. 

Recall that the leftmost sides of \eqref{PiZ:bddEE0-1} and \eqref{PiZ:bdd3-final1} arise from using $g\equiv 1$ and $f(z^1)\equiv \tilde{f}(z^1)$ for \eqref{DY:law1} in the case of $z^0\neq 0$. Hence, 
combining  \eqref{PiZ:bdd3-final}, \eqref{Zibdd:part1}, and \eqref{PiZ:bdd3-final1} proves \eqref{bdd:PiZbdd2} for all $t>0$ and $\vep\in (0,\deltaz)$. The proof of Lemma~\ref{lem:PiZbdd} is complete.
\end{proof}

For the next two lemmas, let $\BES Q(0,\beta\da)$ denote $\{|Z_t|^2\}$ under $\P^{\beta\da}$ or other processes with the same distribution. The SDE of $\BES Q(0,\beta\da)$ is
\begin{align}\label{def:BESQb}
X_t=X_0+\int_0^t 2\biggl(1-\frac{\hK_1(\sqrt{2\beta}\sqrt{|X_s|})}{K_0(\sqrt{2\beta}\sqrt{|X_s|})}\biggr)\d s+2\int_0^t\sqrt{|X_s|}\d B_s
\end{align}
for a one-dimensional standard Brownian motion $\{B_t\}$ \cite[Theorem~2.15 (1$\cc$)]{C:BES}. 

\begin{lem}\label{lem:BESQb2}
For any solution to \eqref{def:BESQb} with $X_0\geq 0$, there exists a version of $\BES Q(0)$ $\{X'_t\}$ such that  with probability one, $X_t\leq X'_t$ for all $t$.
\end{lem}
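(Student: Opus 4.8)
The plan is to compare the given solution of \eqref{def:BESQb} with a solution of the $\BESQ(0)$ equation $\d X'_t = 2\,\d t + 2\sqrt{|X'_t|}\,\d B_t$ driven by the \emph{same} Brownian motion $\{B_t\}$ and started from $X'_0 = X_0$. The drift of \eqref{def:BESQb} is $2\bigl(1 - \hK_1(\sqrt{2\beta}\sqrt{x})/K_0(\sqrt{2\beta}\sqrt{x})\bigr)$, and the drift of the $\BESQ(0)$ equation is the constant $2$. Since $\hK_1(y)/K_0(y) = yK_1(y)/K_0(y) \geq 0$ for all $y > 0$, the drift of \eqref{def:BESQb} is pointwise $\leq$ the drift of the $\BESQ(0)$ equation; moreover the diffusion coefficient $x\mapsto 2\sqrt{|x|}$ is the common one and is $1/2$-Hölder, hence satisfies the Yamada--Watanabe condition ensuring pathwise uniqueness. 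The standard comparison theorem for one-dimensional SDEs (e.g. \cite[2.18 Proposition, p.293]{KS:BM}, exactly as already invoked several times in this section, e.g. around \eqref{Case2-1}) then yields $X_t \leq X'_t$ for all $t$, almost surely.

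First I would exhibit a version of $\BESQ(0)$ on the same probability space as the given solution: let $\{B_t\}$ be the driving Brownian motion of \eqref{def:BESQb}, and let $\{X'_t\}$ solve $X'_t = X_0 + 2t + 2\int_0^t \sqrt{|X'_s|}\,\d B_s$; such a solution exists and is pathwise unique since the coefficients are continuous with the diffusion coefficient $1/2$-Hölder (Yamada--Watanabe), and it is a nonnegative $\BESQ(0)$ by the standard identification (the drift is $\geq 0$ so the solution stays nonnegative, and $|X'_s| = X'_s$). Next I would record the pointwise drift comparison: writing $b(x) \defeq 2\bigl(1 - \hK_1(\sqrt{2\beta}\sqrt{|x|})/K_0(\sqrt{2\beta}\sqrt{|x|})\bigr)$ for the drift of \eqref{def:BESQb} and $b'(x) \defeq 2$ for that of the $\BESQ(0)$ equation, we have $b(x) \leq b'(x)$ for all $x \geq 0$ because $K_0, K_1 > 0$. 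With $X_0 = X'_0$, $b \leq b'$, and the common $1/2$-Hölder diffusion coefficient $\sigma(x) = 2\sqrt{|x|}$, the hypotheses of the comparison theorem are met, giving $\P(X_t \leq X'_t \text{ for all } t) = 1$.

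The one subtlety worth a sentence of care is that the drift $b$ has a singularity as $x \searrow 0$: by \eqref{K00} and \eqref{K10}, $\hK_1(y)/K_0(y) \sim 1/\log(y^{-1}) \to 0$ as $y \searrow 0$, so in fact $b(x) \to 2$ as $x \searrow 0$ and $b$ extends continuously to $[0,\infty)$ with $b(0) = 2 = b'(0)$; thus there is no genuine irregularity of the drift at the boundary, and the comparison theorem applies without modification. (Near $x = 0$, $b$ is continuous and bounded, and the Yamada--Watanabe-type condition is needed only for the diffusion coefficient, which is the same for both equations.) I do not anticipate a real obstacle here — the statement is essentially a bookkeeping application of the comparison theorem already used repeatedly in the paper; the only point to be careful about is to verify that $b$ is continuous up to and including $x = 0$ so that the comparison theorem's hypotheses are literally satisfied, which follows from the asymptotics \eqref{K00} and \eqref{K10}.
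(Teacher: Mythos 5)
Your proposal is correct and follows essentially the same route as the paper: construct the $\BESQ(0)$ solution driven by the same Brownian motion via strong well-posedness, observe that the drift of \eqref{def:BESQb} is pointwise bounded by the constant $2$, and apply a one-dimensional comparison theorem (the paper cites Ikeda--Watanabe, Theorem~1.1 of Chapter~VI, rather than \cite[2.18 Proposition]{KS:BM}, but the argument is the same). Your extra check that the drift extends continuously to $x=0$ is harmless additional care, not a point of divergence.
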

\begin{proof} 
First, we use the strong well-posedness of the SDE of $\BES Q(0)$ to construct a nonnegative process $\{X'_t\}$ such that  $X'_0=X_0$ and $\d X'_t=2\d t+2\sqrt{|X_t'|}\d B_t$ with respect to the same $\{B_t\}$ from \eqref{def:BESQb}. (See \cite[Theorem~3.2 of Chapter~IV, p.182]{IW:SDE} for
 a general theorem that guarantees the strong well-posedness of the SDE of $\BES Q(0)$.)
Second, since the drift coefficient of \eqref{def:BESQb} is pointwise bounded by the constant $2$, the required property follows from a general comparison theorem of Ikeda and Watanabe (cf. \cite[Theorem~1.1 of Chapter VI, pp.437--438]{IW:SDE}). 
\end{proof}

The next lemma concerns the H\"older continuity of $\{|Z_t|^2\}$ under $\P^{\beta\da}_{z^0}$.  

\begin{lem}\label{lem:regularity}
It holds that 
\begin{align}\label{BESQ:Kol}
\begin{split}
&\E_{z^0}^{\beta\da}\biggl[\biggl(\sup_{ 0\leq s\neq t\leq T}\frac{||Z_t|^2-|Z_s|^2|}{|t-s|^{\eta}}\biggr)^{2\nu}\biggr]<\infty,\\&\forall\;\nu>1,\;\eta\in [0,\tfrac{\nu-1}{2\nu}),\; T\in (0,\infty),\;z^0\in \Bbb C.\end{split}
\end{align}
\end{lem}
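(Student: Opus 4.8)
\textbf{Proof strategy for Lemma~\ref{lem:regularity}.}
The plan is to apply the standard Kolmogorov--Chentsov continuity criterion \cite[(2.1) Theorem, p.26]{RY}, for which it suffices to establish a moment bound of the form
\begin{align}\label{plan:KC}
\E_{z^0}^{\beta\da}\bigl[\bigl||Z_t|^2-|Z_s|^2\bigr|^{2\nu}\bigr]\leq C(\nu,T,z^0)|t-s|^{\nu},\quad 0\leq s\leq t\leq T,\;\nu>1.
\end{align}
Indeed, if \eqref{plan:KC} holds, then Kolmogorov's criterion yields a modification of $\{|Z_t|^2\}$ that is locally H\"older continuous of every order $\eta<(\nu-1)/(2\nu)$, with the $L^{2\nu}$-norm of the associated H\"older seminorm on $[0,T]$ finite, which is exactly \eqref{BESQ:Kol}; since $\{|Z_t|^2\}$ already has continuous paths, the modification is indistinguishable from it.

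To prove \eqref{plan:KC}, the first step is to use the SDE \eqref{def:BESQb} for $\{|Z_t|^2\}\sim\BES Q(0,\beta\da)$: writing $X_t=|Z_t|^2$,
\begin{align}\label{plan:decomp}
X_t-X_s=\int_s^t 2\Bigl(1-\tfrac{\hK_1(\sqrt{2\beta}\sqrt{X_r})}{K_0(\sqrt{2\beta}\sqrt{X_r})}\Bigr)\d r+2\int_s^t \sqrt{X_r}\,\d B_r.
\end{align}
The drift term is handled trivially: by the asymptotic representations \eqref{K00}, \eqref{K0infty}, \eqref{K10}, \eqref{K1infty}, the ratio $\hK_1(x)/K_0(x)=xK_1(x)/K_0(x)$ is bounded near $0$ (where it behaves like $1/\log x^{-1}\to 0$) and grows at most linearly at infinity; so on the event $\{X_r\leq R\}$ it is bounded by $C(\beta)(1+R^{1/2})$, but more robustly one simply notes $|1-\hK_1/K_0|\le 1+\hK_1/K_0$ and that $\hK_1/K_0\ge 0$, giving the drift contribution to \eqref{plan:decomp} a pathwise bound of the form $C(\beta)\int_s^t(1+\sqrt{X_r})\d r$. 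For the martingale term, the Burkholder--Davis--Gundy inequality \cite[(4.1) Theorem, p.160]{RY} gives $\E[|\int_s^t\sqrt{X_r}\,\d B_r|^{2\nu}]\le C(\nu)\E[(\int_s^t X_r\,\d r)^{\nu}]\le C(\nu)|t-s|^{\nu-1}\int_s^t\E[X_r^{\nu}]\,\d r$ by Jensen's inequality. Combining the two pieces via $(a+b)^{2\nu}\le C(\nu)(a^{2\nu}+b^{2\nu})$ and Jensen reduces everything to controlling $\sup_{r\le T}\E_{z^0}^{\beta\da}[X_r^{\nu}]$, i.e. positive moments of $|Z_t|^2$.

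The one genuine point requiring care — and the main obstacle — is therefore the uniform-in-$z^0$ bound $\sup_{0\le r\le T}\E_{z^0}^{\beta\da}[|Z_r|^{2\nu}]<\infty$. Here I would invoke Lemma~\ref{lem:BESQb2}: any solution of \eqref{def:BESQb} started at $X_0=|z^0|^2$ is dominated pathwise by a version $\{X'_r\}$ of $\BES Q(0)$ started at the same point, so $\E_{z^0}^{\beta\da}[X_r^{\nu}]\le \E[X_r'^{\nu}]$. Positive moments of a squared Bessel process are classical and polynomially bounded in time and in the starting point — explicitly, $\{X'_r\}$ under its own law is $|z^0+W_r|^2$ for a two-dimensional Brownian motion $W$, so $\E[X_r'^{\nu}]=\E[|z^0+W_r|^{2\nu}]\le C(\nu)(|z^0|^{2\nu}+r^{\nu})\le C(\nu)(|z^0|^{2\nu}+T^{\nu})$ for $r\le T$. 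This is finite for each fixed $z^0$, which is all that \eqref{BESQ:Kol} asks (the constant there is allowed to depend on $z^0$). Plugging this into the BDG estimate above yields $\int_s^t\E[X_r^{\nu}]\d r\le C(\nu,T,z^0)|t-s|$, and the drift term is bounded analogously by $C(\nu,T,z^0)|t-s|^{2\nu}\le C|t-s|^{\nu}$ since $|t-s|\le T$. This establishes \eqref{plan:KC} and hence the lemma. I expect no real difficulty beyond bookkeeping once the comparison with $\BES Q(0)$ is in place; the only subtlety is remembering that we do \emph{not} need uniformity in $z^0$, which is fortunate because $\E[|z^0+W_r|^{2\nu}]$ does grow in $|z^0|$.
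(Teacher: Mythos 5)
Your proposal is correct and follows essentially the same route as the paper: write the increment via the SDE \eqref{def:BESQb} for $\{|Z_t|^2\}$, bound the drift using $\hK_1(x)/K_0(x)\lesssim 1+x$, apply Burkholder--Davis--Gundy plus H\"older/Jensen to the martingale part, control $\sup_{r\le T}\E_{z^0}^{\beta\da}[|Z_r|^{2\nu}]$ by the comparison with $\BES Q(0)$ from Lemma~\ref{lem:BESQb2}, and conclude with the Kolmogorov continuity theorem. The only cosmetic difference is that you make the $\BES Q(0)$ moment bound explicit via $|z^0+W_r|^2$, whereas the paper simply cites boundedness of $\E^{(0)}_{z^0}[|Z_r|^{2\nu}]$ on $[0,T]$.
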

\begin{proof}
It is enough to show the following bound: for all $\nu>1$, $T\in (0,\infty)$ and $z^0\in \Bbb C$,
\begin{align}
&\E_{z^0}^{\beta\da}[||Z_t|^2-|Z_s|^2|^{2\nu}]\leq C(\beta,\nu,|z^0|,T)(t-s)^\nu,\quad \forall\;0\leq s\leq t\leq T.\label{regularity0}
\end{align}
Specifically, given this bound, \eqref{BESQ:Kol} follows upon applying the Kolmogorov continuity theorem \cite[(2.1) Theorem, p.26]{RY}. 

To validate \eqref{regularity0}, we use the SDE \eqref{def:BESQb} for $X_t=|Z_t|^2$, which reads
\begin{align}\label{eq:BESQb}
|Z_t|^2=|Z_0|^2+\int_0^t 2\biggl(1-\frac{\hK_1(\sqrt{2\beta}|Z_s|)}{K_0(\sqrt{2\beta}|Z_s|)}\biggr)\d s+2\int_0^t|Z_s|\d B_s
\end{align}
By the asymptotic representations \eqref{K00}--\eqref{K1infty} of $K_0$ and $K_1$
as $x\to 0$ and $x\to\infty$,
\begin{align}\label{bdd:K1K0}
\frac{\hK_1(x)}{K_0(x)}\less 1+x,\quad x>0.
\end{align}
Hence, for all $\nu>1$, $0\leq s\leq t\leq T$, we have 
\begin{align}
\E_{z^0}^{\beta\da}[||Z_t|^2-|Z_s|^2|^{2\nu}]&\leq C(\nu)(t-s)^{2\nu}+
C(\beta,\nu)\E_{z^0}^{\beta\da}\biggl[\biggl(\int_s^t |Z_r|\d r\biggr)^{2\nu}\biggr]\notag\\
&\quad+
C(\nu)\E^{\beta\da}_{z^0}\biggl[\biggl(\int_s^t 
|Z_r|^2\d r\biggr)^\nu\biggr]\notag\\
&\leq C(\nu)(t-s)^{2\nu}+
C(\beta,\nu)(t-s)^{2\nu-1}\E_{z^0}^{\beta\da}\left[\int_s^t |Z_r|^{2\nu}\d r\right]\notag\\
&\quad+ C(\nu)(t-s)^{\nu-1}\E^{\beta\da}_{z^0}\left[\int_s^t |Z_r|^{2\nu}\d r\right]\notag\\
&\leq C(\beta,\nu,|z^0|,T)(t-s)^\nu,\label{regularity}
\end{align}
which is the required bound in \eqref{regularity0}.
In more detail, the first inequality above uses the inequality $(x+y)^{2\nu}\leq C(\nu)(x^{2\nu}+y^{2\nu})$ for all $x,y\geq 0$ and
the Burkholder--Davis--Gundy inequality \cite[(4.1) Theorem, p.160]{RY}. Also, the second inequality in the above display follows by using H\"older's inequality with the pairs of H\"older conjugates $(p,q)=(2\nu,\frac{2\nu}{2\nu-1})$ and $(p,q)=(\nu,\frac{\nu}{\nu-1})$. Finally, \eqref{regularity} follows since $\E^{\beta\da}_{z^0}[|Z_r|^{2\nu}]\leq \E^{(0)}_{z^0}[|Z_r|^{2\nu}]$ by Lemma~\ref{lem:BESQb2} and 
$\E^{(0)}_{z^0}[|Z_r|^{2\nu}]$ is bounded in $0\leq r\leq T$. The proof is complete.
\end{proof}

The last lemma prepares the forthcoming application of \eqref{BESQ:Kol}. 

\begin{lem}\label{lem:choicenu}
For every $\nu\in (1,\infty)$ satisfying
\begin{align}\label{exp:bdd0}
\frac{3\nu+1}{4\nu -1}<1,
\end{align}
there exist $\eta\in \left(0,\frac{\nu-1}{2\nu}\right)$ and $\gamma\in (0,\frac{1}{2})$ such that 
\begin{align}\label{exp:bdd}
\left(1-\frac{\eta}{2}\right)\frac{4\nu}{4\nu-1}<2\gamma .
\end{align}
\end{lem}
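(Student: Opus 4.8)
The statement to prove is elementary: it asserts the existence of parameters $\eta$ and $\gamma$ satisfying two strict inequalities, given the hypothesis \eqref{exp:bdd0}. The plan is to treat this as a limiting/continuity argument. First I would verify the statement at the ``boundary value'' $\eta = 0$: with $\eta = 0$ the left-hand side of \eqref{exp:bdd} becomes $\frac{4\nu}{4\nu-1}$, and the hypothesis \eqref{exp:bdd0} says $\frac{3\nu+1}{4\nu-1} < 1$, i.e. $3\nu+1 < 4\nu - 1$, i.e. $\nu > 2$; one checks that $\nu > 2$ is exactly the condition making $\frac{4\nu}{4\nu-1} < \frac{1}{2}\cdot\frac{4\nu-1}{4\nu-1}$... wait, that is not right, so the genuinely first step is to compute carefully what $\frac{4\nu}{4\nu-1} < 2\gamma$ requires and confirm that for $\nu$ satisfying \eqref{exp:bdd0} there is room below $\gamma = \tfrac12$, i.e. that $\frac{4\nu}{4\nu-1} < 1$, which is false. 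So the right reading is: choose $\gamma$ close to $\tfrac12$, so $2\gamma$ close to $1$; then we need $\left(1 - \tfrac{\eta}{2}\right)\frac{4\nu}{4\nu-1} < 1$, and since $\frac{4\nu}{4\nu-1} > 1$ always, this forces $\eta > 0$ strictly, with $1 - \tfrac{\eta}{2} < \frac{4\nu-1}{4\nu}$, i.e. $\eta > \frac{1}{2\nu}$. So the real content is: the interval $\left(\frac{1}{2\nu},\ \frac{\nu-1}{2\nu}\right)$ for admissible $\eta$ is nonempty, which holds iff $\frac{1}{2\nu} < \frac{\nu-1}{2\nu}$, i.e. $\nu > 2$, which is precisely \eqref{exp:bdd0} rewritten.

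Thus the proof I would write proceeds in two short steps. Step one: rewrite \eqref{exp:bdd0} as $\nu > 2$ by clearing the denominator $4\nu - 1 > 0$ (valid since $\nu > 1$). Step two: pick any $\eta \in \left(\frac{1}{2\nu}, \frac{\nu-1}{2\nu}\right)$, which is a nonempty open interval precisely because $\nu > 2$, and note $\eta < \frac{\nu-1}{2\nu} < \frac12$ automatically so the constraint $\eta \in \left(0, \frac{\nu-1}{2\nu}\right)$ holds. For this $\eta$ we have $1 - \tfrac{\eta}{2} < 1 - \tfrac{1}{4\nu} = \frac{4\nu - 1}{4\nu}$, hence $\left(1 - \tfrac{\eta}{2}\right)\frac{4\nu}{4\nu - 1} < 1$. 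Finally choose $\gamma \in \left(\tfrac12\left(1 - \tfrac{\eta}{2}\right)\frac{4\nu}{4\nu-1},\ \tfrac12\right)$, again a nonempty interval by the strict inequality just established; then $\gamma < \tfrac12$ and $2\gamma > \left(1 - \tfrac{\eta}{2}\right)\frac{4\nu}{4\nu - 1}$, which is \eqref{exp:bdd}.

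There is essentially no obstacle here — the lemma is a bookkeeping statement isolating the arithmetic constraint needed later when \eqref{BESQ:Kol} (Lemma~\ref{lem:regularity}) is invoked with a specific H\"older exponent; the only thing to be careful about is the direction of the inequalities and the fact that all the intervals produced are genuinely nonempty, which is where the hypothesis \eqref{exp:bdd0} gets used (it is equivalent to $\nu > 2$, which makes $\left(\frac{1}{2\nu}, \frac{\nu-1}{2\nu}\right)$ nonempty). I would present it as a three-line argument: reduce the hypothesis to $\nu>2$; exhibit the interval for $\eta$; exhibit the interval for $\gamma$. No deeper machinery is required, and no result from earlier in the paper beyond the already-quoted range restrictions on $\eta$ and $\gamma$.
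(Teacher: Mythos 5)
Your proof is correct, and it is essentially the paper's argument: both reduce the hypothesis \eqref{exp:bdd0} to $\nu>2$ and then exploit the resulting room in the $\eta$-range, the only cosmetic difference being that the paper checks the key inequality at the endpoint $\eta_0=\tfrac{\nu-1}{2\nu}$ and perturbs, while you solve explicitly for the admissible interval $\bigl(\tfrac{1}{2\nu},\tfrac{\nu-1}{2\nu}\bigr)$ before choosing $\gamma$ in $\bigl(\tfrac12\bigl(1-\tfrac{\eta}{2}\bigr)\tfrac{4\nu}{4\nu-1},\tfrac12\bigr)$. No gap; the bookkeeping and the directions of all inequalities check out.
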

\begin{proof}
Note that \eqref{exp:bdd0} holds if and only if $\nu>2$. To justify the existence of $(\eta,\gamma)$, first note 
\begin{align}
\frac{1}{2}\left(1-\frac{\eta_0}{2}\right)\Big|_{\eta_0=\frac{\nu-1}{2\nu}}\frac{4\nu}{4\nu-1}=\frac{1}{2}\frac{3\nu+1}{4\nu}\frac{4\nu}{4\nu-1}=\frac{1}{2}\frac{3\nu+1}{4\nu-1}<\frac{1}{2},\label{exp:bdd1}
\end{align}
where the last inequality uses \eqref{exp:bdd0}. By \eqref{exp:bdd1}, we can choose $\eta\in \left(0,\frac{\nu-1}{2\nu}\right)$ such that 
$\frac{1}{2}\left(1-\frac{\eta}{2}\right)\frac{4\nu}{4\nu-1}<\frac{1}{2}$. Hence, we can choose $\gamma\in (0,\frac{1}{2})$ such that 
$\frac{1}{2}\left(1-\frac{\eta}{2}\right)\frac{4\nu}{4\nu-1}<\gamma$, which is enough to get \eqref{exp:bdd}.
\end{proof}

\begin{proof}[Proof of Proposition~\ref{prop:step1}]
For $0<s_0\leq s<t\leq t_0$ with $\vep=t-s<1$, write
\begin{align*}
&\quad \;\frac{1}{\vep}\E^{\beta\da}_{z^0}[f(Z_{t})-f(Z_s);Z_s\neq 0,T_0(Z)\circ \vartheta_s\leq \vep]\\
&=\frac{1}{\vep}\E^{\beta\da}_{z^0}\left[f(Z_{t})-f(Z_{s+T_0(Z)\circ \vartheta_s});Z_s\neq 0,T_0(Z)\circ \vartheta_s\leq \vep\right]\\
&\quad \;+\frac{1}{\vep}\E^{\beta\da}_{z^0}\left[f(Z_{s+T_0(Z)\circ \vartheta_s})-f(Z_s);Z_s\neq 0,T_0(Z)\circ \vartheta_s\leq \vep\right].
\end{align*}
Hence, by the Lipschitz continuity of $f\in \C_c^2(\Bbb C)$, 
\begin{align}
&\quad\;\left|\frac{1}{\vep}\E^{\beta\da}_{z^0}[f(Z_{t})-f(Z_s);Z_s\neq 0,T_0(Z)\circ \vartheta_s\leq \vep]\right|\notag\\
\begin{split}\label{regularity00}
&\leq \frac{C(f)}{\vep}\E^{\beta\da}_{z^0}[|Z_t-Z_{s+T_0(Z)\circ \vartheta_s}|;Z_s\neq 0,T_0(Z)\circ \vartheta_s\leq \vep]\\
&\quad\; + \frac{C(f)}{\vep}\E^{\beta\da}_{z^0}[|Z_{s+T_0(Z)\circ \vartheta_s}-Z_s|;Z_s\neq 0,T_0(Z)\circ \vartheta_s\leq \vep].
\end{split}
\end{align}

We now turn to Lemmas~\ref{lem:regularity} and \ref{lem:choicenu} to handle \eqref{regularity00}. 
When $Z_{s+T_0(Z)\circ \vartheta_s}=0$, we have
 \[
 |Z_t-Z_{s+T_0(Z)\circ \vartheta_s}|=|Z_t|=||Z_t|^2|^{1/2}=||Z_t|^2-|Z_{s+T_0(Z)\circ \vartheta_s}|^2|^{1/2}, 
 \]
 and similarly,
 $|Z_{s+T_0(Z)\circ \vartheta_s}-Z_s|=||Z_{s+T_0(Z)\circ \vartheta_s}|^2-|Z_s|^2|^{1/2}$. Hence, for $\nu\in (1,\infty)$ satisfying \eqref{exp:bdd0} and for $(\eta,\gamma)$ chosen in Lemma~\ref{lem:choicenu},  \eqref{regularity00} implies
 \begin{align*}
&\quad\; \left|\frac{1}{\vep}\E^{\beta\da}_{z^0}[f(Z_{t})-f(Z_s);Z_s\neq 0,T_0(Z)\circ \vartheta_s\leq \vep]\right|\\
&\leq \frac{C(f)}{\vep^{1-\frac{\eta}{2}}}\E^{\beta\da}_{z^0}\Biggl[\sup_{\stackrel{0\leq s_1\neq s_2\leq t_0}{|s_1-s_2|\leq \vep}}\frac{||Z_{s_2}|^2-|Z_{s_1}|^2|^{1/2}}{\vep^{\frac{\eta}{2}}};Z_s\neq 0,T_0(Z)\circ \vartheta_s\leq \vep\Biggr]\\
&\leq \frac{C(f)}{\vep^{1-\frac{\eta}{2}}}\E^{\beta\da}_{z^0}\Biggl[\Biggl(\sup_{\stackrel{0\leq s_1\neq s_2\leq t_0}{|s_1-s_2|\leq \vep}}\frac{||Z_{s_2}|^2-|Z_{s_1}|^2|}{|s_2-s_1|^\eta}\Biggr)^{1/2};Z_s\neq 0,T_0(Z)\circ \vartheta_s\leq \vep\Biggr]\\
&\leq \frac{C(f)}{\vep^{1-\frac{\eta}{2}}}\E^{\beta\da}_{z^0}\Biggl[\Biggl(\sup_{\stackrel{0\leq s_1\neq s_2\leq t_0}{|s_1-s_2|\leq \vep}}\frac{||Z_{s_2}|^2-|Z_{s_1}|^2|}{|s_2-s_1|^\eta}\Biggr)^{2\nu}\Biggr]^{\frac{1}{4\nu}}\E^{\beta\da}_{z^0}\left[\P^{\beta\da}_{Z_s}(T_0(Z)\leq \vep)\right]^{\frac{4\nu-1}{4\nu}}\\
\begin{split}
&\leq C(f)\E^{\beta\da}_{z^0}\Biggl[\Biggl(\sup_{0\leq s_1\neq s_2\leq t_0}\frac{||Z_{s_2}|^2-|Z_{s_1}|^2|}{|s_2-s_1|^\eta}\Biggr)^{2\nu}\Biggr]^{\frac{1}{4\nu}}\\
&\quad\;\times\Biggl(\frac{1}{\vep^{(1-\frac{\eta}{2})\frac{4\nu}{4\nu-1}}}\E^{\beta\da}_{z^0}\left[\P^{\beta\da}_{Z_s}(T_0(Z)\leq \vep)\right]\Biggr)^{\frac{4\nu-1}{4\nu}},
\end{split}
\end{align*}
where the second inequality holds since $|s_1-s_2|\leq \vep$ implies $ |s_2-s_1|^{\frac{\eta}{2}}\leq \vep^{\frac{\eta}{2}}$, 
 and the next to the last inequality applies H\"older's inequality with respect to the pair of H\"older conjugates $(p,q)=(4\nu,\frac{4\nu}{4\nu -1})$ and the Markov property of $\{|Z_t|\}$ at time $s$.
By the choice of $\eta$ and $\nu$ in achieving \eqref{exp:bdd} and by \eqref{BESQ:Kol}, the first expectation on the right-hand side of the last inequality, independent of $\vep>0$ and $s,t$, is finite. By the last inequality, \eqref{goal:step1} holds if 
\begin{align}\label{Zprob:lim}
\lim_{\vep \searrow 0}\frac{1}{\vep^{(1-\frac{\eta}{2})\frac{4\nu}{4\nu-1}}}\E^{\beta\da}_{z^0}\left[\P^{\beta\da}_{Z_s}(T_0(Z)\leq \vep)\right]=0\quad\mbox{uniformly in $s_0\leq s\leq t_0$.}
\end{align}

To prove \eqref{Zprob:lim}, first, note that, in the case of $Z_s\neq 0$, 
\begin{align}\label{bdd:T0}
\begin{split}
\P^{\beta\da}_{Z_s}(T_0(Z)\leq \vep)&=\P^{\beta\da}_{Z_s}(\vep^{-1}T_0(Z)\leq 1)\leq \e\E^{\beta\da}_{Z_s}[\e^{-\vep^{-1}T_0(Z)}]\\
&=\e\frac{K_0(\sqrt{2(\beta+\vep^{-1})}|Z_s|)}{K_0(\sqrt{2\beta}|Z_s|)},
\end{split}
\end{align}
where the inequality follows from the Markov inequality, and the last equality uses the exact formula of the Laplace transform of $T_0(Z)=T_0(|Z|)$ under $\P^{\beta\da}$. [This exact formula appears in \cite[(2.9), p.884]{DY:Krein} and can be obtained from \eqref{def:T0Z}.] Also, 
\begin{align}\label{hKinfty:bdd}
K_0(x)\less   \e^{-x}/\sqrt{x}, \quad x\geq 1,
\end{align} 
by the asymptotic representation \eqref{K0infty} of $K_0(x)$ as $x\to\infty$, and since the radial process $\{|Z_t|\}$ under $\P^{\beta\da}$ is a version of $\BES (0,\beta\da)$, 
\begin{align}\label{supK}
\sup_{0\leq r\leq t_0}\E^{\beta\da}_{z^0}\left[\frac{1}{K_0(\sqrt{2\beta }|Z_r|)}\right]<\infty,\quad\forall \;z^0\in \Bbb C,
\end{align}
by using \eqref{DY:law1} since $P_t\1\equiv 1$. Now, for all $z^0\in \Bbb C$ and all $\vep>0$ small such that $\sqrt{2(\beta+\vep^{-1})}\cdot\vep^{\gamma}\geq 1$ and $\vep^\gamma/\sqrt{2\beta}\in (0,\deltaz )$, considering separately $|Z_s|\leq \vep^{\gamma}$ and $|Z_s|>\vep^\gamma$ 
and applying \eqref{bdd:T0} and the decreasing monotonicity of $K_0$ give the first inequality below:
\begin{align}
&\quad \;\frac{1}{\vep^{(1-\frac{\eta}{2})\frac{4\nu}{4\nu-1}}}\E_{z^0}^{\beta\da}\left[\P^{\beta\da}_{Z_s}(T_0(Z)\leq \vep)\right]\notag\\
&\less \frac{1}{\vep^{(1-\frac{\eta}{2})\frac{4\nu}{4\nu-1}}}\P^{\beta\da}_{z^0}(|Z_s|\leq \vep^\gamma)\notag\\
&\quad \;+\frac{1}{\vep^{(1-\frac{\eta}{2})\frac{4\nu}{4\nu-1}}}K_0(\sqrt{2(\beta+\vep^{-1})}\cdot\vep^{\gamma})
\E^{\beta\da}_{z^0}\left[\frac{1}{K_0(\sqrt{2\beta}|Z_s|)}\right]
\notag\\
\begin{split}
&\leq \frac{C(\beta,s_0,t_0)(\vep^{\gamma}/\sqrt{2\beta})^2|\log^3(\vep^\gamma/\sqrt{2\beta})|}{\vep^{(1-\frac{\eta}{2})\frac{4\nu}{4\nu-1}}}\\
&\quad+\frac{C}{\vep^{(1-\frac{\eta}{2})\frac{4\nu}{4\nu-1}}}\cdot \frac{\e^{-x}}{\sqrt{x}}\Bigg|_{x=\sqrt{2(\beta+\vep^{-1})}\cdot \vep^\gamma}\sup_{s_0\leq r\leq t_0}
\E^{\beta\da}_{z^0}\left[\frac{1}{K_0(\sqrt{2\beta}|Z_r|)}\right]
\xrightarrow[\vep \searrow 0]{}0,\label{Z:prob-1}
\end{split}
\end{align}
where the convergence is uniform in $s_0\leq s\leq t_0$. Note that the last inequality uses Lemma~\ref{lem:PiZbdd} and \eqref{hKinfty:bdd}, and the limit holds by applying \eqref{exp:bdd} to the first term and the choice $\gamma<1/2$ and  \eqref{supK} to the second term. The uniform convergence in \eqref{Z:prob-1} proves \eqref{Zprob:lim}. The proof of Proposition~\ref{prop:step1} is complete.
 \end{proof}

\subsubsection{Differentiation in an excursion interval}\label{sec:2conv}

\begin{prop}\label{prop:step2}
For all $z^0\in \Bbb C$, $f\in \C_c^2(\Bbb C)$ and $0<s<\infty$, it holds that
\begin{align}\label{goal:step2}
\lim_{\vep\searrow 0}\frac{1}{\vep}\E^{\beta\da}_{z^0}\bigl[\E^{\beta\da}_{Z_s}[f(Z_{\vep})-f(Z_0);T_0(Z)> \vep];Z_s\neq 0\bigr]=\E^{\beta\da}_{z^0}[\ms Af(Z_s)],
\end{align}
where $\ms Af$ is defined by \eqref{def:A-1}.
\end{prop}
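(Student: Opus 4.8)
\textbf{Proof proposal for Proposition~\ref{prop:step2}.}

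The plan is to fix $z^0\in\Bbb C$, $f\in\C_c^2(\Bbb C)$ and $s>0$, and to analyze the inner expectation $\Phi_\vep(z)\defeq\E^{\beta\da}_z[f(Z_\vep)-f(0);T_0(Z)>\vep]$ for $z\neq 0$, since the outer expectation over $Z_s$ under $\P^{\beta\da}_{z^0}$ only requires dominated convergence once we understand $\vep^{-1}\Phi_\vep(z)$ pointwise in $z$ together with a suitable uniform bound. For the pointwise limit I would use the absolute-continuity relation \eqref{com:Z}: on $\{t<T_0(Z)\}$, $\d\P^{\beta\da}_z$ equals $\e^{-\beta t}K_0(\sqrt{2\beta}|Z_t|)/K_0(\sqrt{2\beta}|z|)\,\d\P^{(0)}_z$, where $\{Z_t\}$ under $\P^{(0)}_z$ is planar Brownian motion. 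Hence
\[
\Phi_\vep(z)=\frac{\e^{-\beta\vep}}{K_0(\sqrt{2\beta}|z|)}\E^{(0)}_z\bigl[(f(Z_\vep)-f(0))K_0(\sqrt{2\beta}|Z_\vep|);\vep<T_0(Z)\bigr].
\]
Because planar Brownian motion started away from $0$ does not hit $0$ (points are polar), the event $\{\vep<T_0(Z)\}$ has probability one, so the indicator can be dropped and
\[
\vep^{-1}\Phi_\vep(z)=\frac{\e^{-\beta\vep}}{K_0(\sqrt{2\beta}|z|)}\cdot\frac{1}{\vep}\E^{(0)}_z\bigl[g(Z_\vep)-g(0)\bigr]+\frac{\e^{-\beta\vep}}{K_0(\sqrt{2\beta}|z|)}\cdot\frac{f(0)}{\vep}\E^{(0)}_z\bigl[K_0(\sqrt{2\beta}|Z_\vep|)-K_0(\sqrt{2\beta}|z|)\bigr],
\]
where $g(w)\defeq f(w)K_0(\sqrt{2\beta}|w|)$. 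Near a fixed $z\neq 0$ both $g$ and $w\mapsto K_0(\sqrt{2\beta}|w|)$ are smooth, so each difference quotient converges to $\tfrac12\Delta$ applied to the respective function at $z$ by the classical generator computation for Brownian motion (e.g.\ Itô's formula plus continuity of the second derivatives, using that $f$ has compact support so all moments are controlled). Collecting terms, a short computation — expanding $\Delta g=K_0\Delta f+2\nabla K_0\cdot\nabla f+f\Delta K_0$, and using that $K_0(\sqrt{2\beta}|w|)$ solves $\tfrac12\Delta K_0=\beta K_0+ \text{(the radial drift term)}$, more precisely $\tfrac12\Delta_w K_0(\sqrt{2\beta}|w|)=\beta K_0(\sqrt{2\beta}|w|)$ away from $0$ by \eqref{DE:K} after rescaling, and $\nabla_w K_0(\sqrt{2\beta}|w|)=-\sqrt{2\beta}K_1(\sqrt{2\beta}|w|)\tfrac{w}{|w|}$ — gives
\[
\lim_{\vep\searrow0}\vep^{-1}\Phi_\vep(z)=\frac{1}{K_0(\sqrt{2\beta}|z|)}\Bigl[\tfrac12 K_0(\sqrt{2\beta}|z|)\Delta f(z)-\sqrt{2\beta}\tfrac{K_1}{|z|}(\sqrt{2\beta}|z|)\,z\cdot\nabla f(z)\Bigr]=\ms Af(z),
\]
after identifying $\sqrt{2\beta}K_1(\sqrt{2\beta}|z|)/|z|=\widehat K_1(\sqrt{2\beta}|z|)/(|z|^2)$ and rewriting $z\cdot\nabla f(z)/|z|^2=\langle \overline z^{-1},\nabla f(z)\rangle$ in the complex notation of \eqref{def:A-1}; the $\beta K_0$ and $-\beta f$ contributions cancel. (I would double-check the $\e^{-\beta\vep}$ factor contributes only $O(\vep)$ and is harmless.)

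It remains to justify interchanging the limit with $\E^{\beta\da}_{z^0}[\,\cdot\,;Z_s\neq 0]$. For this I would produce a $\vep$-uniform dominating bound of the form $|\vep^{-1}\Phi_\vep(Z_s)|\le C(f,\beta)\,h(Z_s)$ with $\E^{\beta\da}_{z^0}[h(Z_s)]<\infty$. The natural candidate is $h(z)=1+\widehat K_1(\sqrt{2\beta}|z|)/(K_0(\sqrt{2\beta}|z|)|z|)$, whose $\P^{\beta\da}_{z^0}$-expectation at time $s$ is finite by \eqref{MP:mom1} of Proposition~\ref{prop:MP}~(1$\cc$). To get such a bound I would apply Itô's formula to $g(Z_r)=f(Z_r)K_0(\sqrt{2\beta}|Z_r|)$ under $\P^{(0)}_z$ over $r\in[0,\vep]$, obtaining $\vep^{-1}\E^{(0)}_z[g(Z_\vep)-g(0)]=\vep^{-1}\int_0^\vep\E^{(0)}_z[\tfrac12\Delta g(Z_r)]\d r$, and similarly for $K_0(\sqrt{2\beta}|\cdot|)$; then estimate $\tfrac12\Delta g$ and $\tfrac12\Delta(K_0\circ)$ using the asymptotics \eqref{K00}--\eqref{K1infty}, which show $|\nabla K_0(\sqrt{2\beta}|w|)|\lesssim K_1(\sqrt{2\beta}|w|)$ and $|\Delta(K_0\circ)(w)|\lesssim K_0(\sqrt{2\beta}|w|)$ away from $0$, while near $0$ the singularity is only logarithmic; dividing by $K_0(\sqrt{2\beta}|z|)$ and using monotonicity/comparison of $K_0$-ratios (as in the proof of Lemma~\ref{lem:PiZbdd}) yields the desired pointwise-in-$z$ bound by $C\,h(z)$ after a short argument controlling the time-averaged Brownian expectation $\vep^{-1}\int_0^\vep\E^{(0)}_z[\cdots]\d r$ uniformly in $\vep\le 1$ — here I would invoke that $w\mapsto \widehat K_1(\sqrt{2\beta}|w|)/(K_0(\sqrt{2\beta}|w|)|w|)$ is an $L^1_{\loc}$ function with at most $|w|^{-1}$ growth near $0$, so its Brownian average over $[0,\vep]$ from $z$ is $\lesssim h(z)$ uniformly, e.g.\ by Lemma~6.2 of \cite{C:BES}-type negative-moment bounds for the two-dimensional Bessel process.

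The main obstacle I anticipate is precisely this last uniform domination step: near the origin $\ms Af$ is genuinely singular (order $|z|^{-1}$), and one must show that the difference quotient $\vep^{-1}\Phi_\vep(z)$ does not blow up worse than $\ms Af$ itself as $\vep\searrow 0$, uniformly in small $\vep$, so that dominated convergence applies against the integrable majorant furnished by \eqref{MP:mom1}. The delicate point is that the Brownian bridge from $z$ over a short time $\vep$ can wander close to $0$ when $|z|$ is itself small, and one needs the time-averaging $\vep^{-1}\int_0^\vep$ to smooth out the $|w|^{-1}$ singularity of the drift coefficient without producing an extra logarithmic factor that would spoil integrability; I would handle this by splitting according to whether $|z|\le\sqrt\vep$ or $|z|>\sqrt\vep$ and using the self-similarity of Brownian motion together with the $L^1_{\loc}$ and negative-moment estimates cited above. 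Everything else — the pointwise generator computation and the cancellation of the $\beta$-terms via \eqref{DE:K} — is routine.
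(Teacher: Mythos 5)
Your reduction of the inner expectation via the change of measure \eqref{com:Z} is legitimate (the indicator can indeed be dropped because planar Brownian motion from $z\neq 0$ never hits $0$), and your pointwise computation of $\lim_{\vep\searrow 0}\vep^{-1}\Phi_\vep(z)=\ms Af(z)$ for fixed $z\neq 0$ is correct, up to the $f(0)$-versus-$f(Z_0)=f(z)$ shorthand and a sign slip in the displayed decomposition (the second term must enter with a minus sign for the $\beta$-terms to cancel as you claim). This part overlaps with how the paper treats the drift term of its own It\^o decomposition, and you have also correctly located the crux in the exchange of the limit with $\E^{\beta\da}_{z^0}$.

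The gap is in your concrete plan for that crux. The identity you intend to use, $\vep^{-1}\E^{(0)}_z[g(Z_\vep)-g(z)]=\vep^{-1}\int_0^\vep\E^{(0)}_z[\tfrac12\Delta g(Z_r)]\d r$ with the \emph{pointwise} Laplacian (and likewise for $K_0(\sqrt{2\beta}|\cdot|)$), is false: $K_0(\sqrt{2\beta}|\cdot|)$ is a Green function, so $\tfrac12\Delta K_0(\sqrt{2\beta}|\cdot|)=\beta K_0(\sqrt{2\beta}|\cdot|)-\pi\delta_0$ distributionally; equivalently the It\^o local martingale for $K_0(\sqrt{2\beta}|Z_r|)$ under $\P^{(0)}_z$ is strict, and the correct identities carry the extra terms $-\pi f(0)\,\vep^{-1}\int_0^\vep P_r(z)\d r$, resp.\ $-\pi\,\vep^{-1}\int_0^\vep P_r(z)\d r$. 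For fixed $z\neq 0$ these vanish as $\vep\searrow 0$, so your pointwise limit survives, but they destroy the proposed domination: taking $\vep\asymp|z|^2$ gives $\vep^{-1}\int_0^\vep P_r(z)\d r\asymp |z|^{-2}$, so after division by $K_0(\sqrt{2\beta}|z|)$ each of your two pieces has $\sup_{0<\vep<1}$ of order $|z|^{-2}\log^{-1}(1/|z|)$ near the origin, far larger than $h(z)\asymp(|z|\log(1/|z|))^{-1}$; since the law of $Z_s$ has density $\asymp\log^2(1/|z|)$ near $0$ by \eqref{eq:Feller}--\eqref{def:mbeta}, this supremum is not $\P^{\beta\da}_{z^0}$-integrable, so no $\vep$-uniform integrable majorant for the two pieces separately exists and dominated convergence cannot be applied as proposed. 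The offending near-origin contributions cancel only \emph{between} the two pieces, i.e., only if you keep $f(Z_\vep)-f(z)$ as a factor against the singular weight and prove a quantitative two-regime estimate uniformly near $|z|\asymp\sqrt{\vep}$ — and that is precisely the hard content: it is the same near-origin/hitting effect that the paper isolates as the stochastic-integral term on $\{T_0(Z)>\vep\}$ and kills via the identity with $\{T_0(Z)\le\vep\}$, BDG/H\"older, the Laplace transform of $T_0(Z)$ and Lemma~\ref{lem:PiZbdd}, while the drift term is handled not by a single majorant but by a two-level uniform-integrability argument resting on Lemma~\ref{lem:Zintbdd} and the sharp bound \eqref{ineq:BESb-2}. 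Without an argument of this type (or an equivalent exhibition of the cancellation), your proof does not close.
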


We will prove Proposition~\ref{prop:step2} right after the following lemma, which bounds negative moments of two-dimensional standard Brownian motion.

\begin{lem}\label{lem:Zintbdd}
For all $\eta\in [1,2)$ and $z^0\in \Bbb C\setminus\{0\}$, it holds that 
\begin{align}\label{ZSDE:lim0}
 \E_{z^0}^{(0)}[|Z_r|^{-\eta}]&\leq C(\eta)|z^0|^{-\eta},\quad \forall\;r\geq 0.
\end{align}
\end{lem}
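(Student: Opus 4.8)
\textbf{Proof proposal for Lemma~\ref{lem:Zintbdd}.}

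The plan is to reduce the bound \eqref{ZSDE:lim0} to the case $r=1$ by Brownian scaling, and then to verify the case $r=1$ directly by an explicit Gaussian computation. First note that for $r=0$ the inequality is trivial (with any $C(\eta)\geq 1$), so fix $r>0$. Under $\P^{(0)}_{z^0}$ the process $\{Z_t\}$ is a two-dimensional standard Brownian motion started at $z^0$, so $Z_r$ is equal in distribution to $z^0+\sqrt{r}\,Z_1'$ where $Z_1'$ is a standard complex Gaussian (mean $0$, identity covariance). Hence
\begin{align}\label{plan:scaling}
\E^{(0)}_{z^0}[|Z_r|^{-\eta}]=\E\bigl[|z^0+\sqrt{r}\,Z_1'|^{-\eta}\bigr]=r^{-\eta/2}\,\E\bigl[|z^0/\sqrt{r}+Z_1'|^{-\eta}\bigr]=r^{-\eta/2}\,\E^{(0)}_{z^0/\sqrt r}[|Z_1|^{-\eta}].
\end{align}
So it suffices to prove the bound for $r=1$, namely $\E^{(0)}_{w}[|Z_1|^{-\eta}]\leq C(\eta)|w|^{-\eta}$ for all $w\neq 0$ (applied with $w=z^0/\sqrt r$, after which $r^{-\eta/2}|z^0/\sqrt r|^{-\eta}=|z^0|^{-\eta}$).

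For the $r=1$ bound I would split according to whether $|Z_1|$ is small or comparable to $|w|$. Write $\E^{(0)}_w[|Z_1|^{-\eta}]=\E^{(0)}_w[|Z_1|^{-\eta};|Z_1|\le |w|/2]+\E^{(0)}_w[|Z_1|^{-\eta};|Z_1|>|w|/2]$. On the event $|Z_1|>|w|/2$ we bound $|Z_1|^{-\eta}\le 2^\eta|w|^{-\eta}$ and use that the total mass is $1$, giving a contribution $\le 2^\eta|w|^{-\eta}$. On the event $|Z_1|\le|w|/2$ the Gaussian density of $Z_1$ at a point $z$ is $\frac{1}{2\pi}\exp(-|z-w|^2/2)\le\frac{1}{2\pi}\exp(-|w|^2/8)$ since $|z-w|\ge|w|-|z|\ge|w|/2$; therefore
\begin{align}\label{plan:split}
\E^{(0)}_w[|Z_1|^{-\eta};|Z_1|\le|w|/2]\le \frac{\e^{-|w|^2/8}}{2\pi}\int_{|z|\le|w|/2}|z|^{-\eta}\d z=\e^{-|w|^2/8}\int_0^{|w|/2}\rho^{-\eta}\rho\,\d\rho,
\end{align}
where the last step uses polar coordinates. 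Since $\eta\in[1,2)$ we have $\eta-1\in[0,1)$, so $\int_0^{|w|/2}\rho^{1-\eta}\d\rho=\frac{(|w|/2)^{2-\eta}}{2-\eta}$ converges, and the whole term is $\le C(\eta)|w|^{2-\eta}\e^{-|w|^2/8}$. Finally $|w|^{2-\eta}\e^{-|w|^2/8}\le C(\eta)|w|^{-\eta}$ because $|w|^{2}\e^{-|w|^2/8}$ is bounded on $(0,\infty)$ (it is bounded near $0$ trivially and decays at $\infty$). Combining the two contributions yields $\E^{(0)}_w[|Z_1|^{-\eta}]\le C(\eta)|w|^{-\eta}$, and then \eqref{plan:scaling} gives \eqref{ZSDE:lim0}.

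There is no real obstacle here; the only point requiring minor care is the integrability of $|z|^{-\eta}$ in two real dimensions, which holds precisely because $\eta<2$, matching the hypothesis. The exponential Gaussian factor is not even needed for convergence of the integral — it only serves to convert the $|w|^{2-\eta}$ growth into the desired $|w|^{-\eta}$ decay for large $|w|$ — and one could alternatively phrase the argument by first treating $|w|\le 1$ (where $|w|^{2-\eta}\le|w|^{-\eta}$ directly) and $|w|>1$ (where the exponential factor dominates any polynomial) separately if one prefers to avoid invoking boundedness of $x\mapsto x^2\e^{-x^2/8}$.
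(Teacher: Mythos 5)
Your proposal is correct, and it reaches the bound by a genuinely different route from the paper. The paper scales in \emph{space}: it first writes $\E^{(0)}_{z^0}[|Z_r|^{-\eta}]=|z^0|^{-\eta}\,\E^{(0)}_{1}[|Z_{r/|z^0|^2}|^{-\eta}]$ and is then left with proving a bound on $\E^{(0)}_1[|Z_r|^{-\eta}]$ that is uniform in the time variable; this is done in two regimes, namely $\E^{(0)}_1[|Z_r|^{-\eta}]\leq \E^{(0)}_0[|Z_r|^{-\eta}]\lesssim r^{-\eta/2}$ via the SDE comparison theorem for $\BES Q(0)$, and an order-one bound for small $r$ obtained from the explicit $\BES(0)$ transition density together with the asymptotics of the modified Bessel function $I_0$. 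You instead scale in \emph{time}, reducing to $r=1$ with a variable starting point $w=z^0/\sqrt r$, after which the required estimate $\E^{(0)}_w[|Z_1|^{-\eta}]\leq C(\eta)|w|^{-\eta}$ is an elementary Gaussian computation: the event $\{|Z_1|>|w|/2\}$ contributes at most $2^\eta|w|^{-\eta}$, and on $\{|Z_1|\leq|w|/2\}$ the density bound $\frac{1}{2\pi}\e^{-|w|^2/8}$ plus polar coordinates (using $\eta<2$ for integrability of $|z|^{-\eta}$ near the origin) gives $C(\eta)|w|^{2-\eta}\e^{-|w|^2/8}\leq C(\eta)|w|^{-\eta}$. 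Your argument avoids both the comparison theorem and the Bessel-density/$I_0$ computations, and in fact it works for every $\eta\in(0,2)$, whereas the paper's small-$r$ step uses $\eta\geq 1$ (e.g.\ to control $(\sqrt r\,\tilde y+1)^{1/2-\eta}$); what the paper's decomposition buys is an explicit intermediate statement about the time decay $\E^{(0)}_1[|Z_r|^{-\eta}]\lesssim r^{-\eta/2}$, which your route does not produce but which is not needed for the lemma as stated.
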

\begin{proof}
By the Brownian scaling property,
\begin{align}
 \E_{z^0}^{(0)}[|Z_r|^{-\eta}]=|z^0|^{-\eta} \E^{(0)}_{1}[|Z_{r/|z^0|^{1/2}}|^{-\eta}].
 \label{ZSDE:lim1}
\end{align}
To bound the expectation on the right-hand side, first, note that 
\begin{align}\label{ZSDE:lim2}
\E^{(0)}_{1}[|Z_r|^{-\eta}]\leq \E^{(0)}_{0}[|Z_r|^{-\eta}]\less r^{-\eta/2},\quad r>0,
\end{align}
where the first inequality uses the comparison theorem of SDEs \cite[2.18 Proposition, p.293]{KS:BM} in the case of $\BES Q(0)$, and the second inequality holds by the Brownian scaling property and the property $\E^{(0)}_0[|Z_1|^{-\eta}]<\infty$ under $\eta\in [1,2)$.

We now improve \eqref{ZSDE:lim2} for $r\to 0$ to an order-$1$ bound 
by showing
\begin{align}\label{ZSDE:lim3}
\E^{(0)}_{1}[|Z_r|^{-\eta}]\leq C(\eta),\quad \forall\; 0<r<1/2.
\end{align}
To see this bound, we use the PDFs of $\{|Z_t|\}\sim \BES(0)$ \cite[p.446]{RY}
and then the asymptotic representation $I_0(x)\sim 1$ as $x\to 0$ and $I_0(x)\sim \e^x/\sqrt{2\pi x}$ as $x\to\infty$ \cite[p.136]{Lebedev} to get
\begin{align*}
\E^{(0)}_{1}[|Z_r|^{-\eta}]&=\int_0^\infty \frac{y^{1-\eta}}{r}\exp\left(-\frac{1+y^2}{2r}\right)I_{0}\left(\frac{y}{r}\right)\d y\\
&\less \int_0^r\frac{y^{1-\eta}}{r}\exp\left(-\frac{1+y^2}{2r}\right)\d y+\int_r^\infty \frac{y^{1-\eta}}{r}\exp\left(-\frac{1+y^2}{2r}\right)\frac{\e^{y/r}}{\sqrt{y/r}}\d y.
\end{align*}
Since $\int_{0+}y^{1-\eta}\d y<\infty$ by the assumption $\eta\in [1,2)$, the first integral on the right-hand side vanishes as $r\to 0$. For the second integral when $0<r<1/2$, we write it as
\begin{align}
&\quad\;\int_r^\infty \frac{y^{1-\eta}}{r}\exp\left(-\frac{1+y^2}{2r}\right)\frac{\e^{y/r}}{\sqrt{y/r}}\d y\notag\\
&=\biggl(\int_r^{r+1/2}+\int_{r+1/2}^\infty\biggr) \frac{y^{1/2-\eta}}{r^{1/2}}\exp\left(-\frac{(y-1)^2}{2r}\right)\d y\notag\\
&=\int_r^{r+1/2} \frac{y^{1/2-\eta}}{r^{1/2}}\exp\left(-\frac{(y-1)^2}{2r}\right)\d y+
\int_{\frac{r-1/2}{\sqrt{r}}}^\infty (\sqrt{r}\tilde{y}+1)^{1/2-\eta}\exp\left(-\frac{\tilde{y}^2}{2}\right)\d \tilde{y}\notag\\
&\leq \int_r^{r+1/2} \frac{y^{1/2-\eta}}{r^{1/2}}\exp\left(-\frac{(r-1/2)^2}{2r}\right)\d y+
C(\eta),\label{ZSDE:lim3000}
\end{align}
where the second equality uses the change of variables $y=\sqrt{r}\tilde{y}+1$. 
Note that the last integral bounds the integral over $y\in (r,r+1/2)$ on the left-hand side of \eqref{ZSDE:lim3000}
since $0<r<1/2$, and we use the assumption $\eta\in [1,2)$ to bound the integral over $\tilde{y}\in [(r-1/2)/\sqrt{r},\infty)$ on the left-hand side of \eqref{ZSDE:lim3000} by $(\sqrt{r}\tilde{y}+1)^{1/2-\eta}\leq (r+1/2)^{1/2-\eta}$ in order to get $C(\eta)$ in \eqref{ZSDE:lim3000}.
The last integral vanishes as $r\to 0$ by  the fast decay of $\exp\{-(r-1/2)^2/(2r)\}$ to zero as $r\to 0$. Hence, the last two displays imply \eqref{ZSDE:lim3}.

Finally, combining  \eqref{ZSDE:lim2} and \eqref{ZSDE:lim3} proves $\E^{(0)}_1[|Z_r|^{-\eta}]\leq C(\eta)$ for all $r\geq 0$. Applying this bound to the right-hand side of \eqref{ZSDE:lim1} proves the required bound 
\eqref{ZSDE:lim0}.
\end{proof}

\begin{proof}[Proof of Proposition~\ref{prop:step2}]
Fix $0<s<\infty$. By Remark~\ref{rmk:SDET0} and It\^{o}'s formula,
\begin{align}
&\quad\;\frac{1}{\vep}\E^{\beta\da}_{z^0}\bigl[\E^{\beta\da}_{Z_s}[f(Z_{\vep})-f(Z_0);T_0(Z)> \vep];Z_s\neq 0\bigr]\notag\\
&=\frac{1}{\vep}\E^{\beta\da}_{z^0}\left[\E^{\beta\da}_{Z_s}\left[\int_0^\vep \ms A f(Z_r)\d r+\int_0^\vep \left\langle\nabla f(Z_r),\d W_r\right\rangle;T_0(Z)> \vep\right];Z_s\neq 0\right]\notag\\
&=\I_{\ref{SDET01-1}}+\II_{\ref{SDET01-1}},\label{SDET01-1}
\end{align}
where $ \left\langle\nabla f(Z_r),\d W_r\right\rangle\,\defeq\;\partial_x f(Z_r)\d \Re(W_r)+\partial_y f(Z_r)\d \Im(W_r)$ with $f(x+\i y)$, $x,y\in \R$, understood as $f(x,y)$ in taking the partial derivatives of $f$, 
and we set
\begin{align}
\I_{\ref{SDET01-1}}&\,\defeq\, \frac{1}{\vep}\E_{z^0}^{\beta\da}\left[\E^{\beta\da}_{Z_s}\left[\int_0^\vep \ms A f(Z_r)\d r;T_0(Z)>\vep\right];Z_s\neq 0\right],\notag\\
\II_{\ref{SDET01-1}}&\,\defeq\,  \frac{1}{\vep}\E_{z^0}^{\beta\da}\left[\E^{\beta\da}_{Z_s}\left[\int_0^\vep \left\langle\nabla f(Z_r),\d W_r\right\rangle;T_0(Z)> \vep\right];Z_s\neq 0\right]\notag\\
&=\frac{1}{\vep}\E_{z^0}^{\beta\da}\left[\E^{\beta\da}_{Z_s}\left[\int_0^\vep \langle\nabla f(Z_r),\d \widetilde{W}_r\rangle;T_0(Z)\leq \vep\right];Z_s\neq 0\right].\label{SDET01-1-2}
\end{align}
Here,
 $\widetilde{W}$ is a two-dimensional standard Brownian motion under $\P^{\beta\da}$ defined as follows by using an independent two-dimensional standad Brownian motion $W'$ with $W'_0=0$:
\begin{align*}
\widetilde{W}_t\;\defeq\,W_{t\wedge T_0(Z)}+W'_{[t-T_0(Z)]\vee 0},\quad t\geq 0.
\end{align*}
Hence, $\int_0^\cdot \la \nabla f(Z_r),\d \widetilde{W}_r\ra$ is a martingale under $\P^{\beta\da}$, and \eqref{SDET01-1-2} follows. 

To prove \eqref{goal:step2}, \eqref{SDET01-1} shows that it is enough to prove the following limits:
\begin{align}\label{ZSDE:lim-1}
\lim_{\vep \searrow 0}\I_{\ref{SDET01-1}}=\E_{z^0}^{\beta\da}[\ms Af(Z_s)]
,\quad \lim_{\vep \searrow 0}\II_{\ref{SDET01-1}}=0,
\end{align}
which will be done in Steps~1 and~2 below.\smallskip 

\noindent {\bf Step 1.} To obtain the first limit in \eqref{ZSDE:lim-1}, we write
\begin{align}
&\quad\; \frac{1}{\vep}\E_{z^0}^{\beta\da}\left[\E^{\beta\da}_{Z_s}\left[\int_0^\vep \ms A f(Z_r)\d r;T_0(Z)>\vep\right];Z_s\neq 0\right]-\E_{z^0}^{\beta\da}[\ms Af(Z_s)]\notag\\
 &=\E_{z^0}^{\beta\da}\left[\E^{(0)}_{Z_s}\left[\frac{1}{\vep}\int_0^\vep \ms A f(Z_r)\d r\frac{\e^{-\beta\vep}K_0(\sqrt{2\beta}|Z_\vep|)}{K_0(\sqrt{2\beta}|Z_0|)}-\ms Af(Z_0)\right];Z_s\neq 0\right]\label{lim:SDE}
\end{align}
by using  \eqref{com:Z} and the fact that $Z_s$ has a probability density with respect to the Lebesgue measure due to \eqref{DY:law1}. Also, note that by continuity,
\begin{align}\label{DCT:0}
\lim_{\vep \searrow 0}\frac{1}{\vep}\int_0^\vep \ms A f(Z_r)\d r\frac{\e^{-\beta\vep}K_0(\sqrt{2\beta}|Z_\vep|)}{K_0(\sqrt{2\beta}|z^1|)}-\ms Af(z^1)=0 \mbox{ $\P_{z^1}^{(0)}$-a.s., $\forall\;z^1\in \Bbb C\setminus\{0\}$.}
\end{align}

To find the limit of the right-hand side of \eqref{lim:SDE}, we will prove in the remaining of Step~1 suitable integrability conditions to exchange limits and expectations in the fashion of
 \begin{align}\label{DCT:order}
 \E^{\beta\da}_{z^0}\;\E^{(0)}_{Z_s}\;\lim_{\vep\searrow 0}
= \E^{\beta\da}_{z^0}\lim_{\vep\searrow 0}\;\E^{(0)}_{Z_s}= \lim_{\vep\searrow 0}\;\E^{\beta\da}_{z^0}\;\E^{(0)}_{Z_s}.
 \end{align}
More specifically, these integrability conditions validate a standard theorem of uniform integrability on exchanging limits and expectations (e.g. \cite[6.5.2 Theorem, p.263]{Ash}).  

To justify the first equality of \eqref{DCT:order} in the context of \eqref{lim:SDE}, we show that 
\begin{align}
\left\|
\begin{array}{ll}\label{DCT:2}
\displaystyle \frac{1}{\vep}\int_0^\vep \ms A f(Z_r)\d r\frac{\e^{-\beta\vep}K_0(\sqrt{2\beta}|Z_\vep|)}{K_0(\sqrt{2\beta}|Z_0|)},\quad \vep\in (0,1),\\
\displaystyle\mbox{is uniformly integrable under $\P^{(0)}_{z^1}$, $\forall\; z^1\in\Bbb C\setminus\{0\}$}.
\end{array}
\right.
\end{align}
To see \eqref{DCT:2}, fix $z^1\in \Bbb C\setminus\{0\}$, and note that for all $1< p<2$, pairs of H\"older conjugates $(p',q')$ for $1<p'<\infty$ such that $1< pp'<2$, and $\nu\in [1,2)$, we have
\begin{align}
&\quad\;\E^{(0)}_{z^1}\left[\left|\frac{1}{\vep}\int_0^\vep \ms A f(Z_r)\d r\frac{\e^{-\beta\vep}K_0(\sqrt{2\beta}|Z_\vep|)}{K_0(\sqrt{2\beta}|Z_0|)}\right|^p\right]\notag\\
&\leq\E^{(0)}_{z^1}\biggl[\biggl(\frac{1}{\vep}\int_0^\vep \ms A f(Z_r)\d r\biggr)^{pp'}\biggr]^{1/p'}
\E^{(0)}_{z^1}\biggl[\biggl(\frac{\e^{-\beta\vep}K_0(\sqrt{2\beta}|Z_\vep|)}{K_0(\sqrt{2\beta}|Z_0|)}\biggr)^{pq'}
\biggr]^{1/q'}\notag\\
&\leq\left(\frac{1}{\vep}\int_0^\vep \E^{(0)}_{z^1}[|\ms A f(Z_r)|^{pp'}]\d r
 \right)^{1/p'}\frac{1}{K_0(\sqrt{2\beta}|z^1|)^p}\E^{(0)}_{z^1}[K_0(\sqrt{2\beta}|Z_\vep|)^{pq'}]^{1/q'}\notag\\
&\leq C(\beta,z^1,p,p',f,\nu)\left(\frac{1}{\vep}\int_0^\vep \{ 1+\E^{(0)}_{z^1}[|Z_r|^{-pp'}]\}\d r
 \right)^{1/p'}\E^{(0)}_{z^1}[|Z_\vep|^{-\nu}+1]^{1/q'}.\label{DCT:2-1}
\end{align}
Note that the last inequality holds by using the following four properties: (i) the bound 
\begin{align}\label{Af:bdd}
|\ms Af(z^1)|\leq C(f)+C(f)/|z^1|
\end{align}
due to \eqref{def:A-1} and the asymptotic representations \eqref{K00}--\eqref{K1infty} of $K_0$ and $K_1$ as $x\to 0$ and $x\to\infty$; (ii) the inequality $(x+y)^{pp'}\leq C(pp')(x^{pp'}+y^{pp'})$ for all $x,y\geq 0$; (iii) the asymptotic representations \eqref{K00} and \eqref{K0infty} of $K_0$ as $x\to 0$ and $x\to\infty$; (iv) the fact that $|\log x^{-1}|^{pq'}\leq C(q',\nu)x^{-\nu}$ for all $0<x\leq 1$. Applying Lemma~\ref{lem:Zintbdd} to \eqref{DCT:2-1} yields
\[
\sup_{\vep\in (0,1)}\E^{(0)}_{z^1}\left[\left|\frac{1}{\vep}\int_0^\vep \ms A f(Z_r)\d r\frac{\e^{-\beta\vep}K_0(\sqrt{2\beta}|Z_\vep|)}{K_0(\sqrt{2\beta}|Z_0|)}\right|^p\right]<\infty,\quad \forall\;z^1\in \Bbb C\setminus\{0\},
\]
which is enough to get \eqref{DCT:2} since $1<p<2$ by assumption. 

Next, to justify the second equality of \eqref{DCT:order} in the context of \eqref{lim:SDE}, we show the following property:
\begin{align}\label{DCT:10}
\left\|
\begin{array}{ll}
\displaystyle \E_{Z_s}^{(0)}\left[\frac{1}{\vep}\int_0^\vep \ms A f(Z_r)\d r\frac{\e^{-\beta\vep}K_0(\sqrt{2\beta}|Z_\vep|)}{K_0(\sqrt{2\beta}|Z_0|)}-\ms Af(Z_0)\right],\quad \;\vep\in (0,1),\\
\vspace{-.4cm}\\
\mbox{is uniformly integrable under }\P^{\beta\da}_{z^0},\quad \forall\;z^0\in \Bbb C.
\end{array}
\right.
\end{align}
To this end, note that for all $\vep\in (0,1)$,
\begin{align}
&\quad\;\left|\E_{Z_s}^{(0)}\left[\frac{1}{\vep}\int_0^\vep \ms A f(Z_r)\d r\frac{\e^{-\beta\vep}K_0(\sqrt{2\beta}|Z_\vep|)}{K_0(\sqrt{2\beta}|Z_0|)}-\ms Af(Z_0)\right]\right|\notag\\
&\leq \frac{1}{\vep}\int_0^\vep \E_{Z_s}^{(0)}\left[|\ms Af(Z_r)|\frac{\e^{-\beta r}K_0(\sqrt{2\beta}|Z_r|)}{K_0(\sqrt{2\beta}|Z_0|)}\right]\d r+C(f)\left(1+\frac{1}{|Z_s|}\right).\label{DCT:intermediate}
\end{align}
Here, we have used \eqref{com:Z} again to get the first term on the right-hand side, and the last term uses \eqref{Af:bdd}.
To bound the ratio of $K_0$'s on the right-hand side of \eqref{DCT:intermediate}, we use the asymptotic representations \eqref{K00}--\eqref{K0infty} of $K_0$ as $x\to 0$ and $x\to\infty$ to get $K_0(x)\less x^{-\nu}$ for any $\nu\in (0,1)$ as $x\to\ 0$ and
 $K_0(x)\more \e^{-2x}$ as $x\to\infty$. Hence, 
\eqref{DCT:intermediate} gives, when $Z_s\neq 0$,
\begin{align}
&\quad\;\left|\E_{Z_s}^{(0)}\left[\frac{1}{\vep}\int_0^\vep \ms A f(Z_r)\d r\frac{\e^{-\beta\vep}K_0(\sqrt{2\beta}|Z_\vep|)}{K_0(\sqrt{2\beta}|Z_0|)}-\ms Af(Z_0)\right]\right|\notag\\
&\leq C(\beta,f)\e^{2\sqrt{2\beta}|Z_s|}\cdot \frac{1}{\vep}\int_0^\vep \E_{Z_s}^{(0)}\left[\frac{1}{|Z_r|^{\nu}}+\frac{1}{|Z_r|^{1+\nu}}\right]\d r+C(f)\left(1+\frac{1}{|Z_s|}\right)\notag\\
\begin{split}
&\leq C(\beta,f,\nu)\e^{2\sqrt{2\beta}|Z_s|}\left(\frac{1}{|Z_s|^{\nu}}+\frac{1}{|Z_s|^{1+\nu}}\right)\\
&\quad\;+C(f)\left(1+\frac{1}{|Z_s|}\right)\in L^{1+\eta}(\P^{\beta\da}_{z^0}),\quad \forall\;\nu\in (0,1),\;\vep\in (0,1),\label{DCT:1}
\end{split}
\end{align}
where $\eta=\eta(\nu)$ is some number in $ (0,1)$, the inequality in \eqref{DCT:1} follows from Lemma~\ref{lem:Zintbdd}, and the $L^{1+\eta}$-integrability in \eqref{DCT:1} can be justified by applying H\"older's inequality to the first term and then applying 
\eqref{ineq:BESb-2} and the bound $\E_{z^1}^{\beta\da}[\e^{a|Z_s|}]<\infty$ for any $a>0$ due to Lemma~\ref{lem:BESQb2}. By \eqref{DCT:1}, we obtain \eqref{DCT:10}.

The limit of the right-hand side of \eqref{lim:SDE} can now be evaluated as follows. By using  \eqref{DCT:0}, \eqref{DCT:2} and \eqref{DCT:10}  in the same order, we get
\begin{align*}
0&=\E_{z^0}^{\beta\da}\left[\E^{(0)}_{Z_s}\left[\lim_{\vep \searrow 0}\left(\frac{1}{\vep}\int_0^\vep \ms A f(Z_r)\d r\frac{\e^{-\beta\vep}K_0(\sqrt{2\beta}|Z_\vep|)}{K_0(\sqrt{2\beta}|Z_0|)}-\ms Af(Z_0)\right)\right];Z_s\neq 0\right]\\
&=\E_{z^0}^{\beta\da}\left[\lim_{\vep \searrow 0}\E^{(0)}_{Z_s}\left[\frac{1}{\vep}\int_0^\vep \ms A f(Z_r)\d r\frac{\e^{-\beta\vep}K_0(\sqrt{2\beta}|Z_\vep|)}{K_0(\sqrt{2\beta}|Z_0|)}-\ms Af(Z_0)\right];Z_s\neq 0\right]\\
&=\lim_{\vep \searrow 0}\E_{z^0}^{\beta\da}\left[\E^{(0)}_{Z_s}\left[\frac{1}{\vep}\int_0^\vep \ms A f(Z_r)\d r\frac{\e^{-\beta\vep}K_0(\sqrt{2\beta}|Z_\vep|)}{K_0(\sqrt{2\beta}|Z_0|)}-\ms Af(Z_0)\right];Z_s\neq 0\right].
\end{align*}
By \eqref{lim:SDE}, the last equality is enough to get the first limit in \eqref{ZSDE:lim-1}.\smallskip 

\noindent {\bf Step 2.}
It remains to prove the second limit of \eqref{ZSDE:lim-1}. Let $(p,q)$ be a pair of H\"older conjugates $(p,q)$ with $1<p<\infty$ to be chosen. By H\"older's inequality with respect to this pair $(p,q)$, we obtain the first inequality below: 
\begin{align}
|\II_{\ref{SDET01-1}}|&\leq \frac{1}{\vep}\E_{z^0}^{\beta\da}\biggl[\E^{\beta\da}_{Z_s}\biggl[\biggl|\int_0^\vep \la \nabla f(Z_r),\d \widetilde{W}_r\ra \biggr|^p\biggr]^{1/p}\P^{\beta\da}_{Z_s}(T_0(Z)\leq \vep)^{1/q}\biggr]\notag\\
&\leq \frac{C(p)}{\vep}\E_{z^0}^{\beta\da}\biggl[\E^{\beta\da}_{Z_s}\biggl[(C(f)\vep)^{p/2}\biggr]^{1/p}\P^{\beta\da}_{Z_s}(T_0(Z)\leq \vep)^{1/q}\biggr]\notag\\
&=\frac{C(p,f)}{\vep^{1/2}}\E_{z^0}^{\beta\da}[\P^{\beta\da}_{Z_s}(T_0(Z)\leq \vep)^{1/q}]\notag\\
&\leq C(p,f)\left(\frac{1}{\vep^{q/2}}\E_{z^0}^{\beta\da}[\P^{\beta\da}_{Z_s}(T_0(Z)\leq \vep)]\right)^{1/q},\label{SDET01-III}
\end{align}
where the second inequality uses  the inequality $(x+y)^{p/2}\leq C(p)(x^{p/2}+y^{p/2})$ for all $x,y\geq 0$ and $1<p<\infty$
and the Burkholder--Davis--Gundy inequality \cite[(4.1) Theorem, p.160]{RY}, and the last inequality applies H\"older's inequality again. 

Now, we choose the pair $(p,q)$ such that for some $\gamma'\in (0,1/2)$, 
$2\gamma'>q/2$. Since $s>0$, it follows from a straightforward modification of the derivation of \eqref{Z:prob-1} with $(\gamma,(1-\frac{\eta}{2})\frac{4\nu}{4\nu-1})$ in \eqref{Z:prob-1} replaced by $(\gamma',q/2)$
 that the right-hand side of \eqref{SDET01-III} tends to zero as $\vep \searrow 0$. The proof of Proposition~\ref{prop:step2} is complete.
\end{proof}

\section{Transformations to skew-product diffusions}\label{sec:SP}
In this section, we comprehensively study transformations to skew-product diffusions by specifying the radial and angular parts. The following assumption generalizes the setting for \eqref{def:SP}.

\begin{ass}[Radial part and angular part]\label{ass:SP}
Given $\alpha_\varrho\in [0,1/2)$ and $\varrho_0>0$, we assume the existence of a process $\{\varrho_t\}$ satisfying the following SDE for all $0\leq t<\infty$:
\begin{align}
\varrho_t
=\varrho_0+\int_0^t  \frac{1-2\alpha_\varrho}{2\varrho_s}\d s+A_\varrho(t)+W_\varrho(t),\label{def:genrhot}
\end{align}
such that $\varrho_t\geq 0$ and $\int_0^t \d s/\varrho_s<\infty$, where $\{A_\varrho(t)\}$ is a real-valued, adapted continuous process of finite variation with $A_\varrho(0)=0$, and $\{W_\varrho(t)\}$ is a one-dimensional standard Brownian motion with $W_\varrho(0)=0$. Here and in what follows, a process of \emph{finite variation} is one such that the total variation on any compact interval is finite with probability one.

Given $\alpha_\vartheta\in [0,1/2)$ and $\vartheta_0\in \R$, let $\{\vartheta_t;t<T_0(\varrho)\}$ be given  by
\begin{align}\label{def:thetat}
\vartheta_t=\vartheta_0+\int_0^t\frac{ \sqrt{1-2\alpha_\vartheta}}{\varrho_s} \d W_\vartheta(s),\quad t<T_0(\varrho),
\end{align}
for a one-dimensional standard Brownian motion $\{W_\vartheta(t)\}\ind \{W_\varrho(t)\}$ with $W_\vartheta(0)=0$, where $T_0(\varrho)$ is defined as in \eqref{def:TetaZ}.
\qed 
\end{ass}

The key role of Assumption~\ref{ass:SP} is played by the radial proces $\{\varrho_t\}$, since
the process $\{\vartheta_t;t<T_0(\varrho)\}$ can be \emph{constructed} according to \eqref{def:thetat} as soon as $\{\varrho_t\}$ and $\{W_\vartheta(t)\}$ are given. Also, 
by the Dambis--Dubins--Schwarz theorem~\cite[(1.6) Theorem on p.181]{RY}, \eqref{def:thetat} implies 
\[
\vartheta_t=\gamma_{\int_0^t (1-2\alpha_\vartheta) \d s/\varrho_s^2}, \quad t<T_0(\varrho), 
\]
for a one-dimensional standard Brownian motion $\{\gamma_t\}$. Recall the angular process in \eqref{def:SP}. Our motivation of considering also $\alpha_\vartheta\in (0,1/2)$ is given in Example~\ref{eg:SP} (1$\cc$) below, although such a choice of $\alpha_\vartheta$ is not used in the other sections of this paper.

\begin{eg}\label{eg:SP}
(1$\cc$) Set $A_\varrho(t)= 0$ and $\alpha_\vartheta=\alpha_\varrho\in (0,1/2)$. In this case, the generator of the continuous extension, in the sense of Erickson~\cite{Erickson}, for the skew-product diffusion $\{\varrho_t\e^{\i \vartheta_t};t<T_0(\varrho)\}$ has been specified in \cite[(8.2) of \S8 (a)]{Erickson},
 since $\{\varrho_t\}$ in \eqref{def:genrhot} is a Bessel process of index $-\alpha_\varrho$, or equivalently, of dimension $2-2\alpha_\varrho$. \smallskip

\noindent (2$\cc$) In the case of $\BES(0,\beta\da)$, the SDE of $\{\varrho_t\}$ is given by \eqref{eq:BESb} with $\rho_t=\varrho_t$. The equivalent under \eqref{def:genrhot} is the one with the following choice: 
$\alpha_\varrho=0$ and $A_\varrho(t)=\int_0^t(-\sqrt{2\beta})(K_1/K_0)(\sqrt{2\beta}\varrho_s)\d s$. \qed 
\end{eg}

The next proposition is the main result of Section~\ref{sec:SP}. We work with the following complex-valued process defined under Assumption~\ref{ass:SP}:
\begin{align}
\begin{split}\label{def:BZBM}
W_{\mathcal Z}(t)&=U_{\mathcal Z}(t)+\i V_{\mathcal Z}(t)\\
&\,\defeq\int_0^t \frac{[\cos \vartheta_s\d W_\varrho(s)-\sqrt{1-2\alpha_\vartheta}\sin \vartheta_s\d W_\vartheta(s)]}{\sqrt{\cos^2\vartheta_s+(1-2\alpha_\vartheta)\sin^2\vartheta_s}}\\
&\quad\;\;+\i\int_0^t \frac{[\sin \vartheta_s\d W_\varrho(s)+\sqrt{1-2\alpha_\vartheta}\cos \vartheta_s\d W_\vartheta(s)]}{\sqrt{\sin^2\vartheta_s+(1-2\alpha_\vartheta)\cos^2\vartheta_s}},\quad 0\leq t<T_0(\varrho).
\end{split}
\end{align}
Note that $\{W_{\mathcal Z}(t);0\leq t<T_0(\varrho)\}$ extends to a two-dimensional standard Brownian motion $\{W_{\mathcal Z}(t);0\leq t<\infty\}$ by joining it with an independent copy of two-dimensional standard Brownian motion with zero initial condition at time $T_0(\varrho)$ and using L\'evy's characterization of Brownian motion \cite[(3.6) Theorem, p.150]{RY}.

\begin{prop}\label{prop:SP}
{\rm (1$\cc$)} Under Assumption~\ref{ass:SP},  $\mathcal Z_t=\mathcal X_t+\i \mathcal Y_t\,\defeq\,\varrho_t\e^{\i\vartheta_t}$, $t<T_0(\varrho)$, satisfies 
\begin{align}\label{def:Z2}
\begin{split}
\mathcal Z_t&=\mathcal Z_0+\int_0^t\left( \frac{(1-2\alpha_\varrho)-(1-2\alpha_\vartheta)}{2\overline{\mathcal Z}_s}\d s+\frac{|\mathcal Z_s|}{\overline{\mathcal Z_s}}\d A_\varrho(s)\right)\\
&\quad +\int_0^t\sqrt{\cos^2\vartheta_s+(1-2\alpha_\vartheta)\sin^2\vartheta_s}\d U_\mathcal Z(s)\\
&\quad +\int_0^t\i \sqrt{\sin^2\vartheta_s+(1-2\alpha_\vartheta)\cos^2\vartheta_s}\d V_\mathcal Z(s).
\end{split}
\end{align}

\noindent {\rm (2$\cc$)} Conversely, suppose that  $\alpha_\vartheta=0$ and $\{\mathcal Z_t=\mc X_t+\i \mc Y_t;t<T_0(\mathcal Z)\}$ is a complex-valued process satisfying $\mathcal Z_0\neq 0$ and \eqref{def:Z2}. Let $\vartheta_0$ be any constant chosen to satisfy $\mathcal Z_0/|\mathcal Z_0|=\e^{\i\vartheta_0}$. Then for $t<T_0(\mathcal Z)$,  $\mathcal Z_t$ can be decomposed as $\mathcal Z_t=\varrho_t\e^{\i \vartheta_t}$ for
$\varrho_t=|\mathcal Z_t|$ obeying \eqref{def:genrhot} and $\vartheta_t$ obeying \eqref{def:thetat}, with $T_0(\varrho)=T_0(\mc Z)$ and
\begin{align}\label{W:choice}
\begin{split}
W_\varrho(t)&=\int_0^t \frac{\mathcal X_s\d U_\mathcal Z(s)+\mathcal Y_s\d V_\mathcal Z(s) }{|\mathcal Z_s|},\\
 W_\vartheta(t)&=\int_0^t \frac{-\mathcal Y_s\d U_\mathcal Z(s)+\mathcal X_s\d V_\mathcal Z(s)}{|\mathcal Z_s|}.
 \end{split}
\end{align}
Moreover, if $\{\mathcal Z_t;t\geq 0\}$ satisfies \eqref{def:Z2} and $\int_0^t \d s/|\mathcal Z_s|<\infty$ for all $0\leq t<\infty$, then $\varrho_t=|\mathcal Z_t|$ obeys the SDE in \eqref{def:genrhot} for all $0\leq t<\infty$.
\end{prop}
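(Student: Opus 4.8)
\textbf{Proof proposal for Proposition~\ref{prop:SP}.}

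The plan is to prove the two parts by direct computation with It\^o's formula, treating part~(1$\cc$) as a forward calculation starting from the skew-product representation $\mathcal Z_t=\varrho_t\e^{\i\vartheta_t}$, and part~(2$\cc$) as its inverse, recovering $\varrho_t=|\mathcal Z_t|$ and $\vartheta_t=\arg\mathcal Z_t$ and checking that they solve the prescribed SDEs. For part~(1$\cc$), I would first record the semimartingale decompositions of $\mathcal X_t=\varrho_t\cos\vartheta_t$ and $\mathcal Y_t=\varrho_t\sin\vartheta_t$ by applying It\^o's formula to the smooth functions $(\varrho,\vartheta)\mapsto\varrho\cos\vartheta$ and $(\varrho,\vartheta)\mapsto\varrho\sin\vartheta$, using \eqref{def:genrhot} and \eqref{def:thetat} together with the independence $\{W_\varrho\}\ind\{W_\vartheta\}$, which makes $\d\langle W_\varrho,W_\vartheta\rangle_s=0$ so that the only second-order terms come from $\d\langle\varrho\rangle_s=\d s$ and $\d\langle\vartheta\rangle_s=(1-2\alpha_\vartheta)\d s/\varrho_s^2$. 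Collecting the drift terms, the $\d s$-terms involving $\cos\vartheta_s,\sin\vartheta_s$ should combine to give $\tfrac{(1-2\alpha_\varrho)-(1-2\alpha_\vartheta)}{2\varrho_s}\cdot(\text{unit vector})$, which rewrites as $\tfrac{(1-2\alpha_\varrho)-(1-2\alpha_\vartheta)}{2\overline{\mathcal Z}_s}$ after recalling $1/\overline{\mathcal Z}_s=\mathcal Z_s/|\mathcal Z_s|^2=\e^{\i\vartheta_s}/\varrho_s$; the $\d A_\varrho(s)$-terms similarly combine to $\tfrac{|\mathcal Z_s|}{\overline{\mathcal Z_s}}\d A_\varrho(s)$. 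The martingale parts are then identified against the definition \eqref{def:BZBM} of $U_{\mathcal Z},V_{\mathcal Z}$: the key algebraic point is that the coefficient $\sqrt{\cos^2\vartheta_s+(1-2\alpha_\vartheta)\sin^2\vartheta_s}$ is exactly the normalization needed so that the martingale part of $\mathcal X_t$, namely $\cos\vartheta_s\d W_\varrho(s)-\sqrt{1-2\alpha_\vartheta}\sin\vartheta_s\d W_\vartheta(s)$, equals that coefficient times $\d U_{\mathcal Z}(s)$, and symmetrically for $\mathcal Y_t$. One also checks, via L\'evy's characterization (referenced already below \eqref{def:BZBM}), that $W_{\mathcal Z}$ is a planar Brownian motion, so \eqref{def:Z2} holds as a genuine SDE.

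For part~(2$\cc$), with $\alpha_\vartheta=0$, I would start from a process $\{\mathcal Z_t;t<T_0(\mathcal Z)\}$ satisfying \eqref{def:Z2} and set $\varrho_t=|\mathcal Z_t|$. Applying It\^o's formula to $z\mapsto|z|$ (valid away from $0$, hence up to $T_0(\mathcal Z)$) and using $\d\langle U_{\mathcal Z}\rangle_s=\d\langle V_{\mathcal Z}\rangle_s=\d s$, $\d\langle U_{\mathcal Z},V_{\mathcal Z}\rangle_s=0$, one gets $\d\varrho_t=\tfrac{1-2\alpha_\varrho}{2\varrho_t}\d s+\d A_\varrho(t)+\d W_\varrho(t)$ with $W_\varrho$ defined by the first line of \eqref{W:choice}; here the Bessel-type $\tfrac{1}{2\varrho}$ contribution arises from the second-order term in $|z|$ hitting the martingale quadratic variation, and the extra $-\tfrac{(1-2\alpha_\vartheta)}{2\varrho}=-\tfrac{0}{2\varrho}$ cancellation is what forces the $\alpha_\vartheta=0$ hypothesis to produce the clean drift $\tfrac{1-2\alpha_\varrho}{2\varrho}$ matching \eqref{def:genrhot}. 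Then $W_\varrho$ as given is a continuous local martingale with $\d\langle W_\varrho\rangle_s=\tfrac{\mathcal X_s^2+\mathcal Y_s^2}{|\mathcal Z_s|^2}\d s=\d s$, so it is a standard Brownian motion by L\'evy; similarly $W_\vartheta$ from the second line of \eqref{W:choice} has $\d\langle W_\vartheta\rangle_s=\d s$ and $\d\langle W_\varrho,W_\vartheta\rangle_s=\tfrac{-\mathcal X_s\mathcal Y_s+\mathcal Y_s\mathcal X_s}{|\mathcal Z_s|^2}\d s=0$, giving independence. Finally I would define $\vartheta_t=\vartheta_0+\int_0^t|\mathcal Z_s|^{-1}\d W_\vartheta(s)$, which is \eqref{def:thetat} with $\alpha_\vartheta=0$ and is well-defined because $\int_0^t\d s/|\mathcal Z_s|^2<\infty$ on $[0,T_0(\mathcal Z))$ (the latter integrability following from $\int_0^t\d s/|\mathcal Z_s|<\infty$ in the second assertion, or from continuity and strict positivity of $|\mathcal Z_s|$ before $T_0$), and verify that $\mathcal Z_t=\varrho_t\e^{\i\vartheta_t}$: this is done by computing $\d(\varrho_t\e^{\i\vartheta_t})$ via It\^o and matching it term-by-term against \eqref{def:Z2}, equivalently by showing that $\mathcal Z_t$ and $\varrho_t\e^{\i\vartheta_t}$ solve the same SDE with the same initial condition and invoking pathwise uniqueness of the linear (in the unknown, with locally bounded-away-from-singularity coefficients) equation, using the choice $\vartheta_0$ with $\mathcal Z_0/|\mathcal Z_0|=\e^{\i\vartheta_0}$ from the hypothesis $\mathcal Z_0\neq0$. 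The statement $T_0(\varrho)=T_0(\mathcal Z)$ is immediate since $\varrho_t=|\mathcal Z_t|$, and the last sentence (global version) follows by the same argument once $\int_0^t\d s/|\mathcal Z_s|<\infty$ holds for all $t$, which upgrades all the local-martingale and integrability statements to $[0,\infty)$.

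The main obstacle I anticipate is bookkeeping rather than conceptual: carefully tracking the trigonometric coefficients so that the martingale parts of $\mathcal X_t,\mathcal Y_t$ are correctly expressed through the single pair $(U_{\mathcal Z},V_{\mathcal Z})$ with the stated square-root normalizations, and making sure the drift terms reassemble into the complex form $\tfrac{\cdots}{2\overline{\mathcal Z}_s}\d s+\tfrac{|\mathcal Z_s|}{\overline{\mathcal Z_s}}\d A_\varrho(s)$ — this requires repeatedly using $\e^{\i\vartheta_s}=\mathcal Z_s/|\mathcal Z_s|$ and $\overline{\mathcal Z}_s^{-1}=\mathcal Z_s/|\mathcal Z_s|^2$. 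A secondary technical point is the justification that $\{W_{\mathcal Z}(t);t<T_0(\varrho)\}$ and the recovered $W_\varrho,W_\vartheta$ extend to, or already are, genuine Brownian motions; this is handled exactly as indicated in the text below \eqref{def:BZBM} by appending an independent planar Brownian motion at the stopping time $T_0(\varrho)$ and applying L\'evy's characterization \cite[(3.6) Theorem, p.150]{RY}. No delicate estimates are needed; everything reduces to It\^o's formula, L\'evy's theorem, and pathwise uniqueness for a non-degenerate SDE away from the origin.
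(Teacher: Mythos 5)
Your treatment of part (1$\cc$) is fine and is essentially the paper's computation written in real coordinates (the paper does the same thing in complex form, applying It\^{o} to $\e^{\i\vartheta_t}$ and then integration by parts with $\varrho_t$, and identifying the martingale part with \eqref{def:BZBM}); likewise your derivation of the SDE for $\varrho_t=|\mathcal Z_t|$ on $\{t<T_0(\mathcal Z)\}$ and the L\'evy-characterization argument for $(W_\varrho,W_\vartheta)$ are sound.

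The genuine gap is in the last step of part (2$\cc$), the identity $\mathcal Z_t=\varrho_t\e^{\i\vartheta_t}$. With $\vartheta_t$ \emph{defined} by \eqref{def:thetat} and $W_\vartheta$ by \eqref{W:choice}, computing $\d(\varrho_t\e^{\i\vartheta_t})$ as in part (1$\cc$) gives a martingale part $\e^{\i\vartheta_s}(\d W_\varrho(s)+\i\,\d W_\vartheta(s))=\e^{\i\vartheta_s}\,\overline{\mathcal Z}_s|\mathcal Z_s|^{-1}\d W_{\mathcal Z}(s)$, which equals $\d W_{\mathcal Z}(s)$ \emph{only if} $\e^{\i\vartheta_s}=\mathcal Z_s/|\mathcal Z_s|$ — precisely what you are trying to prove. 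So the ``term-by-term matching against \eqref{def:Z2}'' is circular, and the fallback via pathwise uniqueness does not apply: $\varrho_t\e^{\i\vartheta_t}$ is not known to solve \eqref{def:Z2} driven by the \emph{same} $W_{\mathcal Z}$ (it solves a rotated equation with noise $(\tilde{\mathcal Z}_s/\mathcal Z_s)\d W_{\mathcal Z}(s)$), uniqueness in law would in any case not yield the asserted pathwise decomposition, and there is no off-the-shelf uniqueness theorem for an equation whose drift contains the integrator $\d A_\varrho$ of an arbitrary adapted finite-variation process together with the singular factor $1/\overline{\mathcal Z}_s$. The missing idea is the paper's cancellation argument: set $H_t=\log\varrho_t+\i\vartheta_t$, compute its SDE (whose martingale part is $\int\d W_{\mathcal Z}/\mathcal Z$ by \eqref{W:choice}), and show by It\^{o} and integration by parts, using the conformality identity $\la W_{\mathcal Z},W_{\mathcal Z}\ra_t\equiv 0$, that all terms in $\d(\mathcal Z_t\e^{-H_t})$ cancel, so $\mathcal Z_t\e^{-H_t}\equiv 1$. (An alternative, closer to your opening sentence, would be to define $\vartheta_t$ as the continuous argument of $\mathcal Z_t$, verify via It\^{o} that it satisfies \eqref{def:thetat} with $W_\vartheta$ from \eqref{W:choice} — the drift and $\d A_\varrho$ contributions cancel because $-\mathcal Y_s\mathcal X_s+\mathcal X_s\mathcal Y_s=0$ — and then the decomposition holds by construction; but as written your proposal does neither.) A secondary gap is the final ``Moreover'' statement: on $[0,\infty)$ the path may hit $0$, so It\^{o}'s formula for $z\mapsto|z|$ is not applicable and the claim that the same argument ``upgrades'' is unjustified; the paper handles this through Lemma~\ref{lem:SQ}, i.e.\ applying It\^{o} to $(|\mathcal Z_t|^2+\vep)^{1/2}$, using $\int_0^t\d s/|\mathcal Z_s|<\infty$ and the dominated convergence theorem for stochastic integrals to pass $\vep\searrow0$ and obtain \eqref{eq:|Z|}.
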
 

\begin{rmk}\label{rmk:2dyn}
If $\int_0^t \d s/|\mathcal Z_s|<\infty$, we must have $\int_0^t \1_{\{\mathcal Z_s=0\}}\d s=0$. Hence, as in the case of \eqref{def:BZBM} for $\alpha_\vartheta=0$, the processes in \eqref{W:choice} can be extended to independent one-dimensional standard Brownian motions over $0\leq t<\infty$.
\hfill $\blacksquare$
\end{rmk}

\begin{proof}[Proof of Proposition~\ref{prop:SP} (1$\cc$)]
It follows from \eqref{def:thetat} and It\^{o}'s formula that for $t<T_0(\varrho)$,
\begin{align}
\e^{\i \vartheta_t}&=\e^{\i \vartheta_0}+\int_0^t \i \e^{\i \vartheta_s}\cdot \frac{\sqrt{1-2\alpha_\vartheta}}{\varrho_s}\d W_\vartheta(s)+\frac{1}{2}\int_0^t (-\e^{\i\vartheta_s})\frac{1-2\alpha_\vartheta}{\varrho_s^2}\d s.\label{angle:1}
\end{align}
Since $\{W_\varrho(t)\}\ind \{W_\vartheta(t)\}$ by assumption, \eqref{def:genrhot} and integration by parts give, for $t<T_0(\varrho)$,
\begin{align}
\varrho_t\e^{\i\vartheta_t}
&=\varrho_0\e^{\i\vartheta_0}+\int_0^t \varrho_s \left(-\frac{\e^{\i\vartheta_s}(1-2\alpha_\vartheta)}{2\varrho_s^2}\right)\d s+\int_0^t \varrho_s\left(\frac{\i \e^{\i\vartheta_s}\sqrt{1-2\alpha_\vartheta}}{\varrho_s}\right)\d W_\vartheta(s)\notag\\
&\quad +\int_0^t \e^{\i\vartheta_s}\left(\frac{1-2\alpha_\varrho}{2\varrho_s}\d s+\d A_\varrho(s)\right)+\int_0^t \e^{\i\vartheta_s}\d W_\varrho(s)\notag\\
\begin{split}
&=\varrho_0\e^{\i\vartheta_0}+\int_0^t\left(\frac{(1-2\alpha_\varrho)-(1-2\alpha_\vartheta)}{2\varrho_s\e^{- \i\vartheta_s}}\d s+ \e^{\i\vartheta_s}\d A_\varrho(s)\right)\\
&\quad\;+\int_0^t\e^{\i\vartheta_s}[\d W_\varrho(s)+\i\sqrt{1-2\alpha_\vartheta} \d W_\vartheta(s)].\label{Zdyn:SK}
\end{split}
\end{align}
To see that the last term equals the sum of the last two terms in \eqref{def:Z2}, note that
\begin{align}
&\quad\;\e^{\i\vartheta_t}[\d W_\varrho(t)+\i\sqrt{1-2\alpha_\vartheta} \d W_\vartheta(t)]\notag\\
&=(\cos \vartheta_t+\i \sin \vartheta_t)[\d W_\varrho(t)+\i\sqrt{1-2\alpha_\vartheta} \d W_\vartheta(t)]\notag\\
\begin{split}\label{UV:compute}
&=[\cos \vartheta_t\d W_\varrho(t)-\sqrt{1-2\alpha_\vartheta}\sin \vartheta_t \d W_\vartheta(t)]\\
&\quad \;+\i [\sin \vartheta_t\d W_\varrho(t)+\sqrt{1-2\alpha_\vartheta}\cos \vartheta_t\d W_\vartheta(t)].
\end{split}
\end{align}
We obtain \eqref{def:Z2} by combining \eqref{Zdyn:SK} and \eqref{UV:compute} and using the definition \eqref{def:BZBM} of $\{W_{\mathcal Z}(t)\}$. 
\end{proof}

The following lemma prepares the proof of Proposition~\ref{prop:SP} (2$\cc$) and has been applied independently earlier. The statement writes $\int_0^t f(s)|\d A|_s$ for the Lebesgue--Stieltjes integral such that the integrator is the total variation of a process $\{A_t\}$ of finite variation. 

\begin{lem}\label{lem:SQ}
Fix $\bj\in \mc E_N$.
Let $\tau$ be a finite stopping time, $\{A^{\bj,\bk}_t;0\leq t\leq \tau\}$, $\bk\in \mc E_N$, be real-valued continuous adapted processes of finite variation, and $\{\mathcal Z^\bk_t=\mathcal X^\bk_t+\i\mathcal Y^{\bk}_t;0\leq t\leq \tau\}$, $\bk\in\mathcal E_N$, be complex-valued continuous adapted processes such that 
 $\int_0^\tau  |\d A^{\bj,\bk}|_s)/|\mathcal Z^\bk_s|<\infty$. Assume that for a two-dimensional standard Brownian motion $W^\bj=U^\bj+\i V^\bj$ with $W^\bj_0=0$, 
\begin{align}\label{def:Zbi}
\mathcal Z^\bj_t=\mathcal Z^\bj_0+\sum_{\bk\in \mathcal E_N}\int_0^t\frac{\d A^{\bj,\bk}_s}{\overline{\mathcal Z}^\bk_s}+W^\bj_t,\quad 0\leq t\leq \tau.
\end{align}
Then for all $0\leq t\leq \tau$,
\begin{align}
|\mathcal Z^\bj_t|^2&=|\mathcal Z^\bj_0|^2+2t+\sum_{\bk\in \mathcal E_N}\int_0^t2\Re\biggl(\frac{\mathcal Z^\bj_s}{\mathcal Z^\bk_s}\biggr)\d A^{\bj,\bk}_s+\int_0^t 2(\mathcal X^\bj_s\d U^\bj_s+\mathcal Y^\bj_s\d V^\bj_s),\label{eq:|Z|2}\\
|\mathcal Z^\bj_t|&=|\mathcal Z^\bj_0|+\int_0^t \frac{\d s}{2|\mathcal Z^\bj_s|}+\sum_{\bk\in \mathcal E_N}\int_0^t\frac{1}{|\mathcal Z^\bj_s|}\Re\biggl(\frac{\mathcal Z^\bj_s}{\mathcal Z^\bk_s}\biggr)\d A^{\bj,\bk}_s+\int_0^t \frac{\mathcal X^\bj_s\d U^\bj_s+\mathcal Y^\bj_s\d V^\bj_s}{|\mathcal Z^\bj_s|}.\label{eq:|Z|}
\end{align}
Here, \eqref{eq:|Z|} holds to the degree that $\int_0^t \d s/|\mc Z^\bj_s|<\infty$ for all $0\leq t\leq \tau$. 
\end{lem}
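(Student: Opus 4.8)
\textbf{Plan of proof for Lemma~\ref{lem:SQ}.}
The plan is to apply It\^o's formula to the functions $(x,y)\mapsto x^2+y^2$ and $(x,y)\mapsto \sqrt{x^2+y^2}$ composed with the continuous semimartingale $\mathcal Z^\bj_t=\mathcal X^\bj_t+\i\mathcal Y^\bj_t$, using the decomposition \eqref{def:Zbi}. First I would record the semimartingale decomposition of the real and imaginary parts: writing $\sigma^{\bj,\bk}_s\d A^{\bj,\bk}_s$ for the real and imaginary contributions of $(\overline{\mathcal Z}^\bk_s)^{-1}\d A^{\bj,\bk}_s$, we have $\d\mathcal X^\bj_s=\sum_\bk \Re\big((\overline{\mathcal Z}^\bk_s)^{-1}\big)\d A^{\bj,\bk}_s+\d U^\bj_s$ and $\d\mathcal Y^\bj_s=\sum_\bk \Im\big((\overline{\mathcal Z}^\bk_s)^{-1}\big)\d A^{\bj,\bk}_s+\d V^\bj_s$; since $A^{\bj,\bk}$ is of finite variation the only nonzero quadratic variations are $\d\langle \mathcal X^\bj\rangle_s=\d\langle U^\bj\rangle_s=\d s$, $\d\langle \mathcal Y^\bj\rangle_s=\d\langle V^\bj\rangle_s=\d s$, and $\d\langle \mathcal X^\bj,\mathcal Y^\bj\rangle_s=0$ because $W^\bj$ is a standard complex Brownian motion. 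Then It\^o's formula for $|\mathcal Z^\bj_t|^2=(\mathcal X^\bj_t)^2+(\mathcal Y^\bj_t)^2$ produces the drift $\tfrac12(2+2)\d s=2\d s$, the finite-variation term $2\mathcal X^\bj_s\,\d\mathcal X^{\bj,A}_s+2\mathcal Y^\bj_s\,\d\mathcal Y^{\bj,A}_s$, and the martingale term $2\mathcal X^\bj_s\d U^\bj_s+2\mathcal Y^\bj_s\d V^\bj_s$; identifying $\mathcal X^\bj_s\Re\big((\overline{\mathcal Z}^\bk_s)^{-1}\big)+\mathcal Y^\bj_s\Im\big((\overline{\mathcal Z}^\bk_s)^{-1}\big)=\Re\big(\mathcal Z^\bj_s(\overline{\mathcal Z}^\bk_s)^{-1}\big)=\Re(\mathcal Z^\bj_s/\mathcal Z^\bk_s)$ gives \eqref{eq:|Z|2}. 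Here I would note that the hypothesis $\int_0^\tau |\d A^{\bj,\bk}|_s/|\mathcal Z^\bk_s|<\infty$ is exactly what makes the finite-variation integral $\int_0^t \Re(\mathcal Z^\bj_s/\mathcal Z^\bk_s)\d A^{\bj,\bk}_s$ absolutely convergent, since $|\Re(\mathcal Z^\bj_s/\mathcal Z^\bk_s)|\le |\mathcal Z^\bj_s|/|\mathcal Z^\bk_s|$ and $|\mathcal Z^\bj|$ is bounded on $[0,\tau]$ by path continuity.

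For \eqref{eq:|Z|} the plan is to apply It\^o's formula to $r(x,y)=\sqrt{x^2+y^2}$, which is smooth away from the origin; since $\{|\mathcal Z^\bj_t|\}$ may hit $0$, I would first work on stochastic intervals where $|\mathcal Z^\bj|$ stays away from $0$ and then pass to the limit. Concretely, on $\{s:|\mathcal Z^\bj_s|>0\}$ the derivatives are $\partial_x r=\mathcal X^\bj/|\mathcal Z^\bj|$, $\partial_y r=\mathcal Y^\bj/|\mathcal Z^\bj|$, and the Hessian contributes $\tfrac12(\partial_{xx}r+\partial_{yy}r)\d s=\tfrac12\cdot\frac{1}{|\mathcal Z^\bj_s|}\d s$ (using $\partial_{xx}r+\partial_{yy}r=1/r$ and the cross term vanishing with $\langle\mathcal X^\bj,\mathcal Y^\bj\rangle$). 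This yields, after the same identification of $\Re(\mathcal Z^\bj_s/\mathcal Z^\bk_s)$, the drift $\int \d s/(2|\mathcal Z^\bj_s|)$, the finite-variation sum over $\bk$, and the martingale term $\int (\mathcal X^\bj_s\d U^\bj_s+\mathcal Y^\bj_s\d V^\bj_s)/|\mathcal Z^\bj_s|$. The standard way to legitimize this despite the zeros is an approximation/localization argument: apply It\^o to $\sqrt{|\mathcal Z^\bj_s|^2+\eta}$ for $\eta>0$ using \eqref{eq:|Z|2}, then let $\eta\searrow 0$, monitoring that each term converges; the stated hypothesis $\int_0^t\d s/|\mathcal Z^\bj_s|<\infty$ (which the lemma explicitly flags as the condition ``to the degree'' that \eqref{eq:|Z|} holds) guarantees the drift term converges by dominated convergence, and the Brownian integral converges in probability since its quadratic variation $\int_0^t |\mathcal Z^\bj_s|^2/(|\mathcal Z^\bj_s|^2+\eta)\,\d s\to \int_0^t \1_{\{\mathcal Z^\bj_s\ne 0\}}\d s=t$, the last equality again by $\int_0^t\d s/|\mathcal Z^\bj_s|<\infty$. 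I would cite \cite[Chapter~IV]{RY} for the It\^o formula and the approximation of $|\cdot|$-type functionals, in the spirit of the derivation of the SDE for a Bessel process.

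The main obstacle is the passage through the zero set of $\{|\mathcal Z^\bj_t|\}$ in establishing \eqref{eq:|Z|}: without an a priori control on the occupation time at $0$ one cannot even assert the drift $\int_0^t\d s/(2|\mathcal Z^\bj_s|)$ is finite, and one must rule out a local-time-at-$0$ contribution (a singular finite-variation term) that a naive application of the Tanaka/It\^o--Tanaka formula would introduce. The resolution is precisely the finiteness hypothesis $\int_0^t\d s/|\mathcal Z^\bj_s|<\infty$: it forces $\int_0^t\1_{\{\mathcal Z^\bj_s=0\}}\d s=0$, and combined with the absence of a martingale part in the radial motion's bracket at $0$ it kills any local-time term, so the $\eta\searrow 0$ limit of the smoothed identity is exactly \eqref{eq:|Z|}. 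The other, purely bookkeeping, point is to keep track of the complex-to-real translation $\Re((\overline{\mathcal Z}^\bk)^{-1})=\Re(\mathcal Z^\bk)/|\mathcal Z^\bk|^2$ and $\mathcal X^\bj\Re(\mathcal Z^\bk)+\mathcal Y^\bj\Im(\mathcal Z^\bk)=\Re(\mathcal Z^\bj\overline{\mathcal Z}^\bk)$ so that $\Re(\mathcal Z^\bj_s\overline{\mathcal Z}^\bk_s)/|\mathcal Z^\bk_s|^2=\Re(\mathcal Z^\bj_s/\mathcal Z^\bk_s)$ appears cleanly in both formulas; this is routine once the convention is fixed.
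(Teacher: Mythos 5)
Your treatment of \eqref{eq:|Z|2} and the $\eta$-smoothing $\sqrt{|\mathcal Z^\bj|^2+\eta}$ for \eqref{eq:|Z|} is essentially the paper's own route (the paper obtains \eqref{eq:|Z|2} by integration by parts on $\mathcal Z^\bj\overline{\mathcal Z}^\bj$, which is the same computation as your real-coordinate It\^o formula, and then applies It\^o to $f(x)=(x+\vep)^{1/2}$ and lets $\vep\searrow 0$). The genuine gap is in how you handle the finiteness of $\int_0^t \d s/|\mathcal Z^\bj_s|$: you treat it as a \emph{hypothesis} ("the resolution is precisely the finiteness hypothesis"), whereas in the paper it is a \emph{conclusion} of the lemma, derived from the stated assumptions. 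This matters downstream: in the proof of Theorem~\ref{thm:MPSDE}~(2$\cc$) the lemma is invoked precisely to obtain "\eqref{SDE:|Zbj|^2}, \eqref{SDE:|Zbj|} \emph{and} the property $\int_0^t \d s/|Z^\bj_s|<\infty$," so a version of the lemma that assumes this finiteness would leave the theorem's proof incomplete (indeed circular). The phrase "holds to the degree that" in the statement is meant in the same sense as elsewhere in the paper (the identity holds, and in particular the integral is finite), not as a conditional clause.

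The missing step is proved in the paper as follows, and your smoothed identity already contains everything needed: rearrange the $\vep$-smoothed It\^o formula, note that the drift integrand satisfies $(|\mathcal Z^\bj_s|^2+\vep)^{-1/2}\bigl[1-\tfrac{|\mathcal Z^\bj_s|^2}{2(|\mathcal Z^\bj_s|^2+\vep)}\bigr]\geq \tfrac12(|\mathcal Z^\bj_s|^2+\vep)^{-1/2}$, bound the $\d A^{\bj,\bk}$-integrands by $1/|\mathcal Z^\bk_s|$ (finite by hypothesis), choose a sequence $\vep_n\searrow 0$ along which the smoothed stochastic integrals converge (the dominated convergence theorem for stochastic integrals), and apply Fatou's lemma to conclude $\int_0^t \d s/(2|\mathcal Z^\bj_s|)<\infty$ on $[0,\tau]$. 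Only after this does one pass $\vep\searrow 0$ in all terms by dominated convergence, exactly as you describe. A minor additional imprecision: your justification that the Brownian integral converges because "its quadratic variation tends to $t$" is not sufficient as stated; what one needs is that the quadratic variation of the \emph{difference}, $\int_0^t |\mathcal Z^\bj_s|^2\bigl[(|\mathcal Z^\bj_s|^2+\eta)^{-1/2}-|\mathcal Z^\bj_s|^{-1}\1_{\{|\mathcal Z^\bj_s|>0\}}\bigr]^2\d s$, tends to $0$ (by dominated convergence, the integrand being bounded), which gives ucp convergence; this is the content of the dominated convergence theorem for stochastic integrals the paper cites.
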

\begin{proof}
We prove \eqref{eq:|Z|2} first.
By the assumption that $\{A^{\bj,\bk}_t\}$ are real-valued, taking complex conjugates of both sides of \eqref{def:Zbi} gives
$\overline{\mathcal Z}^\bj_t=\overline{\mathcal Z}^\bj_0+\sum_{\bk\in \mathcal E_N}\int_0^t\d A^{\bj,\bk}_s/\mathcal Z^\bk_s+\overline{W}_t^\bj$.
Hence, by integration by parts and the identities $|\mathcal Z^\bj_t|^2=\mathcal Z_t^\bj\overline{\mathcal Z}^\bj_t$ and $\la  W^\bj,\overline{W}^\bj\rangle_t=2t$, we get
\begin{align}\label{eq:Zsqrt1}
\begin{split}
|\mathcal Z^\bj_t|^2&=|\mathcal Z^\bj_0|^2+\sum_{\bk\in \mathcal E_N}\int_0^t\biggl(\frac{\overline{\mathcal Z}^\bj_s}{\overline{\mathcal Z}^\bk_s}+\frac{\mathcal Z^\bj_s}{\mathcal Z^\bk_s}\biggr)\d A^{\bj,\bk}_s+\int_0^t \overline{\mathcal Z}^\bj_s\d (\d U^\bj_s+\i\d V^\bj_s )\\
&\quad +\int_0^t 
\mathcal Z_s^\bj\overline{(\d U^\bj_s+\i\d V^\bj_s )}+2t.
\end{split}
\end{align}
To complete the proof of \eqref{eq:|Z|2}, it is enough to write $\mathcal Z^\bj_s=\mc X^\bj_s+\i\mc Y^\bj_s$, and note that for any complex numbers $z^1=x+\i y$ and $z^2=u+\i v$, 
$\overline{z^1}z^2+z^1\overline{z^2}=2xu+2yv$.

To get \eqref{eq:|Z|}, we first apply It\^{o}'s formula to $f(|\mathcal Z^\bj_t|^2)$, $0\leq t\leq \tau$, with $f(x)\,\defeq\,(x+\vep)^{1/2}$, $x\geq 0$, for fixed $\vep>0$. Since $f'(x)=1/[2(x+\vep)^{1/2}]$ and $f''(x)=-1/[4(x+\vep)^{3/2}]$, \eqref{eq:|Z|2} gives
\begin{align}
\begin{split}\label{|Z|:lim0}
&\quad\;(|\mathcal Z^\bj_t|^2+\vep)^{1/2}\\
&=(|\mathcal Z^\bj_0|^2+\vep)^{1/2}+\int_0^t\frac{1}{(|\mathcal Z^\bj_s|^2+\vep)^{1/2}}\biggl[1-\frac{|Z^\bj_s|^2}{2(|Z_s^\bj|^2+\vep)}\biggr]\d s\\
&\quad\;+\sum_{\bk\in \mathcal E_N}\int_0^t\frac{1}{(|\mathcal Z^\bj_s|^2+\vep)^{1/2}}\Re\biggl(\frac{\mathcal Z^\bj_s}{\mathcal Z^\bk_s}\biggr)\d A^{\bj,\bk}_s +\int_0^t \frac{\mathcal X^\bj_s\d U^\bj_s+\mathcal Y^\bj_s\d V^\bj_s}{(|\mathcal Z^\bj_s|^2+\vep)^{1/2}}.
\end{split}
\end{align}

Next, to obtain the precise limits of both sides of \eqref{|Z|:lim0} later, we show that
\begin{align}\label{finite:inverse}
\int_0^t \frac{\d s}{|\mc Z^\bj_s|}<\infty,\quad \forall\;0\leq t\leq \tau.
\end{align}
To see \eqref{finite:inverse}, note that the integrand of $\d A^{\bj,\bk}_s$ on the right-hand side of \eqref{|Z|:lim0} is bounded by $1/|\mathcal Z^k_s|$, and $1-a/[2(a+\vep)]\geq 1/2$ for all $a\geq 0$. Hence, by rearrangement and then Fatou's lemma, \eqref{|Z|:lim0} implies the following bound for any given sequence $\vep_n\searrow 0$:
\[
\int_0^t\frac{\d s}{2|\mathcal Z^\bj_s|}\leq |\mathcal Z^\bj_t|+|\mathcal Z^\bj_0|
+\sum_{\bk\in \mathcal E_N}\int_0^t\frac{|\d A^{\bj,\bk}|_s}{|Z^\bk_s|} +\liminf_{n\to\infty}\left|\int_0^t \frac{\mathcal X^\bj_s\d U^\bj_s+\mathcal Y^\bj_s\d V^\bj_s}{(|\mathcal Z^\bj_s|^2+\vep_n)^{1/2}}\right|.
\]
Moreover, for $0<T<\infty$, we can choose such a sequence $\{\vep_n\}$ such that 
the right-hand side is finite for all $0\leq t\leq T\wedge \tau$. Specifically, the third term on the right-hand side is finite by the assumption $\int_0^\tau  |\d A^{\bj,\bk}|_s)/|\mathcal Z^\bk_s|<\infty$ for all $\bk\in \mc E_N$, and the choice of $\{\vep_n\}$ handles the fourth term by the dominated convergence theorem for stochastic integrals \cite[Theorem~32, p.176]{Protter}:
\begin{align}\label{|Z|:lim1}
\P\mbox{-}\lim_{\vep\to 0}\sup_{0\leq t\leq T}\left|\int_0^{t\wedge \tau} \frac{\mathcal X^\bj_s\d U^\bj_s+\mathcal Y^\bj_s\d V^\bj_s}{(|\mathcal Z^\bj_s|^2+\vep)^{1/2}}-\int_0^{t\wedge \tau}\1_{\{|\mc Z^\bj_s|>0\}} \frac{\mathcal X^\bj_s\d U^\bj_s+\mathcal Y^\bj_s\d V^\bj_s}{|\mathcal Z^\bj_s|}\right|=0,
\end{align}
where the limit is in the sense of convergence in probability. We have proved \eqref{finite:inverse}.

We are ready to complete the proof of \eqref{eq:|Z|}.
By the usual dominated convergence theorem, the bound in \eqref{finite:inverse} and the assumption $\int_0^\tau |\d A^{\bj,\bk}|_s/|\mc Z^\bk_s|<\infty$
 give, for all $0\leq t\leq \tau$, 
\begin{align}
&\lim_{\vep\to 0}\int_0^t\frac{1}{(|\mathcal Z^\bj_s|^2+\vep)^{1/2}}\biggl[1-\frac{|Z^\bj_s|^2}{2(|Z_s^\bj|^2+\vep)}\biggr]\d s=\int_0^t \frac{\d s}{2|\mathcal Z^\bj_s|},\label{|Z|:lim2}\\
&\lim_{\vep\to 0}\int_0^t\frac{1}{(|\mathcal Z^\bj_s|^2+\vep)^{1/2}}\Re\biggl(\frac{\mathcal Z^\bj_s}{\mathcal Z^\bk_s}\biggr)\d A^{\bj,\bk}_s=\int_0^t\frac{1}{|\mathcal Z^\bj_s|}\Re\biggl(\frac{\mathcal Z^\bj_s}{\mathcal Z^\bk_s}\biggr)\d A^{\bj,\bk}_s.\label{|Z|:lim3}
\end{align}
The required identity in \eqref{eq:|Z|} follows by applying \eqref{finite:inverse}, \eqref{|Z|:lim1}, \eqref{|Z|:lim2} and \eqref{|Z|:lim3} to \eqref{|Z|:lim0}.
\end{proof}

\begin{proof}[Proof of Proposition~\ref{prop:SP} (2$\cc$)] 
First, the SDE of $\varrho_t=|\mathcal Z_t|$, $t<T_0(\mc Z)$, follows by applying \eqref{eq:|Z|2} with the following choice for a fixed $\bj$: $\mathcal Z^\bj=\mathcal Z=\mc X+\i\mc Y$, $W^\bj=W_\mc Z=U_\mc Z+\i V_\mc Z$, $A^{\bj,\bj}_t=-\alpha_\varrho t+\int_0^t|\mathcal Z_s|\d A_\varrho(s)$, 
and $\mathcal Z^\bk=A^{\bj,\bk}=0$ for all $\bk\neq \bj$.  Then
\eqref{eq:|Z|} becomes
\begin{align*}
\varrho_t
&=\varrho_0+\int_0^t \frac{\d s}{2\varrho_s}+\int_0^t\frac{[-\alpha_\varrho \d s+\varrho_s\d A_\varrho(s)]}{\varrho_s}+\int_0^t \frac{\mathcal X_s\d U_\mathcal Z(s)+\mathcal Y_s\d V_\mathcal Z(s)}{|\mc Z_s|},
\end{align*}
which simplifies to \eqref{def:genrhot} with $W_\varrho(t)$, $t<T_0(\mc Z)$, defined in \eqref{W:choice}. We have proved the required SDE \eqref{def:genrhot} for $\{\varrho_t;t<T_0(\mc Z)\}$ in the case of $\{\mc Z_t;t<T_0(\mc Z)\}$. The same argument applies to the case of $\{\mc Z_t;t\geq 0\}$, so the required SDE \eqref{def:genrhot} for $\{\varrho_t;t\geq 0\}$ holds.

Next, we show the required identity $\mathcal Z_t=\varrho_t\e^{\i \vartheta_t}$, $t< T_0(\mc Z)$, of  Proposition~\ref{prop:SP} (2$\cc$) for $\varrho_t$ obeying \eqref{def:genrhot} and $\vartheta_t$ obeying \eqref{def:thetat} with $\alpha_\vartheta=0$, $W_\vartheta$ defined in \eqref{W:choice}, $T_0(\varrho)=T_0(\mc Z)$, and $\vartheta_0$ such that  $\mc Z_0=\varrho_0\e^{\i\vartheta_0}$. Note that $\mathcal Z_t=\varrho_t\e^{\i \vartheta_t}$ 
for all $t<T_0(\mc Z)$ is equivalent to $\mathcal Z_t\e^{-H_t}= 1$ for all $t<T_0(\mathcal Z)$, where $H_t\,\defeq\,\log \varrho_t+\i\vartheta_t$.

We verify $\mc Z_t\e^{-H_t}=1$ for all $t<T_0(\mc Z)$ by It\^{o}'s formula. First, for $t<T_0(\mathcal Z)$, \eqref{def:genrhot} gives
\begin{align*}
\log \varrho_t&=\log \varrho_0+\int_0^t \frac{1-2\alpha_\varrho}{2\varrho_s^2}\d s+\int_0^t \frac{\d A_\varrho(s)}{\varrho_s}+\int_0^t \frac{\d W_\varrho(s)}{\varrho_s}-\frac{1}{2}\int_0^t \frac{1}{\varrho_s^2}\d s\\
&=\log \varrho_0-\int_0^t \frac{\alpha_\varrho}{\varrho^2_s}\d s+\int_0^t \frac{\d A_\varrho(s)}{\varrho_s}+\int_0^t \frac{\d W_\varrho(s)}{\varrho_s}.
\end{align*}
By the last equality and \eqref{def:thetat} for $\alpha_\vartheta=0$, the SDE of $H_t=\log \varrho_t+\i\vartheta_t$ for $t<T_0(\mathcal Z)$ is 
\begin{align}
H_t&=H_0-\int_0^t \frac{\alpha_\varrho}{\varrho^2_s}\d s+\int_0^t \frac{\d A_\varrho(s)}{\varrho_s}+\int_0^t \frac{\d W_\varrho(s)}{\varrho_s}+\i\int_0^t \frac{\d W_\vartheta(s)}{\varrho_s}\notag\\
&=H_0-\int_0^t \frac{\alpha_\varrho}{\varrho^2_s}\d s+\int_0^t \frac{\d A_\varrho(s)}{\varrho_s}+\int_0^t \frac{\d W_\mathcal Z(s)}{\mathcal Z_s}.\label{Ht:Ito}
\end{align}
Here, the last term of \eqref{Ht:Ito} follows by using \eqref{W:choice} and the computation that
\begin{align*}
\frac{\mathcal X_s\d U_\mathcal Z(s)+\mathcal Y_s\d V_\mathcal Z(s)}{|\mc Z_s|^2}+\i \frac{-\mathcal Y_s\d U_\mathcal Z(s)+\mathcal X_s\d V_\mathcal Z(s)}{|\mc Z_s|^2}
&=\frac{\overline{\mathcal Z}_s}{|\mc Z_s|^2}\d W_\mathcal Z(s)=\frac{\d W_{\mc Z}(s)}{\mc Z_s}.
\end{align*}
Next,  by It\^{o}'s formula, we obtain from \eqref{Ht:Ito} and the identity $\la W_\mathcal Z,W_\mathcal Z\ra_t\equiv 0$ that
\begin{align*}
\e^{-H_t}
&=\e^{-H_0}+\int_0^t \e^{-H_s}\frac{\alpha_\varrho}{\varrho_s^2}\d s-\int_0^t\e^{-H_s}\frac{\d A_\varrho(s)}{\varrho_s}-\int_0^t \e^{-H_s}\frac{\d W_\mathcal Z(s)}{\mathcal Z_s},\quad t<T_0(\mc Z).
\end{align*}
Hence, by integration by parts, \eqref{def:Z2} with $\alpha_\vartheta=0$, and the identity $\la W_\mathcal Z,W_\mathcal Z\ra_t\equiv 0$, for all $t<T_0(\mc Z)$,
\begin{align*}
\mathcal Z_t\e^{-H_t}
&=1+\int_0^t \e^{-H_s}\frac{-\alpha_\varrho}{\overline{\mathcal Z}_s}\d s+\int_0^t \e^{-H_s}\frac{|\mathcal Z_s|}{\overline{\mathcal Z}_s}\d A_\varrho(s)+
\int_0^t \e^{-H_s}\d W_\mathcal Z(s)\\
&\quad+\int_0^t \mathcal Z_s\e^{-H_s}\frac{\alpha_\varrho}{\varrho_s^2}\d s-\int_0^t \mathcal Z_s\e^{-H_s}\frac{\d A_\varrho(s)}{\varrho_s}-\int_0^t\mathcal Z_s \e^{-H_s}\frac{\d W_\mathcal Z(s)}{\mathcal Z_s}=1,
\end{align*}
which is the required identity $\mc Z_t\e^{-H_t}=1$ for $t<T_0(\mc Z)$. The proof is complete.
\end{proof}

\end{document}